\newtheorem{thm}{Theorem}
\newtheorem{prop}[thm]{Proposition}
\newtheorem{lem}[thm]{Lemma}
\newtheorem{cor}[thm]{Corollary}
{\theorembodyfont{\normalfont}

\newtheorem{defi}[thm]{Definition}
\newtheorem{rem}[thm]{Remark}
\newtheorem{con}[thm]{Construction}
}
\numberwithin{thm}{section}  
\newenvironment{proof}[1][\Proofname]{\par
  \normalfont
  \topsep6\p@\@plus6\p@ \trivlist
  \item[\hskip\labelsep{\itshape #1}\@addpunct{\itshape.}]\ignorespaces
}{%
  \endtrivlist
}
\newcommand{\Proofname}{Proof}
\def\BOXSYMBOL{\RIfM@\bgroup\else$\bgroup\aftergroup$\fi
  \vcenter{\hrule\hbox{\vrule height.85em\kern.6em\vrule}\hrule}\egroup}
\newcommand{\BOX}{%
  \ifmmode\else\leavevmode\unskip\penalty9999\hbox{}\nobreak\hfill\fi
  \quad\hbox{$\Box$}}
\newcommand\qed{\BOX}
\DeclareSymbolFontAlphabet{\mathbb}{AMSb}
\DeclareSymbolFontAlphabet{\mathbbl}{bbold}
\newcommand{\prism}{{\mathlarger{\mathbbl{\Delta}}}}
\newcommand{\str}{\sigma}
\titleformat*{\section}{\large\bfseries}
\newcommand{\moda}[2]{\widetilde{\mathcal{M}}_{\prism}(#1, #2)}
\newcommand{\modaa}[1]{\widetilde{\mathcal{M}}_{\prism}(#1)}
\newcommand{\modb}[2]{\mathcal{M}_{\prism}(#1, #2)}
\newcommand{\modbb}[1]{\mathcal{M}_{\prism}(#1)}
\newcommand{\modc}[1]{\widetilde{\mathcal{M}}(#1)}
\newcommand{\modd}[1]{\mathcal{M}(#1)}
\newcommand{\modfp}[1]{\mathcal{M}^{\textrm{fp}}(#1)}
\newcommand{\modtors}[1]{\mathcal{M}^{\textrm{tors}}(#1)}
\newcommand{\modpntors}[1]{\mathcal{M}^{p^n\textrm{-tors}}(#1)}
\newcommand{\stra}[2]{\mathscr{S}_{#1, #2}}
\newcommand{\strat}[2]{\mathscr{S}^{\text{new}}_{#1, #2}}
\newcommand{\ol}[1]{\overline{#1}}
\title{Prismatic and $q$-crystalline sites of higher level}
\author{Kimihiko Li}
\date{}
\begin{document}

\maketitle

Address: The University of Tokyo, Tokyo, Japan.

kimihiko@g.ecc.u-tokyo.ac.jp 

\begin{abstract}
In this article, we define the $m$-prismatic site and the $m$-$q$-crystalline site, which are higher level analogs of the prismatic site and the $q$-crystalline site respectively.
We prove a certain equivalence between the category of crystals on the $m$-prismatic site (resp. the $m$-$q$-crystalline site) and that on the prismatic site (resp. the $q$-crystalline site),
 which can be regarded as the prismatic (resp. the $q$-crystalline) analog of the Frobenius descent due to Berthelot and the Cartier transform due to Ogus-Vologodsky, Oyama and Xu.
 We also prove the equivalence between the category of crystals on the $m$-prismatic site and that on the $(m-1)$-$q$-crystalline site.
\end{abstract} 

\vspace{0.1in}
Mathematics Subject Classification: Primary: 14F30; Secondary: 14F20.

\vspace{0.1in}
Keywords: $m$-prismatic site, $m$-$q$-crystalline site, Frobenius descent.

\tableofcontents

\phantomsection
\section*{Introduction}
\addcontentsline{toc}{section}{Introduction}

Fix a prime $p$. In \cite{BS19}, Bhatt and Scholze defined two new $p$-adic cohomology \linebreak theories generalizing crystalline cohomology, called prismatic cohomology and $q$-\linebreak crystalline cohomology. 
They are defined as the cohomology of the corresponding sites called the prismatic site and the $q$-crystalline site respectively. 
The notion of \linebreak prismatic site $(X/A)_\prism$ is defined for a $p$-adic formal scheme $X$ over $A/I$, where $(A,I)$ is a bounded prism. 
Also, the $q$-crystalline site $(X/D)_{q\text{-crys}}$ is defined for a $p$ \linebreak -completely smooth affine  formal scheme $X$ over $D/I$, where $(D,I)$ is a $q$-PD pair. 
(The assumptions imply that $A,D$ are endowed with a lift $\phi$ of Frobenius on $A/pA, D/pD$ respectively. Also, $D$ is a $\mathbf{Z}_p \llbracket q-1\rrbracket$-algebra such that $(D,[p]_qD)$, where $[p]_q = \frac{q^p-1}{q-1}$, is a bounded prism and $I \subseteq \phi^{-1}([p]_qD)$. 
Moreover, when $q = 1$ in $D$, the $q$-PD pair $(D,I)$ is a PD ring.) 
Among other things, they proved the following comparison theorems:

\begin{enumerate}[label = (\arabic*)]
\item If $(D,I)$ is a $q$-PD pair and $X$ is a  $p$-completely smooth affine  formal scheme over $D/I$, then the $q$-crystalline cohomology of $X$ over $D$ is isomorphic to the prismatic cohomology of its Frobenius pullback $X' := X \underset{\text{Spf}(D/I),\phi^*}{\widehat{\times}} \text{Spf}(D/[p]_qD)$ over $D$ (Theorem 16.18 of \cite{BS19}).
\item If $(D,I)$ is a $q$-PD pair with $q = 1$ in $D$ and $X$ is an affine $p$-completely smooth formal scheme over $D/I$, then the $q$-crystalline cohomology of $X$ over $D$ is \linebreak isomorphic to the crystalline cohomology of $X$ over $D$ (a special case of \linebreak Theorem 16.14 of \cite{BS19}).
\end{enumerate}

On the other hand, for an integer $m \ge 0$, Berthelot (\cite{Ber90}, see also the works of Le Stum-Quir\'{o}s \cite{SQ01} and Miyatani \cite{Miy15}) defined the notion of crystalline cohomology of level $m$ as the cohomology of the $m$-crystalline site. 
The notion of $m$-crystalline site $(X/D)_{m\text{-crys}}$ can be defined for a smooth scheme $X$ over $D/I$, where $(D,J,I,\gamma)$ is a $p$-torsion free $p$-complete $m$-PD ring or an $m$-PD ring in which $p$ is nilpotent such that the ideal $I$ contains $p$. When $m = 0$, it coincides with the usual crystalline site. 
He proved the following results which are called the Frobenius descent:

\begin{enumerate}[label = (\arabic*)]
\item[(3)] The category of crystals on the $m$-crystalline site of $X$ over $D$ is equivalent \linebreak to that on the crystalline site of $X'$ over $D$, where 
\[
X' := X \underset{\text{Spec}(D/I),(\phi^m)^*}{\times} \text{Spec}(D/I)
\]
 is the pullback of $X$ by the $m$-fold iteration $\phi^m$ of the Frobenius lift. (Corollaire 2.3.7, Th\'{e}or\`{e}me 4.1.3 of \cite{Ber00} in local situation). (See also the equivalence $\widetilde{\sigma}^*$ in Section 4.)
\item[(4)] The crystalline cohomology of level $m$ of $X$ over $D$ is isomorphic to the crystalline cohomology of $X'$ over $D$, where $X'$ is as above (Proposition 5.4 of \cite{Miy15}, Proposition 5.5 of \cite{SQ01}).
\end{enumerate}

The purpose of the present article is to introduce the notion of $m$-prismatic site and $m$-$q$-crystalline site which are the level $m$ version of the prismatic site and the $q$-crystalline site respectively, 
and prove a prismatic and a $q$-crystalline version of the equivalence of (3) in Theorems \ref{th:mcrys}, \ref{th:mqcrys}. 
The $q$-crystalline version of (3) is actually compatible with the equivalence of (3) when $q = 1$ and $p \in I$. 
We also prove the equivalences of categories of crystals corresponding to the higher level version of (1) and (2) in Theorem \ref{th:mm1} and Proposition  \ref{prtaucrys} respectively.

Let us explain the content of each section. 
In Section 1, for a bounded prism $(A,I)$ and a smooth and separated formal scheme $X$ over $A/J$ with $J := (\phi^m)^{-1}(I)$, we first define the $m$-prismatic site $(X/A)_{m-\prism}$ (Definition \ref{def:m-pris}). Next, we
prove that the category of crystals on the $m$-prismatic site $(X/A)_{m-\prism}$ is equivalent to that on the prismatic site $(X'/A)_{\prism}$, where $X' := X \underset{\text{Spf}(A/J),(\phi^m)^*}{\widehat{\times}} \text{Spf}(A/I)$ (Theorem \ref{th:mcrys}). 
We work with the category of crystals with respect to certain categories of modules with some technical conditions (Definition \ref{def:mod}) so that our argument works. 
Our proof of the equivalence is based on the idea developed by Oyama \cite{Oya17} and Xu \cite{Xu19} in their study of Cartier transform in positive characteristic or the case modulo $p^n$: 
we define a functor $\rho : (X/A)_{m-\prism} \to (X'/A)_{\prism}$ of sites and prove that it induces an equivalence of topoi $\widetilde{(X/A)}_{m-\prism} \to \widetilde{(X'/A)}_{\prism}$ (Theorem \ref{thpristop}) and moreover that this equivalence preserves the categories of crystals on both sides.

In Section 2, for a $q$-PD pair $(D,I)$ and a smooth and separated formal \linebreak scheme $X$ over $D/J$ with $J := (\phi^m)^{-1}(I)$,
we first define the $m$-$q$-crystalline site \linebreak $(X/D)_{m\text{-}q\text{-crys}}$ (Definition \ref{def:mqcrys}). Next, we prove that the category of crystals on \linebreak $(X/D)_{m\text{-}q\text{-crys}}$ is equivalent to that on the $q$-crystalline site $(X'/D)_{q\text{-crys}}$,
where \linebreak $X' := X \underset{\text{Spf}(D/J),(\phi^m)^*}{\widehat{\times}} \text{Spf}(D/I)$ (Theorem \ref{th:mqcrys}).
The proof of the equivalence is \linebreak parallel to the one in Section 1. We also introduce the category of stratifications which is equivalent to the category of crystals on $(X/D)_{m\text{-}q\text{-crys}}$ (Definition \ref{defstra}, Proposition \ref{pr:qstra}).

In section 3, we compare the category of crystals on the $m$-prismatic site with that on the $(m-1)$-$q$-crystalline site. 
By the comparison theorems in (1) and (4) above, it is natural to regard  the prismatic cohomology  as a kind of `level $(-1)$ $q$-crystalline cohomology'. So it would be natural to compare the $m$-prismatic site with the $(m-1)$-$q$-crystalline site. 
Based on this observation, for a $q$-PD pair $(D,I)$ with $J_q := (\phi^{m-1})^{-1}(I)$, $J_\prism := (\phi^{m})^{-1}([p]_qD)$ and a smooth and separated formal scheme $X$ over $D/J_q$, we prove that the category of crystals on the $m$-prismatic site $(\widetilde{X}/D)_{m-\prism}$ \linebreak is equivalent to that on the $(m-1)$-$q$-crystalline site $(X/D)_{(m-1)\text{-}q\text{-crys}}$, 
\nolinebreak where \linebreak $\widetilde{X} := X \underset{\text{Spf}(D/J_q)}{\widehat{\times}} \text{Spf}(D/J_\prism)$ (Theorem \ref{th:mm1}). The category of stratifications we \linebreak introduced in Section 2 plays a central role in the proof.

In Section 4, we relate our equivalence in Section 2 to the Frobenius descent in (3). 
Let $(D,J,I,\gamma)$ be a $p$-torsion free $p$-complete $m$-PD ring with $p \in I$, and let $\widetilde{X}$ be a smooth and separated scheme over $D/I$. First, we give an alternative site-theoretic proof (a proof based on the idea of Oyama \cite{Oya17} and Xu \cite{Xu19}) of the equivalence in (3), 
namely, the equivalence between the category of crystals on the $m$-crystalline site $(\widetilde{X}/D)_{m\text{-crys}}$ and that on the crystalline site $(X'/D)_{\text{crys}}$, where $X' := \widetilde{X} \underset{\text{Spec}(D/I),(\phi^m)^*}{\times} \text{Spec}(D/I)$ (Corollary \ref{cor:new}). 
Note that the original \linebreak definition of the ($m$-)crystalline site is not suitable enough to apply the site-theoretic \linebreak argument of Oyama and Xu. 
Our strategy is to introduce the variants $(X/D)_{m\text{-crys,new}}$ (where $X := \widetilde{X} \underset{\text{Spec}(D/I)}{\times} \text{Spec}(D/J)$) and $(X'/D)_{\text{crys,new}}$ of the ($m$-)crystalline site (Definition \ref{defmcrysnew}) which do not change the categories of crystals and for which the site-theoretic argument works. 
Next, assuming that  $(D,I)$ is a $q$-PD pair with $q=1$ in $D$ and $p \in I$, we prove that the equivalence of categories of crystals in Section 2 is compatible with the Frobenius descent by comparing the categories of crystals on the ($m$-)$q$-crystalline site and those on the variants of the ($m$-)crystalline site.

In Section 5, we establish relationships between our result and results in the works of Xu \cite{Xu19}, Gros-Le Stum-Quir\'{o}s \cite{GSQ20b} and Morrow-Tsuji \cite{MT20}. 
We will see that our equivalences between the category of crystals on the prismatic site, that on the 1-prismatic site and that on the $q$-crystalline site are compatible with the equivalence of Cartier transform in the case modulo $p^n$ by Xu,  and the equivalence between the category of twisted hyper-stratified modules of level $(-1)$ and that of level 0 by Gros-Le Stum-Quir\'{o}s.
The equivalence of categories of crystals in Section 1 also fits naturally into the diagram involving the category of generalized representations, the category of modules with flat $q$-Higgs field and the category of modules with flat $q$-connection that appeared in the work of Morrow-Tsuji.

The original motivation of introducing $m$-crystalline cohomology would be to \linebreak develop a $p$-adic cohomology theory over a ramified base, for example, when the base is a complete discrete valuation ring $V$ of mixed characteristic $(0,p)$ in which $p$ is not a uniformizer. 
However, there exists no $\delta$-ring structure on the ring $V$ above. So our definition of the $m$-prismatic site and the $m$-$q$-crystalline site is not enough for this purpose. We hope to generalize our results to the case of a possibly ramified base in the future.

Finally, the author would like to express his sincere gratitude to his supervisor \linebreak Atsushi Shiho who patiently answered many questions and had discussions on \linebreak  the  constructions of this paper. 
The author was partially supported by WINGS-\linebreak FMSP (World-leading Innovative Graduate Study for Frontiers of Mathematical \linebreak Sciences  and Physics) program at the Graduate School of Mathematical Sciences, \linebreak the  University of Tokyo. 
This work was supported by the Grant-in-Aid for Scientific \linebreak Research  (KAKENHI  No. 22J10387) and the Grant-in-Aid for JSPS DC2 fellowship. This work was also supported by the Research Institute for Mathematical Sciences, an International Joint Usage/Research Center located in Kyoto University.

\section{The \textit{m}-prismatic site}

In this section, we first define the $m$-prismatic site, which is a higher level analog of the prismatic site defined in \cite{BS19}  as well as a prismatic analog of the level $m$ crystalline site. Next, we prove that there exists an equivalence between the category of crystals on the $m$-prismatic site of a smooth formal scheme $X$ and that on the usual prismatic site of $X'$, where $X'$ is the pullback of $X$ by the $m$-fold iteration $\phi^m$ of the Frobenius lift $\phi$ on the base prism.

We fix a non-negative integer $m$ and a bounded prism $(A,I)$. Then $I$ is closed for the $p$-adic topology of $A$ and $\phi : A \to A$ is continuous for the $p$-adic topology. Thus $(\phi ^m)^{-1}(I)$ is closed for the $p$-adic topology of $A$.

\begin{defi}
\label{def:m-pris}
Let $J = (\phi ^m)^{-1}(I)$, and let $X$ be a $p$-adic formal scheme smooth and separated over $A/J$. We define the  $m$-\textit{prismatic site} $(X/A)_{m-\prism}$ of $X$ over $A$ as follows.  
Objects are maps $(A,I) \to (E,I_E)$ of bounded prisms together with a map Spf$(E/J_E) \to X$ over $A/J$ satisfying the following condition, where $J_E = (\phi ^m)^{-1}(I_E)$:
\begin{enumerate}[label = (\roman*)]
\item[(*)] Spf$(E/J_E) \to X$ factors through some affine open Spf$(R) \subseteq X$.
\end{enumerate}
We shall often denote such an object by
\[
(\text{Spf}(E) \gets \text{Spf}(E/J_E) \to X) \in (X/A)_{m-\prism}
\]
or $(E,I_E)$ if no confusion arises. A morphism 
\[
(\text{Spf}(E') \gets \text{Spf}(E'/J_{E'}) \to X) \to (\text{Spf}(E) \gets \text{Spf}(E/J_E) \to X)
\]
 is a map of bounded prisms $(E,I_E) \to (E',I_{E'})$ over $(A,I)$ such that the induced morphism
\[
\text{Spf}(E'/J_{E'}) \to \text{Spf}(E/J_E)
\]
 is compatible with the maps $\text{Spf}(E'/J_{E'}) \to X$, $\text{Spf}(E/J_E) \to X$. 
When we denote such an object by $(E,I_E)$, we shall write $(E,I_E) \to (E',I_{E'})$ (not $(E',I_{E'}) \to (E,I_E)$) for a morphism from $(E',I_{E'})$ to $(E,I_E)$. 
A map ($E,I_E$) $\to$ ($E',I_{E'}$) in  $(X/A)_{m-\prism}$ is a cover if it is a faithfully flat map of prisms, i.e., $E'$ is $(p,I_E)$-completely faithfully flat over $E$.
\end{defi}

\begin{rem}
By Lemma 3.5 of \cite{BS19}, for an object $(E,I_E)$ of $(X/A)_{m-\prism}$, the ideal $I_E$ is always equal to the ideal $IE$.
\end{rem}

We need to check that the category $(X/A)_{m-\prism}$ endowed with the topology as defined above forms a site. Thanks to Lemma 3.8 of \cite{MT20}, we have the following lemma:

\begin{lem}
\label{lm:pushout}
Let $(E_1,I_{E_1})\xleftarrow{f}(E,I_E)\xrightarrow{g}(E_2,I_{E_2})$ be maps in $(X/A)_{m-\prism}$ such that f is a cover. Let $E_3 := E_1 \widehat{\otimes}_E E_2$, where the completion is the classical $(p,I)$-completion. 
Then  $(E_3,I_{E_3})$ is the object that represents the coproduct $(E_1,I_{E_1})\sqcup_{(E,I_E)}(E_2,I_{E_2})$ in $(X/A)_{m-\prism}$, and the canonical map $(E_2,I_{E_2}) \to (E_3,I_{E_3})$ is a cover.
\end{lem}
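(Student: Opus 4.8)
The plan is to reduce the prism-theoretic content to the cited Lemma 3.8 of \cite{MT20} and then to promote the resulting bounded prism to an object of the site by exhibiting its structure map to $X$. Indeed, since $f$ is a cover, Lemma 3.8 of \cite{MT20} already tells us that the classical $(p,I)$-completion $E_3 = E_1 \widehat{\otimes}_E E_2$ carries the structure of a bounded prism with $I_{E_3} = IE_3$, that $(E_3,I_{E_3})$ represents the coproduct $(E_1,I_{E_1}) \sqcup_{(E,I_E)} (E_2,I_{E_2})$ in the category of bounded prisms over $(A,I)$, and that the canonical map $(E_2,I_{E_2}) \to (E_3,I_{E_3})$ is $(p,I)$-completely faithfully flat. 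Since the covering condition in $(X/A)_{m-\prism}$ is exactly faithful flatness of the underlying prism map, this gives the last assertion of the lemma directly. It therefore remains to equip $(E_3,I_{E_3})$ with a map $\text{Spf}(E_3/J_{E_3}) \to X$ over $A/J$ satisfying condition (*), and to upgrade the prism-level universal property to a universal property in $(X/A)_{m-\prism}$.

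For the structure map, I would first note that each prism map $E \to E_3$ and $E_i \to E_3$ descends to the quotients by the ideals $J_\bullet = (\phi^m)^{-1}(I\,\bullet)$. Since a prism map commutes with $\phi$, hence with $\phi^m$, an element $x \in J_{E_i}$ has image $\overline{x} \in E_3$ with $\phi^m(\overline{x}) \in IE_i \cdot E_3 \subseteq IE_3 = I_{E_3}$, so that $J_{E_i}E_3 \subseteq J_{E_3}$, and likewise $J_E E_3 \subseteq J_{E_3}$. This yields maps $p_i : \text{Spf}(E_3/J_{E_3}) \to \text{Spf}(E_i/J_{E_i})$ and $w : \text{Spf}(E_3/J_{E_3}) \to \text{Spf}(E/J_E)$, and I would define the structure map of $(E_3,I_{E_3})$ to be $u \circ w$, where $u : \text{Spf}(E/J_E) \to X$ is the structure map of the base object $(E,I_E)$. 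The step demanding the most care is the compatibility of this choice with both legs of the diagram, and this is the point I would single out as the crux: because $f$ and $g$ are morphisms of the site, the structure maps of $(E_1,I_{E_1})$ and $(E_2,I_{E_2})$ factor as $\text{str}_{E_i} = u \circ v_i$ with $v_i : \text{Spf}(E_i/J_{E_i}) \to \text{Spf}(E/J_E)$; combined with the identity $v_i \circ p_i = w$ (both sides being induced by the composite $E \to E_i \to E_3$, which is the structure map $E \to E_3$), this gives $\text{str}_{E_i} \circ p_i = u \circ w$ for $i=1,2$. Hence the two a priori different maps $\text{Spf}(E_3/J_{E_3}) \to X$ induced via $E_1$ and via $E_2$ coincide, condition (*) is inherited from $u$ (as $u \circ w$ factors through the same affine open of $X$ through which $u$ does), and $p_1, p_2$ are morphisms in $(X/A)_{m-\prism}$.

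Finally, I would verify the universal property in the site. Given any object $(F,I_F)$ with morphisms from $(E_1,I_{E_1})$ and $(E_2,I_{E_2})$ that agree after restriction to $(E,I_E)$, the coproduct property from Lemma 3.8 of \cite{MT20} supplies a unique prism map $E_3 \to F$ over $(A,I)$; denoting by $r$ and $q_i$ the induced maps on the quotients, the same factorization bookkeeping as above gives $\text{str}_{E_3} \circ r = \text{str}_{E_1} \circ p_1 \circ r = \text{str}_{E_1} \circ q_1 = \text{str}_F$, so that $E_3 \to F$ is a morphism of the site, and its uniqueness is inherited from the prism level. Thus $(E_3,I_{E_3})$ represents the coproduct in $(X/A)_{m-\prism}$. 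I do not expect any serious obstacle beyond \cite{MT20}, which already provides flatness, boundedness and the fact that $E_3$ is a prism; the only genuinely new verification is the well-definedness and forcedness of the structure map to $X$, which dissolves once one observes that every structure map in sight factors through the single map $u$ of the base object $(E,I_E)$.
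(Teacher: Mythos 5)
Your proposal is correct and follows essentially the same route as the paper, which simply derives the lemma from Lemma 3.8 of [MT20] without spelling out further details; the additional bookkeeping you supply (the inclusion $J_{E_i}E_3 \subseteq J_{E_3}$ via compatibility of prism maps with $\phi^m$, the well-definedness of the structure map $\mathrm{Spf}(E_3/J_{E_3}) \to X$ through the base object $(E,I_E)$, and the inheritance of condition (*)) is exactly the routine verification left implicit there. No gaps.
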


By Lemma \ref{lm:pushout}, the set of covers in Definition \ref{def:m-pris} actually forms a pretopology on the category $(X/A)_{m-\prism}$.

\begin{rem}
\label{rem:aff}
When $m=0$, we denote the site $(X/A)_{m-\prism}$ simply by $(X/A)_{\prism}$ and call it the prismatic site. This is equal to the prismatic site defined in Definition 4.1 in \cite{BS19} except that the technical condition (*) in Definition \ref{def:m-pris}, which we imposed to facilitate some arguments below, is not assumed in \cite{BS19}. One easily checks that the topos is unchanged even if we do not impose the condition (*).
\end{rem}

In order to establish the equivalence between the category of crystals on the $m$-prismatic site and that on the usual prismatic site, we first construct a functor between these sites.

\begin{con}
\label{conpris}
Under the notation and assumption in Definition \ref{def:m-pris}, let $X'$ be $X$$\underset{\mathrm{Spf}(A/J),(\phi ^m)^*}{\widehat{\times}}$ Spf$(A/I)$. Then we have a diagram:
\[
\begin{tikzcd}
\mathrm{Spf}(A)\arrow[r,hookleftarrow] & \mathrm{Spf}(A/J)\arrow[dr,phantom, "\Box"] & X\arrow[l]\\
\mathrm{Spf}(A)\arrow[r,hookleftarrow]\arrow[u,"(\phi ^m)^*"] & \mathrm{Spf}(A/I)\arrow[u,"(\phi ^m)^*"] & X'.\arrow[l]\arrow[u]
\end{tikzcd}
\]

We define a functor $\rho$ from the $m$-prismatic site of $X$ over $A$ to the usual prismatic site of $X'$ over $A$ in the following way: For an object (Spf$(E) \gets$ Spf$(E/J_E) \to X)$ of $(X/A)_{m-\prism}$, 
we define the object $\rho$(Spf$(E) \gets$ Spf$(E/J_E) \to X)$ of $(X'/A)_{\prism}$ by 
\[
(\text{Spf}(E) \gets \text{Spf}(E/I_E) \xrightarrow{f} X'),
\]
where the right map $f$ is defined as follows:
\[
\mathrm{Spf}(E/I_E) \xrightarrow{g} \mathrm{Spf}(E/J_E) \underset{\mathrm{Spf}(A/J),(\phi ^m)^*}{\widehat{\times}} \mathrm{Spf}(A/I) \to X \underset{\mathrm{Spf}(A/J),(\phi ^m)^*}{\widehat{\times}} \mathrm{Spf}(A/I)=X'.
\]
Here the first map $g$ is induced by the map of rings
\[
E/J_E \underset{A/J,\phi^m}{\widehat{\otimes}} A/I \to E/I_E ; \ \ \ \ \  e \otimes a \mapsto \phi^m(e)a.
\]
This defines the functor $\rho:(X/A)_{m-\prism} \to (X'/A)_{\prism}$.
\end{con}

Next, we want to show that $\rho$ induces an equivalence of topoi. We follow the proof of Theorem 9.2 of \cite{Xu19}, where the author proved the analogous result for the Oyama topoi. The main properties of the functor $\rho$ that we need are summarized in the following propositions.

\begin{prop}
\label{pr:1}
{\rm (cf. \cite{Xu19} 9.3 (i))} The functor $\rho$ is fully faithful.
\end{prop}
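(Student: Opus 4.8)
The plan is to exploit that $\rho$ is the identity on underlying prisms: an object of either site is a bounded prism $(E,I_E)$ over $(A,I)$ equipped with a structure map ($\mathrm{Spf}(E/J_E)\to X$, resp.\ $\mathrm{Spf}(E/I_E)\to X'$), and a morphism in either site is nothing but a map of prisms over $(A,I)$ subject to a compatibility condition with the respective structure map; the functor $\rho$ sends a morphism to the very same underlying map of prisms. Faithfulness is then immediate, since $\rho$ is injective on each $\mathrm{Hom}$-set. For fullness I must prove the converse of what makes $\rho$ well defined: if a map of prisms $h\colon(E,I_E)\to(E',I_{E'})$ over $(A,I)$ is compatible with the structure maps $f_E,f_{E'}$ to $X'$, then it is already compatible with the structure maps $u_E\colon\mathrm{Spf}(E/J_E)\to X$ and $u_{E'}\colon\mathrm{Spf}(E'/J_{E'})\to X$.

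First I would record the relation between the two structure maps. Writing $\pi_E\colon\mathrm{Spf}(E/I_E)\to\mathrm{Spf}(E/J_E)$ for the morphism induced by $\phi^m$ (well defined since $\phi^m(J_E)\subseteq I_E$), composing the factorization of $f_E$ in Construction \ref{conpris} with the projection $p\colon X'\to X$ gives $p\circ f_E=u_E\circ\pi_E$; this is a direct unwinding of the formula $e\otimes a\mapsto\phi^m(e)a$ defining $g$. As $h$ is a $\delta$-map over $A$ it commutes with $\phi^m$, whence $\pi_E\circ\widetilde h=\bar h\circ\pi_{E'}$, where $\widetilde h,\bar h$ denote the maps induced by $h$ on the relevant quotients. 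Feeding the hypothesis $f_{E'}=f_E\circ\widetilde h$ into $p$ therefore yields
\[
u_{E'}\circ\pi_{E'}=u_E\circ\bar h\circ\pi_{E'}
\]
as morphisms $\mathrm{Spf}(E'/I_{E'})\to X$.

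It remains to cancel $\pi_{E'}$ on the right, and this is the crux. The ring map $E'/J_{E'}\to E'/I_{E'}$ induced by $\phi^m$ is injective, precisely because $J_{E'}=(\phi^m)^{-1}(I_{E'})$ forces its kernel to be zero; hence $\pi_{E'}$ is schematically dominant. Since $X$ is separated, the equalizer of $u_{E'}$ and $u_E\circ\bar h$ is a closed formal subscheme of $\mathrm{Spf}(E'/J_{E'})$ through which $\pi_{E'}$ factors, so it contains the schematic image of $\pi_{E'}$, namely all of $\mathrm{Spf}(E'/J_{E'})$. Thus $u_{E'}=u_E\circ\bar h$, which is exactly the compatibility defining a morphism $(E',I_{E'})\to(E,I_E)$ of $(X/A)_{m-\prism}$ lying over $h$, proving fullness.

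I expect the injectivity of $\phi^m\colon E'/J_{E'}\to E'/I_{E'}$ together with the schematic-dominance/separatedness cancellation to be the heart of the matter: the identity $J_{E'}=(\phi^m)^{-1}(I_{E'})$ built into the definition of the $m$-prismatic site is exactly what is needed, and the separatedness hypothesis on $X$ (rather than any finiteness of $\phi$) is what legitimizes the cancellation without having to analyze the image of $\pi_{E'}$ on points.
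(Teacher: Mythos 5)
Your proof is correct and follows the same strategy as the paper's: faithfulness is immediate, and fullness reduces to cancelling the morphism $\pi_{E'}$ induced by $\phi^m$ on the right, which both arguments justify by combining the injectivity of $\phi^m : E'/J_{E'} \to E'/I_{E'}$ (forced by $J_{E'} = (\phi^m)^{-1}(I_{E'})$) with the separatedness of $X$. The only difference is in how the cancellation is executed: the paper first reduces to a common affine open $\mathrm{Spf}(R_3)$ using condition (*) and the fact that $(\phi^m)^*$ is a homeomorphism on underlying spaces, then compares ring maps directly, whereas you phrase it globally via the closedness of the equalizer and the schematic dominance of $\pi_{E'}$ --- a minor repackaging resting on the same two facts.
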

\begin{proof}
The functor $\rho$ is clearly faithful. To prove fullness, suppose that 
\[
\alpha : \rho(\text{Spf}(E') \gets \text{Spf}(E'/J_{E'}) \to X) \to \rho(\text{Spf}(E) \gets \text{Spf}(E/J_E) \to X)
\]
 is a map in  $(X'/A)_{\prism}$. We consider the diagram
\[
\begin{tikzcd}
\text{Spf}(E'/I_{E'}) \arrow[rrr,"(\phi^m)^*"]\arrow[dr]\arrow[dd,"\overline{\alpha}"] &&&\text{Spf}(E'/J_{E'})\arrow[dl]\arrow[dd,"\overline{\alpha}"] \\
&X'\arrow[r]&X& \\
\text{Spf}(E/I_{E}) \arrow[rrr,"(\phi^m)^*"]\arrow[ur]&&&\text{Spf}(E/J_{E}), \arrow[ul]
\end{tikzcd}
\]
where the morphisms $\overline{\alpha}$ are induced by $\alpha$ and the morphisms $(\phi^m)^*$ are induced by $\phi^m$ on $E'$ and $E$. 
It is enough to check that the right triangle is commutative. Since the left triangle, the outer square and the two trapezoids are commutative, it suffices to prove the following claim:
\bigskip 

\noindent
{\bf claim.} \, Let $f_i$ : $\text{Spf}(E'/J_{E'}) \to X (i = 1,2)$ be maps which factor through some affine opens $\text{Spf}(R_i) \subseteq X (i = 1,2)$ respectively, and satisfies 
\[
f_1 \circ (\phi^m)^* = f_2 \circ (\phi^m)^* : \text{Spf}(E'/I_{E'}) \to X.
\]
 Then $f_1 = f_2$.
\bigskip 

We prove the claim. Let $\text{Spf}(R_3) = \text{Spf}(R_1) \cap \text{Spf}(R_2)$ (note that, since $X$ is separated by assumption, the intersection on the right hand side is affine).
Then the map $f_1 \circ (\phi^m)^* = f_2 \circ (\phi^m)^*$ factors through  $\text{Spf}(R_3)$. Since $(\phi^m)^*$ is  homeomorphic as a map of topological spaces, this implies that both $f_1$ and $f_2$ factor through $\text{Spf}(R_3)$. 
If $f_i^*$ denote the maps $R_3 \to E'/J_{E'}$ corresponding to $f_i (i = 1,2)$, then we have the equality of the map of rings $\phi^m \circ f_1^* = \phi^m \circ f_2^* : R_3 \to E'/I_{E'}$. 
Since the map $\phi^m : E'/J_{E'} \to E'/I_{E'}$ is injective, we conclude that $f_1^* = f_2^*$, and hence $f_1 = f_2$.
\qed
\end{proof}

\begin{prop}
\label{pr:2}
{\rm (cf. \cite{Xu19} 9.3 (ii))} The functor $\rho$ is continuous.
\end{prop}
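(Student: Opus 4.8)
The plan is to verify continuity through the standard criterion: a functor $u \colon \mathcal{C} \to \mathcal{D}$ between sites is continuous once (i) it carries covers to covers and (ii) it commutes with the fibre products occurring in a cover, i.e.\ for a cover $U_i \to U$ and any morphism $V \to U$ the natural map $u(U_i \times_U V) \to u(U_i) \times_{u(U)} u(V)$ is an isomorphism. Note that we only ever need fibre products along covers, and these exist on both sides: on $(X/A)_{m-\prism}$ they are furnished by Lemma \ref{lm:pushout}, and on $(X'/A)_{\prism}$ by its $m = 0$ instance. In each case the fibre product of a cover $(E, I_E) \to (E_1, I_{E_1})$ with an arbitrary $(E, I_E) \to (E_2, I_{E_2})$ is represented by the $(p,I)$-completed tensor product $E_3 := E_1 \widehat{\otimes}_E E_2$.

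The decisive observation I would exploit is that the functor $\rho$ of Construction \ref{conpris} leaves the underlying prism and every map of prisms untouched: it sends $(\mathrm{Spf}(E) \gets \mathrm{Spf}(E/J_E) \to X)$ to $(\mathrm{Spf}(E) \gets \mathrm{Spf}(E/I_E) \to X')$ with the very same $(E, I_E)$, and it sends a morphism given by a ring map $E \to E'$ to the morphism given by the identical ring map. Condition (i) is then immediate, since being a cover is a property of the ring map $E \to E'$ alone (namely $(p, I_E)$-complete faithful flatness), which $\rho$ does not alter.

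For condition (ii) I would start from a cover $(E, I_E) \to (E_1, I_{E_1})$ and an arbitrary morphism $(E, I_E) \to (E_2, I_{E_2})$ in $(X/A)_{m-\prism}$; their fibre product is $(E_3, I_{E_3})$ with $E_3 = E_1 \widehat{\otimes}_E E_2$ by Lemma \ref{lm:pushout}. Because $\rho$ does not change underlying prisms, $\rho(E_1), \rho(E_2), \rho(E)$ have the same underlying prisms, and their fibre product in $(X'/A)_{\prism}$ is again represented by $E_1 \widehat{\otimes}_E E_2 = E_3$. Hence $\rho(E_3)$ and $\rho(E_1) \times_{\rho(E)} \rho(E_2)$ share the same underlying prism, and it remains only to check that their structure maps to $X'$ agree. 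This follows from functoriality of $\rho$: the projections $E_3 \to E_1$ and $E_3 \to E_2$ are sent to the projections in $(X'/A)_{\prism}$, so the structure map of $\rho(E_3)$ makes both projections morphisms over $X'$, and by the universal property of the fibre product it must coincide with the canonical one. Thus the comparison morphism is an isomorphism.

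The step I expect to require the most care is (ii): one must confirm that the base-change isomorphism is compatible with the new structure maps to $X'$ produced by $\rho$, and not merely with the underlying prisms. Once this compatibility is traced through the explicit recipe of Construction \ref{conpris}, however, the whole statement reduces to Lemma \ref{lm:pushout} together with its $m = 0$ analogue, and continuity follows.
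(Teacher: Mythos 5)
Your proposal is correct and follows essentially the same route as the paper: the paper's proof likewise observes that covers are detected on the underlying ring map, which $\rho$ leaves unchanged, and then invokes Lemma \ref{lm:pushout} to see that $\rho(E_1\sqcup_{E}E_2) \to \rho(E_1)\sqcup_{\rho(E)}\rho(E_2)$ is an isomorphism. Your extra care in checking that the structure maps to $X'$ agree is a detail the paper leaves implicit, but the argument is the same.
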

\begin{proof}
By definition, $\alpha$ : $(E,I_E)$ $\to$ $(E',I_{E'})$ is a cover in $(X/A)_{m-\prism}$ if and only if \linebreak $\rho(\alpha)$ : $\rho$$(E,I_E)$ $\to$ $\rho(E',I_{E'})$ is a cover in $(X'/A)_{\prism}$ (note that the definition of covers in the prismatic site is the $m=0$ case of Definition \ref{def:m-pris}). 
On the other hand, if
\[
(E_1,IE_1)\xleftarrow{f}(E,IE)\xrightarrow{g}(E_2,IE_2)
\]
 are maps in $(X/A)_{m-\prism}$ such that the map $f$ is a cover, then by Lemma \ref{lm:pushout}, we see that $\rho(E_1\sqcup_{E}E_2) \to \rho(E_1)\sqcup_{\rho(E)}\rho(E_2)$ is an isomorphism. These imply that $\rho$ is continuous, as desired.
\qed
\end{proof}

\begin{prop}
\label{pr:3}
{\rm (cf. \cite{Xu19} 9.3 (ii))} The functor $\rho$ is cocontinuous.
\end{prop}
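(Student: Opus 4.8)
The plan is to verify the defining property of cocontinuity directly. Recall that $\rho$ is cocontinuous if for every object $U = (E,I_E)$ of $(X/A)_{m-\prism}$ and every covering of $\rho(U)$ in $(X'/A)_{\prism}$, there is a covering of $U$ in $(X/A)_{m-\prism}$ whose image under $\rho$ refines the given one. Since the topology on either site is generated by single faithfully flat maps of prisms, it suffices to treat the covering of $\rho(U)$ determined by one $(p,I_E)$-completely faithfully flat map of prisms $E \to E'$ over $(A,I)$, where the covering object $(E',I_{E'})$ carries a structure map $\mathrm{Spf}(E'/I_{E'}) \to X'$ compatible with that of $\rho(U)$.

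First I would lift the target prism $(E',I_{E'})$ back to the $m$-prismatic site. Keeping the same underlying prism, I equip $\mathrm{Spf}(E'/J_{E'})$ with the structure map to $X$ given by the composite $\mathrm{Spf}(E'/J_{E'}) \to \mathrm{Spf}(E/J_E) \to X$, where the first arrow is induced by $E \to E'$ and the second is the structure map of $U$; call this object $\widetilde{U}$. Condition (*) for $\widetilde{U}$ is immediate, since the structure map of $U$ factors through an affine open $\mathrm{Spf}(R) \subseteq X$, hence so does its composite with $\mathrm{Spf}(E'/J_{E'}) \to \mathrm{Spf}(E/J_E)$. Because $E \to E'$ is a faithfully flat map of prisms over $(A,I)$, the resulting morphism $\widetilde{U} \to U$ is a cover in $(X/A)_{m-\prism}$.

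It then remains to identify $\rho(\widetilde{U})$ with the given covering object $(E',I_{E'})$ over $\rho(U)$, and this is the step I expect to be the crux. Both objects share the underlying prism $(E',I_{E'})$, so the point is to match the two structure maps $\mathrm{Spf}(E'/I_{E'}) \to X'$. On one hand, since $(E',I_{E'})$ maps to $\rho(U)$ in $(X'/A)_{\prism}$, Definition \ref{def:m-pris} (in the case $m=0$) forces its structure map to be the composite of the reduction $\mathrm{Spf}(E'/I_{E'}) \to \mathrm{Spf}(E/I_E)$ with the structure map of $\rho(U)$; unwinding Construction \ref{conpris}, its underlying map to $X$ is $\mathrm{Spf}(E'/I_{E'}) \to \mathrm{Spf}(E/I_E) \xrightarrow{(\phi^m)^*} \mathrm{Spf}(E/J_E) \to X$. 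On the other hand, the structure map of $\rho(\widetilde{U})$ has underlying $X$-map $\mathrm{Spf}(E'/I_{E'}) \xrightarrow{(\phi^m)^*} \mathrm{Spf}(E'/J_{E'}) \to \mathrm{Spf}(E/J_E) \to X$. The key point is that $E \to E'$, being a map of prisms, is a homomorphism of $\delta$-rings and hence commutes with $\phi^m$; this makes the square relating the two $(\phi^m)^*$ arrows commute, so the two underlying maps to $X$ coincide. As both structure maps induce the canonical $A/I$-algebra structure on $E'/I_{E'}$, the universal property of the fibre product $X' = X \mathbin{\widehat{\times}}_{\mathrm{Spf}(A/J),(\phi^m)^*}\mathrm{Spf}(A/I)$ forces the two maps to $X'$ to agree. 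Hence $\rho(\widetilde{U}) = (E',I_{E'})$, and the image of the cover $\widetilde{U} \to U$ is literally the given covering of $\rho(U)$, which in particular refines it.

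The only genuine subtlety beyond bookkeeping is the compatibility of the prism map $E \to E'$ with $\phi^m$ (so that the two presentations of the map to $X$ agree); everything else reduces to the cartesian description of $X'$ and the definition of morphisms in the two sites. Together with Propositions \ref{pr:1} and \ref{pr:2}, this cocontinuity is what will feed into the asserted equivalence of topoi.
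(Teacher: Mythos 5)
Your proof is correct and follows essentially the same route as the paper: both lift the covering object $(E',I_{E'})$ of $\rho(E,I_E)$ to the $m$-prismatic site by composing the reduction $\mathrm{Spf}(E'/J_{E'})\to\mathrm{Spf}(E/J_E)$ with the structure map of $(E,I_E)$ (using condition (*)), and both reduce the identification $\rho(\widetilde U)=(E',I_{E'})$ to the compatibility of the prism map $E\to E'$ with $\phi^m$. The paper verifies this last point by an explicit ring-level diagram chase through $R\to R'\to E'/I_{E'}$, whereas you invoke the universal property of the fibre product $X'$; these are the same argument in different clothing.
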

\begin{proof}
Suppose that 
\[
\alpha' : (\text{Spf}(E') \gets \text{Spf}(E'/I_{E'}) \to X') \to \rho(\text{Spf}(E) \gets \text{Spf}(E/J_E) \allowbreak \to X)
\]
 is a cover in $(X'/A)_{\prism}$. The condition (*) in Definition \ref{def:m-pris} ensures that the map Spf($E/J_E) \to X$ factors through some affine open $\text{Spf}(R) \subseteq X$. Then $\alpha'$ induces the commutative diagram:
\[
\begin{tikzcd}
R' = R  \underset{A/J,\phi^m}{\widehat{\otimes}} A/I \arrow[r,"g \otimes \text{id}"]\arrow[drr,bend right=10,"g'"] & E/J_E \underset{A/J,\phi^m}{\widehat{\otimes}} A/I \arrow[r] &E/I_E \arrow[d,"\overline{\alpha}'"] & E\arrow[l, two heads]\arrow[d,"\alpha'"]\\
&&E'/I_{E'} & E',\arrow[l, two heads]
\end{tikzcd}
\]
where $g$ (resp. $g'$) is the map induced by $\text{Spf}(E/J_E) \allowbreak \to X$ (resp. $\text{Spf}(E'/I_{E'}) \to X'$). 
If we define $g''$ as the composite $R \to R' \xrightarrow{g'} E'/I_{E'}$, 
the commutativity of the above diagram implies that the solid arrows in
\[
\begin{tikzcd}
R  \arrow[r,"g"]\arrow[dr,dashed,bend right=20]\arrow[drr,bend right=5,near end,"g''"] & E/J_E \arrow[d,"\overline{\alpha}'",crossing over] \arrow[r,hookrightarrow,"\phi ^m"] &E/I_E \arrow[d,"\overline{\alpha}'"] & E\arrow[l, two heads]\arrow[d,"\alpha'"]\\
&E'/J_{E'} \arrow[r,hookrightarrow,"\phi ^m"']&E'/I_{E'} & E'\arrow[l, two heads]
\end{tikzcd}
\]
commute. Then we can define the dotted arrow as the composite $\overline{\alpha}' \circ g$ so that the \linebreak diagram  commutes and that it defines 
an object (Spf$(E') \gets$ Spf($E'/J_{E'}) \to X$) and a  \linebreak morphism 
\[
(\text{Spf}(E') \gets \text{Spf}(E'/J_{E'}) \to X) \to (\text{Spf}(E) \gets \text{Spf}(E/J_{E}) \to X)
\]
 in $(X/A)_{m-\prism}$. 
If we denote it by $\alpha$, then $\rho (\alpha) = \alpha'$. The map $\alpha$ is also a cover by assumption. These imply that $\rho$ is cocontinuous, as desired.
\qed
\end{proof}

In order to prove another important proposition to prove the equivalence of topoi, we prepare several lemmas.

\begin{lem}
\label{lemlift}
Let $A$ be an $I$-adically complete ring where $I$ is a finitely generated ideal. 
Let $J \subseteq A$ be a closed ideal such that the image of $J$ in $A/I$ is a nil ideal, and let $\overline{f} : \overline{U} \to \mathrm{Spf}(A/J)$ be a smooth morphism $($resp. an open immersion$)$ with $\overline{U}$ affine. 
Then there is a unique smooth morphism $($resp. open immersion$)$ $f : U \to \mathrm{Spf}(A)$ which lifts $\overline{f}$. Moreover, $U$ is affine.
\end{lem}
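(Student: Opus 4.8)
The plan is to separate the two features of the closed immersion $\mathrm{Spf}(A/J)\hookrightarrow\mathrm{Spf}(A)$ cut out by $J$: on underlying topological spaces it is a homeomorphism, and the lifting problem then factors into a nil-thickening part and an $I$-adic part. First I would record the topological statement. Since the image of $J$ in $A/I$ is nil, the closed immersion $\mathrm{Spec}(A/(I+J))\hookrightarrow\mathrm{Spec}(A/I)$ is a homeomorphism, so
\[
|\mathrm{Spf}(A)| = \mathrm{Spec}(A/I) = \mathrm{Spec}(A/(I+J)) = |\mathrm{Spf}(A/J)|.
\]
For the open-immersion case this already suffices: $\overline{f}$ is the inclusion of the open formal subscheme on some open $W\subseteq|\mathrm{Spf}(A/J)|$, and I let $U\hookrightarrow\mathrm{Spf}(A)$ be the open formal subscheme on the same $W\subseteq|\mathrm{Spf}(A)|$. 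This is the unique open immersion lifting $\overline{f}$, because open immersions into a formal scheme correspond bijectively to open subsets of its space and restriction along the above homeomorphism is the identity on such subsets. Affineness of $U$ follows because $\overline{U}$ affine forces $W$ to be an affine open of $\mathrm{Spec}(A/(I+J))$, hence (affineness of a scheme being equivalent to affineness of its reduction, and the ideal here being nil) an affine open of $\mathrm{Spec}(A/I)$, so $U$ is affine.

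For the smooth case I would write $R_0=A/J$, $\overline{R}_0=A/(I+J)$, $R=A/I$, and $\overline{U}_0=\overline{U}\times_{R_0}\overline{R}_0$, and produce the lift in two steps matching the surjections $A\twoheadrightarrow A/I=R$ (the $I$-adic thickening, with $A$ being $I$-adically complete) and $R=A/I\twoheadrightarrow A/(I+J)=\overline{R}_0$ (a nil thickening, its kernel being the image of $J$). First I lift the smooth affine $\overline{U}_0$ along $R\twoheadrightarrow\overline{R}_0$ to a smooth affine $U_0$ over $\mathrm{Spec}(R)$; then I lift $U_0$ along $A\twoheadrightarrow A/I$ by the usual step-by-step deformation along the square-zero layers $I^n/I^{n+1}$ of $A/I^{n+1}\to A/I^n$, assembling $U:=\mathrm{Spf}(\varprojlim_n\Gamma(U_n,\mathcal{O}))$ by completeness. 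At every stage existence and uniqueness up to isomorphism of the lift of a smooth \emph{affine} scheme hold because the obstruction and ambiguity groups $H^i(\overline{U},\mathcal{T}\otimes-)$ vanish for $i\ge 1$ on the affine $\overline{U}$, and affineness is preserved because a scheme whose reduction is affine is affine and a limit of affines along the tower is again affine. To identify the reduction, I observe that $U\times_A A/J$ and $\overline{U}$ are both smooth affine lifts of $\overline{U}_0$ along the nil thickening $R_0\twoheadrightarrow\overline{R}_0$, hence isomorphic by the uniqueness in the nil step; uniqueness of $U$ itself then follows by reducing any two lifts modulo $I$, matching them by the nil step, and propagating the isomorphism up the $I$-adic tower.

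The delicate point—and the one I expect to be the main obstacle—is the nil thickening $R\twoheadrightarrow\overline{R}_0$: its kernel is nil but in general not nilpotent, so one-step square-zero deformation theory does not literally apply. The way I would handle it is to use that a nil ideal makes $(R,\ker)$ a Henselian pair, so that étale $\overline{R}_0$-algebras lift uniquely; since $\overline{U}_0$ is smooth it is, Zariski-locally on its affine (hence quasi-compact) source, étale over affine space, so I cover it by finitely many such pieces, lift each uniquely, lift the étale transition isomorphisms uniquely, and glue to a smooth $R$-scheme $U_0$, which is affine because its reduction $\overline{U}_0$ is. The same Henselian uniqueness yields the required uniqueness up to isomorphism. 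Equivalently, one reduces to the nilpotent case by the standard limit argument, using that a finite presentation of the smooth algebra $\overline{U}_0$ involves only finitely many elements of the nil ideal, which generate a nilpotent ideal. Once this nil-thickening step is in place, the $I$-adic step and the topological bookkeeping are routine.
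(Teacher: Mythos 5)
Your overall strategy does yield the lemma, but it is a genuinely different route from the paper's, and two of your justifications need repair. The paper reduces at once to $A/I^n$, so that the image of $J$ becomes a nil ideal of $A/I^n$ and the entire problem is a single lifting of a smooth affine scheme along a nil thickening; the non-nilpotent, non-Noetherian difficulty is then handled by a double limit argument (write $J$ as a filtered colimit of finitely generated nilpotent ideals and descend $\overline{U}$ by EGA IV 8.8.2 and 17.7.8, then write $A$ as a colimit of Noetherian subrings), after which SGA 1, III, Cor.\ 6.8 gives existence and uniqueness, and affineness is proved by Serre's criterion applied to $0 \to J\mathscr{I} \to \mathscr{I} \to \mathscr{I}/J\mathscr{I} \to 0$. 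You instead split the thickening in the opposite order (first the nil step $A/I \twoheadrightarrow A/(I+J)$, then the $I$-adic tower), which works but costs you the extra step of identifying $U\times_A A/J$ with $\overline{U}$; you replace the limit argument by \'{e}tale-lifting considerations, and you replace the Serre-criterion computation by the general fact that a scheme with affine thickening is affine. Your alternative treatment of the nil step (a finite presentation of the smooth algebra meets only finitely many elements of the nil ideal, which generate a nilpotent one) is essentially the paper's first reduction.

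The two repairs. (1) You justify $U\times_A A/J \cong \overline{U}$ by saying both are lifts of $\overline{U}_0$ ``along the nil thickening $R_0 \twoheadrightarrow \overline{R}_0$.'' But the kernel of $A/J \to A/(I+J)$ is the image of $I$, not of $J$, and nothing in the hypotheses makes it nil (in the paper's applications it contains $p$, which is not nilpotent in $E/I_E$). The identification is still true, but the uniqueness you need here is uniqueness of smooth affine formal lifts along the $I$-adic tower of $A/J$, i.e.\ the same $H^1$-vanishing argument you already run over $A$ --- not the nil step. (2) ``A nil ideal makes $(R,\ker)$ a Henselian pair, so \'{e}tale algebras lift uniquely'' is not quite the right tool: for a general Henselian pair the base-change equivalence is only available for \emph{finite} \'{e}tale algebras, and full faithfulness for arbitrary \'{e}tale algebras can fail. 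What you actually want is topological invariance of the \'{e}tale site for the universal homeomorphism $\mathrm{Spec}(\overline{R}_0)\to \mathrm{Spec}(R)$ defined by a nil ideal, which does give a full equivalence of categories of \'{e}tale schemes and makes your cover--lift--glue argument (and its uniqueness) go through. With these two adjustments the proof is correct.
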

\begin{proof}
We follow the proof of Lemma 2.3.14 of \cite{NS08} where the authors proved the analogous results for log schemes. We may replace $A$ by $A/I^n$ and Spf by Spec to prove the lemma. Then $J$ is a nil ideal of $A$ and the morphism $\overline{f} : \overline{U} \to \text{Spec}(A/J)$ is a smooth morphism (resp. an open immersion) of schemes. 

We first treat the case that $\overline{f}$ is smooth.
Express $J$ as the inductive limit of the inductive system $(J_\lambda)_\lambda$ of finitely generated nilpotent ideals of $A$: $J = \varinjlim_\lambda J_\lambda$. Since \[\mathrm{Spec}(A/J) = \varprojlim_\lambda \mathrm{Spec}(A/J_\lambda),\] by Th\'{e}or\`{e}me 8.8.2 (ii) and Proposition 17.7.8 of \cite{EGA4}, there exists a smooth scheme $\overline{U_\lambda}$ over $\mathrm{Spec}(A/J_\lambda)$ for some $\lambda$ such that $\overline{U} = \overline{U_\lambda} \underset{\mathrm{Spec}(A/J_\lambda)}{\times} \mathrm{Spec}(A/J)$. By Th\'{e}or\`{e}me 8.10.5 (viii) of \cite{EGA4}, we may assume $\overline{U_\lambda}$ is affine. By replacing $\overline{U} \to \text{Spec}(A/J)$ by $\overline{U_\lambda} \to \mathrm{Spec}(A/J_\lambda)$, we may assume that $J$ is a finitely generated nilpotent ideal. Next, fix a generator $x_1, \dots , x_n$ of $J$ and write $A$ as the inductive limit of the inductive system $(A_\lambda)_\lambda$ of Noetherian subrings containing $x_1, \dots , x_n$. If we write $J_\lambda$ for the ideal of $A_\lambda$ generated by $x_1, \dots , x_n$, we have 
\[
\text{Spec}(A/J) = \varprojlim_\lambda \mathrm{Spec}(A_\lambda/J_\lambda).
\]
Then, by a similar argument as above, we may replace $J \subseteq A$ by $J_\lambda \subseteq A_\lambda$ for some $\lambda$ and so we may assume furthermore that $A$ is Noetherian. Then the existence and the uniqueness of $U$ follows from I\hspace{-1pt}I\hspace{-1pt}I, Corollaire 6.8 of \cite{SGA1}.

It remains to check that the scheme $U$ is affine. We can reduce to the case where $J$ is nilpotent by the argument above, and then we may assume that $J^2 = 0$.  Let $\mathscr{I}$ be a coherent ideal sheaf of $\mathscr{O}_U$. By the proof of Theorem 3.7 of \cite{Har77}, for a scheme $X$, the following conditions on $X$ are equivalent:
\begin{enumerate}
\item $X$ is affine.
\item $H^1(X,\mathscr{I}) = 0$ for all coherent  ideal sheaves $\mathscr{I}$.
\end{enumerate}
So we need to prove that $H^1(U,\mathscr{I}) = 0$. We consider the exact sequence
\[
 0 \to J \mathscr{I} \to \mathscr{I} \to \mathscr{I}/J \mathscr{I} \to 0. 
\]
As $\overline{U}$ is affine, we see that $H^1(U,\mathscr{I}/J \mathscr{I}) = H^1(\overline{U},\mathscr{I}/J \mathscr{I}) = 0$. Similarly, one can show that $H^1(U, J \mathscr{I}) = 0$. Hence $H^1(X,\mathscr{I}) = 0$.

Next we  consider the case that $\overline{f}$ is an open immersion.
Then the existence and the uniqueness of $f$ follow as $\text{Spec}(A/J)$ is homeomorphic to $\text{Spec}(A)$. 
To prove that $U$ is affine, we may assume that $A$ is Noetherian and $J$ is nilpotent, and then the above argument works. So we are done.
\qed
\end{proof}

\begin{lem}
\label{prislift}
Let $(E,I_E)$ be a bounded prism. Let $\overline{f}$ be an open immersion 
\[
\overline{f} : \mathrm{Spf}(\overline{E_i}) \to \mathrm{Spf}(E/I_E)
\]
 or an open immersion 
\[
\overline{f} : \mathrm{Spf}(\overline{E_i}) \to \mathrm{Spf}(E/J_E).
\]
 Then there is a unique  open immersion  $f : \mathrm{Spf}(E_i) \to \mathrm{Spf}(E)$ which lifts $\overline{f}$. Moreover, the corresponding map of rings induces a map $(E,I_E) \to (E_i,I_EE_i)$ of bounded prisms.
\end{lem}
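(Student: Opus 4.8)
The plan is to produce the open immersion $f$ by invoking Lemma~\ref{lemlift}, and then to promote the underlying ring map $E \to E_i$ to a morphism of bounded prisms using the deformation theory of $\delta$-structures along \'etale maps.

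First I would apply Lemma~\ref{lemlift} with $A = E$, taking the finitely generated ideal of definition of the lemma to be $(p, I_E)$ and the closed ideal $J$ of the lemma to be $I_E$ in the first case and $J_E$ in the second. This is legitimate: $E$ is classically $(p, I_E)$-complete because $(E, I_E)$ is a bounded prism, $(p, I_E)$ is finitely generated since $I_E$ is invertible, and both $I_E$ and $J_E = (\phi^m)^{-1}(I_E)$ are closed. The one hypothesis that needs an argument is that the image of $J$ in $E/(p, I_E)$ be a nil ideal. For $J = I_E$ this image is zero. For $J = J_E$, any $x \in J_E$ satisfies $\phi^m(x) \in I_E$, and since $\phi$ is a lift of Frobenius we have $\phi^m(x) \equiv x^{p^m} \pmod{p}$; hence $x^{p^m} \in I_E + pE = (p, I_E)$, so its class in $E/(p, I_E)$ is nilpotent. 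Lemma~\ref{lemlift} then yields a unique open immersion $f : \mathrm{Spf}(E_i) \to \mathrm{Spf}(E)$ lifting $\overline{f}$, with $\mathrm{Spf}(E_i)$ affine.

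Next I would endow $E \to E_i$ with the structure of a prism map. Since $f$ is an open immersion of formal schemes, $E \to E_i$ is $(p, I_E)$-completely \'etale, hence flat, and therefore carries a unique compatible $\delta$-structure by the uniqueness of Frobenius lifts along \'etale maps of $\delta$-rings (cf.\ \cite{BS19}). It then remains to verify that $(E_i, I_E E_i)$ is a bounded prism: the ideal $I_E E_i$ is invertible by flat base change and is proper because it is contained in the ideal of definition $(p, I_E)E_i$ of the nonzero ring $E_i$; the ring $E_i$ is classically $(p, I_E)$-complete by construction; applying the $\delta$-ring map $E \to E_i$ to the relation $p \in I_E + \phi(I_E)E$ gives $p \in I_E E_i + \phi(I_E E_i)E_i$; and the bounded $p^\infty$-torsion of $E/I_E$ is inherited by the flat extension $E_i/I_E E_i$. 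Assembling these, $E \to E_i$ is a map of bounded prisms $(E, I_E) \to (E_i, I_E E_i)$.

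I expect the genuinely delicate content to lie in the second step rather than the first: once the nilpotence of the image of $J$ is noted, existence and uniqueness of the open immersion are immediate from Lemma~\ref{lemlift}, and the real work is the compatibility with the $\delta$- and prism structures. The inputs there---uniqueness of $\delta$-structures along \'etale maps and the stability of invertibility and of bounded $p^\infty$-torsion under flat base change---are standard facts about $\delta$-rings and bounded prisms, which I would cite from \cite{BS19} and simply combine.
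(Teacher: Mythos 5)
Your first step matches the paper exactly: you verify that the images of $I_E$ and $J_E$ in $E/(p,I_E)$ are nil (your computation $\phi^m(x)\equiv x^{p^m} \pmod p$ is the right one) and then invoke Lemma \ref{lemlift} to get existence, uniqueness and affinity of the lift. The gap is in your second step, at the single sentence ``Since $f$ is an open immersion of formal schemes, $E \to E_i$ is $(p,I_E)$-completely \'etale, hence flat.'' This is not automatic, and it is precisely the point where the paper does its real work. The ring $E_i$ is the ring of sections over an affine open $U \subseteq \mathrm{Spf}(E)$ which need not be a standard open $\mathrm{Spf}(\widehat{E_g})$; it is only covered by such. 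Complete \'etaleness is a derived condition (one needs $E_i \otimes^L_E N$ concentrated in degree $0$ for $(p,I_E)$-torsion modules $N$, not merely that $E_i/(p,I_E)$ is \'etale over $E/(p,I_E)$), and there is no standard citation that hands this to you for an arbitrary affine open formal subscheme.

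The paper closes this gap as follows: it covers $\mathrm{Spf}(E_i)$ by standard opens $\mathrm{Spf}(\widehat{E}_{g_j})$, identifies each $\widehat{E}_{g_{j_1}\cdots g_{j_n}}$ with the derived $(p,I_E)$-completion of an \'etale $E$-algebra (hence $(p,I_E)$-completely \'etale, carrying a unique compatible $\delta$-structure by Lemma 2.18 of \cite{BS19} and giving a bounded prism by Lemma 3.7), and then uses the quasi-isomorphism $E_i \xrightarrow{\simeq} K^\bullet$ with the \v{C}ech complex of this cover. For a $(p,I_E)$-torsion module $N$, the complex $K^\bullet \otimes^L_E N$ is concentrated in degrees $\geq 0$ by complete flatness of each term, while $E_i \otimes^L_E N$ is concentrated in degrees $\leq 0$; hence it sits in degree $0$, and since $E_i/(p,I_E)$ is \'etale over $E/(p,I_E)$ one concludes that $E\to E_i$ is $(p,I_E)$-completely \'etale. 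Only after this can one apply the uniqueness of $\delta$-structures along completely \'etale maps and Lemma 3.7 of \cite{BS19}, as you do. Your remaining verifications (invertibility of $I_EE_i$, bounded $p^\infty$-torsion under flat base change) are fine and are what the citation of Lemma 3.7 (2),(3) packages, but without the \v{C}ech argument the proof is incomplete.
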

\begin{proof}
We first check that the images of $I_E, J_E$ in $E/(p,I_E)$ are nil ideals. The assertion is trivial for $I_E$. For $J_E$, it follows immediately from the definitions of $J_E$ and the Frobenius lift $\phi$. Then the existence, the uniqueness and the affinity of the lifting follow from Lemma \ref{lemlift}. We denote the lifting by $f : \mathrm{Spf}(E_i) \to \mathrm{Spf}(E)$.

As $f$ is an open immersion, there exists an open cover 
\[
\mathrm{Spf}(E_i) = \bigcup_{j=1}^N \mathrm{Spf}(\widehat{E}_{g_j}),
\] 
where $\widehat{E}_{g_j}$ is the classical $(p,I_E)$-completion of the localization $E_{g_j}$ of $E$ by $g_j \in E$. As $E_{g_j}$ is an \'{e}tale $E$-algebra, the derived  $(p,I_E)$-completion $E_{g_j}^D$ of  $E_{g_j}$ is $(p,I_E)$-completely \'{e}tale and it admits a unique $\delta$-structure compatible with the one on $E$ by Lemma 2.18 of \cite{BS19}. Then, $(E_{g_j}^D, I_E E_{g_j}^D)$ is a bounded prism by Lemma 3.7 (2),(3) of \cite{BS19}, and so $\widehat{E}_{g_j}$ is equal to $E_{g_j}^D$ by Lemma 3.7 (1) of \cite{BS19}. We see also that $\widehat{E}_{g_{j_1} \dots g_{j_n}}$ is equal to $E_{g_{j_1} \dots g_{j_n}}^D$ by the same reason.

By the sheaf property and the vanishing of higher cohomologies of the structure sheaf of $\text{Spf}(E_i)$, we have a quasi-isomorphism 
\[
E_i \xrightarrow{\simeq} [ \underset{j}{\prod} \widehat{E}_{g_j} \to \underset{j_1<j_2}{\prod} \widehat{E}_{g_{j_1}g_{j_2}} \to \dots \to \widehat{E}_{g_{1} \dots g_{N}} ].
\]
We write $K^\bullet$ for the complex on the right hand side. As each $\widehat{E}_{g_{j_1} \dots g_{j_n}} = E_{g_{j_1} \dots g_{j_n}}^D$ is $(p,I_E)$-completely flat over $E$, $K^\bullet \otimes^L_E N$ is concentrated in degree $\geq 0$ for any  $(p,I_E)$-torsion $E$-module $N$. On the other hand, $E_i \otimes^L_E N$ is concentrated in degree $\leq 0$ by definition. Hence $E_i \otimes^L_E N$ is concentrated in degree 0. Then, since \[E_i \otimes^L_E E/(p,I_E) = E_i/(p,I_E)\] is \'{e}tale over $E/(p,I_E)$, $E_i$ is  $(p,I_E)$-completely \'{e}tale over $E$. Thus it admits a unique $\delta$-structure compatible with the one on $E$ by Lemma 2.18 of \cite{BS19} and then $(E_i, I_EE_i)$ is a bounded prism by Lemma 3.7 (2),(3) of \cite{BS19}.
\qed
\end{proof}

The next result will be also used to prove the equivalence of topoi.

\begin{prop}
\label{pr:4}
{\rm (cf. \cite{Xu19} 9.8 (i))} Let $(E',I_{E'})$ be an object in $(X'/A)_{\prism}$. Then there exists an object $(E,I_{E})$ in $(X/A)_{m-\prism}$ and a cover of the form 
\[(E',I_{E'}) \to \rho (E,I_{E}).
\]
\end{prop}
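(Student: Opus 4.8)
The plan is to imitate the argument for \cite{Xu19} 9.8(i): starting from the object $(E',I_{E'})$ of $(X'/A)_{\prism}$, I construct an object $(E,I_E)$ of $(X/A)_{m-\prism}$ by adjoining $\phi^m$-preimages of the coordinates of the structure map, and then exhibit the ring map $E' \to E$ as the desired cover $(E',I_{E'}) \to \rho(E,I_E)$. Recall that $\rho$ does not change the underlying prism, only the map to the special fibre, so the content is entirely in building the structure map $\mathrm{Spf}(E/J_E) \to X$ and controlling flatness.

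First I would reduce to the affine, framed situation. Since $(\phi^m)^*$ is a homeomorphism on underlying spaces, the affine opens of $X'$ correspond to those of $X$, so the structure map $\mathrm{Spf}(E'/I_{E'}) \to X'$ factors through an affine open $\mathrm{Spf}(R') \subseteq X'$ with $R' = R \widehat{\otimes}_{A/J,\phi^m} A/I$ for an affine open $\mathrm{Spf}(R) \subseteq X$. Composing with $R \to R'$, $r \mapsto r \otimes 1$, yields a ring map $R \to E'/I_{E'}$. Using smoothness of $R$ over $A/J$, after shrinking $\mathrm{Spf}(R)$ I may choose a framing, i.e.\ a $p$-completely étale map $A/J\langle t_1,\dots,t_d\rangle \to R$; let $x_i' \in E'$ be lifts of the images of the $t_i$ in $E'/I_{E'}$. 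Since such a framing exists only Zariski-locally, I would use Lemma \ref{prislift} to pass to a Zariski cover of $\mathrm{Spf}(E')$ on which $R$ is framed and assemble the objects constructed below into a single cover.

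Next I would construct $E$. Let $E'\{w_1,\dots,w_d\}$ be the free $\delta$-algebra over $E'$, and let $(E,I_E)$ be the prismatic envelope of $E'\{w_1,\dots,w_d\}$ along the ideal $(\phi^m(w_1)-x_1',\dots,\phi^m(w_d)-x_d')$, so that $\phi^m(w_i) \equiv x_i' \pmod{I_E}$, with the étale part of $R$ lifted uniquely by Lemma \ref{prislift}. By the theory of prismatic envelopes in \cite{BS19}, $(E,I_E)$ is a bounded prism and $(E',I_{E'}) \to (E,I_E)$ is a map of bounded prisms over $(A,I)$. I then define $\mathrm{Spf}(E/J_E) \to X$ by $t_i \mapsto \overline{w_i}$ (the image of $w_i$ in $E/J_E$) together with the étale extension; this factors through $\mathrm{Spf}(R)$, so $(E,I_E)$ with this structure map is an object of $(X/A)_{m-\prism}$ satisfying condition (*). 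By construction $\phi^m(\overline{w_i}) = x_i'$ in $E/I_E$, so the structure map of $\rho(E,I_E)$ from Construction \ref{conpris}, which sends $t_i \in R$ to $\phi^m(\overline{w_i})$, agrees with $R \to E'/I_{E'} \to E/I_E$. Hence the ring map $E' \to E$ defines a morphism $\rho(E,I_E) \to (E',I_{E'})$ in $(X'/A)_{\prism}$ compatible with the maps to $X'$, i.e.\ a morphism of the form $(E',I_{E'}) \to \rho(E,I_E)$.

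\textbf{The main obstacle} is to show that this morphism is a cover, i.e.\ that $E$ is $(p,I_{E'})$-completely faithfully flat over $E'$ (boundedness being part of the same package). This is exactly where the prismatic-envelope flatness results of \cite{BS19} are needed: the elements $\phi^m(w_i)-x_i'$ form a sequence along which the $\delta$-envelope is $(p,I)$-completely flat, reflecting the fact that the $m$-fold relative Frobenius of affine space is finite flat; faithfulness then follows because $E/(p,I_E)$ is nonzero over $E'/(p,I_{E'})$, the preimages $w_i$ having been adjoined freely. I expect the verification of this $(p,I)$-complete flatness, together with the bookkeeping that reduces a general smooth $R$ to the framed case and assembles the local pieces into one cover, to be the technically delicate part. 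Once flatness is established, $E' \to E$ is a cover and the proof is complete.
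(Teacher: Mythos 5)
Your overall skeleton matches the paper's: reduce to an affine piece of $X'$ of the form $\mathrm{Spf}(R\widehat{\otimes}_{A/J,\phi^m}A/I)$ using Lemma \ref{prislift}, build a prismatic envelope encoding ``$\phi^m$-preimages of the coordinates,'' and win by a faithful-flatness statement for prismatic envelopes. But two steps of your version have genuine gaps, and both are exactly the points the paper's construction is engineered to avoid.

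First, the structure map $\mathrm{Spf}(E/J_E)\to X$. You present $R$ via a $p$-completely \'etale framing $A/J\langle t_1,\dots,t_d\rangle\to R$ and propose to send $t_i\mapsto \overline{w_i}$, with ``the \'etale part of $R$ lifted uniquely by Lemma \ref{prislift}.'' That lemma only lifts \emph{open immersions}, not \'etale maps, so it does not produce the required factorization of $\mathrm{Spf}(E/J_E)\to\mathrm{Spf}(A/J\langle t\rangle)$ through $\mathrm{Spf}(R)$; what you actually know is that the $\phi^m$-twisted point factors through the \emph{twist} $R'$, and transporting this back to $R$ needs a separate argument (topological invariance of the \'etale site along the universal homeomorphism induced by $\phi^m$). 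The paper sidesteps this entirely by using a closed presentation: Tag 00T7 gives a surjection $A[x_1,\dots,x_n]^\wedge\twoheadrightarrow R$ with kernel $(J,y_1,\dots,y_r)$, and then the map $R\to S/J_S$ exists for free once one checks $\phi^m(y_i)\in I_S$, i.e.\ $y_i\in J_S$.

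Second, and more seriously, faithful flatness of $E'\to E$. Your envelope is taken directly over $E'$ along $(I_{E'},\phi^m(w_i)-x_i')$; Proposition 3.13 of \cite{BS19} (via Lemma \ref{lem:reg seq}) will give you $(p,I)$-complete \emph{flatness}, but your justification of \emph{faithfulness} --- ``$E/(p,I_E)$ is nonzero'' --- is not a valid criterion: a flat map with nonzero target need not be faithfully flat (e.g.\ a localization), and the special fibre of a prismatic envelope is not the naive quotient $E'\{w\}/(p,I,w_i^{p^m}-x_i')$ you are implicitly picturing. The paper's proof is structured precisely to make faithfulness automatic: it forms \emph{two} envelopes $S'$ and $S$ over $A\{x_1,\dots,x_n\}^\wedge$ (for the ideals $(I,y_i')$ and $(I,\phi^m(y_i))$ respectively), observes that $S'\to S$, $x_i\mapsto\phi^m(x_i)$, is the base change of the map $A\{x\}^\wedge\to A\{x\}^\wedge$, $x_i\mapsto\phi^m(x_i)$, which is faithfully flat by Lemma 2.11 of \cite{BS19}, and then takes the cover to be the further base change $E'\to E'\widehat{\otimes}_{S'}S$ along $S'\to E'$. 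Your single-envelope construction can be repaired by factoring it the same way (an envelope in variables $v_i$ with $v_i\equiv x_i'$, which admits an $E'$-algebra retraction and is hence faithfully flat, followed by the variable-Frobenius base change $v_i\mapsto\phi^m(w_i)$), but as written the decisive step is asserted rather than proved.
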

\begin{proof}
Let $X = \bigcup_i \text{Spf}(R_i)$ be a finite affine open cover of $X$. By the definition of $X'$ in Construction \ref{conpris}, we see that $\text{Spf}(R_i) \underset{X}{\widehat{\times}} X' = \text{Spf}(R_i \underset{A/J,\phi^m}{\widehat{\otimes}} A/I)$. We denote this formal scheme by $\text{Spf}(R'_i)$.
On the other hand, the condition (*) in Definition \ref{def:m-pris} ensures that the map $\text{Spf}(E'/I_{E'}) \to X'$ factors through some affine open $\text{Spf}(R'') \subseteq X'$. As $X'$ is separated by assumption, we see that  $\text{Spf}(R_i') \cap \text{Spf}(R'') = \text{Spf}(R^*_i)$ for some $R^*_i$. Then the formal scheme $\text{Spf}(R'_i) \widehat{\times}_{X'} \text{Spf}(E'/I_{E'})$ is identified with $\text{Spf}(R_i' \widehat{\otimes}_{R^*_i} E'/I_{E'})$. In particular, it is affine. We denote this affine open subscheme of $\text{Spf}(E'/I_{E'})$ by $\text{Spf}(\overline{E'_i})$. 
By Lemma \ref{prislift}, there is a unique bounded prism $(E'_i, I_{E'_i}) \in (X'/A)_\prism$ for which the corresponding formal scheme $\text{Spf}(E'_i)$ is an affine open formal subscheme of $\text{Spf}(E')$ and lifts $\text{Spf}(\overline{E'_i})$, namely $E'_i/I_{E'_i} = \overline{E'_i}$. By construction, we see that 
\[
(E',I_{E'}) \to \prod_i(E'_i,I_{E'_i})
\]
 is a cover in $(X'/A)_\prism$. To prove the proposition, 
we may replace $(E',I_{E'})$ by $(E'_i,I_{E'_i})$ to assume that the structure morphism $\text{Spf}(E'/I_{E'}) \to X'$ factors through an affine open $\text{Spf}(R') \subseteq X'$ such that $R'$ is of the form $R \underset{A/J,\phi^m}{\widehat{\otimes}} A/I$ for some affine open $\text{Spf}(R) \subseteq X$.

As the map $A/J \to R$ is smooth, Tag 00TA of \cite{Sta} tells us that there exists an open cover of $\text{Spf}(R)$ by standard opens $\text{Spf}(\widehat{R_g})$ such that each $\widehat{R_g}/p$ is standard smooth over $A/(p,J)$.
After refining the given cover of $X$ in the previous paragraph we may assume that $R/p$ is standard smooth over $A/(p,J)$.
By Tag 00T7 of \cite{Sta},  there exists a surjection $A[x_1, \dots, x_n]^\wedge \to R$ whose kernel is the ideal ($J, y_1, \dots, y_r$) such that 
\[
\overline{y_1}, \dots, \overline{y_r} \in A/(p,J)[x_1, \dots, x_n]
\]
 form a regular sequence. 
On the other hand, there is a natural map 
\[
A[x_1, \dots, x_n]^\wedge \to A\{x_1, \dots, x_n\}^\wedge,
\]
 where the symbol \{\} denotes the adjoining of elements in the theory of $\delta$-rings (see Notation 2.8 of \cite{BS19}). By Corollary \ref{cor:reg seq} below about the regular sequence, we see that the sequence 
\[
\phi^m(y_1), \dots, \phi^m(y_r) \in A\{x_1, \dots, x_n\}^\wedge
\]
 is ($p,I$)-completely regular relative to $A$. 
Then we can construct a map 
\[
A\{x_1, \dots, x_n\}^\wedge \to S \stackrel{\mathrm{def}}{=} ((A\{x_1, \dots, x_n\})^\wedge\{ \tfrac{K}{I}\})^\wedge
\]
 to the prismatic envelope by Proposition 3.13 of \cite{BS19}, where $K$ denotes the ideal ($I,\phi^m(y_1), \dots, \phi^m(y_r)) \subseteq A\{x_1, \dots, x_n\}^\wedge$. 
By the construction of the prismatic \linebreak envelope, $\phi^m(y_i) \in I_S$ and so $y_i \in J_S = (\phi^m)^{-1}(I_S)$ for all $i$. This gives a map $R \to S/J_S$.

Next, let $A[x_1, \dots, x_n]^\wedge \to R \underset{A,\phi^m}{\widehat{\otimes}} A$ be the base change of $A[x_1, \dots, x_n]^\wedge \to R$ in \linebreak the previous paragraph along $\phi^m : A \to A$. Passing to the quotient then induces $A[x_1, \dots, x_n]^\wedge \to R'$.
The kernel of this map is the ideal ($I, y'_1, \dots, y'_r$), where the $y_{i}'$'s are the images of the $y_i$'s under the map $A[x_1, \dots, x_n]^\wedge \to A[x_1, \dots, x_n]^\wedge$ sending \linebreak $\sum_{\underline{j}} \alpha_{\underline{j}} x^{\underline{j}}$ to $\sum_{\underline{j}} \phi^m(\alpha_{\underline{j}}) x^{\underline{j}}$ (here we write $\underline{j} := (j_1, \dots, j_n)$ for the multi-index). 
By the definition of $y_i$'s, the sequence $\overline{y_1'}, \dots, \overline{y_r'} \in A/(p,I)[x_1, \dots, x_n]$ is a regular  sequence. \linebreak By Corollary \ref{cor:reg seq} below, the sequence $y_1', \dots, y_r' \in A\{x_1, \dots, x_n\}^\wedge$ is ($p,I$)-completely regular relative to $A$. Then we can construct a map 
\[
A\{x_1, \dots, x_n\}^\wedge \to S' \stackrel{\mathrm{def}}{=} ((A\{x_1, \dots, x_n\})^\wedge\{ \tfrac{K'}{I}\})^\wedge
\]
to the prismatic envelope, where $K'$ denotes the ideal $(I,y'_1, \dots, y'_r) \subseteq A\{x_1, \dots, x_n\}^\wedge$. Then we have a diagram:
\[
\begin{tikzcd}[column sep=0.4cm]
A/I  \arrow[r] & R'&&\\
A \arrow[u, two heads]\arrow[r] &A[x_1, \dots, x_n]^\wedge \arrow[u, two heads]\arrow[r]& A\{x_1, \dots, x_n\}^\wedge \arrow[r] & S' \stackrel{\mathrm{def}}{=} ((A\{x_1, \dots, x_n\})^\wedge \{ \tfrac{K'}{I}\})^\wedge.
\end{tikzcd}
\]
By the definitions of $y_i$ and $y_i'$, we have a map of $\delta$-$A$-algebras $(\phi^m)' : S' \to S$ sending $x_i$ to $\phi^m(x_i)$. By Proposition \ref{prop:mprismff} below, we see that $(\phi^m)'$ is $(p,I)$-completely faithfully flat.

Let ($E',I_{E'}) \in (X'/A)_{\prism}$ be as above. We have a map $f_1 : A[x_1, \dots, x_n]^\wedge \to E'/I_{E'}$ determined as the composition of the map $A[x_1, \dots, x_n]^\wedge \to R'$ in the above diagram with $R' \to E'/I_{E'}$. As $A[x_1, \dots, x_n]^\wedge$ is the completion of a polynomial ring, one can choose a map $f_2 : A[x_1, \dots, x_n]^\wedge \to E'$ lifting $f_1$.
As $E'$ is a $\delta$-$A$-algebra, $f_2$ extends uniquely to a $\delta$-$A$-algebra map $f_3 : A\{x_1, \dots, x_n\}^\wedge \to E'$.
By construction, this extension carries $K'$ into $I_{E'}$. By the universal property of $S'$, $f_3$ extends uniquely to a $\delta$-$A$-algebra map $g : S' \to E'$. 
If we set $h : E' \to E' \widehat{\otimes}_{S'} S$ to be the base change of $(\phi^m)' : S' \to S$ along $g$, then by $(p,I)$-complete faithful flatness of $(\phi^m)'$, we see that the same holds true for $h$.

It remains to check that the map $h$ defines a morphism
\[
(E',I_{E'}) \to \rho(E' \widehat{\otimes}_{S'} S, I_{E' \widehat{\otimes}_{S'} S})
\]
in $(X'/A)_{\prism}$. To see this, it is enough to check that the bottom right square in the following diagram is commutative, in which all the other squares are commutative.
{\fontsize{9pt}{10pt}\selectfont
\[
\begin{tikzcd}[column sep=0.35cm]
&A[x_1, \dots, x_n]^\wedge  \arrow[r]\arrow[d] & S\arrow[r,"i_2"]\arrow[d]&    E' \widehat{\otimes}_{S'} S \arrow[d] \\
A[x_1, \dots, x_n]^\wedge \arrow[r, two heads] \arrow[d,"f_2"] & R' = R  \underset{A/J,\phi^m}{\widehat{\otimes}} A/I \arrow[r]\arrow[d]& S/J_S  \underset{A/J,\phi^m}{\widehat{\otimes}} A/I \arrow[r,"\overline{i_2}"] & E' \widehat{\otimes}_{S'} S/J_{E' \widehat{\otimes}_{S'} S}  \underset{A/J,\phi^m}{\widehat{\otimes}} A/I \arrow[d] \\
E'\arrow[r, two heads]&E'/I_{E'}\arrow[rr,"\overline{h}"]&&E' \widehat{\otimes}_{S'} S/I_{E' \widehat{\otimes}_{S'} S}
\end{tikzcd}
\]
}

\noindent
Here $i_2$ is the map to the second component of the coproduct in Lemma \ref{lm:pushout}. We can check the commutativity of the bottom right square by tracing the elements $x_i$'s:
\[
\begin{tikzcd}
&x_i \arrow[r, mapsto]\arrow[d, mapsto] & x_i\arrow[r, mapsto]&1 \otimes x_i \arrow[d, mapsto] \\
x_i \arrow[r, mapsto] \arrow[d, mapsto] & \overline{x_i} \otimes 1 \arrow[rr, mapsto]&& \overline{1 \otimes x_i} \otimes 1 \arrow[d, mapsto] \\
f_2(x_i)\arrow[r, mapsto]&\overline{f_2(x_i)}\arrow[rr, mapsto]&&\overline{1 \otimes \phi^m(x_i)} = \overline{f_2(x_i) \otimes 1},
\end{tikzcd}
\]
so the proposition follows.
\qed
\end{proof}

We prove the claims which were used in the proof above. We first review several notions that appeared in Section 2.6 in \cite{BS19}. 

A map $A \to B$ of simplicial commutative rings is called flat if $\pi_0(A) \to \pi_0(B)$ is flat and $\pi_i(A) \otimes_{\pi_0(A)} \pi_0(B) \to \pi_i(B)$ is an isomorphism for all $i$. 
This can be checked after the derived base change along $A \to \pi_0(A)$. For a commutative ring $A$, a finitely generated ideal $I = (f_1, \dots,  f_n) \subseteq A$ and an $I$-completely flat commutative $A$-algebra $B$, a sequence $x_1, \dots,  x_r \in B$ is $I$-completely regular relative to $A$ if the map of simplicial commutative rings
\[
A \otimes^L_{\mathbb{Z}[f_1, \dots,  f_n]} \mathbb{Z} \to B \otimes^L_{\mathbb{Z}[f_1, \dots,  f_n, x_1, \dots,  x_r]} \mathbb{Z}
\]
is flat. Here the maps $\mathbb{Z}[f_1, \dots,  f_n] \to \mathbb{Z}$, $\mathbb{Z}[f_1, \dots,  f_n, x_1, \dots,  x_r] \to \mathbb{Z}$ are defined by sending each $f_i, x_i$ to 0.

\begin{lem}
\label{lem:reg seq}
Let $A, I = (f_1, \dots,  f_n) \subseteq A$ and $B$ be as above and let $x_1, \dots,  x_r \in B$ be a sequence of elements satisfying the following conditions:
\begin{enumerate}
\item The images $\overline{x_1}, \dots,  \overline{x_r}$ of $x_1, \dots,  x_r$ in $B/IB$ form a regular sequence.
\item $B/(I, x_1, \dots,  x_r)$ is flat over $A/I$.
\end{enumerate}
Then $x_1, \dots,  x_r$ is $I$-completely regular relative to $A$.
\end{lem}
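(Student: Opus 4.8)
The plan is to unwind the definition and show directly that the map of simplicial commutative rings
\[
A' := A \otimes^{L}_{\mathbb{Z}[f_1, \dots, f_n]} \mathbb{Z} \longrightarrow B' := B \otimes^{L}_{\mathbb{Z}[f_1, \dots, f_n, x_1, \dots, x_r]} \mathbb{Z}
\]
is flat. By the flatness criterion recalled above, this may be checked after the derived base change along $A' \to \pi_0(A')$. Since $\pi_0(A') = A \otimes_{\mathbb{Z}[f_1,\dots,f_n]} \mathbb{Z} = A/I$, and a map out of a discrete ring is flat exactly when its target is discrete and flat over that ring, it suffices to prove that
\[
B' \otimes^{L}_{A'} (A/I)
\]
is concentrated in degree $0$ and flat over $A/I$.

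The main computation is to identify this base change with an honest Koszul complex, and for this I would separate the two groups of variables. Writing $\underline{f} = f_1, \dots, f_n$ and $\underline{x} = x_1, \dots, x_r$, set $\bar{B} := B \otimes^{L}_{\mathbb{Z}[\underline{x}]} \mathbb{Z} = \mathrm{Kos}(B; x_1, \dots, x_r)$, regarded as an $A$-algebra through $A \to B \to \bar{B}$. Because the polynomial generators $\underline{f}$ and $\underline{x}$ are independent, one may kill them in either order, and this gives $B' \simeq \bar{B} \otimes^{L}_{A} A'$ as $A'$-algebras; associativity of the derived tensor product then yields
\[
B' \otimes^{L}_{A'} (A/I) \simeq \bar{B} \otimes^{L}_{A} (A/I).
\]
Since each term of $\mathrm{Kos}(B;\underline{x})$ is a finite free $B$-module and $B$ is $I$-completely flat over $A$ (so that $B \otimes^{L}_{A} A/I \simeq B/IB$, discrete and $A/I$-flat), the right-hand side becomes
\[
\mathrm{Kos}(B; \underline{x}) \otimes_{B} B/IB \;=\; \mathrm{Kos}\bigl(B/IB;\, \overline{x}_1, \dots, \overline{x}_r\bigr).
\]

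It then remains to read off the two hypotheses. By condition (1) the images $\overline{x}_1, \dots, \overline{x}_r$ form a regular sequence in $B/IB$, so $\mathrm{Kos}(B/IB; \overline{x}_1, \dots, \overline{x}_r)$ is acyclic in positive degrees with $H_0 = B/(I, x_1, \dots, x_r)$; hence $B' \otimes^{L}_{A'} (A/I)$ is concentrated in degree $0$. By condition (2) this degree-$0$ term $B/(I, x_1, \dots, x_r)$ is flat over $A/I$. Thus $B' \otimes^{L}_{A'} (A/I)$ is a flat $A/I$-module in degree $0$, and the flatness criterion shows $A' \to B'$ is flat, i.e. $x_1, \dots, x_r$ is $I$-completely regular relative to $A$. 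The step requiring the most care is the identification $B' \otimes^{L}_{A'} (A/I) \simeq \bar{B} \otimes^{L}_{A} (A/I)$: one must check that the two Koszul directions genuinely decouple, so that base change along $A' \to A/I$ undoes only the $\underline{f}$-direction (which is quotiented on both sides) while leaving precisely the $\underline{x}$-Koszul complex over $B/IB$.
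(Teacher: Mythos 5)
Your proposal is correct and follows essentially the same route as the paper's proof: both check flatness after derived base change along $A \otimes^{L}_{\mathbb{Z}[f_1,\dots,f_n]} \mathbb{Z} \to A/I$, both use the decoupling of the $\underline{f}$- and $\underline{x}$-Koszul directions together with the $I$-complete flatness of $B$ to identify the result with $\mathrm{Kos}(B/IB;\overline{x}_1,\dots,\overline{x}_r)$, and both conclude from the regularity of $\overline{x}_1,\dots,\overline{x}_r$ and the flatness of $B/(I,x_1,\dots,x_r)$ over $A/I$. The only difference is organizational: the paper factors the base change into two successive steps, while you rearrange the tensor products in one pass.
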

\begin{proof}
We prove the flatness of the map
\[
A \otimes^L_{\mathbb{Z}[f_1, \dots,  f_n]} \mathbb{Z} \to B \otimes^L_{\mathbb{Z}[f_1, \dots,  f_n, x_1, \dots,  x_r]} \mathbb{Z}.
\]
This can be checked after the derived base change along
\[
A \otimes^L_{\mathbb{Z}[f_1, \dots,  f_n]} \mathbb{Z} \to \pi_0(A \otimes^L_{\mathbb{Z}[f_1, \dots,  f_n]} \mathbb{Z}) = A/I,
\]
which is the composition of the maps
\[
A \otimes^L_{\mathbb{Z}[f_1, \dots,  f_n]} \mathbb{Z} \xrightarrow{f_1} A/I \otimes^L_{\mathbb{Z}[f_1, \dots,  f_n]} \mathbb{Z}  \xrightarrow{f_2} A/I,
\]
where $f_1$ is the map induced by the projection $A \to A/I$ and $f_2$ is the map taking the degree 0 part of the Koszul complex. By taking the base change along $f_1$, we obtain the map
\begin{align*}
A/I \otimes^L_{\mathbb{Z}[f_1, \dots,  f_n]} \mathbb{Z} &\to (A/I \otimes^L_{A} B) \otimes^L_{\mathbb{Z}[f_1, \dots,  f_n, x_1, \dots,  x_r]} \mathbb{Z} \\
&\cong B/IB \otimes^L_{\mathbb{Z}[f_1, \dots,  f_n, x_1, \dots,  x_r]} \mathbb{Z} \\
&\cong (B/IB \otimes^L_{\mathbb{Z}[f_1, \dots,  f_n, x_1, \dots,  x_r]} \mathbb{Z}[x_1, \dots,  x_r]) \otimes^L_{\mathbb{Z}[x_1, \dots,  x_r]}  \mathbb{Z},
\end{align*}
where the first isomorphism follows from the $I$-complete flatness of $B$ over $A$. Then, taking the base change along $f_2$, we obtain the map
\[
A/I \to B/IB \otimes^L_{\mathbb{Z}[x_1, \dots,  x_r]} \mathbb{Z}.
\]
Note that the complex $B/IB \otimes^L_{\mathbb{Z}[x_1, \dots,  x_r]} \mathbb{Z}$ is the Koszul complex corresponding to $B/IB$ and $\overline{x_1}, \dots,  \overline{x_r}$.
As $\overline{x_1}, \dots,  \overline{x_r}$ form a regular sequence in $B/IB$, we see that the \linebreak complex $B/IB \otimes^L_{\mathbb{Z}[x_1, \dots,  x_r]} \mathbb{Z}$ is identified with $B/(I, x_1, \dots,  x_r)$, which is flat over $A/I$ by assumption, as desired.
\qed
\end{proof}

\begin{cor}
\label{cor:reg seq}
With notation as in the proof of Proposition \ref{pr:4}, the sequence \linebreak $\phi^m(y_1), \dots,  \phi^m(y_r) \in A\{x_1, \dots, x_n\}^\wedge$ and the sequence $y_1', \dots,  y_r' \in A\{x_1, \dots, x_n\}^\wedge$ are $(p,I)$-completely regular relative to $A$.
\end{cor}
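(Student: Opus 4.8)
The plan is to verify both statements by a single application of Lemma \ref{lem:reg seq}, taking the finitely generated ideal to be $(p,I)$ and the $(p,I)$-completely flat $A$-algebra to be $B := A\{x_1,\dots,x_n\}^\wedge$ (it is $(p,I)$-completely flat because $A\{x_1,\dots,x_n\}$ is a polynomial $A$-algebra, hence $A$-flat, and $(p,I)$-completion preserves this). Everything reduces to understanding the images of the two sequences in $B/(p,I)B = (A/(p,I))\{x_1,\dots,x_n\}$, which is a polynomial ring over $P := A/(p,I)[x_1,\dots,x_n]$ in the variables $\delta^k(x_i)$ ($k\ge 1$) and is therefore faithfully flat over $P$. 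The one computation I would carry out is the reduction modulo $(p,I)$: since $\phi$ reduces to the Frobenius modulo $p$ in any $\delta$-ring, we have $\phi^m(x_k)\equiv x_k^{p^m} \pmod{p}$, so writing $y_i=\sum_{\underline j}\alpha_{i,\underline j}x^{\underline j}$ one finds $y_i'\equiv \overline{y_i'}\in P$ and $\phi^m(y_i)\equiv \overline{y_i'}(x_1^{p^m},\dots,x_n^{p^m}) \pmod{(p,I)}$, the latter being the substitution $x_k\mapsto x_k^{p^m}$ applied to $\overline{y_i'}$.

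With this in hand I would check condition (1) of Lemma \ref{lem:reg seq} (regularity modulo $(p,I)$) for each sequence. By Proposition \ref{pr:4} the sequence $\overline{y_1'},\dots,\overline{y_r'}$ is regular in $P$; faithful flatness of $P\to B/(p,I)B$ then makes it regular there, which handles $y_1',\dots,y_r'$. For $\phi^m(y_1),\dots,\phi^m(y_r)$, the substitution $x_k\mapsto x_k^{p^m}$ identifies $P$ as a free, hence faithfully flat, module over its subring $P^{[p^m]}:=A/(p,I)[x_1^{p^m},\dots,x_n^{p^m}]$, and under the isomorphism $P\xrightarrow{\sim}P^{[p^m]}$, $x_k\mapsto x_k^{p^m}$, the regular sequence $\overline{y_i'}$ is carried to $\overline{y_i'}(x^{p^m})$; regularity is then preserved along the faithfully flat extensions $P^{[p^m]}\to P\to B/(p,I)B$.

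For condition (2) (flatness of the quotient over $A/(p,I)$) the key input is that $R'/p=P/(\overline{y_1'},\dots,\overline{y_r'})$ is flat over $A/(p,I)$: indeed $R'=R\,\widehat{\otimes}_{A/J,\phi^m}A/I$ is smooth over $A/I$ as a base change of the smooth morphism $A/J\to R$, so $R'/p$ is smooth, hence flat, over $A/(p,I)$. For the sequence $y_i'$ the quotient $B/((p,I),y_1',\dots,y_r')$ is the polynomial ring $(R'/p)[\delta^k(x_i):k\ge 1]$, which is flat over $A/(p,I)$. For the sequence $\phi^m(y_i)$, the elements $\overline{y_i'}(x^{p^m})$ lie in $P^{[p^m]}$, so the quotient equals $B/(p,I)B\otimes_{P^{[p^m]}}\bigl(P^{[p^m]}/(\overline{y_i'}(x^{p^m}))\bigr)$; since $P^{[p^m]}/(\overline{y_i'}(x^{p^m}))\cong R'/p$ is flat over $A/(p,I)$ and $B/(p,I)B$ is flat over $P^{[p^m]}$, this quotient is again flat over $A/(p,I)$. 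Both hypotheses of Lemma \ref{lem:reg seq} are thus met, giving the corollary.

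The only real obstacle is the bookkeeping in the opening computation: correctly reducing $\phi^m(y_i)$ and $y_i'$ modulo $(p,I)$, and then keeping track of which of the ring maps $P^{[p^m]}\to P\to B/(p,I)B$ are faithfully flat so that both regularity and flatness propagate. The conceptual point, namely recognizing $\phi^m(y_i)\bmod p$ as the $p^m$-power substitution (which is exactly where the $\delta$-structure enters) and identifying the resulting quotients with polynomial rings over $R'/p$, is what makes everything go through.
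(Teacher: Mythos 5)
Your argument is correct, and for the sequence $y_1',\dots,y_r'$ it coincides with the paper's: both check the two hypotheses of Lemma \ref{lem:reg seq} using the flatness of $A/(p,I)[x_1,\dots,x_n]\to A/(p,I)\{x_1,\dots,x_n\}$, the regularity of $\overline{y_1'},\dots,\overline{y_r'}$ recorded in the proof of Proposition \ref{pr:4}, and the smoothness of $R'/p$ over $A/(p,I)$. For $\phi^m(y_1),\dots,\phi^m(y_r)$ the two routes genuinely diverge. The paper works with the reduction $\overline{\phi^m(y_i)}=\overline{y_i}^{\,p^m}$: it first transports the regularity of $\overline{y_1},\dots,\overline{y_r}$ and the smoothness of the quotient from $A/(p,J)$ to $A/(p,I)$ by a Jacobian-criterion argument over the nilpotent thickening $A/(p,J^{(p^m)})$, and then appeals, essentially implicitly, to the standard facts that $p^m$-th powers of a regular sequence remain regular and that the quotient by those powers is still flat --- the latter requiring a filtration/d\'evissage that the paper does not spell out. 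You instead write the same element as $\overline{y_i'}(x_1^{p^m},\dots,x_n^{p^m})$ and push the already-established regularity of $\overline{y_1'},\dots,\overline{y_r'}$ through the isomorphism of $P=A/(p,I)[x_1,\dots,x_n]$ onto your subring $P^{[p^m]}$, followed by the free (hence faithfully flat) extensions $P^{[p^m]}\to P\to A/(p,I)\{x_1,\dots,x_n\}$; the flatness of the quotient then falls out of the base-change identification of $B/((p,I),\phi^m(y_1),\dots,\phi^m(y_r))$ with $B/(p,I)B\otimes_{P^{[p^m]}}(R'/p)$. Your version buys a completely explicit verification of hypothesis (2) of Lemma \ref{lem:reg seq} for the Frobenius-twisted sequence, derives the second half of the corollary directly from the first, and avoids both the detour through $J^{(p^m)}$ and the unstated power-of-a-regular-sequence lemma, at the cost of introducing the subring of $p^m$-th powers of the variables; the paper's version stays closer to the presentation of $R$ itself and to the sequence $y_1,\dots,y_r$ actually used to build the prismatic envelope $S$.
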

\begin{proof}
We first treat the case of the sequence $y_1', \dots,  y_r'$. Note that the map 
\[
A/(p,I)[x_1, \dots, x_n] \to A/(p,I)\{x_1, \dots, x_n\}
\]
 is flat. Recall that $\overline{y_1'}, \dots,  \overline{y_r'} \in A/(p,I)[x_1, \dots, x_n]$
 form a regular sequence and the ring $A[x_1, \dots, x_n]/(p,I,y_1', \dots,  y_r') = R'/pR'$ is smooth (and hence flat) over $A/(p,I)$, so the claim follows from Lemma \ref{lem:reg seq}.

Next we prove the claim for the sequence $\phi^m(y_1), \dots,  \phi^m(y_r)$. Recall that the sequence
\[
\overline{y_1}, \dots,  \overline{y_r} \in A/(p,J)[x_1, \dots, x_n]
\]
is a regular sequence and $A[x_1, \dots, x_n]/(p,J,y_1, \dots,y_r) = R/p$ is smooth over $A/(p,J)$. 
If $J^{(p^m)}$ denotes the ideal generated by $x^{p^m}$ for all elements $x$ of $J$, then we have $(p,J^{(p^m)}) \subseteq (p,I)$.
 Since $(p,J)$ is a nil ideal in  $A/(p,J^{(p^m)})[x_1, \dots, x_n]$,  Jacobian criterion for smoothness implies that \[\overline{y_1}, \dots,  \overline{y_r} \in A/(p,J^{(p^m)})[x_1, \dots, x_n]\] form a  regular  sequence and that the quotient
$A[x_1, \dots, x_n]/(p,J^{(p^m)},y_1, \dots,y_r)$
 is smooth over $A/(p,J^{(p^m)})$. 
Then it implies that $\overline{y_1}, \dots,  \overline{y_r} \in A/(p,I)[x_1, \dots, x_n]$ form a  regular sequence and that the quotient $A[x_1, \dots, x_n]/(p,I,y_1, \dots,y_r)$ is smooth over $A/(p,I)$. 
As $\overline{\phi^m(y_i)} = \overline{y_i}^{p^m}$ in $A/(p,I)\{x_1, \dots, x_n\}$, we see that 
\[
\overline{\phi^m(y_1)}, \dots,  \overline{\phi^m(y_r)} \in A/(p,I)\{x_1, \dots, x_n\}
\]
 form a regular sequence and that $A\{x_1, \dots, x_n\}/(p,I,\phi^m(y_1), \dots,  \phi^m(y_r))$
 is flat over $A/(p,I)$. By Lemma \ref{lem:reg seq}, we conclude that the sequence 
\[
\phi^m(y_1), \dots,  \phi^m(y_r) \in A\{x_1, \dots, x_n\}^\wedge
\]
 is $(p,I)$-completely regular relative to $A$.
\qed
\end{proof}

\begin{prop}
\label{prop:mprismff}
With notation as in the proof of Proposition \ref{pr:4}, the map 
\[
(\phi^m)' : S' \to S
\]
 is $(p,I)$-completely faithfully flat.
\end{prop}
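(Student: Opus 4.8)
The plan is to reduce the statement to the faithful flatness of a single Frobenius-type endomorphism of the $\delta$-polynomial ring. Write $P := A\{x_1, \dots, x_n\}^\wedge$ and let $\psi : P \to P$ be the $\delta$-$A$-algebra endomorphism determined by $\psi(x_i) = \phi^m(x_i)$, so that $\psi$ fixes $A$ and is $\delta$-equivariant. First I would record that $(\phi^m)'$ is induced by $\psi$: writing each $y_i = \sum_{\underline j} \alpha_{\underline j} x^{\underline j}$ with $\alpha_{\underline j} \in A$, one computes $\psi(y_i') = \sum_{\underline j} \phi^m(\alpha_{\underline j}) \prod_k \phi^m(x_k)^{j_k} = \phi^m(y_i)$, so $\psi$ carries the generators of $K'$ to those of $K$ and hence sends the ideal $K'P$ onto $K$. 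Granting (from the second step) that $\psi$ is flat, the prismatic envelope is compatible with base change along $\psi$, giving $S \cong S' \widehat{\otimes}_{P, \psi} P$, under which identification $(\phi^m)'$ becomes the base change of $\psi$ along the structure map $P \to S'$. Since $(p,I)$-complete faithful flatness is stable under base change, it then suffices to prove that $\psi : P \to P$ is $(p,I)$-completely faithfully flat.

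For that second step I would reduce modulo $(p,I)$. Since $A$ is a bounded prism, $P$ is $(p,I)$-completely flat (it is the $(p,I)$-completion of the polynomial ring $A[\delta^j(x_i) : i, j \ge 0]$), so $(p,I)$-complete faithful flatness of $\psi$ may be tested after applying $- \otimes^L_P P/(p,I)$, i.e.\ it suffices to show that the reduction $\overline{\psi}$ of $\psi$ is faithfully flat. Now $\overline{A} := A/(p,I)$ is an $\mathbf{F}_p$-algebra and $\overline{P} = \overline{A}[\delta^j(x_i) : i,j]$ is a polynomial ring over $\overline{A}$, on which the Frobenius $\phi$ of the $\delta$-structure coincides with the $p$-power map. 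Using that $\overline{\psi}$ is $\delta$-equivariant together with $\overline{\psi}(x_i) = \overline{x_i}^{p^m}$, a one-line computation gives
\[
\overline{\psi}(\delta^j(x_i)) = \delta^j(\overline{x_i}^{p^m}) = \delta^j(\phi^m(\overline{x_i})) = \phi^m(\delta^j(\overline{x_i})) = (\delta^j(\overline{x_i}))^{p^m}
\]
for all $i, j$. Thus $\overline{\psi}$ fixes $\overline{A}$ and sends each polynomial generator to its $p^m$-th power. Such a map is injective and exhibits $\overline{P}$ as a free module over the source (via $\overline{\psi}$) with basis the monomials $\prod_{i,j} (\delta^j(x_i))^{a_{i,j}}$ with $0 \le a_{i,j} < p^m$, so it is faithfully flat, as required.

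The main obstacle I anticipate is the first step: justifying the identification $S \cong S' \widehat{\otimes}_{P, \psi} P$ and that $(\phi^m)'$ is the corresponding base-change map. This rests on the compatibility of the prismatic envelope of Proposition 3.13 of \cite{BS19} with flat base change along $\psi$, which follows from its universal property together with the flatness of $\psi$; this is exactly why the flatness of $\psi$ must be established \emph{independently} (as in the second step) rather than deduced from properties of $S$ and $S'$, so there is no circularity. The remaining points — that $\psi(K')P = K$, that complete faithful flatness descends to the mod-$(p,I)$ reduction over the bounded prism $A$, and that the $p^m$-power substitution is faithfully flat in the completed, countably-many-variables setting — are routine once the base-change identification is in place.
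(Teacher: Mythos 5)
Your proof is correct and follows essentially the same route as the paper: both reduce $(\phi^m)'$ to the $(p,I)$-completed base change of the $\delta$-$A$-algebra endomorphism $x_i \mapsto \phi^m(x_i)$ of $A\{x_1,\dots,x_n\}^\wedge$ and then establish the faithful flatness of that endomorphism. The only difference is that the paper simply cites Lemma 2.11 of \cite{BS19} for the faithful flatness of this Frobenius twist on the free $\delta$-ring, whereas you reprove it by reducing modulo $(p,I)$ and exhibiting an explicit monomial basis.
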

\begin{proof}
Since the map $(\phi^m)'$ is the derived $(p,I)$-completion of the base change of the $\delta$-$A$-algebra map \[
(\phi^m)'' : A\{x_1, \dots, x_n\}^\wedge \to A\{x_1, \dots, x_n\}^\wedge, \ \ x_i \mapsto \phi^m(x_i),
\]
it suffices to prove that $(\phi^m)''$ is $(p,I)$-completely faithfully flat. By Lemma 2.11 of \cite{BS19}, the $\delta$-$A$-algebra map
\[
(\phi^m)''' : A\{x_1, \dots, x_n\} \to A\{x_1, \dots, x_n\}, \ \ x_i \mapsto \phi^m(x_i)
\]
is faithfully flat. As the map $(\phi^m)''$ is the derived $(p,I)$-completion of the base change of $(\phi^m)'''$ along $A\{x_1, \dots, x_n\} \to A\{x_1, \dots, x_n\}^\wedge$, we conclude that $(\phi^m)''$ is $(p,I)$-completely faithfully flat.
\qed
\end{proof}

We are now prepared to prove the equivalence of topoi. First, we note that the functor of sites $\rho:(X/A)_{m-\prism} \to (X'/A)_{\prism}$ in Construction \ref{conpris} induces a functor between the categories of presheaves of sets
\[
\widehat{\rho}^* : \widehat{(X'/A)}_{\prism} \to \widehat{(X/A)}_{m-\prism} \ \ \ \ \ \ \mathscr{G} \mapsto \mathscr{G} \circ \rho,
\] 
and it admits a right adjoint
\[
\widehat{\rho}_* :  \widehat{(X/A)}_{m-\prism} \to \widehat{(X'/A)}_{\prism}.
\] 
By Proposition \ref{pr:2} and \ref{pr:3}, we obtain a morphism of topoi
\[
\mathrm{C} : \widetilde{(X/A)}_{m-\prism} \to \widetilde{(X'/A)}_{\prism} \ \ \ \ \ \ \mathrm{C}^* = \widehat{\rho}^*, \mathrm{C}_* = \widehat{\rho}_*.
\] 

\begin{thm}
\label{thpristop}
The morphism $\mathrm{C} : \widetilde{(X/A)}_{m-\prism} \to \widetilde{(X'/A)}_{\prism}$ is an equivalence of topoi.
\end{thm}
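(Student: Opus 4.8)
The plan is to prove that the adjunction $\mathrm{C}^* \dashv \mathrm{C}_*$ is an adjoint equivalence; since $\mathrm{C}$ is already a morphism of topoi, this amounts to checking that the unit $\eta : \mathrm{id} \to \mathrm{C}_*\mathrm{C}^*$ and the counit $\epsilon : \mathrm{C}^*\mathrm{C}_* \to \mathrm{id}$ are isomorphisms. The key simplification is that $\mathrm{C}^* = \widehat{\rho}^*$ and $\mathrm{C}_* = \widehat{\rho}_*$ are the restrictions to sheaves of the presheaf-level precomposition $\widehat{\rho}^*$ and of its right adjoint $\widehat{\rho}_*$ (the right Kan extension ${}_p\rho$ along $\rho$); by Propositions \ref{pr:2} and \ref{pr:3} these both preserve sheaves, so $\eta$ and $\epsilon$ are obtained by restricting the corresponding presheaf-level transformations, and I may compute everything at the presheaf level first.

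First I would dispose of the counit. Since $\rho$ is fully faithful by Proposition \ref{pr:1}, the right Kan extension $\widehat{\rho}_* = {}_p\rho$ is fully faithful as a functor on presheaves; equivalently, for every presheaf $\mathscr{F}$ on $(X/A)_{m-\prism}$ and every object $U$ the counit map $(\widehat{\rho}^*\widehat{\rho}_*\mathscr{F})(U) \to \mathscr{F}(U)$ is a bijection. Hence $\widehat{\rho}^*\widehat{\rho}_* \cong \mathrm{id}$ on presheaves, and restricting to sheaves shows that $\epsilon$ is an isomorphism.

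The substantive step is the unit $\eta : \mathrm{id} \to \widehat{\rho}_*\widehat{\rho}^*$, where Proposition \ref{pr:4} enters. For any presheaf $\mathscr{K}$ on $(X'/A)_{\prism}$ one has $\mathscr{K}(\rho(U)) = (\widehat{\rho}^*\mathscr{K})(U)$ by definition of $\widehat{\rho}^*$; applying this to $\mathscr{K} = \widehat{\rho}_*\widehat{\rho}^*\mathscr{G}$ and combining the counit isomorphism $\widehat{\rho}^*\widehat{\rho}_* \cong \mathrm{id}$ with the triangle identities shows that, for every sheaf $\mathscr{G}$ on $(X'/A)_{\prism}$, the morphism $\eta_{\mathscr{G}}$ induces an isomorphism on sections over every object of the form $\rho(U)$. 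By Proposition \ref{pr:4} the objects $\rho(U)$, for $U \in (X/A)_{m-\prism}$, form a topologically generating family of $(X'/A)_{\prism}$: every object admits a cover by such an object. Now a morphism of sheaves that induces an isomorphism on sections over a topologically generating family is itself an isomorphism --- injectivity at an arbitrary object follows from separatedness after covering by generators, and local surjectivity then follows by lifting and gluing, the relevant fibre products being available by (the $m=0$ case of) Lemma \ref{lm:pushout}. Therefore $\eta_{\mathscr{G}}$ is an isomorphism of sheaves, and $\mathrm{C}$ is an equivalence of topoi.

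I expect the main obstacle to lie entirely in the unit: one must pass from the purely formal isomorphism on the essential image of $\rho$ to an isomorphism over all of $(X'/A)_{\prism}$, and this passage is exactly what the covering density of Proposition \ref{pr:4} provides. All the genuinely prism-specific content --- the construction of the prismatic envelope and the faithful flatness of $\phi^m$ furnished by Proposition \ref{prop:mprismff} and Corollary \ref{cor:reg seq} --- has already been absorbed into Proposition \ref{pr:4}, so the present theorem is a formal consequence of Propositions \ref{pr:1}--\ref{pr:4}. Equivalently, one may organize the whole argument by observing that Propositions \ref{pr:1}--\ref{pr:4} are precisely the hypotheses making $\rho$ a special cocontinuous functor, and then invoke the general equivalence criterion for such functors, following the proof of Theorem 9.2 of \cite{Xu19}.
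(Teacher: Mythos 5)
Your proposal is correct and ultimately takes the same route as the paper: the paper's proof of Theorem \ref{thpristop} consists precisely of invoking the general criterion for (special) cocontinuous functors (Proposition 4.2.1 of \cite{Oya17}, Proposition 9.10 of \cite{Xu19}) whose hypotheses are Propositions \ref{pr:1}--\ref{pr:4}, which is exactly your closing paragraph. The explicit unit/counit unwinding you give beforehand is a sound proof of that criterion in this setting (with the fibre-product point handled as in Proposition \ref{pr:7}), but it is not needed beyond what the citation already provides.
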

\begin{proof}
By Proposition 4.2.1 of \cite{Oya17} (see also Proposition 9.10 of \cite{Xu19}), we are reduced to check the conditions which were proved in Propositions \ref{pr:1}, \ref{pr:2}, \ref{pr:3}, \ref{pr:4}.
\qed
\end{proof}


We want to prove that C induces an equivalence between the categories of crystals. 
First, we define a suitable category of crystals with some technical conditions so that our argument works.

\begin{defi}
\label{def:mod}
Let $(E,I_E)$ be a bounded prism.
\begin{enumerate}
\item 
Let $\moda{E}{I_E}$ be the category of $E$-modules $M$ such that, for any map of bounded prisms 
$
(E,I_E) \to (E_0,I_{E_0})
$
 and  any faithfully flat map  of bounded prisms
$
(E_0,I_{E_0}) \to (E'_0,I_{E'_0}),
$  the sequence
\[ 0 \to M \widehat{\otimes}_E E_0 \to M \widehat{\otimes}_E E'_0 \to M \widehat{\otimes}_E (E'_0\widehat{\otimes}_{E_0}E'_0) \]
is exact, where the completion is the classical $(p,I_{E_0})$-completion.
\item
Let $\{ \modb{E}{I_E} \subseteq \moda{E}{I_E} \}_{(E,I_E)}$ be the largest family of full subcategories such that, for any $M \in \modb{E}{I_E}$, any map $(E,I_E) \to (E'_0,I_{E'_0})$ of bounded prisms   and any faithfully flat map  of bounded prisms $(E_0,I_{E_0}) \to (E'_0,I_{E'_0})$, any descent datum $\epsilon$ on    $M \widehat{\otimes}_E E'_0$ (i.e., an isomorphism 
\[ (E'_0\widehat{\otimes}_{E_0}E'_0) \widehat{\otimes}_{E'_0}(M\widehat{\otimes}_E E'_0) \cong (M\widehat{\otimes}_E E'_0) \widehat{\otimes}_{E'_0}(E'_0\widehat{\otimes}_{E_0} E'_0) \]
satisfying the cocycle condition on $E'_0\widehat{\otimes}_{E_0}E'_0\widehat{\otimes}_{E_0}E'_0$)  descends uniquely to an $E_0$-module $M_0 \in \modb{E_0}{I_{E_0}}$.
\end{enumerate}
\end{defi} 

We will need the following variant of Definition \ref{def:mod}; in this variant, the ring $E$ does not necessarily admit a $\delta$-structure, and we assume that the ideal $I_E$ above is equal to $(p)$.

\begin{defi}
Let $E$ be a $p$-torsion free $p$-complete ring.
\begin{enumerate}
\item 
Let $\modc{E}$ be the category of $E$-modules $M$ such that, for any map $E \to E_0$ of $p$-torsion free $p$-complete rings and any $p$-completely faithfully flat map $E_0 \to E'_0$ of $p$-torsion free $p$-complete rings, the sequence
\[ 0 \to M \widehat{\otimes}_E E_0 \to M \widehat{\otimes}_E E'_0 \to M \widehat{\otimes}_E (E'_0\widehat{\otimes}_{E_0}E'_0) \]
is exact, where the completion is the classical $p$-completion.
\item
Let $\{ \modd{E} \subseteq \modc{E} \}_E$ be the largest family of full subcategories such that, for any $M \in \modd{E}$, any map $E \to E'_0$ of $p$-torsion free $p$-complete rings and any $p$-completely faithfully flat map $E_0 \to E'_0$ of $p$-torsion free $p$-complete rings, any descent datum $\epsilon$ on $M \widehat{\otimes}_E E'_0$ (i.e., an isomorphism
\[ (E'_0\widehat{\otimes}_{E_0}E'_0) \widehat{\otimes}_{E'_0}(M\widehat{\otimes}_E E'_0) \cong (M\widehat{\otimes}_E E'_0) \widehat{\otimes}_{E'_0}(E'_0\widehat{\otimes}_{E_0} E'_0) \]
satisfying the cocycle condition on $E'_0\widehat{\otimes}_{E_0}E'_0\widehat{\otimes}_{E_0}E'_0$)  descends uniquely to an $E_0$-module $M_0 \in \modd{E_0}$.
\end{enumerate}
\end{defi}

The categories introduced above have the following properties.
\begin{prop}
\label{pr:tensor}
\begin{enumerate}
\item For any $M \in \moda{E}{I_E}$ and any map $(E,I_E) \to (E',I_{E'})$ of bounded prisms, $M \widehat{\otimes}_E E'$ belongs to $\moda{E'}{I_{E'}}$. The analogous property holds true for the other categories $\modb{E}{I_E}$, $\modc{E}$ and $\modd{E}$.
\item Let $f$ : $M \to M'$ be a morphism in $\moda{E}{I_E}$ and let $(E,I_E) \to (E',I_{E'})$ be a faithfully flat map of bounded prisms. Assume that $f$ induces an isomorphism \[
M \widehat{\otimes}_E E' \simeq M' \widehat{\otimes}_E E'.
\] Then $f$ is also an isomorphism. The analogous property holds ture for $\modc{E}$.
\end{enumerate}
\end{prop}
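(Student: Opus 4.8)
The plan is to prove the two assertions separately, and in each to reduce the statements for $\modb{E}{I_E}$ (resp.\ $\modd{E}$) to those for $\moda{E}{I_E}$ (resp.\ $\modc{E}$), since the only extra input carried by the former is the descent-closure condition defining them. The recurring technical tool will be transitivity of the classical completed tensor product: for a chain of maps of bounded prisms $(E,I_E) \to (E',I_{E'}) \to (E_0,I_{E_0})$ and $N := M \widehat{\otimes}_E E'$, one has canonical identifications $N \widehat{\otimes}_{E'} E_0 \cong M \widehat{\otimes}_E E_0$, and likewise $N \widehat{\otimes}_{E'} E'_0 \cong M \widehat{\otimes}_E E'_0$ and $N \widehat{\otimes}_{E'} (E'_0 \widehat{\otimes}_{E_0} E'_0) \cong M \widehat{\otimes}_E (E'_0 \widehat{\otimes}_{E_0} E'_0)$, all completions being the relevant $(p,I_{E_0})$-completions (using $I_{E_0} = I E_0$).

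For part (1) with $M \in \moda{E}{I_E}$ and a map $(E,I_E) \to (E',I_{E'})$, I set $N := M \widehat{\otimes}_E E'$ and verify the defining exactness for $N$. Given any further map $(E',I_{E'}) \to (E_0,I_{E_0})$ and any cover $(E_0,I_{E_0}) \to (E'_0,I_{E'_0})$, the transitivity isomorphisms identify the sequence to be checked for $N$ with the sequence attached to the composite $(E,I_E) \to (E_0,I_{E_0})$ and the cover $(E_0,I_{E_0}) \to (E'_0,I_{E'_0})$; exactness then holds by the hypothesis $M \in \moda{E}{I_E}$. The case of $\modc{E}$ is identical, with $p$-torsion free $p$-complete rings in place of bounded prisms and $p$-completion in place of $(p,I)$-completion.

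For part (1) with $M \in \modb{E}{I_E}$, I argue by the maximality defining the family $\{\modb{E}{I_E}\}_{(E,I_E)}$. Enlarge it to the family $\mathcal{F}^+$ whose value at $(E',I_{E'})$ consists of those $N \in \moda{E'}{I_{E'}}$ of the form $M \widehat{\otimes}_{E''} E'$ for some map $(E'',I_{E''}) \to (E',I_{E'})$ and some $M \in \modb{E''}{I_{E''}}$; taking the identity map shows $\modb{E'}{I_{E'}} \subseteq \mathcal{F}^+(E',I_{E'})$, and the $\moda{E}{I_E}$ case of part (1) guarantees $\mathcal{F}^+$ takes values in the ambient categories $\moda{E'}{I_{E'}}$. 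The crux is that $\mathcal{F}^+$ again satisfies the descent-closure condition: given such an $N$ and a descent datum on $N \widehat{\otimes}_{E'} E'_0$ relative to a cover $(E_0,I_{E_0}) \to (E'_0,I_{E'_0})$, transitivity identifies $N \widehat{\otimes}_{E'} E'_0 \cong M \widehat{\otimes}_{E''} E'_0$ compatibly with the $E'_0 \widehat{\otimes}_{E_0} E'_0$-structure, hence with the descent datum, and the defining property of $M \in \modb{E''}{I_{E''}}$ applied to the composite $(E'',I_{E''}) \to (E'_0,I_{E'_0})$ yields a unique descent to some $M_0 \in \modb{E_0}{I_{E_0}} \subseteq \mathcal{F}^+(E_0,I_{E_0})$. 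Thus $\mathcal{F}^+$ satisfies the closure condition, so by maximality $\mathcal{F}^+(E',I_{E'}) \subseteq \modb{E'}{I_{E'}}$ for all $(E',I_{E'})$, forcing $N \in \modb{E'}{I_{E'}}$. The $\modd{E}$ case is the same.

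For part (2), I exploit the descent sequences directly. Taking $E_0 = E$ (the identity) and $E'_0 = E'$ in the definition, and using that the modules are $(p,I_E)$-complete so that $M \widehat{\otimes}_E E = M$, realizes $M$ and $M'$ as kernels in the exact sequences
\[
0 \to M \to M \widehat{\otimes}_E E' \to M \widehat{\otimes}_E (E' \widehat{\otimes}_E E'), \qquad 0 \to M' \to M' \widehat{\otimes}_E E' \to M' \widehat{\otimes}_E (E' \widehat{\otimes}_E E').
\]
The morphism $f$ induces a map between these two rows. The middle vertical arrow $f \widehat{\otimes}_E E'$ is an isomorphism by hypothesis, and the right vertical arrow $f \widehat{\otimes}_E (E' \widehat{\otimes}_E E')$ is the base change of $f \widehat{\otimes}_E E'$ along $E' \to E' \widehat{\otimes}_E E'$, hence also an isomorphism. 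The five lemma (equivalently, functoriality of kernels) then shows the left vertical arrow $f$ is an isomorphism; the $\modc{E}$ case is verbatim with $p$-completion. The main obstacle throughout is the bookkeeping in the maximality argument of the third paragraph — ensuring the enlarged family $\mathcal{F}^+$ still lands in the ambient $\moda{}{}$-categories and that descended modules remain inside it — together with the careful justification of the transitivity isomorphisms for the classical completed tensor product.
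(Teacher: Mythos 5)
Your proof is correct and follows essentially the same route as the paper: part (1) via transitivity of the completed tensor product (which is all the paper means by ``follows immediately from the definition''), and part (2) via the same comparison of the two equalizer sequences for the cover $(E,I_E)\to(E',I_{E'})$ with the five lemma. Your maximality argument for the $\modb{E}{I_E}$ and $\modd{E}$ cases of part (1) is a useful elaboration of a point the paper leaves implicit, but it is the natural reading of the definition rather than a different method.
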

\begin{proof}
Part (1) follows immediately from the definition of $\moda{E}{I_E}$. For part (2), we consider the map of exact sequences:
\[
\begin{tikzcd}
0 \arrow[r] & M\arrow[r]\arrow[d,"f"] & M \widehat{\otimes}_E E'\arrow[d,"f \widehat{\otimes} \text{id}"]\arrow[r,shift left]\arrow[r,shift right] & M \widehat{\otimes}_E (E' \widehat{\otimes}_E E')\arrow[d,"f \widehat{\otimes} (\text{id}\widehat{\otimes}\text{id})"]\\
0 \arrow[r] &M'\arrow[r] & M' \widehat{\otimes}_E E'\arrow[r,shift left]\arrow[r,shift right] & M' \widehat{\otimes}_E (E' \widehat{\otimes}_E E').
\end{tikzcd}
\]
The map $f \widehat{\otimes} \text{id}$ is an isomorphism by assumption. It follows that $f \widehat{\otimes} (\text{id}\widehat{\otimes}\text{id})$ is also an isomorphism. These imply that $f$ is an isomorphism.
\qed
\end{proof}

\begin{rem}
\begin{enumerate}
\item Let $\modfp{E}$ be  the category of finite projective $E$-modules. Also, let $\modpntors{E}$ be the category of
\[
\begin{cases}
(p,I_E)^n\text{-torsion $E$-modules}  & (\text{when }(E,I_E)\text{ is a bounded prism})  \\
p^n\text{-torsion $E$-modules}  & (\text{when }E\text{ is a $p$-torsion free $p$-complete ring}) 
\end{cases}
\]
and let $\modtors{E}$ be $\bigcup_{n} \modpntors{E}$.
Then we have an inclusion
\[
\modfp{E} \subseteq \modb{E}{I_E}, \ \modd{E}
\]
 by Proposition A.12 of \cite{AB19}. We also have an inclusion
\[
\modtors{E} \subseteq \modb{E}{I_E}, \  \modd{E}
\]
by the usual descent argument.
\item  If $E$ is a $p$-torsion free $p$-complete $\delta$-ring, then we have an inclusion 
\[ 
\modc{E} \subseteq \moda{E}{pE}
\]
by the definition. The same holds true for $\mathcal{M}(E)$ and $\mathcal{M}_{\prism}(E,pE)$. 
\end{enumerate}
\end{rem}

\noindent
We shall simply write $\modaa{E}$ (resp. $\modbb{E}$)  for $\moda{E}{I_E}$ (resp. $\modb{E}{I_E}$).

Next, we define the category of crystals with respect to the categories of modules $\modbb{E}$ in Definition \ref{def:mod}.

\begin{defi}
\label{mcrys}
Let $\mathscr{C}_{\prism}((X/A)_{m-\prism})$ (resp. $\mathscr{C}^{\mathrm{fp}}((X/A)_{m-\prism}),
\mathscr{C}^{\mathrm{tors}}((X/A)_{m-\prism})$) be the category of abelian presheaves $\mathscr{F}$ on $(X/A)_{m-\prism}$ such that, for any object \linebreak $(E,I_E)$  in $(X/A)_{m-\prism}$, $\mathscr{F}(E,I_E) \in \modbb{E}$ (resp. $\modfp{E},\modtors{E}$), and for any \linebreak morphism $(E,I_E) \to (E_1,I_{E_1})$ in $(X/A)_{m-\prism}$, the map $\mathscr{F}(E,I_E)  \to \mathscr{F}(E_1,I_{E_1})$ is compatible with the module structures in the usual sense and
the canonical map $\mathscr{F}(E,I_E) \widehat{\otimes}_E E_1 \to \mathscr{F}(E_1,I_{E_1})$ is an isomorphism of $E_1$-modules. 
This condition means that $\mathscr{C}_{\prism}((X/A)_{m-\prism})$ (resp. $\mathscr{C}^{\mathrm{fp}}((X/A)_{m-\prism}), \mathscr{C}^{\mathrm{tors}}((X/A)_{m-\prism})$) is the category of crystals with respect to $\mathcal{M}_{\prism}$ (resp. $\mathcal{M}^{\rm fp},\mathcal{M}^{\rm tors}$).
\end{defi}

\begin{rem}
\label{remmcrys}
\begin{enumerate}
\item Presheaves $\mathscr{F}$ in the above definition are automatically sheaves by Definition \ref{def:mod}.1.
\item One can prove that the category $\mathscr{C}_{\prism}((X/A)_{m-\prism})$ is unchanged even if we do not impose the condition (*) in Definition \ref{def:m-pris}.
\end{enumerate}
\end{rem}

In order to prove that the morphism C induces an equivalence between the \linebreak categories of crystals, we will use the following propositions. We follow the proof of Theorem 9.12 of \cite{Xu19}.

\begin{prop}
\label{pr:5}
{\rm (cf. \cite{Xu19} 9.5)} Let $(E,I_E)$ be an object of $(X/A)_{m-\prism}$ and let \linebreak $g : \rho(E,I_E) \to (E',I_{E'})$ be a morphism in $(X'/A)_{\prism}$. Then there exist an object $(E_1,I_{E_1})$ of $(X/A)_{m-\prism}$ and a morphism $f : (E,I_E) \to (E_1,I_{E_1})$ in $(X/A)_{m-\prism}$ such that  $g = \rho(f)$ $($So $(E_1,I_{E_1}) = (E',I_{E'})$ as a bounded prism$)$. 

If $g$ is a cover, then so is $f$.
\end{prop}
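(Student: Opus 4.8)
The plan is to construct the desired lift $(E_1, I_{E_1})$ directly, by taking $E_1 := E'$ as a bounded prism and building the required map $\mathrm{Spf}(E'/J_{E'}) \to X$ so that $\rho$ applied to the resulting object recovers $(E', I_{E'})$ together with its map to $X'$, and so that $f$ is precisely $g$ as a map of prisms. Since $\rho$ does not change the underlying prism (it only replaces $E/J_E$ by $E/I_E$ in the structure map), the only genuine content is to produce the structure map over $X$, i.e. a factorization $\mathrm{Spf}(E'/J_{E'}) \to X$ compatible with the one on $\mathrm{Spf}(E/J_E)$ via $g$, and then to verify that $\rho$ of this object equals $(E', I_{E'})$.

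First I would use the condition (*) in Definition~\ref{def:m-pris}: the structure map $\mathrm{Spf}(E/J_E) \to X$ factors through some affine open $\mathrm{Spf}(R) \subseteq X$, giving a ring map $R \to E/J_E$. Composing with the reduction $E/J_E \to E'/J_{E'}$ induced by the map of prisms $g$ (note $g$ carries $I_E$ into $I_{E'}$, hence $J_E = (\phi^m)^{-1}(I_E)$ into $J_{E'} = (\phi^m)^{-1}(I_{E'})$, so $g$ descends to $E/J_E \to E'/J_{E'}$), I obtain a ring map $R \to E'/J_{E'}$, i.e. a map $\mathrm{Spf}(E'/J_{E'}) \to \mathrm{Spf}(R) \hookrightarrow X$ factoring through an affine open as required. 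This defines the object $(E_1, I_{E_1}) := (E', I_{E'})$ of $(X/A)_{m-\prism}$, and by construction $g$ itself is a morphism $f : (E, I_E) \to (E_1, I_{E_1})$ in $(X/A)_{m-\prism}$.

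The key verification is then that $\rho(f) = g$, i.e. that the map $\mathrm{Spf}(E'/I_{E'}) \to X'$ produced by $\rho$ from the structure map $R \to E'/J_{E'}$ just constructed agrees with the map recorded by the target object $(E', I_{E'})$ of the given $g$. Here I would trace through the definition of the first map $g$ in Construction~\ref{conpris}, namely $e \otimes a \mapsto \phi^m(e) a$, and use that the composite $R \to E/J_E \xrightarrow{\phi^m} E/I_E$ followed by $g$ agrees with $R \to E'/J_{E'} \xrightarrow{\phi^m} E'/I_{E'}$ — this is exactly the compatibility of $\phi^m$ with the map of prisms $g$, together with the fact that $\rho(E, I_E) \to (E', I_{E'})$ was a morphism in $(X'/A)_{\prism}$, which already encodes that its structure map to $X'$ is compatible over $X'$. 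Chasing the diagram of squares (the commuting squares built from $R$, $E/J_E$, $E/I_E$, $E'/J_{E'}$, $E'/I_{E'}$ linked by $\phi^m$ and by $g$) shows the two maps $\mathrm{Spf}(E'/I_{E'}) \to X'$ coincide, so $\rho(f) = g$.

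Finally, the cover statement is immediate from Proposition~\ref{pr:2}: a morphism in $(X/A)_{m-\prism}$ is a cover precisely when it is $(p, I_E)$-completely faithfully flat as a map of prisms, and this condition depends only on the underlying map of prisms, which is unchanged by $\rho$. Hence if $g = \rho(f)$ is a cover in $(X'/A)_{\prism}$ then $f$ is a cover in $(X/A)_{m-\prism}$. The main obstacle I anticipate is the diagram-chase verifying $\rho(f) = g$: one must be careful that the affine-open factorizations on both sides are compatible (using separatedness of $X$ to intersect opens, as in Proposition~\ref{pr:1}) and that the injectivity of $\phi^m : E'/J_{E'} \to E'/I_{E'}$ is what pins down the structure map uniquely, so that no ambiguity remains in identifying the two maps to $X'$.
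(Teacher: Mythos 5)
Your proposal is correct and follows essentially the same route as the paper: the paper's proof of this proposition is simply a pointer back to the argument in the proof of Proposition \ref{pr:3} (cocontinuity), where exactly your construction appears — take $E_1 = E'$ as a prism, define the structure map $R \to E'/J_{E'}$ by composing the factorization from condition (*) with the reduction of the prism map, and check $\rho(f)=g$ by the diagram chase through $\phi^m$. The cover statement is likewise handled there by observing that being a cover depends only on the underlying map of prisms.
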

\begin{proof}
We have shown this result in the proof of Proposition \ref{pr:3}.
\qed
\end{proof}

\begin{prop}
\label{pr:6}
{\rm (cf. \cite{Xu19} 9.8(ii))} Let $g : (E',I_{E'}) \to (E'_1,I_{E'_1})$ be a morphism in $(X'/A)_{\prism}$. Then there exist a morphism $h : (E,I_{E}) \to (E_1,I_{E_1})$ in $(X/A)_{m-\prism}$ and covers $f : (E',I_{E'}) \to \rho(E,I_{E})$, $f_1 : (E'_1,I_{E'_1}) \to \rho(E_1,I_{E_1})$ in $(X'/A)_{\prism}$ such that the following diagram is a pushout diagram:
\[
\begin{tikzcd}
(E'_1,I_{E'_1}) \arrow[r,"f_1"] & \rho(E_1,I_{E_1}) \arrow[dl,phantom, "\urcorner", very near start] \\
(E',I_{E'}) \arrow[u,"g"]\arrow[r,"f"]&\rho(E,I_{E})\arrow[u,"\rho(h)"].
\end{tikzcd}
\]
\end{prop}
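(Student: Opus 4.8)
The plan is to deduce the statement by assembling three results already established: Proposition \ref{pr:4} (existence of a covering lift), Lemma \ref{lm:pushout} (existence of pushouts along covers), and Proposition \ref{pr:5} (lifting a map out of $\rho(E,I_E)$ through $\rho$). First I would apply Proposition \ref{pr:4} to the source $(E',I_{E'})$ of $g$ to produce an object $(E,I_E)$ of $(X/A)_{m-\prism}$ together with a cover $f : (E',I_{E'}) \to \rho(E,I_E)$ in $(X'/A)_{\prism}$. This $f$ will serve as the bottom horizontal arrow of the desired square, and more importantly it supplies the covering hypothesis required by Lemma \ref{lm:pushout} in the next step.

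Next, since $f$ is a cover, I would form the pushout of the span $\rho(E,I_E) \xleftarrow{f} (E',I_{E'}) \xrightarrow{g} (E'_1,I_{E'_1})$ inside $(X'/A)_{\prism}$. By Lemma \ref{lm:pushout} this pushout exists and is represented by the bounded prism $(E'_2,I_{E'_2})$ with $E'_2 = E \widehat{\otimes}_{E'} E'_1$, the completion being the classical $(p,I)$-completion; moreover the canonical map $f_1 : (E'_1,I_{E'_1}) \to (E'_2,I_{E'_2})$ is again a cover. This provides the top horizontal arrow and makes the resulting square a pushout square by construction, the remaining leg being the canonical map $\rho(E,I_E) \to (E'_2,I_{E'_2})$.

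It then remains to recognize the top-right corner as $\rho$ of an object of the $m$-prismatic site and to produce $h$. For this I would apply Proposition \ref{pr:5} to the morphism $\rho(E,I_E) \to (E'_2,I_{E'_2})$ just obtained: it yields an object $(E_1,I_{E_1})$ of $(X/A)_{m-\prism}$ and a morphism $h : (E,I_E) \to (E_1,I_{E_1})$ with $\rho(E_1,I_{E_1}) = (E'_2,I_{E'_2})$ as a bounded prism and with $\rho(h)$ equal to the canonical leg. Substituting this identification, $f_1$ becomes the required cover $(E'_1,I_{E'_1}) \to \rho(E_1,I_{E_1})$, and the square acquires exactly the shape asserted in the statement, so the proposition follows.

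As for the main difficulty: the argument is essentially a bookkeeping assembly of the three earlier results, so I do not expect a genuine obstacle, only two points requiring care. The first is that the covering hypothesis of Lemma \ref{lm:pushout} must actually hold, which is precisely why Proposition \ref{pr:4} is invoked at the outset rather than attempting to build $(E,I_E)$ by hand. The second, and slightly more delicate, point is confirming that after the identification $\rho(E_1,I_{E_1}) = (E'_2,I_{E'_2})$ furnished by Proposition \ref{pr:5} the square is still a genuine pushout in $(X'/A)_{\prism}$; this holds because Proposition \ref{pr:5} leaves the underlying prism and structure maps unchanged and merely exhibits $(E'_2,I_{E'_2})$ as lying in the essential image of $\rho$, so the pushout property established in the second step is preserved verbatim.
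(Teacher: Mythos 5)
Your proposal is correct and follows exactly the paper's argument: the paper's proof likewise takes $f$ to be the cover from Proposition \ref{pr:4}, forms the pushout of $(E'_1,I_{E'_1})\xleftarrow{g}(E',I_{E'})\xrightarrow{f}\rho(E,I_{E})$ via Lemma \ref{lm:pushout}, and applies Proposition \ref{pr:5} to the resulting map out of $\rho(E,I_E)$. Your version merely spells out the details (including the correct identification of which coprojection is a cover) that the paper leaves implicit.
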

\begin{proof}
Let $f$ be the cover constructed in Proposition \ref{pr:4}, and let $(E'',I_{E''})$ be the pushout of the diagram $(E'_1,I_{E'_1})\xleftarrow{g}(E',I_{E'})\xrightarrow{f}\rho(E,I_{E})$. Applying Proposition \ref{pr:5} to the map $\rho(E,I_{E}) \to (E'',I_{E''})$, we obtain the desired diagram.
\qed
\end{proof}

\begin{prop}
\label{pr:7}
{\rm (cf. \cite{Xu19} 9.9)}  Let $(E',I_{E'})$ be an object of $(X'/A)_{\prism}$, let $(E,I_E)$ be an object of $(X/A)_{m-\prism}$ and let $(E',I_{E'}) \to \rho(E,I_{E})$ be a cover. 
Then there exist an object  $(E_2,I_{E_2})$ of $(X/A)_{m-\prism}$ and two morphisms $p_1,p_2 : (E,I_{E}) \to (E_2,I_{E_2})$ in $(X/A)_{m-\prism}$ such that $\rho(E_2,I_{E_2}) = \rho(E,I_E) \underset{(E',I_{E'})}{\widehat{\otimes}} \rho(E,I_E)$, and $\rho(p_1)$ {\rm (}resp. $\rho(p_2)${\rm )} is the map $\rho(E,I_E) \to \rho(E,I_E) \underset{(E',I_{E'})}{\widehat{\otimes}} \rho(E,I_E) $ to the first {\rm (}resp. second{\rm )} component.
\end{prop}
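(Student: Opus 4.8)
The plan is to take $(E_2,I_{E_2})$ to be the prism underlying the self-coproduct of $\rho(E,I_E)$ over $(E',I_{E'})$, and then to lift its structure morphism from $X'$ back to $X$. Since $(E',I_{E'}) \to \rho(E,I_E)$ is a cover, Lemma \ref{lm:pushout} produces the coproduct $\rho(E,I_E)\widehat{\otimes}_{(E',I_{E'})}\rho(E,I_E)$ in $(X'/A)_{\prism}$, represented by the $(p,I)$-completed tensor product $E_2 := E\widehat{\otimes}_{E'}E$, which is then automatically a bounded prism; write $p_1,p_2 : E \to E_2$ for the two coprojection ring maps. The whole point is to promote $(E_2,I_{E_2})$ to an object of $(X/A)_{m-\prism}$ in such a way that $p_1,p_2$ become morphisms there and that $\rho$ recovers the coprojections.

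To build the structure map $\text{Spf}(E_2/J_{E_2}) \to X$, I would start from the structure map $f_E : \text{Spf}(E/J_E) \to X$ of the given object, which by condition (*) in Definition \ref{def:m-pris} factors through an affine open $\text{Spf}(R)\subseteq X$, and set $f_i := f_E \circ \overline{p_i} : \text{Spf}(E_2/J_{E_2}) \to X$ for $i=1,2$, where $\overline{p_i}$ is the reduction of $p_i$ modulo $J$. Both $f_1$ and $f_2$ factor through the same affine open $\text{Spf}(R)$. The crux is the equality $f_1 = f_2$, after which I can declare $f_{E_2} := f_1 = f_2$ to be the structure map: condition (*) then holds for $(E_2,I_{E_2})$, and each $p_i$ becomes a morphism $(E,I_E) \to (E_2,I_{E_2})$ in $(X/A)_{m-\prism}$ essentially by construction.

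The proof of $f_1 = f_2$ is where I expect the main difficulty, and the strategy mirrors the claim in the proof of Proposition \ref{pr:1}. First, recall from Construction \ref{conpris} that the $X'$-structure map $f'_E : \text{Spf}(E/I_E) \to X'$ of $\rho(E,I_E)$, composed with the projection $X' \to X$, equals $f_E \circ (\phi^m)^*$, where $(\phi^m)^* : \text{Spf}(E/I_E) \to \text{Spf}(E/J_E)$ is induced by the Frobenius lift. Since $E_2$ represents the self-coproduct over $(E',I_{E'})$, the two coprojections induce maps $\text{Spf}(E_2/I_{E_2}) \to \text{Spf}(E/I_E)$ whose composites with $f'_E$ coincide, both factoring through $\text{Spf}(E'/I_{E'}) \to X'$ because $E' \to E$ is a morphism in $(X'/A)_{\prism}$. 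Using the $\phi^m$-equivariance of the $\delta$-ring maps $p_i$ (the commutative squares relating $\overline{p_i}$ modulo $J$ and modulo $I$ through $\phi^m$), these two facts combine to give $f_1 \circ (\phi^m)^* = f_2 \circ (\phi^m)^* : \text{Spf}(E_2/I_{E_2}) \to X$. Finally, $\phi^m : E_2/J_{E_2} \to E_2/I_{E_2}$ is injective by the very definition $J_{E_2} = (\phi^m)^{-1}(I_{E_2})$, so, exactly as in the claim of Proposition \ref{pr:1} and with both $f_i$ factoring through $\text{Spf}(R)$, the agreement after $(\phi^m)^*$ forces $f_1 = f_2$.

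It then remains to check the two compatibilities with $\rho$, which are formal. The underlying prism of $\rho(E_2,I_{E_2})$ is $E_2 = E\widehat{\otimes}_{E'}E$, the same as that of the coproduct. Since $p_1,p_2$ are now morphisms in $(X/A)_{m-\prism}$ and $\rho$ is a functor, the $X'$-structure of $\rho(E_2,I_{E_2})$ is compatible with $\rho(p_1),\rho(p_2)$, hence agrees with the coproduct structure built in the first step; thus $\rho(E_2,I_{E_2}) = \rho(E,I_E)\widehat{\otimes}_{(E',I_{E'})}\rho(E,I_E)$. As $\rho$ leaves the underlying ring maps unchanged, $\rho(p_1)$ and $\rho(p_2)$ are precisely the first and second coprojections, which completes the proof.
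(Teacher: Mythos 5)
Your proof is correct and follows essentially the same route as the paper: the paper's own proof simply applies Proposition \ref{pr:5} to the first coprojection to obtain $(E_2,I_{E_2})$ together with $p_1$, and then invokes the fullness of $\rho$ (Proposition \ref{pr:1}) to produce $p_2$, whereas you have unfolded those two citations into their underlying arguments (the lifting of the structure map along $p_1$, and the equality $f_1=f_2$ via the injectivity of $\phi^m : E_2/J_{E_2} \to E_2/I_{E_2}$, which is exactly the claim proved inside Proposition \ref{pr:1}). The difference is purely presentational.
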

\begin{proof}
Applying Proposition \ref{pr:5} to the coprojection 
\[
\rho(E,I_E) \to \rho(E,I_E) \underset{(E',I_{E'})}{\widehat{\otimes}} \rho(E,I_E) 
\]
 into the first component, we obtain the map $p_1 : (E,I_{E}) \to (E_2,I_{E_2})$ satisfying the conditions. The existence of $p_2$ follows from the fullness of $\rho$.
\qed
\end{proof}

We are now prepared to prove the equivalence of categories of crystals.

\begin{thm}
\label{th:mcrys}
The functors $\mathrm{C}_*,\mathrm{C}^*$ induce equivalences of categories
\[
\mathscr{C}_{\prism}((X/A)_{m-\prism}) \rightleftarrows \mathscr{C}_{\prism}((X'/A)_{\prism})
\]
quasi-inverse to each other.
\end{thm}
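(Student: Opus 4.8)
The plan is to treat the two functors separately: I will show that $\mathrm{C}^* = \widehat{\rho}^*$ carries crystals on $(X'/A)_{\prism}$ to crystals on $(X/A)_{m-\prism}$, and that $\mathrm{C}_* = \widehat{\rho}_*$ carries crystals in the opposite direction. Since $\mathrm{C}$ is an equivalence of topoi by Theorem \ref{thpristop}, the unit and counit isomorphisms $\mathrm{C}^*\mathrm{C}_* \cong \mathrm{id}$ and $\mathrm{C}_*\mathrm{C}^* \cong \mathrm{id}$ on abelian sheaves then restrict to the full subcategories of crystals, yielding the asserted quasi-inverse equivalences.

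The $\mathrm{C}^*$ direction is immediate. For a crystal $\mathscr{F}'$ and an object $(E,I_E)$ of $(X/A)_{m-\prism}$ one has $(\mathrm{C}^*\mathscr{F}')(E,I_E) = \mathscr{F}'(\rho(E,I_E))$, and since $\rho$ leaves the underlying bounded prism and every structural ring map unchanged, the membership $\mathscr{F}'(\rho(E,I_E)) \in \modbb{E}$ and the isomorphy of the linearization maps transport verbatim from the crystal conditions for $\mathscr{F}'$. Here I use that $\modbb{E}$ depends only on the bounded prism $(E,I_E)$ and not on the ambient site.

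The substance is the $\mathrm{C}_*$ direction: for a crystal $\mathscr{G}$ on $(X/A)_{m-\prism}$ I must check that $\mathscr{F}' := \mathrm{C}_*\mathscr{G}$ is a crystal. Because $\mathrm{C}$ is an equivalence, $\mathscr{F}'(\rho(E,I_E)) \cong \mathscr{G}(E,I_E)$ for every source object, so the values of $\mathscr{F}'$ on the image of $\rho$ are already controlled. To handle an arbitrary object $(E',I_{E'})$ of $(X'/A)_{\prism}$ I would invoke Proposition \ref{pr:4} to produce a source object $(E,I_E)$ and a cover $f : (E',I_{E'}) \to \rho(E,I_E)$, and then Proposition \ref{pr:7} to realize the \v{C}ech double product $\rho(E,I_E)\widehat{\otimes}_{(E',I_{E'})}\rho(E,I_E)$ as $\rho(E_2,I_{E_2})$ with its two projections lifting to maps $p_1,p_2$ in the source. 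Feeding the crystal isomorphisms of $\mathscr{G}$ through $p_1,p_2$ equips $M := \mathscr{G}(E,I_E) = \mathscr{F}'(\rho(E,I_E))$ with a descent datum along the faithfully flat map $E' \to E$; the cocycle condition is checked on the triple product, which is again of the form $\rho(\text{--})$ by the same argument as Proposition \ref{pr:7}. Since $M \in \modbb{E}$, the descent-closure built into Definition \ref{def:mod} forces this datum to descend to a module $M_0 \in \modbb{E'}$, and the sheaf property of $\mathrm{C}_*\mathscr{G}$ identifies $M_0$ with $\mathscr{F}'(E',I_{E'})$. This gives the module-membership condition.

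It remains to verify that the linearization maps of $\mathscr{F}'$ are isomorphisms. For a morphism $(E',I_{E'}) \to (E'_1,I_{E'_1})$ in $(X'/A)_{\prism}$, I would apply Proposition \ref{pr:6} to obtain a morphism $(E,I_E)\to(E_1,I_{E_1})$ in the source together with covers $f, f_1$ forming a pushout square, so that $\rho(E_1,I_{E_1}) = \rho(E,I_E)\widehat{\otimes}_{(E',I_{E'})}(E'_1,I_{E'_1})$. Using the identification $\mathscr{F}'(E')\widehat{\otimes}_{E'}E \cong \mathscr{G}(E)$ recovered from the descent of the previous step, together with the crystal isomorphism $\mathscr{G}(E)\widehat{\otimes}_E E_1 \cong \mathscr{G}(E_1)$ of $\mathscr{G}$ along $E \to E_1$, the base change of the linearization map $\mathscr{F}'(E')\widehat{\otimes}_{E'}E'_1 \to \mathscr{F}'(E'_1)$ along the cover $f_1 : E'_1 \to E_1$ becomes this isomorphism; Proposition \ref{pr:tensor}(2) then descends it, so the original map is an isomorphism. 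The main obstacle throughout is this $\mathrm{C}_*$ direction: everything hinges on realizing enough of the \v{C}ech data over $(E',I_{E'})$ inside the image of $\rho$ (Propositions \ref{pr:4}, \ref{pr:6}, \ref{pr:7}), so that the crystal structure of $\mathscr{G}$ supplies the requisite descent datum, and on the fact that the categories $\modbb{E}$ were defined precisely as the largest descent-stable family so that the descended module stays inside it.
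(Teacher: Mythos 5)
Your proposal is correct and follows essentially the same route as the paper: the $\mathrm{C}^*$ direction by the tautological formula $\mathrm{C}^*\mathscr{F}'(E,I_E)=\mathscr{F}'(\rho(E,I_E))$, and the $\mathrm{C}_*$ direction by reducing via Propositions \ref{pr:4} and \ref{pr:6} to a cover $(E',I_{E'})\to\rho(E,I_E)$, realizing the \v{C}ech products inside the image of $\rho$ via Proposition \ref{pr:7}, descending the resulting datum using the descent-stability built into $\modbb{-}$, and identifying the descended module with $\mathrm{C}_*\mathscr{G}(E',I_{E'})$ by the sheaf property, with Proposition \ref{pr:tensor} handling the linearization maps. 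The only difference is cosmetic ordering (you treat module-membership before the linearization maps, while the paper sets up the pushout square of Proposition \ref{pr:6} first and then reduces to the cover case); the ingredients and logic are identical.
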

\begin{proof}
We want to prove that the functors $\mathrm{C}_*,\mathrm{C}^*$ preserve crystals. For $\mathrm{C}^*$, it follows easily from the equality
\[
\mathrm{C}^*(\mathscr{F})(E,I_E) = \mathscr{F}(\rho(E,I_E)).
\]

We show the claim for  $\mathrm{C}_*$. Let $\mathscr{F}$ be an object in $\mathscr{C}_{\prism}((X/A)_{m-\prism})$ and let \linebreak $g : (E',I_{E'}) \to (E'_1,I_{E'_1})$ be a morphism in $(X'/A)_{\prism}$. We want to prove that \linebreak $\mathrm{C}_*(\mathscr{F})(E',I_{E'}) \in \modbb{E'}, \mathrm{C}_*(\mathscr{F})(E'_1,I_{E'_1}) \in \modbb{E'_1}$ and that the map
\[
\mathrm{C}_*(\mathscr{F})(E',I_{E'}) \underset{E'}{\widehat{\otimes}}  E'_1 \to \mathrm{C}_*(\mathscr{F})(E'_1,I_{E'_1})
\]
is an isomorphism.  By Proposition \ref{pr:6}, we have a pushout diagram
\[
\begin{tikzcd}
(E'_1,I_{E'_1}) \arrow[r,"f_1"] & \rho(E_1,I_{E_1}) \arrow[dl,phantom, "\urcorner", very near start] \\
(E',I_{E'}) \arrow[u,"g"]\arrow[r,"f"]&\rho(E,I_{E})\arrow[u,"\rho(h)"]
\end{tikzcd}
\]
where $f$ and $f_1$ are covers. Then we have the following commutative diagram :
\[
\begin{tikzcd}[column sep=0.4cm]
\mathrm{C}_*(\mathscr{F})(E',I_{E'}) \underset{E'}{\widehat{\otimes}} E'_1 \underset{E'_1}{\widehat{\otimes}}  E_1 \arrow[r,"\simeq"]\arrow[d] & \mathrm{C}_*(\mathscr{F})(E',I_{E'}) \underset{E'}{\widehat{\otimes}} E \underset{E}{\widehat{\otimes}}  E_1 \arrow[r]& \mathrm{C}_*(\mathscr{F})(\rho(E,I_{E})) \underset{E}{\widehat{\otimes}}  E_1 \arrow[d] \\
\mathrm{C}_*(\mathscr{F})(E'_1,I_{E'_1}) \underset{E'_1}{\widehat{\otimes}} E_1 \arrow[rr]&&\mathrm{C}_*(\mathscr{F})(\rho(E_1,I_{E_1})).
\end{tikzcd}
\]
Using Proposition \ref{pr:tensor}, it is enough to show that $\mathrm{C}_*(\mathscr{F})(E',I_{E'}) \in \modbb{E'},$ \linebreak $\mathrm{C}_*(\mathscr{F})(E'_1,I_{E'_1}) \in \modbb{E'_1}$ and that the left vertical arrow is an isomorphism. 
To show that the left vertical arrow is an isomorphism, it suffices to check that the other arrows are all isomorphisms.
As $\mathrm{C}^* \mathrm{C}_* \simeq$ id, we see that for each $(E,I_E) \in (X/A)_{m-\prism}$,
\begin{equation}
\label{eq:c*f}
\mathscr{F}(E,I_E) = \mathrm{C}^* \mathrm{C}_*(\mathscr{F})(E,I_E) = \mathrm{C}_*(\mathscr{F})(\rho(E,I_E)).
\end{equation}

As $\mathscr{F}$ is a crystal, the equality (\ref{eq:c*f}) implies that the right vertical arrow is an isomorphism.
So it is enough to show that for any cover $f : (E',I_{E'}) \to \rho(E,I_{E})$, one has $\mathrm{C}_*(\mathscr{F})(E',I_{E'}) \in \modbb{E'}$ and the map
\[
\mathrm{C}_*(\mathscr{F})(E',I_{E'}) \underset{E'}{\widehat{\otimes}} E \to  \mathrm{C}_*(\mathscr{F})(\rho(E,I_{E}))
\]
is an isomorphism.

In the following, we simply write $E,E'$ for $(E,I_E),(E',I_{E'})$ respectively in order to lighten notation.
 We consider the diagram
\[
\begin{tikzcd}
E' \arrow[r,"f"] & \rho(E) \arrow[r,shift left]\arrow[r,shift right] &  \rho(E) \underset{E'}{\widehat{\otimes}} \rho(E) \arrow[r,shift left =1.2ex]\arrow[r,shift right =1.2ex]\arrow[r] & \rho(E) \underset{E'}{\widehat{\otimes}} \rho(E) \underset{E'}{\widehat{\otimes}} \rho(E),
\end{tikzcd}
\]
where all the arrows except $f$ are the maps constructed by the coprojections. By Proposition \ref{pr:7} and the definition of $\rho$, the above diagram may be rewritten as
\[
\begin{tikzcd}
E' \arrow[r,"f"] & \rho(E) \arrow[r,shift left]\arrow[r,shift right] &  \rho(E_2) \arrow[r,shift left =1.2ex]\arrow[r,shift right =1.2ex]\arrow[r] & \rho(E_3),
\end{tikzcd}
\]
where $E_2,E_3 \in (X/A)_{m-\prism}$ such that  $E_2= E \underset{E'}{\widehat{\otimes}} E, E_3 = E \underset{E'}{\widehat{\otimes}} E \underset{E'}{\widehat{\otimes}} E$ as prisms and all the arrows except $f$ come from the morphisms in $(X/A)_{m-\prism}$. Then we have a diagram
\[
\begin{tikzcd}
\mathrm{C}_*(\mathscr{F})(\rho(E)) \arrow[r,shift left]\arrow[r,shift right] &  \mathrm{C}_*(\mathscr{F})(\rho(E_2)) \arrow[r,shift left =1.2ex]\arrow[r,shift right =1.2ex]\arrow[r] & \mathrm{C}_*(\mathscr{F})(\rho(E_3))
\end{tikzcd}
\]
which is identified with the following diagram by using the equality (\ref{eq:c*f}):
\[
\begin{tikzcd}
\mathscr{F}(E) \arrow[r,shift left]\arrow[r,shift right] &  \mathscr{F}(E_2) \arrow[r,shift left =1.2ex]\arrow[r,shift right =1.2ex]\arrow[r] & \mathscr{F}(E_3).
\end{tikzcd}
\]
As $\mathscr{F}$ is a crystal, the above diagram defines a descent datum on $\mathscr{F}(E)$ relative to the faithfully flat map of bounded prisms $(E',I_{E'}) \to (E,I_E)$. So it descends uniquely to an object $M$ in $\modbb{E'}$. In particular, we have an isomorphism of $E$-modules $M \widehat{\otimes}_{E'} E \cong \mathscr{F}(E)$ and an exact sequence
\[
\begin{tikzcd}
0 \arrow[r] & M \arrow[r] & \mathscr{F}(E) \arrow[r,shift left]\arrow[r,shift right] &  \mathscr{F}(E_2).
\end{tikzcd}
\]
On the other hand, since $\mathrm{C}_*(\mathscr{F})$ is a sheaf, we have an exact sequence
\[
\begin{tikzcd}
0 \arrow[r] & \mathrm{C}_*(\mathscr{F})(E') \arrow[r] & \mathrm{C}_*(\mathscr{F})(\rho(E)) \arrow[r,shift left]\arrow[r,shift right] &  \mathrm{C}_*(\mathscr{F})(\rho(E_2)).
\end{tikzcd}
\]
By these two exact sequences and the equality (\ref{eq:c*f}), we see that 
\[
\mathrm{C}_*(\mathscr{F})(E') \cong M \in \modbb{E'}.
\]
 Thus,
\[
\mathrm{C}_*(\mathscr{F})(E',I_{E'}) \underset{E'}{\widehat{\otimes}} E \to  \mathrm{C}_*(\mathscr{F})(\rho(E,I_{E}))
\]
is an isomorphism, as desired.
\qed
\end{proof}

The following is proved in exactly the same way as Theorem \ref{th:mcrys}.

\begin{cor}
The functors $\mathrm{C}_*,\mathrm{C}^*$ induce equivalences of categories
\begin{align*}
\mathscr{C}^{\mathrm{fp}}((X/A)_{m-\prism}) &\rightleftarrows \mathscr{C}^{\mathrm{fp}}((X'/A)_{\prism}), \\
\mathscr{C}^{\mathrm{tors}}((X/A)_{m-\prism}) &\rightleftarrows \mathscr{C}^{\mathrm{tors}}((X'/A)_{\prism})
\end{align*}
quasi-inverse to each other.
\end{cor}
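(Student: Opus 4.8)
The plan is to run the proof of Theorem \ref{th:mcrys} verbatim, replacing the module categories $\modbb{\cdot}$ by $\modfp{\cdot}$ (resp. $\modtors{\cdot}$) throughout. As in that proof, the two things to establish are that $\mathrm{C}^*$ and $\mathrm{C}_*$ each carry the relevant category of crystals into itself; once both are known, the quasi-inverse relations $\mathrm{C}^*\mathrm{C}_* \simeq \mathrm{id}$ and $\mathrm{C}_*\mathrm{C}^* \simeq \mathrm{id}$ are inherited from the equivalence of topoi of Theorem \ref{thpristop}, since the underlying natural isomorphisms restrict to the subcategories of crystals.

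For $\mathrm{C}^*$ this is immediate. First I would note that $\rho(E,I_E)$ has underlying prism $(E,I_E)$, so that the formula $\mathrm{C}^*(\mathscr{F})(E,I_E) = \mathscr{F}(\rho(E,I_E))$ exhibits $\mathrm{C}^*(\mathscr{F})(E,I_E)$ as the value of a crystal on a prism with the same underlying ring $E$; hence it lies in $\modfp{E}$ (resp. $\modtors{E}$). The crystal (base-change) condition is transported along $\rho$ exactly as for $\mathscr{C}_{\prism}$, using that $\modfp{\cdot}$ and $\modtors{\cdot}$ are stable under base change of prisms: for finite projective modules this is clear, and for $(p,I_E)^n$-torsion modules it follows from $I_{E'} = I_E E'$.

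The substantive point is that $\mathrm{C}_*$ preserves crystals, and here the argument of Theorem \ref{th:mcrys} applies almost unchanged. The commutative diagram produced from the pushout of Proposition \ref{pr:6}, the reduction via Proposition \ref{pr:tensor}(1) (which holds for $\modfp$ and $\modtors$ by the same reasoning as indicated above), and the construction of the comparison exact sequences all go through word for word. The only step that genuinely uses the choice of module category is the sentence ``it descends uniquely to an object $M$ in $\modbb{E'}$'': there I would instead invoke effective faithfully flat descent of finite projective modules (Proposition A.12 of \cite{AB19}) resp. of torsion modules (the usual descent argument), recorded earlier as the inclusions $\modfp{E}, \modtors{E} \subseteq \modbb{E}$. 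This guarantees that the descent datum $\mathscr{F}(E) \rightrightarrows \mathscr{F}(E_2) \to \mathscr{F}(E_3)$ descends to an object of $\modfp{E'}$ (resp. $\modtors{E'}$), not merely of $\modbb{E'}$.

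The main (and essentially the only) obstacle is thus to confirm that $\modfp{\cdot}$ and $\modtors{\cdot}$ satisfy the same effective-descent property along faithfully flat maps of prisms that defines $\modbb{\cdot}$ in Definition \ref{def:mod}; this is exactly what the cited descent results provide. With that in hand, the remaining verifications — stability under the base changes appearing in the diagrams and the identification $\mathrm{C}_*(\mathscr{F})(E') \cong M$ via the two exact sequences — are identical to those in the proof of Theorem \ref{th:mcrys}.
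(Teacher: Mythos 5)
Your proposal is correct and follows the same route as the paper, which simply states that the corollary is proved in exactly the same way as Theorem \ref{th:mcrys}; your identification of the descent step (via the inclusions $\modfp{E},\modtors{E} \subseteq \modbb{E}$ from Proposition A.12 of \cite{AB19} and the usual torsion descent) as the only place where the choice of module category matters is exactly the point implicit in the paper's one-line proof.
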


\section{The \textit{m}-\textit{q}-crystalline site}

In this section, we prove a $q$-crystalline version of the theorems in the previous section, 
namely, we define the $m$-$q$-crystalline site which is a higher level analog of the $q$-crystalline site defined in Section 16 of \cite{BS19}, and prove the equivalence between the category of crystals on the $m$-$q$-crystalline site of a smooth formal scheme $X$ and that on the usual $q$-crystalline site of $X'$, where $X'$ is the pullback of $X$  by the $m$-fold iteration $\phi^m$ of the Frobenius lift $\phi$ on the base $q$-PD pair.

First, we give the definition of  $q$-PD pairs we  consider. Set $A = \mathbf{Z}_p \llbracket q-1\rrbracket$ with $\delta$-structure given by $\delta(q) = 0$.

\begin{defi}
\label{def:qpd}
A $q$-PD pair is given by a derived $(p,[p]_q)$-complete $\delta$-pair $(D,I)$ over $(A,(q-1))$ satisfying the following conditions:
\begin{enumerate}
\item For any $f \in I$, $\phi (f) - [p]_q \delta(f) \in [p]_qI$.
\item The pair $(D,([p]_q))$ is a bounded prism over $(A,([p]_q))$, i.e., $D$ is $[p]_q$-torsion free and $D/([p]_q)$ has bounded $p^\infty$-torsion.
\item The ring $D/(q-1)$ is $p$-torsion free with finite $(p,[p]_q)$-complete Tor-amplitude over $D$.
\item $D/I$ is classically $p$-complete.
\end{enumerate}
\end{defi}

\begin{rem}
\label{rqpd}
\begin{enumerate}
\item This definition of $q$-PD pairs follows that in Section 7.1 of \cite{Kos20}. 
The condition 4. enables us to consider the affine $p$-adic formal scheme Spf$(D/I)$, but this is not imposed in Definition 16.2 of \cite{BS19}. Our notion of $q$-PD pair differs from that in Definition 3.1 of \cite{GSQ20a}; 
the conditions 3., 4. in Definition \ref{def:qpd}, the condition that $D/([p]_q)$ has bounded $p^\infty$-torsion and the condition of $(p,[p]_q)$-completeness of $D$ are not imposed in Definition 3.1 of \cite{GSQ20a} (but the latter two conditions are imposed in the definition of the $q$-crystalline site in Section 7 of \cite{GSQ20a}).
\item By the conditions 1. and 2., we see that for any $x \in I$, the element 
\[
\gamma(x) \stackrel{\mathrm{def}}{=} \phi(x)/[p]_q - \delta(x) \in I
\]
 is well-defined.
\item Let $J = (\phi^m)^{-1}(I)$. As $\phi$ is continuous, the ideal $J \subseteq D$ is closed for the $p$-adic topology of $D$. So $D/J$ is also classically $p$-complete.
\item The condition 4. in Definition \ref{def:qpd} implies that $I$ is closed for the $(p,[p]_q)$-adic topology of $D$ because \[
I + (p,[p]_q)^n \subseteq I + (p, q-1)^n \subseteq I+(p)^n.
\]
\end{enumerate}
\end{rem}

Next, we define the $m$-$q$-crystalline site, which is a higher level analog of the $q$-crystalline site as well as a $q$-analog of the level $m$ crystalline site. We fix a non-negative integer $m$ and a $q$-PD pair $(D,I)$.

\begin{defi}
\label{def:mqcrys}
Let $J = (\phi ^m)^{-1}(I)$ and let $X$ be a $p$-adic formal scheme smooth and separated over $D/J$. We define the  $m$-$q$-\textit{crystalline site} $(X/D)_{m\text{-}q\text{-crys}}$ of $X$ over $D$ as follows.  
Objects are maps $(D,I) \to (E,I_E)$ of $q$-PD pairs together with a map Spf$(E/J_E) \to X$ over $D/J$ satisfying the following condition, where $J_E = (\phi ^m)^{-1}(I_E)$:
\begin{enumerate}[label = (\roman*)]
\item[(*)] Spf$(E/J_E) \to X$ factors through some affine open Spf$(R) \subseteq X$.
\end{enumerate}
We shall often denote such an object by
\[
(\text{Spf}(E) \gets \text{Spf}(E/J_E) \to X) \in (X/D)_{m\text{-}q\text{-crys}}
\]
or $(E,I_E)$ if no confusion arises. A morphism \[
(\text{Spf}(E') \gets \text{Spf}(E'/J_{E'}) \to X) \to (\text{Spf}(E) \gets \text{Spf}(E/J_E) \to X)
\]
 is a map of $q$-PD pairs $(E,I_E) \to (E',I_{E'})$ over $(D,I)$ such that the induced morphism
\[
\text{Spf}(E'/J_{E'}) \to \text{Spf}(E/J_E)
\]
 is compatible with the maps $\text{Spf}(E'/J_{E'}) \to X$, $\text{Spf}(E/J_E) \to X$. 
When we denote such an object by $(E,I_E)$, we shall write $(E,I_E) \to (E',I_{E'})$ (not $(E',I_{E'}) \to (E,I_E))$ for a morphism from $(E',I_{E'})$ to $(E,I_E)$. A map ($E,I_E$) $\to$ ($E',I_{E'}$) in  $(X/D)_{m\text{-}q\text{-crys}}$ is a cover if it is a $(p,[p]_q)$-completely faithfully flat map and satisfies
\begin{equation}
\label{eq:rigid}
I_{E'} = \ol{I_E E'},
\end{equation}
where the right hand side means the $(p,[p]_q)$-complete ideal of $E'$ generated by $I_EE'$.
\end{defi}

\begin{rem}
Note that the equality (\ref{eq:rigid}) does not imply the equality $J_{E'} = \ol{J_E E'}$ in general. Indeed, in the case of the inclusion
\[
(E,I_E) := (\mathbf{Z}_p \llbracket q-1\rrbracket, (q-1)) \hookrightarrow (\mathbf{Z}_p \llbracket q^{1/p}-1\rrbracket, (q-1)) =: (E', I_{E'})
\]
where the $\delta$-ring structure on $E'$ is defined by $\delta(q^{1/p}) = 0$, we have $I_{E'} = \ol{I_E E'}$ but $\ol{J_E E'} = (q-1) \neq (q^{1/p}-1) = J_{E'}$.
\end{rem}

We need to check that the category $(X/D)_{m\text{-}q\text{-crys}}$ endowed with the topology as defined above forms a site. We have the following lemma as in the case of the $m$-prismatic site: 

\begin{lem}
\label{lm:pushout2}
Let $(E_1,I_{E_1})\xleftarrow{f}(E,I_E)\xrightarrow{g}(E_2,I_{E_2})$ be maps in $(X/D)_{m\text{-}q\text{-}\mathrm{crys}}$ such that f is a cover. 
Let $E_3 := E_1 \widehat{\otimes}_E E_2$, where the completion is the classical $(p,[p]_q)$-completion. Then $(E_3,\ol{I_{E_2}E_3})$ is the object that represents the  coproduct 
\[
(E_1,I_{E_1})\sqcup_{(E,I_E)}(E_2,I_{E_2})
\]
 in $(X/D)_{m\text{-}q\text{-}\mathrm{crys}}$, and the canonical map $(E_2,I_{E_2}) \to (E_3,I_{E_3})$ is a cover.
\end{lem}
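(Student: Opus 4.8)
The plan is to follow the proof of Lemma \ref{lm:pushout} as closely as possible, treating the underlying $\delta$-ring and the universal property via the prismatic pushout and then verifying separately that the ideal $\ol{I_{E_2}E_3}$ equips $E_3$ with a $q$-PD structure. First I would note that a cover of $q$-PD pairs is in particular a $(p,[p]_q)$-completely faithfully flat map of $\delta$-$A$-algebras, so $f$ restricts to a cover $(E,([p]_q)) \to (E_1,([p]_q))$ of the underlying bounded prisms. Hence Lemma \ref{lm:pushout} in the case $m=0$ (equivalently Lemma 3.8 of \cite{MT20}) applies to the underlying prisms: the classical $(p,[p]_q)$-completion $E_3 = E_1 \widehat{\otimes}_E E_2$ carries a canonical $\delta$-structure making $(E_3,([p]_q)E_3)$ a bounded prism, it represents the coproduct in the category of $(p,[p]_q)$-complete $\delta$-$A$-algebras, and the canonical map $(E_2,([p]_q)) \to (E_3,([p]_q))$ is a faithfully flat cover of prisms. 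In particular condition 2 of Definition \ref{def:qpd} already holds for $(E_3,\ol{I_{E_2}E_3})$, and $E_2 \to E_3$ is $(p,[p]_q)$-completely faithfully flat.

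Next I would verify the remaining conditions of Definition \ref{def:qpd} for $I_{E_3} := \ol{I_{E_2}E_3}$. Since $E_2 \to E_3$ is a map over $(A,(q-1))$, the element $q-1 \in I_{E_2}$ maps to $q-1 \in I_{E_3}$, so $[p]_q \equiv p$ modulo $I_{E_3}$ because $[p]_q - p \in (q-1)$; consequently the $(p,[p]_q)$-complete quotient $E_3/I_{E_3}$ is in fact classically $p$-complete, which is condition 4 (cf. Remark \ref{rqpd}). For condition 3, I would use that $E_2 \to E_3$ is $(p,[p]_q)$-completely flat: base change along this map sends the finite $(p,[p]_q)$-complete Tor-amplitude of $E_2/(q-1)$ over $E_2$ to that of $E_3/(q-1) \cong E_3 \widehat{\otimes}_{E_2}(E_2/(q-1))$ over $E_3$, and preserves $p$-torsion freeness.

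The main obstacle is condition 1, which must be checked on the whole closure $I_{E_3} = \ol{I_{E_2}E_3}$ rather than on a set of generators. Here I would analyze the continuous map $\psi \colon f \mapsto \phi(f) - [p]_q \delta(f)$ and show that the locus $T := \{f \in I_{E_3} : \psi(f) \in [p]_q I_{E_3}\}$ is a closed $E_3$-ideal containing the image of $I_{E_2}$. The key identities are $\psi(f+g) - \psi(f) - \psi(g) = -[p]_q\bigl(\delta(f+g)-\delta(f)-\delta(g)\bigr)$, whose right-hand side lies in $[p]_q I_{E_3}$ for $f,g \in I_{E_3}$ since $\delta(f)+\delta(g)-\delta(f+g)$ is a sum of terms divisible by $f$, and $\psi(af) = \phi(a)\psi(f) - [p]_q\delta(a)f^p$, which lies in $[p]_q I_{E_3}$ whenever $\psi(f) \in [p]_q I_{E_3}$ and $f \in I_{E_3}$. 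Thus $T$ is stable under addition and multiplication by $E_3$; it contains the image of $I_{E_2}$ by condition 1 for $(E_2,I_{E_2})$, hence contains $I_{E_2}E_3$; and since $\psi$ is continuous for the $(p,[p]_q)$-adic topology and $[p]_q I_{E_3}$ is closed (using boundedness of $E_3$), $T$ is closed and therefore equals $I_{E_3}$. This gives condition 1 and completes the proof that $(E_3,I_{E_3})$ is a $q$-PD pair.

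Finally I would deduce the universal property and the cover assertion. Given a $q$-PD pair $(T,I_T)$ under both $(E_1,I_{E_1})$ and $(E_2,I_{E_2})$ compatibly over $(E,I_E)$, the coproduct property of $E_3$ among $(p,[p]_q)$-complete $\delta$-$A$-algebras produces a unique $\delta$-$A$-algebra map $E_3 \to T$; it carries $I_{E_2}$ into $I_T$ and, $I_T$ being closed, carries $I_{E_3} = \ol{I_{E_2}E_3}$ into $I_T$, so it is a map of $q$-PD pairs, while the structure map $\mathrm{Spf}(E_3/J_{E_3}) \to X$ satisfying condition (*) is inherited from $E_2$. Hence $(E_3,I_{E_3})$ represents the coproduct in $(X/D)_{m\text{-}q\text{-crys}}$. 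The canonical map $(E_2,I_{E_2}) \to (E_3,I_{E_3})$ is $(p,[p]_q)$-completely faithfully flat by the first paragraph and satisfies $I_{E_3} = \ol{I_{E_2}E_3}$ by construction, so it is a cover, as required.
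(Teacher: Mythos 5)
Your argument is correct and follows the same route as the paper: reduce to Lemma \ref{lm:pushout} for the underlying bounded prisms, show that $(E_3,\ol{I_{E_2}E_3})$ is a $q$-PD pair, and verify the universal property using the closedness of $q$-PD ideals (Remark \ref{rqpd}.4). The only difference is that where you check the axioms of Definition \ref{def:qpd} by hand (in particular condition 1, via your analysis of $f \mapsto \phi(f)-[p]_q\delta(f)$), the paper simply invokes Lemma 16.5 (5) of \cite{BS19}, which is exactly the statement that the $q$-PD structure passes to $\ol{I_{E_2}E_3}$ along a $(p,[p]_q)$-completely flat base change.
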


\begin{proof}
By Lemma \ref{lm:pushout}, $(E_3, ([p]_q))$ represents the object $(E_1,([p]_q))\sqcup_{(E,([p]_q))}(E_2,([p]_q))$ in the category of bounded prisms. Also, by Lemma 16.5 (5) of \cite{BS19}, $(E_3,\ol{I_{E_2}E_3})$ is a $q$-PD pair. We can check that it represents the object 
\[
(E_1,I_{E_1})\sqcup_{(E,I_E)}(E_2,I_{E_2})
\]
 in $(X/D)_{m\text{-}q\text{-}\mathrm{crys}}$ by using Remark \ref{rqpd}.4.
\qed
\end{proof}

By Lemma \ref{lm:pushout2}, the set of covers in Definition \ref{def:mqcrys} actually forms a pretopology on the category  $(X/D)_{m\text{-}q\text{-crys}}$. 

\begin{rem}
\label{remq}
When $m=0$, we denote the site $(X/D)_{m\text{-}q\text{-}\mathrm{crys}}$ simply by $(X/D)_{q\text{-crys}}$ and call it the $q$-crystalline site. This site is similar to the one in \cite{BS19} and that in \cite{Kos20} but there are slight differences: Firstly, the topology considered here (flat topology) is different from the one in \cite{BS19} (indiscrete topology) and the one in \cite{Kos20} (\'{e}tale topology). Secondly, we imposed the technical condition (*) which are not assumed in \cite{BS19} and \cite{Kos20}. (However, we note that our topos is unchanged if we do not impose the condition (*).) Thirdly, $X$ is assumed to be affine in \cite{BS19} but we do not assume that $X$ is affine.
\end{rem}

In order to establish the equivalence between the category of crystals on the $m$-$q$-crystalline site and that on the usual $q$-crystalline site, we first construct a functor between these sites, as in the prismatic case.

\begin{con}
\label{conqcrys}
Under the notation and assumption in Definition \ref{def:mqcrys}, let $X'$ be $X$$\underset{\mathrm{Spf}(D/J),(\phi ^m)^*}{\widehat{\times}}$ Spf$(D/I)$. Then we have a diagram:
\[
\begin{tikzcd}
\mathrm{Spf}(D)\arrow[r,hookleftarrow] & \mathrm{Spf}(D/J)\arrow[dr,phantom, "\Box"] & X\arrow[l]\\
\mathrm{Spf}(D)\arrow[r,hookleftarrow]\arrow[u,"(\phi ^m)^*"] & \mathrm{Spf}(D/I)\arrow[u,"(\phi ^m)^*"] & X'.\arrow[l]\arrow[u]
\end{tikzcd} 
\]
We define a functor $\rho$ from the $m$-$q$-crystalline site of $X$ over $D$ to the usual $q$-crystalline site of $X'$ over $D$ as in the case of $m$-prismatic site: For an object \[(\text{Spf}(E) \gets \text{Spf}(E/J_E) \to X)\] of $(X/D)_{m\text{-}q\text{-crys}}$, 
 we define the object $\rho$(Spf$(E) \gets$ Spf$(E/J_E) \to X)$ of $(X'/D)_{q\text{-crys}}$ by 
\[
(\text{Spf}(E)  \gets \text{Spf}(E/I_E) \xrightarrow{f} X'),
\]
 where the right map $f$ is defined as follows:
\[ \mathrm{Spf}(E/I_E) \xrightarrow{g} \mathrm{Spf}(E/J_E) \underset{\mathrm{Spf}(D/J),(\phi ^m)^*}{\widehat{\times}} \mathrm{Spf}(D/I) \to X \underset{\mathrm{Spf}(D/J),(\phi ^m)^*}{\widehat{\times}} \mathrm{Spf}(D/I) = X'. \]
Here the first map $g$ is induced by the map of rings
\[
E/J_E \underset{D/J,\phi^m}{\widehat{\otimes}} D/I \to E/I_E ; \ \ \ \ \  e \otimes d \mapsto \phi^m(e)d.
\]
This defines the functor $\rho:(X/D)_{m\text{-}q\text{-crys}} \to (X'/D)_{q\text{-crys}}$.
\end{con}

Next, we want to show that $\rho$ induces an equivalence of topoi. 
Since the $m$-$q$-crystalline site is defined in a very similar way to the $m$-prismatic site, almost all propositions can be proved in exactly the same way as in the case of $m$-prismatic site by replacing the bounded prism $(A,I)$ with the $q$-PD pair $(D,I)$ and so we omit the details. In particular, we have the following proposition.

\begin{prop}
\label{pr:1q}
The functor $\rho$ is fully faithful, continuous and cocontinuous.
\end{prop}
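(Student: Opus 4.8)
The plan is to mirror exactly the four-step strategy already carried out for the $m$-prismatic site in Propositions \ref{pr:1}, \ref{pr:2}, \ref{pr:3} and \ref{pr:4}, transporting each argument verbatim with the bounded prism $(A,I)$ replaced by the $q$-PD pair $(D,I)$ and the $(p,I)$-completion replaced by the $(p,[p]_q)$-completion. Since the statement bundles together full faithfulness, continuity and cocontinuity, I would simply verify these three properties in turn, pointing in each case to the corresponding prismatic proof and flagging only the places where the $q$-PD structure requires extra care.

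First I would establish \textbf{full faithfulness} as in Proposition \ref{pr:1}. Faithfulness is immediate. For fullness, given a morphism $\alpha$ between the images under $\rho$, I would form the same cube diagram and reduce to the claim that two maps $f_1, f_2 : \mathrm{Spf}(E'/J_{E'}) \to X$ agreeing after precomposition with $(\phi^m)^*$ must be equal. The separatedness of $X$ lets me intersect the two affine opens $\mathrm{Spf}(R_i)$ into an affine $\mathrm{Spf}(R_3)$, and the crucial input is that $\phi^m : E'/J_{E'} \to E'/I_{E'}$ is injective. Here I would note that this injectivity still holds for $q$-PD pairs: by Remark \ref{rqpd}.3 the ideal $J_{E'}$ is exactly $(\phi^m)^{-1}(I_{E'})$, so $\phi^m$ induces an injection on the quotient by construction.

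Next, \textbf{continuity} and \textbf{cocontinuity} follow the templates of Propositions \ref{pr:2} and \ref{pr:3}. For continuity, the key points are that $\alpha$ is a cover in $(X/D)_{m\text{-}q\text{-crys}}$ if and only if $\rho(\alpha)$ is a cover in $(X'/D)_{q\text{-crys}}$, and that $\rho$ preserves the relevant pushouts; the former holds because the cover condition (a $(p,[p]_q)$-completely faithfully flat map satisfying $I_{E'} = \ol{I_E E'}$) is the $m=0$ instance of Definition \ref{def:mqcrys}, and the latter follows from the pushout description in Lemma \ref{lm:pushout2} in place of Lemma \ref{lm:pushout}. For cocontinuity, the lifting-of-covers diagram of Proposition \ref{pr:3} goes through unchanged once one observes that the map $g$ and the factorization through an affine open provided by condition (*) behave identically in the $q$-PD setting.

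The main obstacle, and the only step requiring genuine new input, is the analog of Proposition \ref{pr:4}: every object of $(X'/D)_{q\text{-crys}}$ admits a cover from an object in the image of $\rho$. This is where the construction of the prismatic envelope and the $(p,[p]_q)$-complete faithful flatness of the Frobenius-twist map $(\phi^m)'$ (the analogs of Corollary \ref{cor:reg seq} and Proposition \ref{prop:mprismff}) must be re-examined. I expect the regular-sequence and faithful-flatness arguments to carry over once the $q$-PD envelope is substituted for the prismatic envelope and $[p]_q$ plays the role of $I$ in the relevant completions; the delicate point is ensuring that the $q$-PD structure is preserved under these envelope constructions, for which Lemma 16.5 of \cite{BS19} (already invoked in Lemma \ref{lm:pushout2}) supplies the needed compatibility. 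Having checked all four properties, the proof concludes by the same citation of Proposition 4.2.1 of \cite{Oya17} that underlies Theorem \ref{thpristop}.
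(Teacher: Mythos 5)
Your proposal is correct and matches the paper's intent exactly: the paper omits the proof of this proposition precisely because it is obtained by transporting the arguments of Propositions \ref{pr:1}, \ref{pr:2} and \ref{pr:3} verbatim, with the bounded prism replaced by the $q$-PD pair and Lemma \ref{lm:pushout} replaced by Lemma \ref{lm:pushout2}, and you correctly isolate the one point needing verification (injectivity of $\phi^m : E'/J_{E'} \to E'/I_{E'}$, which is immediate from $J_{E'} = (\phi^m)^{-1}(I_{E'})$). Note only that the covering statement you discuss at the end (the analog of Proposition \ref{pr:4}) is not part of this proposition but is the separate Proposition \ref{pr:4q}, which the paper does prove in detail because the $q$-PD envelope genuinely differs from the prismatic envelope.
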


As the construction of the $q$-PD envelope is different from that of the prismatic envelope that appeared in the proof of Proposition \ref{pr:4}, we give a proof of the $q$-analog of Proposition \ref{pr:4}. We will need the following $q$-analog of Lemma \ref{prislift}:

\begin{lem}
\label{qpdlift}
Let $(E,I_E)$ be a $q$-PD pair. Let $\overline{f}$ be an open immersion 
\[
\overline{f} :\mathrm{Spf}(\overline{E_i}) \to \mathrm{Spf}(E/I_E)
\]
 or an open immersion 
\[
\overline{f} :\mathrm{Spf}(\overline{E_i}) \to \mathrm{Spf}(E/J_E).
\]
 Then there is a  unique open immersion  $f : \mathrm{Spf}(E_i) \to \mathrm{Spf}(E)$ which lifts $\overline{f}$. Moreover,  the corresponding map of rings induces a map $(E,I_E) \to (E_i,\ol{I_EE_i})$ of $q$-PD pairs.
\end{lem}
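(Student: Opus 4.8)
The plan is to mirror the proof of Lemma~\ref{prislift} almost verbatim, systematically replacing the prism ideal $I_E$ and the completion ideal $(p,I_E)$ by the prism ideal $([p]_q)$ of the $q$-PD pair and the completion ideal $(p,[p]_q)$, and substituting the structural lemmas on bounded prisms (Lemma~3.7 of \cite{BS19}) by their $q$-PD counterparts in Section~16 of \cite{BS19}.

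First I would construct the open immersion $f : \mathrm{Spf}(E_i) \to \mathrm{Spf}(E)$ via Lemma~\ref{lemlift}, applied with $A = E$, completion ideal $(p,[p]_q)$, and $J = I_E$ (resp. $J = J_E$). The one point needing attention is that the images of $I_E$ and $J_E$ in $E/(p,[p]_q)$ are nil. For $I_E$ this is no longer automatic, unlike the prismatic case where the image was literally $0$: for $f \in I_E$ we have $\phi(f) \in [p]_qE$ since $I_E \subseteq \phi^{-1}([p]_qE)$, and $\phi(f) \equiv f^p \pmod p$, so $f^p = \phi(f) - p\,\delta(f) \in (p,[p]_q)$. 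For $f \in J_E = (\phi^m)^{-1}(I_E)$ the element $\phi^m(f)$ lies in $I_E$, hence $\phi^m(f)^p \in (p,[p]_q)$, and combining this with $\phi^m(f) \equiv f^{p^m} \pmod p$ gives $f^{p^{m+1}} \in (p,[p]_q)$. Both images are therefore nil, and Lemma~\ref{lemlift} produces the unique affine open immersion lifting $\overline{f}$.

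Next I would promote $f$ to a morphism of $q$-PD pairs. As in Lemma~\ref{prislift}, I cover $\mathrm{Spf}(E_i)$ by finitely many standard opens $\mathrm{Spf}(\widehat{E}_{g_j})$, with $\widehat{E}_{g_j}$ the classical $(p,[p]_q)$-completion of $E_{g_j}$; each $E_{g_j}$ is étale over $E$, so its derived completion is $(p,[p]_q)$-completely étale, its classical and derived completions coincide, and it carries a unique compatible $\delta$-structure by Lemma~2.18 of \cite{BS19}. Running the Čech resolution of $\mathscr{O}_{\mathrm{Spf}(E_i)}$ through the same homological argument shows that $E_i \otimes^L_E N$ is concentrated in degree $0$ for every $(p,[p]_q)$-torsion $E$-module $N$, whence $E_i$ is $(p,[p]_q)$-completely étale over $E$ and inherits a unique compatible $\delta$-structure by Lemma~2.18 of \cite{BS19}.

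Finally I would check the axioms of Definition~\ref{def:qpd} for $(E_i, \ol{I_EE_i})$. Conditions~2 and~3 follow from the $(p,[p]_q)$-complete flatness of $E_i$ over $E$ together with the corresponding properties of $E$, just as Lemma~3.7 of \cite{BS19} was used in the prismatic case; condition~4 holds because $\mathrm{Spf}(E_i)$ is an affine open of $\mathrm{Spf}(E)$, so $E_i/\ol{I_EE_i}$ is an affine open of the classically $p$-complete $\mathrm{Spf}(E/I_E)$ (the two share the same underlying space, since $I_E$ is nil modulo $(p,[p]_q)$) and is hence classically $p$-complete. I expect the main obstacle to be condition~1 together with the passage to the completed ideal $\ol{I_EE_i}$: unlike the prismatic case, where $I_EE_i$ was already the correct ideal and Lemma~3.7 applied directly, here one must verify the $\delta$-theoretic compatibility $\phi(f) - [p]_q\,\delta(f) \in [p]_q\,\ol{I_EE_i}$ after replacing $I_E$ by its $(p,[p]_q)$-complete closure. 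This is precisely where the $q$-PD stability statements of Lemma~16.5 of \cite{BS19} do the real work.
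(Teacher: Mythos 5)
Your proposal is correct and follows essentially the same route as the paper: check that $I_E$ and $J_E$ are nil modulo $(p,[p]_q)$, invoke Lemma \ref{lemlift} for the lift, transport the \'etale/$\delta$-structure argument of Lemma \ref{prislift} with $([p]_q)$ in place of $I_E$, and reduce the $q$-PD axioms to the $(p,[p]_q)$-complete flatness of $E \to E_i$ via Lemma 16.5 (5) of \cite{BS19}. The only cosmetic difference is that the paper disposes of condition 4 by noting that $\ol{I_EE_i}$ is a closed ideal, rather than by your (equally valid) identification of $E_i/\ol{I_EE_i}$ with an affine open of $\mathrm{Spf}(E/I_E)$.
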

\begin{proof}
We first check that the images of $I_E, J_E$ in $E/(p,[p]_q)$ are nil ideals. For $I_E$, it follows immediately from the condition 1. in Definition \ref{def:qpd} and the definition of the Frobenius lift $\phi$. For $J_E$, we note that the image of $J_E$ in $E/I_E$ is a nil ideal by the definitions of $J_E$ and $\phi$, so the claim follows. Then the existence, the uniqueness and the affinity of the lifting follow from Lemma \ref{lemlift}. We denote the lifting by \linebreak $f : \mathrm{Spf}(E_i) \to \mathrm{Spf}(E)$. We can give a $\delta$-structure on $E_i$ such that the corresponding map $E \to E_i$ is a map of $\delta$-rings as in the proof of Lemma  \ref{prislift} by replacing the ideal $I_E$ with the ideal $([p]_q)$.

It remains to show that $(E_i,\ol{I_EE_i})$ is a $q$-PD pair. The condition 4. in Definition \ref{def:qpd} follows as the ideal $\ol{I_EE_i}$ is closed. To prove the other conditions, by Lemma 16.5 (5) of \cite{BS19}, it is enough to prove that the map of $\delta$-rings $E \to E_i$ is $(p,[p]_q)$-completely flat. Applying the same argument as in the proof of Lemma \ref{prislift} by replacing the ideal $I_E$ with the ideal $([p]_q)$, we see that $E \to E_i$ is $(p,[p]_q)$-completely flat (actually $(p,[p]_q)$-completely \'{e}tale), as desired.
\qed
\end{proof}

The next result will be used to prove the equivalence of topoi.

\begin{prop}
\label{pr:4q}
Let $(E',I_{E'})$ be an object in $(X'/D)_{q\text{-}\mathrm{crys}}$. Then there exists an object $(E,I_{E})$ in $(X/D)_{m\text{-}q\text{-}\mathrm{crys}}$ and a cover of the form $(E',I_{E'})$ $\to$ $\rho$$(E,I_{E})$.
\end{prop}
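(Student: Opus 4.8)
The plan is to mirror the proof of Proposition~\ref{pr:4} step by step, substituting the $q$-PD envelope of \cite{BS19} for the prismatic envelope and letting the prism ideal $[p]_q$ play the role filled there by $I$. First I would carry out the same local reduction. Cover $X$ by finitely many affine opens $\text{Spf}(R_i)$; then $\text{Spf}(R_i)\widehat{\times}_X X' = \text{Spf}(R_i')$ with $R_i' = R_i \widehat{\otimes}_{D/J,\phi^m} D/I$, and by condition (*) together with the separatedness of $X'$ the fiber products $\text{Spf}(R_i')\widehat{\times}_{X'}\text{Spf}(E'/I_{E'})$ are affine opens of $\text{Spf}(E'/I_{E'})$. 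Lemma~\ref{qpdlift} lifts each of these to a $q$-PD pair $(E_i',I_{E_i'})$ with $\text{Spf}(E_i')$ an affine open of $\text{Spf}(E')$, and $(E',I_{E'})\to\prod_i(E_i',I_{E_i'})$ is a cover, so we may assume the structure map $\text{Spf}(E'/I_{E'})\to X'$ factors through an affine open $\text{Spf}(R')$ with $R' = R \widehat{\otimes}_{D/J,\phi^m} D/I$ for an affine open $\text{Spf}(R)\subseteq X$.

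Next I would produce the presentation exactly as before: after refining so that $R/p$ is standard smooth over $D/(p,J)$, write $D[x_1,\dots,x_n]^\wedge \to R$ with kernel $(J,y_1,\dots,y_r)$, where $\overline{y_1},\dots,\overline{y_r}$ is a regular sequence. The Jacobian-criterion computation of Corollary~\ref{cor:reg seq}, carried out now relative to the prism $(D,[p]_q)$, shows that $\phi^m(y_1),\dots,\phi^m(y_r)$ and the base-changed sequence $y_1',\dots,y_r'$ are $(p,[p]_q)$-completely regular relative to $D$. I would then form the $q$-PD envelopes $S$ and $S'$ of $D\{x_1,\dots,x_n\}^\wedge$ along the ideals $K = (I,\phi^m(y_1),\dots,\phi^m(y_r))$ and $K' = (I,y_1',\dots,y_r')$, using the $q$-PD envelope construction of \cite{BS19}; the regularity just established guarantees that these are $(p,[p]_q)$-completely flat over $D$, that $\phi^m(y_i)\in I_S$, and hence $y_i\in J_S = (\phi^m)^{-1}(I_S)$, which yields a map $R\to S/J_S$. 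Functoriality of the envelope produces $(\phi^m)' : S' \to S$, $x_i\mapsto\phi^m(x_i)$.

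The step I expect to be the main obstacle is the $q$-analog of Proposition~\ref{prop:mprismff}, namely that $(\phi^m)'$ is $(p,[p]_q)$-completely faithfully flat. In the prismatic case this reduced cleanly to the faithful flatness of the Frobenius substitution $(\phi^m)''' : D\{x_1,\dots,x_n\}\to D\{x_1,\dots,x_n\}$ (Lemma~2.11 of \cite{BS19}) via the fact that forming the prismatic envelope commutes with the base change along $(\phi^m)'''$. For the $q$-PD envelope this compatibility is less transparent, because the envelope is built by adjoining the operations $\gamma(x) = \phi(x)/[p]_q - \delta(x)$ of Remark~\ref{rqpd}.2; the care is needed in checking that $\phi^m$ carries the generators $y_i'$ of $K'$ to the generators $\phi^m(y_i)$ of $K$ compatibly with these $\gamma$-operations, so that $S$ is identified with the $(p,[p]_q)$-completed base change $S'\widehat{\otimes}_{D\{x_1,\dots,x_n\}^\wedge,(\phi^m)''} D\{x_1,\dots,x_n\}^\wedge$ of $q$-PD envelopes. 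Granting this identification, the flatness of $(\phi^m)'$ descends from that of $(\phi^m)'''$ exactly as in Proposition~\ref{prop:mprismff}.

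Finally, with $(\phi^m)'$ shown to be $(p,[p]_q)$-completely faithfully flat, I would lift the structure map of $(E',I_{E'})$ just as in Proposition~\ref{pr:4}: choose $f_2 : D[x_1,\dots,x_n]^\wedge \to E'$ lifting the map $D[x_1,\dots,x_n]^\wedge \to R' \to E'/I_{E'}$, extend it uniquely to a $\delta$-map $f_3 : D\{x_1,\dots,x_n\}^\wedge \to E'$, note that it carries $K'$ into $I_{E'}$, and invoke the universal property of the $q$-PD envelope to obtain $g : S'\to E'$. Setting $E := E'\widehat{\otimes}_{S'} S$ and letting $h : E'\to E$ be the base change of $(\phi^m)'$ along $g$, faithful flatness of $(\phi^m)'$ passes to $h$, so $h$ is a cover. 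The verification that $h$ defines a morphism $(E',I_{E'})\to\rho(E,I_E)$ in $(X'/D)_{q\text{-crys}}$ is then the same element-level diagram chase as in Proposition~\ref{pr:4}, now using Remark~\ref{rqpd}.4 to control the closed ideal $\overline{I_E E}$.
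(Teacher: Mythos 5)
Your proposal follows the paper's own proof of Proposition \ref{pr:4q} step for step: the same local reduction via Lemma \ref{qpdlift}, the same standard-smooth presentation and regularity statements, the same $q$-PD envelopes $S$ and $S'$ built from the ideals $K$ and $K'$, and the same reduction of the faithful flatness of $(\phi^m)'$ to Lemma 2.11 of \cite{BS19}. The base-change identification of $S$ with $S'\widehat{\otimes}_{D\{x_1,\dots,x_n\}^\wedge,(\phi^m)''}D\{x_1,\dots,x_n\}^\wedge$ that you flag as the delicate point is exactly what the paper asserts (without further elaboration) in Proposition \ref{prop:mqff}, so your argument matches the paper's both in structure and in level of detail.
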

\begin{proof}
Let $X = \bigcup_i \text{Spf}(R_i)$ be a finite affine open cover of $X$. By the definition of $X'$ in Construction \ref{conqcrys}, we see that $\text{Spf}(R_i) \underset{X}{\widehat{\times}} X' = \text{Spf}(R_i \underset{D/J,\phi^m}{\widehat{\otimes}} D/I)$. We denote this formal scheme by $\text{Spf}(R'_i)$.
On the other hand, the condition (*) in Definition \ref{def:qpd} ensures that the map $\text{Spf}(E'/I_{E'}) \to X'$ factors through some affine open $\text{Spf}(R'') \subseteq X'$. As $X'$ is separated by assumption, we see that  $\text{Spf}(R_i') \cap \text{Spf}(R'') = \text{Spf}(R^*_i)$ for some $R^*_i$. Then the formal scheme $\text{Spf}(R'_i) \widehat{\times}_{X'} \text{Spf}(E'/I_{E'})$ is identified with $\text{Spf}(R_i' \widehat{\otimes}_{R^*_i} E'/I_{E'})$. In particular, it is affine. We denote this affine open subscheme of $\text{Spf}(E'/I_{E'})$ by $\text{Spf}(\overline{E'_i})$. 
By Lemma \ref{qpdlift}, there is a unique $q$-PD pair $(E'_i, \ol{I_{E'}E'_i}) \in (X'/D)_{q\text{-}\mathrm{crys}}$ for which the corresponding formal scheme $\text{Spf}(E'_i)$ is an affine open formal subscheme of $\text{Spf}(E')$ and lifts $\text{Spf}(\overline{E'_i})$, namely $E'_i/\ol{I_{E'}E'_i} = \overline{E'_i}$. By construction, we see that \[
(E',I_{E'}) \to \prod_i(E'_i,\ol{I_{E'}E'_i})
\]
 is a cover in $(X'/D)_{q\text{-}\mathrm{crys}}$.
To prove the proposition, we may replace $(E',I_{E'})$ by $(E'_i,\ol{I_{E'}E'_i})$ to assume that the structure morphism $\text{Spf}(E'/I_{E'}) \to X'$ factors through an affine open $\text{Spf}(R') \subseteq X'$ such that $R'$ is of the form $R \underset{D/J,\phi^m}{\widehat{\otimes}} D/I$ for some affine open $\text{Spf}(R) \subseteq X$.

As the map $D/J \to R$ is smooth, Tag 00TA of \cite{Sta} tells us that there exists an open cover of $\text{Spf}(R)$ by standard opens $\text{Spf}(\widehat{R_g})$ such that each $\widehat{R_g}/p$ is standard smooth over $D/(p,J)$.
After refining the given cover of $X$ in the previous paragraph we may assume that $R/p$ is standard smooth over $D/(p,J)$.
By Tag 00T7 of \cite{Sta},  there exists a surjection $D[x_1, \dots, x_n]^\wedge \to R$ whose kernel is the ideal ($J, y_1, \dots, y_r$) such that 
\[
\overline{y_1}, \dots, \overline{y_r} \in D/(p,J)[x_1, \dots, x_n]
\]
 form a regular sequence. 
On the other hand, there is a natural map 
\[
D[x_1, \dots, x_n]^\wedge \to D\{x_1, \dots, x_n\}^\wedge,
\]
 where the symbol \{\} denotes the adjoining of elements in the theory of $\delta$-rings. By a similar argument to that in the proof of Corollary \ref{cor:reg seq}, we see that the sequence 
\[
\phi^m(y_1), \dots, \phi^m(y_r) \in D\{x_1, \dots, x_n\}^\wedge
\]
 is ($p,[p]_q$)-completely regular relative to $D$. 
Then we can construct a map 
\[
D\{x_1, \dots, x_n\}^\wedge \to S \stackrel{\mathrm{def}}{=} (D\{x_1, \dots, x_n, \frac{\phi^{m+1}(y_1)}{[p]_q}, \dots, \frac{\phi^{m+1}(y_r)}{[p]_q}\})^\wedge
\]
 to the $q$-PD envelope of $(D\{x_1, \dots, x_n\}^\wedge,K)$ by Lemma 16.10 of \cite{BS19}, where $K$ denotes the ideal $(I,\phi^{m}(y_1), \dots, \phi^{m}(y_r)) \subseteq D\{x_1, \dots, x_n\}^\wedge$. 
By the construction of the $q$-PD envelope, $\phi^m(y_i) \in I_S$ and so $y_i \in J_S = (\phi^m)^{-1}(I_S)$ for all $i$. This gives a map $R \to S/J_S$.

Next, let $D[x_1, \dots, x_n]^\wedge \to R \underset{D,\phi^m}{\widehat{\otimes}} D$ be the base change of $D[x_1, \dots, x_n]^\wedge \to R$ \linebreak in the previous paragraph along $\phi^m : D \to D$. Passing to the quotient then \linebreak induces $D[x_1, \dots, x_n]^\wedge \to R'$.
The kernel of this map is the ideal ($I, y'_1, \dots, y'_r$), where the $y_{i}'$'s are the images of the $y_i$'s under the map $D[x_1, \dots, x_n]^\wedge \to D[x_1, \dots, x_n]^\wedge$ send\nolinebreak ing $\sum_{\underline{j}} \alpha_{\underline{j}} x^{\underline{j}}$ to $\sum_{\underline{j}} \phi^m(\alpha_{\underline{j}}) x^{\underline{j}}$ (here we write $\underline{j} := (j_1, \dots, j_n)$ for the multi-index). 
By the definition of $y_i$'s, the sequence $\overline{y_1'}, \dots, \overline{y_r'} \in D/(p,I)[x_1, \dots, x_n]$ is a regular \linebreak sequence. By a similar argument to that in the proof of Corollary \ref{cor:reg seq}, the sequence $y_1', \dots, y_r' \in D\{x_1, \dots, x_n\}^\wedge$ is ($p,[p]_q$)-completely regular relative to $D$. Then we can construct a map 
\[
D\{x_1, \dots, x_n\}^\wedge \to S' \stackrel{\mathrm{def}}{=} (D\{x_1, \dots, x_n, \frac{\phi(y_1')}{[p]_q}, \dots, \frac{\phi(y_r')}{[p]_q}\})^\wedge
\]
 to the $q$-PD envelope of $(D\{x_1, \dots, x_n\}^\wedge,K')$, where $K'$ denotes the ideal 
\[
(I,y'_1, \dots, y'_r) \subseteq D\{x_1, \dots, x_n\}^\wedge.
\]
 Then we have a diagram:
\[
\begin{tikzcd}[column sep=0.19cm]
D/I  \arrow[r] & R'&&\\
D \arrow[u, two heads]\arrow[r] &D[x_1, \dots, x_n]^\wedge \arrow[u, two heads]\arrow[r]& D\{x_1, \dots, x_n\}^\wedge \arrow[r] & S' \stackrel{\mathrm{def}}{=} (D\{x_1, \dots, x_n, \frac{\phi(y_1')}{[p]_q}, \dots, \frac{\phi(y_r')}{[p]_q}\})^\wedge.
\end{tikzcd}
\]
By the definitions of $y_i$ and $y_i'$, we have a map of $\delta$-$D$-algebras $(\phi^m)' : S' \to S$ sending $x_i$ to $\phi^m(x_i)$. By Proposition \ref{prop:mqff} below, we see that $(\phi^m)'$ is $(p,[p]_q)$-completely faithfully flat.

Let ($E',I_{E'}) \in (X'/D)_{q\text{-crys}}$ be as above. We have a map 
\[
f_1 : D[x_1, \dots, x_n]^\wedge \to E'/I_{E'}
\]
 determined as the composition of the map $D[x_1, \dots, x_n]^\wedge \to R'$ in the above diagram with $R' \to E'/I_{E'}$. As $D[x_1, \dots, x_n]^\wedge$ is the completion of a polynomial ring, one can choose a map $f_2 : D[x_1, \dots, x_n]^\wedge \to E'$ lifting $f_1$.
As $E'$ is a $\delta$-$D$-algebra, $f_2$ extends uniquely to a $\delta$-$D$-algebra map $f_3 : D\{x_1, \dots, x_n\}^\wedge \to E'$.
By construction, this extension carries $K'$ into $I_{E'}$. By the universal property of $S'$, $f_3$ extends uniquely to a $\delta$-$D$-algebra map $g : S' \to E'$. 
If we set $h : E' \to E' \widehat{\otimes}_{S'} S$ to be the base change of $(\phi^m)' : S' \to S$ along $g$, then by $(p,[p]_q)$-complete faithful flatness of $(\phi^m)'$, we see that the same holds true for $h$.

It remains to check that the map $h$ defines a morphism
\[
(E',I_{E'}) \to \rho(E' \widehat{\otimes}_{S'} S, I_{E' \widehat{\otimes}_{S'} S})
\]
in $(X'/D)_{q\text{-crys}}$. To see this, it is enough to check that the bottom right square in the following diagram is commutative, in which all the other squares are commutative.
{\fontsize{9pt}{10pt}\selectfont
\[
\begin{tikzcd}[column sep=0.35cm]
&D[x_1, \dots, x_n]^\wedge  \arrow[r]\arrow[d] & S\arrow[r,"i_2"]\arrow[d]&    E' \widehat{\otimes}_{S'} S \arrow[d] \\
D[x_1, \dots, x_n]^\wedge \arrow[r, two heads] \arrow[d,"f_2"] & R' = R  \underset{D/J,\phi^m}{\widehat{\otimes}} D/I \arrow[r]\arrow[d]& S/J_S  \underset{D/J,\phi^m}{\widehat{\otimes}} D/I \arrow[r,"\overline{i_2}"] & E' \widehat{\otimes}_{S'} S/J_{E' \widehat{\otimes}_{S'} S}  \underset{D/J,\phi^m}{\widehat{\otimes}} D/I \arrow[d] \\
E'\arrow[r, two heads]&E'/I_{E'}\arrow[rr,"\overline{h}"]&&E' \widehat{\otimes}_{S'} S/I_{E' \widehat{\otimes}_{S'} S}
\end{tikzcd}
\]}

\noindent
Here $i_2$ is the map to the second component of the  coproduct in Lemma \ref{lm:pushout2}. We can check the commutativity of the bottom right square by tracing the elements $x_i$'s:
\[
\begin{tikzcd}
&x_i \arrow[r, mapsto]\arrow[d, mapsto] & x_i\arrow[r, mapsto]&1 \otimes x_i \arrow[d, mapsto] \\
x_i \arrow[r, mapsto] \arrow[d, mapsto] & \overline{x_i} \otimes 1 \arrow[rr, mapsto]&& \overline{1 \otimes x_i} \otimes 1 \arrow[d, mapsto] \\
f_2(x_i)\arrow[r, mapsto]&\overline{f_2(x_i)}\arrow[rr, mapsto]&&\overline{1 \otimes \phi^m(x_i)} = \overline{f_2(x_i) \otimes 1},
\end{tikzcd}
\]
so the proposition follows.
\qed
\end{proof}

We prove the claim used in the proof above.

\begin{prop}
\label{prop:mqff}
With notation as in the proof of Proposition \ref{pr:4q}, the map 
\[
(\phi^m)' : S' \to S
\]
 is $(p,[p]_q)$-completely faithfully flat.
\end{prop}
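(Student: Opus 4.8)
The plan is to follow the proof of Proposition~\ref{prop:mprismff} almost verbatim, the only genuinely new input being that the base-change compatibility of prismatic envelopes used there must be replaced by the corresponding property of $q$-PD envelopes. Write $P = D\{x_1, \dots, x_n\}^\wedge$ and let $(\phi^m)'' : P \to P$ be the $\delta$-$D$-algebra map with $x_i \mapsto \phi^m(x_i)$; by construction the map $(\phi^m)'$ is the one induced by $(\phi^m)''$ on the $q$-PD envelopes $S'$ and $S$ of $K'$ and $K$. Since $(p,[p]_q)$-complete faithful flatness is stable under base change and under derived $(p,[p]_q)$-completion, it suffices to show (a) that $(\phi^m)'$ is a base change of $(\phi^m)''$, and (b) that $(\phi^m)''$ is $(p,[p]_q)$-completely faithfully flat.

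For (a) I would establish the isomorphism
\[
S \;\cong\; S' \,\widehat{\otimes}_{P,(\phi^m)''}\, P
\]
(derived $(p,[p]_q)$-completion), under which $(\phi^m)'$ becomes the base change of $(\phi^m)''$ along the structure map $P \to S'$. The key observation is that $(\phi^m)''$ is a map of $\delta$-rings, hence commutes with $\phi$, so that $(\phi^m)''(\phi(y_i')) = \phi((\phi^m)''(y_i')) = \phi(\phi^m(y_i)) = \phi^{m+1}(y_i)$; thus $(\phi^m)''$ carries $K' = (I, y_1', \dots, y_r')$ into $K = (I, \phi^m(y_1), \dots, \phi^m(y_r))$ and sends each generator $\phi(y_i')/[p]_q$ of $S'$ to the corresponding generator $\phi^{m+1}(y_i)/[p]_q$ of $S$. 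Because the sequences in question are $(p,[p]_q)$-completely regular, as recorded in the proof of Proposition~\ref{pr:4q} via the argument of Corollary~\ref{cor:reg seq}, the formation of these $q$-PD envelopes commutes with base change along $(\phi^m)''$ --- this is part of the content of Lemma~16.10 of \cite{BS19}, exactly as Proposition~3.13 of \cite{BS19} supplies the analogous statement for prismatic envelopes in Proposition~\ref{prop:mprismff}. Matching the universal properties of the two sides then yields the displayed isomorphism.

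For (b), arguing as in Proposition~\ref{prop:mprismff}, the map $(\phi^m)''$ is the derived $(p,[p]_q)$-completion of the base change along $D\{x_1, \dots, x_n\} \to P$ of the $\delta$-$D$-algebra map $(\phi^m)''' : D\{x_1, \dots, x_n\} \to D\{x_1, \dots, x_n\}$, $x_i \mapsto \phi^m(x_i)$. The latter is faithfully flat by Lemma~2.11 of \cite{BS19}, so $(\phi^m)''$ is $(p,[p]_q)$-completely faithfully flat, and by (a) so is $(\phi^m)'$.

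I expect the main obstacle to be step (a): unlike the prismatic envelope, whose base-change compatibility is essentially formal from its universal property, the $q$-PD envelope is built by adjoining the divided elements $\phi(f)/[p]_q$, so one must track carefully how $\phi$ interacts with $(\phi^m)''$ and verify that the $(p,[p]_q)$-complete regularity hypotheses needed to invoke Lemma~16.10 of \cite{BS19} survive the base change. This is precisely the role of the identity $(\phi^m)''(\phi(y_i')) = \phi^{m+1}(y_i)$ together with Corollary~\ref{cor:reg seq}.
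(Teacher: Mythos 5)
Your proposal is correct and follows essentially the same route as the paper: the paper's proof is exactly your steps (a) and (b), asserting without further comment that $(\phi^m)'$ is the derived $(p,[p]_q)$-completion of a base change of $(\phi^m)''$ and then reducing via Lemma 2.11 of \cite{BS19} to the uncompleted map $(\phi^m)'''$. Your additional justification of step (a) via the identity $(\phi^m)''(\phi(y_i'))=\phi^{m+1}(y_i)$ and the base-change compatibility of $q$-PD envelopes is a correct elaboration of what the paper leaves implicit.
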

\begin{proof}
Since the map $(\phi^m)'$ is the derived $(p,[p]_q)$-completion of the base change of the $\delta$-$D$-algebra map \[
(\phi^m)'' : D\{x_1, \dots, x_n\}^\wedge \to D\{x_1, \dots, x_n\}^\wedge, \ \ x_i \mapsto \phi^m(x_i),
\]
it suffices to prove that $(\phi^m)''$ is $(p,[p]_q)$-completely faithfully flat. By Lemma 2.11 of \cite{BS19}, the $\delta$-$D$-algebra map
\[
(\phi^m)''' : D\{x_1, \dots, x_n\} \to D\{x_1, \dots, x_n\}, \ \ x_i \mapsto \phi^m(x_i)
\]
is faithfully flat. As the map $(\phi^m)''$ is the derived $(p,[p]_q)$-completion of the base change of $(\phi^m)'''$ along $D\{x_1, \dots, x_n\} \to D\{x_1, \dots, x_n\}^\wedge$, we conclude that $(\phi^m)''$ is $(p,[p]_q)$-completely faithfully flat.
\qed
\end{proof}

We are now prepared to prove the equivalence of topoi. First, we note that the functor of sites $\rho:(X/D)_{m\text{-}q\text{-crys}} \to (X'/D)_{q\text{-crys}}$ in Construction \ref{conqcrys} induces a functor between the categories of presheaves of sets
\[
\widehat{\rho}^* : \widehat{(X'/D)}_{q\text{-crys}} \to \widehat{(X/D)}_{m\text{-}q\text{-crys}} \ \ \ \ \ \ \mathscr{G} \mapsto \mathscr{G} \circ \rho,
\] 
and it admits a right adjoint
\[
\widehat{\rho}_* :  \widehat{(X/D)}_{m\text{-}q\text{-crys}} \to \widehat{(X'/D)}_{q\text{-crys}}.
\] 
By Proposition \ref{pr:1q}, we obtain a morphism of topoi
\[
\mathrm{C} : \widetilde{(X/D)}_{m\text{-}q\text{-crys}} \to \widetilde{(X'/D)}_{q\text{-crys}} \ \ \ \ \ \ \mathrm{C}^* = \widehat{\rho}^*, \mathrm{C}_* = \widehat{\rho}_*.
\]

\begin{thm}
The morphism $\mathrm{C} : \widetilde{(X/D)}_{m\text{-}q\text{-}\mathrm{crys}} \to \widetilde{(X'/D)}_{q\text{-}\mathrm{crys}}$ is an equivalence of topoi.
\end{thm}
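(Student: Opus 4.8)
The plan is to mirror exactly the argument used for the prismatic equivalence in Theorem \ref{thpristop}. That proof reduced the statement to an abstract criterion describing when a functor of sites induces an equivalence of the associated topoi, namely Proposition 4.2.1 of \cite{Oya17} (equivalently Proposition 9.10 of \cite{Xu19}). This criterion asserts that a functor $\rho$ between sites induces an equivalence of topoi provided that $\rho$ is fully faithful, continuous, and cocontinuous, and that every object of the target site admits a cover by an object lying in the essential image of $\rho$. Since the $m$-$q$-crystalline site and the functor $\rho$ of Construction \ref{conqcrys} were built to parallel the $m$-prismatic situation, the same criterion should apply verbatim once its four hypotheses are verified.

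First I would recall that the pretopology is legitimate: Lemma \ref{lm:pushout2} guarantees that covers are stable under the relevant base change, so $(X/D)_{m\text{-}q\text{-crys}}$ and $(X'/D)_{q\text{-crys}}$ are genuine sites and the morphism of topoi $\mathrm{C}$ with $\mathrm{C}^* = \widehat{\rho}^*$ and $\mathrm{C}_* = \widehat{\rho}_*$ is well-defined. Next I would invoke Proposition \ref{pr:1q}, which already establishes in one stroke that $\rho$ is fully faithful, continuous, and cocontinuous, these being the $q$-analogs of Propositions \ref{pr:1}, \ref{pr:2}, and \ref{pr:3}. Finally, the covering condition is supplied by Proposition \ref{pr:4q}: for every object $(E',I_{E'})$ of $(X'/D)_{q\text{-crys}}$ there exist an object $(E,I_E)$ of $(X/D)_{m\text{-}q\text{-crys}}$ together with a cover $(E',I_{E'}) \to \rho(E,I_E)$. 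With all four hypotheses in hand, the criterion yields that $\mathrm{C}$ is an equivalence of topoi, exactly as in the prismatic case.

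The substantive difficulty has already been dispatched in the preceding propositions rather than in the theorem itself, so I do not expect a genuine obstacle at this stage. The only truly new $q$-crystalline input is Proposition \ref{pr:4q}, whose proof replaces the prismatic envelope of Proposition \ref{pr:4} by the $q$-PD envelope (via Lemma 16.10 of \cite{BS19}) and relies on the faithful flatness of $(\phi^m)'$ established in Proposition \ref{prop:mqff}; the full faithfulness, continuity, and cocontinuity transfer from the prismatic arguments by replacing the bounded prism $(A,I)$ with the $q$-PD pair $(D,I)$ and adjusting the ideal-theoretic conditions accordingly, as already indicated before Proposition \ref{pr:1q}. Thus the proof of this theorem is a formal citation of the abstract criterion together with Propositions \ref{pr:1q} and \ref{pr:4q}, precisely paralleling the proof of Theorem \ref{thpristop}.
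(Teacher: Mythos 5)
Your proof is correct and matches the paper's argument exactly: the paper also reduces the theorem to Proposition 4.2.1 of Oyama (Proposition 9.10 of Xu) and cites Propositions \ref{pr:1q} and \ref{pr:4q} for the hypotheses. Nothing further is needed.
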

\begin{proof}
By Proposition 4.2.1 of \cite{Oya17} (see also Proposition 9.10 of \cite{Xu19}), we are reduced to check the conditions which were proved in Propositions \ref{pr:1q} and \ref{pr:4q}.
\qed
\end{proof}

Next, we define the category of crystals with respect to the categories of modules $\modbb{E}$ and $\modd{E}$ in Definition \ref{def:mod}.

\begin{defi}
\begin{enumerate}
\item Let 
\[
\mathscr{C}_{\prism}((X/D)_{m\text{-}q\text{-crys}}) \, (\text{resp.} \mathscr{C}^{\mathrm{fp}}((X/D)_{m\text{-}q\text{-crys}}),
\mathscr{C}^{\mathrm{tors}}((X/D)_{m\text{-}q\text{-crys}}))
\]
 be the category of abelian presheaves $\mathscr{F}$ on $(X/D)_{m\text{-}q\text{-crys}}$ such that, for any object  $(E,I_E)$  in $(X/D)_{m\text{-}q\text{-crys}}$, $\mathscr{F}(E,I_E) \in \modbb{E}$ (resp. $\modfp{E},\modtors{E}$), and for any morphism $(E,I_E) \to (E_1,I_{E_1})$ in $(X/D)_{m\text{-}q\text{-crys}}$, the map 
\[
\mathscr{F}(E,I_E)  \to \mathscr{F}(E_1,I_{E_1})
\]
 is compatible with the module structures in the usual sense and
the  canonical  \linebreak map $\mathscr{F}(E,I_E) \widehat{\otimes}_E E_1 \to \mathscr{F}(E_1,I_{E_1})$ is an isomorphism of $E_1$-modules. 
\item Assume that $(D, I)$ is a $q$-PD pair with $q = 1$ in $D$. Then we define the cate\nolinebreak gory
$
\mathscr{C}((X/D)_{m\text{-}q\text{-crys}})
$
 as the category of abelian presheaves $\mathscr{F}$ on $(X/D)_{m\text{-}q\text{-crys}}$ such that, for any object  $(E,I_E)$  in $(X/D)_{m\text{-}q\text{-crys}}$, $\mathscr{F}(E,I_E) \in \modd{E}$, and for any morphism $(E,I_E) \to (E_1,I_{E_1})$ in $(X/D)_{m\text{-}q\text{-crys}}$, the map 
\[
\mathscr{F}(E,I_E)  \to \mathscr{F}(E_1,I_{E_1})
\]
 is compatible with the module structures in the usual sense and
the canonical \linebreak map $\mathscr{F}(E,I_E) \widehat{\otimes}_E E_1 \to \mathscr{F}(E_1,I_{E_1})$ is an isomorphism of $E_1$-modules. 
\end{enumerate}
\end{defi}

\begin{rem}
As in Remark \ref{remmcrys}, one can prove that presheaves $\mathscr{F}$ in the above definition are automatically sheaves and the category $\mathscr{C}_{\prism}((X/D)_{m\text{-}q\text{-crys}})$ is unchanged even if we do not impose the condition (*) in Definition \ref{def:mqcrys}.
\end{rem}

We are now prepared to prove the equivalence of categories of crystals. The fol\nolinebreak low\nolinebreak ing results can be proved in exactly the same way as in the case of $m$-prismatic site, so we omit the details.

\begin{prop}
\label{pr:5q}
Let $(E,I_E)$ be an object of $(X/D)_{m\text{-}q\text{-}\mathrm{crys}}$ and let \[g : \rho(E,I_E) \to (E',I_{E'})\] be a morphism in $(X'/D)_{q\text{-}\mathrm{crys}}$. Then there exist an object $(E_1,I_{E_1})$ of $(X/D)_{m\text{-}q\text{-}\mathrm{crys}}$ and a morphism $f : (E,I_E) \to (E_1,I_{E_1})$ in $(X/D)_{m\text{-}q\text{-}\mathrm{crys}}$ such that  $g = \rho(f)$. If $g$ is a cover, then so is $f$.
\end{prop}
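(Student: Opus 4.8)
The plan is to prove this $q$-crystalline statement exactly as its prismatic counterpart Proposition~\ref{pr:5}: the assertion is precisely the content of the cocontinuity of $\rho$ asserted in Proposition~\ref{pr:1q}, so I would extract it from the $q$-analog of the argument used to prove Proposition~\ref{pr:3}. Unwinding the notational convention of Definition~\ref{def:mqcrys}, the datum $g$ is a map of $q$-PD pairs $E \to E'$ over $(D,I)$, together with the structure map $\mathrm{Spf}(E'/I_{E'}) \to X'$ of the object $(E',I_{E'})$ and the requirement that it be compatible with the one of $\rho(E,I_E)$. Since $\rho$ leaves the underlying $q$-PD pair untouched and only rewrites the map to $X$ as a map to $X'$, the task reduces to upgrading the $X'$-structure on $\mathrm{Spf}(E'/I_{E'})$ to an $X$-structure on $\mathrm{Spf}(E'/J_{E'})$; the sought object $(E_1,I_{E_1})$ is then $(E',I_{E'})$ equipped with this lifted structure map, and $f$ is the map $E \to E'$.

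First I would invoke the condition (*) in Definition~\ref{def:mqcrys} for the source $(E,I_E)$, giving an affine open $\mathrm{Spf}(R) \subseteq X$ through which $\mathrm{Spf}(E/J_E) \to X$ factors. As in the proof of Proposition~\ref{pr:3}, the map $g$ fits into a commutative diagram relating $R$, $R' := R \underset{D/J,\phi^m}{\widehat{\otimes}} D/I$, the Frobenius quotients $E/J_E \xrightarrow{\phi^m} E/I_E$ and $E'/J_{E'} \xrightarrow{\phi^m} E'/I_{E'}$, and the projections $E \twoheadrightarrow E/I_E$, $E' \twoheadrightarrow E'/I_{E'}$. Composing $R \to R' \to E'/I_{E'}$ and then using the diagram to produce a dotted arrow $R \to E'/J_{E'}$ yields a structure morphism $\mathrm{Spf}(E'/J_{E'}) \to \mathrm{Spf}(R) \subseteq X$, exhibiting $(E',I_{E'})$ as an object of $(X/D)_{m\text{-}q\text{-crys}}$ and $E \to E'$ as a morphism $f$ with $\rho(f) = g$.

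The genuinely nontrivial step, identical to the prismatic case, is that this lift $R \to E'/J_{E'}$ exists and is unique. This rests on the injectivity of $\phi^m : E'/J_{E'} \to E'/I_{E'}$ (valid since $J_{E'} = (\phi^m)^{-1}(I_{E'})$) together with the fact that $\mathrm{Spf}(E'/I_{E'}) \to \mathrm{Spf}(E'/J_{E'})$ is a homeomorphism, so that a factoring through $\mathrm{Spf}(R)$ after applying $\phi^m$ forces the factoring before it. The requisite nilpotence of the images of $I_{E'}$ and $J_{E'}$ in $E'/(p,[p]_q)$ is exactly what is checked at the start of the proof of Lemma~\ref{qpdlift}, so this transcribes without change.

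For the cover claim, I would observe that the cover condition of Definition~\ref{def:mqcrys} depends only on the ring map $E \to E'$ and on the ideals $I_E, I_{E'}$, none of which is altered by $\rho$. Hence if $g$ is a cover---that is, $E \to E'$ is $(p,[p]_q)$-completely faithfully flat and satisfies the rigidity condition~\eqref{eq:rigid} $I_{E'} = \overline{I_E E'}$---then $f$ is automatically a cover as well. The only feature absent from the prismatic argument is this condition~\eqref{eq:rigid}, but since it is built into the hypothesis on $g$ and preserved verbatim by $\rho$, it poses no difficulty. I therefore expect the lift-existence step to be the sole real subtlety, everything else being a verbatim transcription of the $m$-prismatic argument.
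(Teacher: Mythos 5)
Your proposal is correct and matches the paper's approach: the paper proves Proposition \ref{pr:5q} by the same argument as Proposition \ref{pr:5}, which is exactly the lifting construction in the proof of cocontinuity (Proposition \ref{pr:3}) — the new structure map $R \to E'/J_{E'}$ is defined as the composite of the given structure map $R \to E/J_E$ with the reduction modulo $J$ of the ring map $E \to E'$, and the cover condition (including \eqref{eq:rigid}) is untouched by $\rho$. One small correction of emphasis: the existence of this lift does not rest on the injectivity of $\phi^m : E'/J_{E'} \to E'/I_{E'}$ or on a homeomorphism argument (those are the ingredients of the full-faithfulness proof, Proposition \ref{pr:1}); existence is immediate from the explicit composite, and the diagram chase is only needed to verify that $\rho(f) = g$.
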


\begin{prop}
\label{pr:6q}
Let $g : (E',I_{E'}) \to (E'_1,I_{E'_1})$ be a morphism in $(X'/D)_{q\text{-}\mathrm{crys}}$. Then there exist a morphism $h : (E,I_{E}) \to (E_1,I_{E_1})$ in $(X/D)_{m\text{-}q\text{-}\mathrm{crys}}$ and covers $f : (E',I_{E'}) \to \rho(E,I_{E})$, $f_1 : (E'_1,I_{E'_1}) \to \rho(E_1,I_{E_1})$ in $(X'/D)_{q\text{-}\mathrm{crys}}$ such that the following diagram is a pushout diagram:
\[
\begin{tikzcd}
(E'_1,I_{E'_1}) \arrow[r,"f_1"] & \rho(E_1,I_{E_1}) \arrow[dl,phantom, "\urcorner", very near start] \\
(E',I_{E'}) \arrow[u,"g"]\arrow[r,"f"]&\rho(E,I_{E})\arrow[u,"\rho(h)"].
\end{tikzcd}
\]
\end{prop}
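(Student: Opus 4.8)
The plan is to transcribe the proof of Proposition~\ref{pr:6} essentially verbatim, replacing each prismatic ingredient by its $q$-crystalline counterpart proved above (Lemma~\ref{lm:pushout2} and Propositions~\ref{pr:4q}, \ref{pr:5q}). First I would apply Proposition~\ref{pr:4q} to the object $(E',I_{E'})$ to produce an object $(E,I_E)$ of $(X/D)_{m\text{-}q\text{-crys}}$ together with a cover $f : (E',I_{E'}) \to \rho(E,I_E)$ in $(X'/D)_{q\text{-crys}}$; this supplies the lower edge and the lower-left vertex of the square we are trying to build.

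Next I would form the pushout $(E'',I_{E''})$ of the span $(E'_1,I_{E'_1}) \xleftarrow{g} (E',I_{E'}) \xrightarrow{f} \rho(E,I_E)$ inside $(X'/D)_{q\text{-crys}}$. Because $f$ is a cover, Lemma~\ref{lm:pushout2} ensures that this coproduct is represented by an honest object of the $q$-crystalline site, whose underlying ring is the classical $(p,[p]_q)$-completed tensor product $E\,\widehat{\otimes}_{E'}\,E'_1$ and whose $q$-PD ideal is $\overline{I_{E'_1}E''}$, and --- decisively --- that the cobase change $f_1 : (E'_1,I_{E'_1}) \to (E'',I_{E''})$ of $f$ along $g$ is itself a cover. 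This gives the upper edge $f_1$, with $(E'',I_{E''})$ destined to be $\rho(E_1,I_{E_1})$.

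It then remains to exhibit $(E'',I_{E''})$ as the image under $\rho$ of an object of $(X/D)_{m\text{-}q\text{-crys}}$ and the coprojection $\rho(E,I_E) \to (E'',I_{E''})$ as $\rho(h)$. For this I would feed the morphism $\rho(E,I_E) \to (E'',I_{E''})$ into Proposition~\ref{pr:5q}, obtaining an object $(E_1,I_{E_1})$ and a morphism $h : (E,I_E) \to (E_1,I_{E_1})$ in $(X/D)_{m\text{-}q\text{-crys}}$ with $\rho(h)$ equal to that coprojection, so that $\rho(E_1,I_{E_1}) = (E'',I_{E''})$ as $q$-PD pairs. Assembling $f$, $f_1$, $g$ and $\rho(h)$ then reproduces the claimed pushout square with both $f$ and $f_1$ covers, and we are done.

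The only step that is not completely formal --- and the sole place where the $q$-crystalline argument departs from the prismatic one --- is the ideal bookkeeping in the pushout. In Definition~\ref{def:mqcrys} a cover must obey the rigidity condition $I_{E'} = \overline{I_E E'}$, and the coproduct of Lemma~\ref{lm:pushout2} is formed with the $(p,[p]_q)$-complete ideal $\overline{I_{E'_1}E''}$ rather than with a naive extension of ideals. I would therefore check explicitly that $(E'',I_{E''})$ and $f_1$ as produced by Lemma~\ref{lm:pushout2} carry exactly this $q$-PD ideal, so that $f_1$ satisfies the rigidity condition and is a cover in the sense of Definition~\ref{def:mqcrys}. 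Once this is confirmed, every remaining verification is word-for-word the prismatic computation and poses no new obstacle.
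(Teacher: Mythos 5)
Your proposal is correct and follows exactly the route the paper takes: the paper proves this by citing the prismatic case (Proposition \ref{pr:6}), whose proof is precisely your three steps — produce $f$ via Proposition \ref{pr:4q}, form the pushout (Lemma \ref{lm:pushout2} supplying the cover $f_1$ and the ideal $\ol{I_{E'_1}E''}$), and lift the coprojection via Proposition \ref{pr:5q}. Your extra check on the rigidity condition for $f_1$ is already contained in the statement of Lemma \ref{lm:pushout2}, so nothing further is needed.
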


\begin{prop}
\label{pr:7q}
Let $(E',I_{E'})$ be an object of $(X'/D)_{q\text{-}\mathrm{crys}}$, let $(E,I_E)$ be an \linebreak object of $(X/D)_{m\text{-}q\text{-}\mathrm{crys}}$ and let $(E',I_{E'}) \to \rho(E,I_{E})$ be a cover. 
Then there exist an object  $(E_2,I_{E_2})$ of $(X/D)_{m\text{-}q\text{-}\mathrm{crys}}$ and two morphisms $p_1,p_2 : (E,I_{E}) \to (E_2,I_{E_2})$ in $(X/D)_{m\text{-}q\text{-}\mathrm{crys}}$ such that $\rho(E_2,I_{E_2}) = \rho(E,I_E) \underset{(E',I_{E'})}{\widehat{\otimes}} \rho(E,I_E)$, and $\rho(p_1)$ {\rm (}resp. $\rho(p_2)${\rm )} is the map $\rho(E,I_E) \to \rho(E,I_E) \underset{(E',I_{E'})}{\widehat{\otimes}} \rho(E,I_E) $ to the first {\rm (}resp. second{\rm )} component.
\end{prop}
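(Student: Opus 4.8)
The plan is to follow verbatim the argument for Proposition \ref{pr:7}, substituting the $q$-crystalline analogs established in this section (Propositions \ref{pr:5q} and \ref{pr:1q}) for their prismatic counterparts. First I would observe that, since $(E',I_{E'}) \to \rho(E,I_E)$ is a cover in $(X'/D)_{q\text{-crys}}$, Lemma \ref{lm:pushout2} guarantees that the coproduct
\[
\rho(E,I_E) \underset{(E',I_{E'})}{\widehat{\otimes}} \rho(E,I_E)
\]
is represented by an object of $(X'/D)_{q\text{-crys}}$ and that both coprojections are again covers. In particular the coprojection onto the first component
\[
i_1 : \rho(E,I_E) \to \rho(E,I_E) \underset{(E',I_{E'})}{\widehat{\otimes}} \rho(E,I_E)
\]
is a morphism of $(X'/D)_{q\text{-crys}}$ whose source is $\rho(E,I_E)$.

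Next I would apply Proposition \ref{pr:5q} to $i_1$. This lifts $i_1$ to a morphism $p_1 : (E,I_E) \to (E_2,I_{E_2})$ in $(X/D)_{m\text{-}q\text{-crys}}$ with $\rho(p_1) = i_1$; in particular the target satisfies $\rho(E_2,I_{E_2}) = \rho(E,I_E) \underset{(E',I_{E'})}{\widehat{\otimes}} \rho(E,I_E)$ and $\rho(p_1)$ is precisely the coprojection onto the first component, as required. To produce $p_2$ I would then invoke the full faithfulness of $\rho$ (Proposition \ref{pr:1q}): the second coprojection $i_2 : \rho(E,I_E) \to \rho(E_2,I_{E_2})$ is a morphism between two objects lying in the image of $\rho$, so fullness yields a morphism $p_2 : (E,I_E) \to (E_2,I_{E_2})$ in $(X/D)_{m\text{-}q\text{-crys}}$ with $\rho(p_2) = i_2$ (unique, by faithfulness), which is the coprojection onto the second component.

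The argument is entirely formal, so I expect no genuine obstacle. The one point that needs care is the very first step, namely that the coproduct $\rho(E,I_E) \underset{(E',I_{E'})}{\widehat{\otimes}} \rho(E,I_E)$ is genuinely represented inside $(X'/D)_{q\text{-crys}}$; this is where the $q$-crystalline pushout Lemma \ref{lm:pushout2} is essential, since the relevant tensor product is the $(p,[p]_q)$-completion and the correct ideal on the pushout is governed by the rigidity condition (\ref{eq:rigid}) rather than by the naive extension of $I_E$. Once this representability is in hand, the liftings $p_1$ and $p_2$ follow immediately from Propositions \ref{pr:5q} and \ref{pr:1q}, exactly as in the prismatic case.
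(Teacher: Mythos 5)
Your proposal is correct and follows exactly the route of the paper's (omitted) proof, which simply transposes the argument of Proposition \ref{pr:7}: lift the first coprojection via Proposition \ref{pr:5q} to obtain $p_1$ and the object $(E_2,I_{E_2})$, then obtain $p_2$ from the fullness of $\rho$ (Proposition \ref{pr:1q}). Your added remark that Lemma \ref{lm:pushout2} is what guarantees representability of the coproduct, with the ideal on the pushout governed by the condition (\ref{eq:rigid}), is a correct and worthwhile precision that the paper leaves implicit.
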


\begin{thm}
\label{th:mqcrys}
The functors $\mathrm{C}_*,\mathrm{C}^*$ induce  equivalences of categories
\[
\mathscr{C}_{\prism}((X/D)_{m\text{-}q\text{-}\mathrm{crys}}) \rightleftarrows \mathscr{C}_{\prism}((X'/D)_{q\text{-}\mathrm{crys}})
\]
quasi-inverse to each other.
\end{thm}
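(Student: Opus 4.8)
The plan is to follow the proof of Theorem~\ref{th:mcrys} line by line, since every ingredient used there now has a $q$-crystalline counterpart already established in this section: the equivalence of topoi, the full faithfulness, continuity and cocontinuity packaged in Proposition~\ref{pr:1q}, and the three structural Propositions~\ref{pr:5q}, \ref{pr:6q}, \ref{pr:7q}, which are the $q$-analogs of Propositions~\ref{pr:5}, \ref{pr:6}, \ref{pr:7}. The formal properties of the module categories recorded in Proposition~\ref{pr:tensor} apply directly to the underlying bounded prisms $(E,[p]_qE)$ of the $q$-PD pairs in play (all completed tensor products now being $(p,[p]_q)$-completions), so they may be used freely.

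I would first check that $\mathrm{C}^*$ preserves crystals; as in the prismatic case this is immediate from the identity $\mathrm{C}^*(\mathscr{F})(E,I_E) = \mathscr{F}(\rho(E,I_E))$ together with the crystal property of $\mathscr{F}$ and the definition of $\rho$. For $\mathrm{C}_*$, I would take $\mathscr{F} \in \mathscr{C}_{\prism}((X/D)_{m\text{-}q\text{-crys}})$ and a morphism $g : (E',I_{E'}) \to (E'_1,I_{E'_1})$ in $(X'/D)_{q\text{-crys}}$, and use the pushout diagram furnished by Proposition~\ref{pr:6q} (with $f,f_1$ covers) to build the same commutative square of completed tensor products as in Theorem~\ref{th:mcrys}. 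Invoking Proposition~\ref{pr:tensor} together with the identity $\mathrm{C}^*\mathrm{C}_* \simeq \mathrm{id}$ (so that $\mathscr{F}(E,I_E) = \mathrm{C}_*(\mathscr{F})(\rho(E,I_E))$), this reduces the whole statement to the local claim that for every cover $f : (E',I_{E'}) \to \rho(E,I_E)$ one has $\mathrm{C}_*(\mathscr{F})(E',I_{E'}) \in \modbb{E'}$ and the canonical map $\mathrm{C}_*(\mathscr{F})(E',I_{E'}) \widehat{\otimes}_{E'} E \to \mathrm{C}_*(\mathscr{F})(\rho(E,I_E))$ is an isomorphism.

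To prove this local claim I would use Proposition~\ref{pr:7q} to rewrite the \v{C}ech nerve $E' \to \rho(E) \rightrightarrows \rho(E)\widehat{\otimes}_{E'}\rho(E) \Rrightarrow \cdots$ in the form $E' \to \rho(E) \rightrightarrows \rho(E_2) \Rrightarrow \rho(E_3)$, with $E_2,E_3 \in (X/D)_{m\text{-}q\text{-crys}}$ representing the relevant coproducts. Applying $\mathrm{C}_*(\mathscr{F})$ and the identity above identifies this with $\mathscr{F}(E) \rightrightarrows \mathscr{F}(E_2) \Rrightarrow \mathscr{F}(E_3)$, which, since $\mathscr{F}$ is a crystal, is a descent datum on $\mathscr{F}(E)$ along the $(p,[p]_q)$-completely faithfully flat map $(E',I_{E'}) \to (E,I_E)$. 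By the descent condition built into the definition of $\modbb{E'}$ it descends to a unique $M \in \modbb{E'}$, and comparing the exact sequence $0 \to M \to \mathscr{F}(E) \rightrightarrows \mathscr{F}(E_2)$ with the exact sequence coming from the sheaf property of $\mathrm{C}_*(\mathscr{F})$ identifies $\mathrm{C}_*(\mathscr{F})(E',I_{E'})$ with $M$, yielding both assertions.

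The only genuine point requiring care, and hence the main obstacle, is that coproducts and covers in the $q$-crystalline site are governed by the rigidity condition $I_{E'} = \ol{I_E E'}$ of \eqref{eq:rigid} and by the $q$-PD ideal $\ol{I_{E_2}E_3}$ of Lemma~\ref{lm:pushout2}, rather than by the simpler $IE$ of the prismatic case. These discrepancies, however, are exactly what Propositions~\ref{pr:5q}, \ref{pr:6q}, \ref{pr:7q} have already absorbed; once one grants that Proposition~\ref{pr:tensor} transports to $(p,[p]_q)$-completions, the descent bookkeeping becomes formally identical to the prismatic argument and goes through unchanged.
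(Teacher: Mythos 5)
Your proposal is correct and matches the paper's approach exactly: the paper proves Theorem \ref{th:mqcrys} by declaring that it follows "in exactly the same way" as Theorem \ref{th:mcrys}, using Propositions \ref{pr:5q}, \ref{pr:6q}, \ref{pr:7q} in place of their prismatic counterparts, which is precisely the argument you spell out. Your extra attention to the rigidity condition $I_{E'} = \ol{I_E E'}$ and the $(p,[p]_q)$-completions is a reasonable sanity check, and these points are indeed already absorbed into the cited propositions as you say.
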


\begin{cor}
The functors $\mathrm{C}_*,\mathrm{C}^*$ induce  equivalences of categories
\begin{align*}
\mathscr{C}^{\mathrm{fp}}((X/D)_{m\text{-}q\text{-}\mathrm{crys}}) &\rightleftarrows \mathscr{C}^{\mathrm{fp}}((X'/D)_{q\text{-}\mathrm{crys}}), \\
\mathscr{C}^{\mathrm{tors}}((X/D)_{m\text{-}q\text{-}\mathrm{crys}}) &\rightleftarrows \mathscr{C}^{\mathrm{tors}}((X'/D)_{q\text{-}\mathrm{crys}})
\end{align*}
quasi-inverse to each other.
\end{cor}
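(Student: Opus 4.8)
The plan is to run the argument establishing Theorem~\ref{th:mqcrys} essentially verbatim, checking at each step that the functors $\mathrm{C}^*$ and $\mathrm{C}_*$ respect the smaller coefficient categories $\modfp{}$ and $\modtors{}$ in place of $\modbb{}$. Since $\modfp{E}, \modtors{E} \subseteq \modbb{E}$ for every $q$-PD pair $(E,I_E)$, the categories $\mathscr{C}^{\mathrm{fp}}$ and $\mathscr{C}^{\mathrm{tors}}$ are full subcategories of $\mathscr{C}_{\prism}$ on both sides, and by Theorem~\ref{th:mqcrys} the functors $\mathrm{C}^*, \mathrm{C}_*$ are already quasi-inverse equivalences between the ambient crystal categories. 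It therefore suffices to show that $\mathrm{C}^*$ and $\mathrm{C}_*$ each carry the subcategory $\mathscr{C}^{\mathrm{fp}}$ (resp.\ $\mathscr{C}^{\mathrm{tors}}$) on one side into the corresponding subcategory on the other; fullness, faithfulness, and the quasi-inverse relation are then inherited automatically from Theorem~\ref{th:mqcrys}.

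For $\mathrm{C}^*$ this is immediate. If $\mathscr{F} \in \mathscr{C}^{\mathrm{fp}}((X'/D)_{q\text{-}\mathrm{crys}})$, then for any object $(E,I_E)$ of $(X/D)_{m\text{-}q\text{-}\mathrm{crys}}$ we have $\mathrm{C}^*(\mathscr{F})(E,I_E) = \mathscr{F}(\rho(E,I_E))$, and since the underlying ring of $\rho(E,I_E)$ is again $E$, this module lies in $\modfp{E}$ by hypothesis. The torsion case is identical.

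For $\mathrm{C}_*$ I would follow the final descent step in the proof of Theorem~\ref{th:mcrys}, whose $q$-crystalline counterpart underlies Theorem~\ref{th:mqcrys}. Given $\mathscr{F} \in \mathscr{C}^{\mathrm{fp}}((X/D)_{m\text{-}q\text{-}\mathrm{crys}})$ and an object $(E',I_{E'})$, Proposition~\ref{pr:4q} furnishes a cover $(E',I_{E'}) \to \rho(E,I_E)$, and the crystal structure of $\mathscr{F}$ equips $\mathscr{F}(E) \in \modfp{E}$ with a descent datum relative to the faithfully flat map $(E',I_{E'}) \to (E,I_E)$. As in the proof of Theorem~\ref{th:mcrys}, the sheaf property identifies $\mathrm{C}_*(\mathscr{F})(E',I_{E'})$ with the module to which this descent datum descends. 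The key point is that finite projective modules satisfy faithfully flat descent by Proposition~A.12 of \cite{AB19}, so this descended module lies in $\modfp{E'}$; the compatibility-with-base-change isomorphism then follows exactly as before using part~(1) of Proposition~\ref{pr:tensor} for $\modfp{}$. For the torsion case one argues identically, replacing the descent for finite projective modules by the usual faithfully flat descent for torsion modules, both recorded in the Remark following Proposition~\ref{pr:tensor}.

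The only substantive input beyond the proof of Theorem~\ref{th:mqcrys} is thus the faithfully flat descent of finite projective and of torsion modules, together with the stability of these two classes under the completed base change of part~(1) of Proposition~\ref{pr:tensor}; both are already available. I expect no genuine obstacle here, the single point requiring verification being that the descended module remains in the correct subcategory $\modfp{E'}$ (resp.\ $\modtors{E'}$) rather than only in the larger $\modbb{E'}$, which is precisely what the cited descent results guarantee.
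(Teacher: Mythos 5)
Your proposal is correct and matches the paper's (implicit) argument: the paper proves this corollary by simply rerunning the proof of Theorem \ref{th:mcrys}/\ref{th:mqcrys} with $\modfp{}$ or $\modtors{}$ in place of $\modbb{}$, the only new input being exactly what you identify — that faithfully flat descent preserves finite projectivity (Proposition A.12 of \cite{AB19}) and torsion-ness, so the descended module $\mathrm{C}_*(\mathscr{F})(E',I_{E'})$ lands in the correct subcategory. Your reduction to checking that $\mathrm{C}^*$ and $\mathrm{C}_*$ preserve the full subcategories, with the equivalence then inherited from the ambient one, is a clean and valid packaging of the same argument.
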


\begin{cor}
Assume that $(D, I)$ is a $q$-PD pair with $q = 1$ in $D$. Then the functors $\mathrm{C}_*,\mathrm{C}^*$ induce  equivalences of categories
\[
\mathscr{C}((X/D)_{m\text{-}q\text{-}\mathrm{crys}}) \rightleftarrows \mathscr{C}((X'/D)_{q\text{-}\mathrm{crys}}).
\]
quasi-inverse to each other.
\end{cor}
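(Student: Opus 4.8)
The plan is to reduce everything to the proof of Theorem \ref{th:mqcrys} (itself modeled on Theorem \ref{th:mcrys}), replacing the module categories $\modbb{E}$ by $\modd{E}$ throughout. The first step is to record why the hypothesis $q=1$ places us in the regime of the category $\mathcal{M}$ of Definition \ref{def:mod}. Since the base map $A = \mathbf{Z}_p\llbracket q-1\rrbracket \to D$ kills $q-1$ and every object $(E,I_E)$ of $(X/D)_{m\text{-}q\text{-crys}}$ is a $\delta$-pair over $(A,(q-1))$, we have $q=1$ in each such $E$, hence $[p]_q = p$. Therefore each $E$ is $p$-torsion free (being $[p]_q$-torsion free) and classically $p$-complete (being derived $(p,[p]_q)$-complete, $p$-torsion free and bounded), so $\modd{E}$ is defined for every object of the site; moreover the covers, which are $(p,[p]_q)$-completely faithfully flat, are precisely the $p$-completely faithfully flat maps. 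Thus the completions and flatness hypotheses appearing in the argument coincide with those built into $\mathcal{M}$ in Definition \ref{def:mod}.2. The functor $\rho$ of Construction \ref{conqcrys} and the equivalence of topoi $\mathrm{C}$ are unchanged, so it remains only to check that $\mathrm{C}_*$ and $\mathrm{C}^*$ preserve the subcategories of crystals cut out by $\mathcal{M}$.

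Preservation under $\mathrm{C}^*$ is immediate, exactly as in Theorem \ref{th:mcrys}, from the identity $\mathrm{C}^*(\mathscr{F})(E,I_E) = \mathscr{F}(\rho(E,I_E))$ together with part (1) of Proposition \ref{pr:tensor}, which is valid for $\mathcal{M}$. For $\mathrm{C}_*$ I would copy the proof of Theorem \ref{th:mcrys} line for line: given a crystal $\mathscr{F}$ and a morphism $g : (E',I_{E'}) \to (E'_1,I_{E'_1})$ in $(X'/D)_{q\text{-crys}}$, apply Proposition \ref{pr:6q} to obtain a pushout square in which the horizontal maps $f,f_1$ are covers, then invoke Proposition \ref{pr:tensor} to reduce to the case of a single cover $f : (E',I_{E'}) \to \rho(E,I_E)$. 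Propositions \ref{pr:5q} and \ref{pr:7q} then permit one to rewrite the associated \v{C}ech nerve over $E'$ as the $\rho$-image of a diagram built from morphisms $E \to E_2 \to E_3$ in $(X/D)_{m\text{-}q\text{-crys}}$, and the isomorphism $\mathrm{C}^*\mathrm{C}_* \simeq \mathrm{id}$ identifies the diagram $\mathrm{C}_*(\mathscr{F})(\rho(E)) \to \mathrm{C}_*(\mathscr{F})(\rho(E_2)) \to \cdots$ with $\mathscr{F}(E) \to \mathscr{F}(E_2) \to \cdots$.

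The heart of the matter, and the step I expect to be the main obstacle, is the descent of the resulting datum. Since $\mathscr{F}$ takes values in $\mathcal{M}$, the diagram $\mathscr{F}(E) \to \mathscr{F}(E_2) \to \mathscr{F}(E_3)$ is a descent datum on $\mathscr{F}(E) \in \modd{E}$ relative to the $p$-completely faithfully flat map $E' \to E$, and Definition \ref{def:mod}.2 for $\mathcal{M}$ guarantees that it descends uniquely to an object $M \in \modd{E'}$ with $M \widehat{\otimes}_{E'} E \cong \mathscr{F}(E)$. Comparing the exact sequence $0 \to M \to \mathscr{F}(E) \to \mathscr{F}(E_2)$ with the sheaf-exactness sequence $0 \to \mathrm{C}_*(\mathscr{F})(E') \to \mathrm{C}_*(\mathscr{F})(\rho(E)) \to \mathrm{C}_*(\mathscr{F})(\rho(E_2))$ and using $\mathrm{C}^*\mathrm{C}_* \simeq \mathrm{id}$ identifies $\mathrm{C}_*(\mathscr{F})(E')$ with $M$, whence $\mathrm{C}_*(\mathscr{F})(E') \in \modd{E'}$ and the base-change map $\mathrm{C}_*(\mathscr{F})(E') \widehat{\otimes}_{E'} E \to \mathrm{C}_*(\mathscr{F})(\rho(E))$ is an isomorphism, which is exactly the crystal condition. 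The only genuinely new verification is that the descent datum supplied by the site has precisely the shape demanded by Definition \ref{def:mod}.2, i.e. that faithfully flat covers of $q$-PD pairs with $q=1$ really are $p$-completely faithfully flat maps of $p$-torsion free $p$-complete rings; but this is exactly the observation of the first paragraph. Once it is in place, $\mathrm{C}_*$ and $\mathrm{C}^*$ restrict to mutually quasi-inverse equivalences between $\mathscr{C}((X/D)_{m\text{-}q\text{-crys}})$ and $\mathscr{C}((X'/D)_{q\text{-crys}})$, as claimed.
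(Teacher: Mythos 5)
Your proposal is correct and follows exactly the route the paper intends: the paper omits the proof of this corollary, remarking only that it is proved in the same way as Theorem \ref{th:mcrys} with $\mathcal{M}_{\prism}$ replaced by $\mathcal{M}$, and your argument supplies precisely those details — in particular the key observation that $q=1$ forces $[p]_q=p$, so every object of the site is a $p$-torsion free classically $p$-complete ring and every cover is $p$-completely faithfully flat, which is what makes Definition \ref{def:mod} for $\mathcal{M}$ and Proposition \ref{pr:tensor} applicable.
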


Finally, we establish a relationship between the category of crystals on the $m$-$q$-crystalline site and the category of certain stratifications.

\begin{defi}
\label{defstra}
\begin{enumerate}
\item Let $(D,I)$ be a $q$-PD pair, let $J = (\phi ^m)^{-1}(I)$ and let $X$ be a $p$-adic formal scheme smooth and separated over $D/J$. 
We define the category $\stra{m}{(D,I)}$ as follows.  Objects are maps $\str : T \to R$ where $T$ is a finite set and Spf$(R) \subseteq X$ is an affine open formal subscheme such that the induced map $D/(p,J) \to R/p$ is standard smooth in the sense of Tag 00T6 of \cite{Sta}; moreover, $\str$ is a map satisfying the following conditions:
\begin{enumerate}
\item $f_\str : D_\str := D[x_t]^{\wedge}_{t \in T} \to R$ \ \ \ is surjective. \\
$\ \ \ \ \ \ \ \ \ \ \ \ \ \ \ \ \ \ \   x_t \ \ \ \mapsto \str(t)$
\item There exists a sequence $y_1, \dots , y_r \in D[x_t]^{\wedge}_{t \in T}$ such that the kernel of $f_\str$ can be described as the ideal $(J,(y_w)_{w \in W})$ (where $W = \{ 1, \dots , r \}$), and that $\overline{y_1}, \dots , \overline{y_r} \in D/(p,J)[x_t]_{t \in T}$ form a regular sequence. 
\end{enumerate}
A morphism from $\str : T \to R$ to $\str' : T' \to R'$ in $\stra{m}{(D,I)}$ is a pair $(f,g)$ where $f$ is a map of sets $f : T \to T'$ and $g$ is a map of $p$-complete rings $g : R \to R'$, such that the diagram
\[
\begin{tikzcd}
T\arrow[r,"\str"]\arrow[d,"f"] & R\arrow[d,"g"] \\
T'\arrow[r,"\str'"] & R'.
\end{tikzcd}
\] 
commute and that the map $g^* : \text{Spf}(R') \to \text{Spf}(R)$ corresponding to $g$ is \linebreak compatible with the open immersions $\text{Spf}(R') \subseteq X, \text{Spf}(R) \subseteq X$.

Let $\str : T \to R$ and $\str' : T' \to R'$ be objects of $\stra{m}{(D,I)}$. Their sum $\str \sqcup \str'$ is given by the map
\[
T \sqcup T' \to R \widehat{\otimes}_{D/J} R' \to R'',
\]
where the first map is given by $t \mapsto \str(t) \otimes 1 (t \in T), t' \mapsto 1 \otimes \str'(t') (t' \in T')$, $R''$ is defined by the equality Spf$(R'')$ = Spf$(R)$ $\cap$ Spf$(R') \subseteq X$ (it is well-defined as $X$ is separated) 
and the second map is induced by the open immersions $\text{Spf}(R'') \subseteq \text{Spf}(R), \text{Spf}(R'') \subseteq \text{Spf}(R')$. We shall simply write $\stra{m}{D}$ instead of $\stra{m}{(D,I)}$ if no confusion arises.

\item For $\str : T \to R \in \stra{m}{D}$, we define the ring  $S_\str$ in the same way as in the proof of Proposition \ref{pr:4q} : 
namely, we define $S_\str = D\{x_t, \frac{\phi^{m+1}(y_w)}{[p]_q}\}^\wedge_{t \in T, w \in W}$ to be the $q$-PD envelope of $(D\{x_t\}^\wedge_{t \in T},K)$, where the ideal $K = (I,(\phi^m(y_w))_{w \in W})$.
Note that it is independent of the choice of the elements $y_w (w \in W)$. Indeed, the ideal $(J, (y_w)_{w \in W})$ is independent of the choice when regarded as an ideal of $D\{x_t\}^\wedge_{t \in T}$, and so is the ideal $(I,(\phi^m(y_w))_{w \in W}) = (I, \phi^m((J, (y_w)_{w \in W})))$. 
Since the $q$-PD envelope only depends on the ideal $(I,(\phi^m(y_w))_{w \in W})$ by Lemma 16.10 of \cite{BS19}, this gives the desired independence. In particular, we see that the construction of $S_\str$ is functorial in $\str \in \stra{m}{D}$.

\item A \textit{stratification} with respect to $\stra{m}{D}$ and  $\mathcal{M}_{\prism}$ is a pair 
\[
((M_\str)_{\str \in \stra{m}{D}}, (\varphi_{\str \str'})_{\str \to \str'}),
\]
 where $M_\str \in \modbb{S_\str}$ and $\varphi_{\str \str'} : M_\str \widehat{\otimes}_{S_\str} S_{\str'}  \xrightarrow{\simeq} M_{\str'}$ is an isomorphism of $S_{\str'}$-modules satisfying the cocycle condition. 
If $(D, I)$ is a $q$-PD pair with $q = 1$ in $D$, then we can also define a \textit{stratification} with respect to $\stra{m}{D}$ and $\mathcal{M}$  as above by replacing $\mathcal{M}_{\prism}$ with $\mathcal{M}$. 
We denote the category of stratifications with respect to $\stra{m}{D}$ and  $\mathcal{M}_{\prism}$ (resp. $\mathcal{M}$) by $\text{Str}_\prism(\stra{m}{D})$ (resp. $\text{Str}(\stra{m}{D})$).
\end{enumerate}
\end{defi}

\begin{prop}
\label{pr:qstra}
There is an equivalence of categories
\[
\mathscr{C}_{\prism}((X/D)_{m\text{-}q\text{-}\mathrm{crys}}) \xrightarrow{\simeq} \mathrm{Str}_\prism(\stra{m}{D}).
\]
\end{prop}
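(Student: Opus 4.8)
The plan is to produce mutually quasi-inverse functors between the two categories, using the assignment $\str \mapsto (S_\str, I_{S_\str})$ as the bridge. Each $S_\str$ is a genuine object of the site: the map $R \to S_\str/J_{S_\str}$ produced in Definition \ref{defstra}.2 supplies a morphism $\mathrm{Spf}(S_\str/J_{S_\str}) \to X$ factoring through $\mathrm{Spf}(R)$, so it satisfies condition (*), and $S_{(-)} : \stra{m}{D} \to (X/D)_{m\text{-}q\text{-crys}}$ is functorial. The forward functor $\Phi$ is then restriction: $\Phi(\mathscr{F}) = ((\mathscr{F}(S_\str, I_{S_\str}))_\str, (\varphi_{\str\str'}))$, where a morphism $\str \to \str'$ gives $S_\str \to S_{\str'}$ in the site and the crystal condition furnishes the isomorphism $\mathscr{F}(S_\str)\widehat{\otimes}_{S_\str} S_{\str'} \xrightarrow{\sim} \mathscr{F}(S_{\str'})$. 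This lands in $\mathrm{Str}_\prism(\stra{m}{D})$ because $\mathscr{F}(S_\str) \in \modbb{S_\str}$ by hypothesis and the cocycle condition is just functoriality of $\mathscr{F}$.

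The substance of the proof is the backward functor $\Psi$. Given a stratification $((M_\str)_\str, (\varphi_{\str\str'}))$, I want to build a crystal $\mathscr{G}$ by the formula $\mathscr{G}(E) := M_\str \widehat{\otimes}_{S_\str,\psi} E$. For an object $(E, I_E)$ whose structure map $\mathrm{Spf}(E/J_E) \to X$ factors through an affine open $\mathrm{Spf}(R)$ with $D/(p,J)\to R/p$ standard smooth, one chooses a chart $\str : T \to R$ and, lifting the coordinates $x_t$ to $E$ and extending to a $\delta$-map carrying $K=(I,(\phi^m(y_w)))$ into $I_E$, obtains a map $\psi : S_\str \to E$ in the site by the universal property of the $q$-PD envelope, exactly as in the construction of $g$ in the proof of Proposition \ref{pr:4q}. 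By Proposition \ref{pr:tensor}.1 the base change $\mathscr{G}(E)$ lies in $\modbb{E}$. For a general object one first covers $\mathrm{Spf}(R)$ by standard-smooth affine opens (Tag 00TA of \cite{Sta}), which form a basis of $X$, and works locally.

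The crucial claim is \emph{well-definedness}: $\mathscr{G}(E)$ is independent, up to a coherent system of canonical isomorphisms, of the chart and of the lift $\psi$. Two lifts $\psi_0,\psi_1 : S_\str \rightrightarrows E$ agree modulo $I_E$, hence jointly factor through the sum $S_{\str\sqcup\str}$, whose two coprojections are morphisms in $\stra{m}{D}$; the associated isomorphisms $\varphi$ compose to a stratification isomorphism which, pulled back along $(\psi_0,\psi_1) : S_{\str\sqcup\str} \to E$, identifies $\psi_0^* M_\str$ with $\psi_1^* M_\str$, and the cocycle condition makes all such identifications compatible. A change of chart over a fixed or shrinking $R$ is a morphism in $\stra{m}{D}$ and is handled directly by the corresponding $\varphi_{\str\str'}$. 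Since the modules lie in the categories $\modbb{E}$, which satisfy effective descent (Definition \ref{def:mod}.2), these locally defined modules glue to a well-defined $\mathscr{G}(E)$ for arbitrary $(E,I_E)$; the base-change formula $\mathscr{G}(E)\widehat{\otimes}_E E_1 \simeq \mathscr{G}(E_1)$ and the sheaf property (automatic by Definition \ref{def:mod}.1) then show $\mathscr{G}$ is a crystal.

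Finally, $\Phi\Psi \simeq \mathrm{id}$ is immediate, since $\mathscr{G}(S_\str) = M_\str\widehat{\otimes}_{S_\str}S_\str = M_\str$ via the identity map and the transition isomorphisms match by construction; and $\Psi\Phi\simeq\mathrm{id}$ holds because every object locally receives a map from some $S_\str$ and, by the crystal property, $\mathscr{F}(E)$ is the base change of $\mathscr{F}(S_\str)$, so the same gluing recovers $\mathscr{F}$. I expect the main obstacle to be precisely the well-definedness of $\Psi$: producing the canonical comparison isomorphisms between the modules attached to different lifts and different charts, and verifying that they satisfy the cocycle and gluing conditions so that the descent is effective and yields an honest crystal rather than a merely local datum. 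This is exactly the step where the self-products $S_\str \widehat{\otimes} S_\str$ and the cocycle condition built into the notion of a stratification are indispensable.
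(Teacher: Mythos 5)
Your proposal is correct and follows essentially the same route as the paper: restriction along $\str\mapsto S_\str$ for the forward functor, and for the inverse a locally defined base change $M_\str\widehat{\otimes}_{S_\str}E$ along a lift $S_\str\to E$ built from the universal property of the $q$-PD envelope, with independence of choices handled via the sum $\str\sqcup\str'$ and the stratification isomorphisms, and the general case obtained by descent in $\modbb{E}$ over the cover $(E_i)$ coming from standard-smooth opens. The only nitpick is that two lifts of a chart agree modulo $J_E$ (not $I_E$), which is exactly what is needed for the joint factorization through $S_{\str\sqcup\str}$.
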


\begin{proof}
The functor $\mathscr{C}_{\prism}((X/D)_{m\text{-}q\text{-}\mathrm{crys}}) \to \text{Str}_\prism(\stra{m}{D})$ is given by
\[
\mathscr{F} \mapsto ((\mathscr{F}(S_\str))_{\str \in \stra{m}{D}}, (\mathscr{F}(S_\str) \widehat{\otimes}_{S_\str} S_{\str'} \xrightarrow{\simeq} \mathscr{F}(S_{\str'}))_{\str \to \str'}).
\]

We can define the functor $\text{Str}_\prism(\stra{m}{D}) \to \mathscr{C}_{\prism}((X/D)_{m\text{-}q\text{-}\mathrm{crys}})$ as follows.
Given $((M_\str)_{\str \in \stra{m}{D}}, (\varphi_{\str \str'})_{\str \to \str'}) \in \text{Str}_\prism(\stra{m}{D})$ and $(E,I_E) \in (X/D)_{m\text{-}q\text{-}\mathrm{crys}}$, we choose an affine open Spf$(R) \subseteq X$ such that Spf$(E/J_E) \to X$ factors through Spf$(R)$. 
 There exists an open cover of $\text{Spf}(R)$ by standard opens $\text{Spf}(\widehat{R_{g_i}})$ such that each $\widehat{R_{g_i}}/p$ is standard smooth over $D/(p,J)$ as in the proof of Proposition \ref{pr:4q}.
Then the fiber product $\text{Spf}(\widehat{R_{g_i}} \widehat{\otimes}_{R} E/J_E)$ is an affine open of $\text{Spf}(E/J_E)$. We denote the formal \linebreak scheme $\text{Spf}(\widehat{R_{g_i}} \widehat{\otimes}_{R} E/J_E)$ by $\text{Spf}(\overline{E_i})$. By Lemma \ref{qpdlift}, there is a unique $q$-PD pair $(E_i, \widehat{I_EE_i}) \in (X/D)_{m\text{-}q\text{-}\mathrm{crys}}$ for which the corresponding formal scheme $\text{Spf}(E_i)$ is an affine open formal subscheme of $\text{Spf}(E)$, and lifts $\text{Spf}(\overline{E_i})$.

First, we define $\mathscr{F}(E_i)$ for such $E_i$. By construction, Spf$(E_i/J_{E_i}) \to X$ fac\nolinebreak tors through some affine open Spf$(\widetilde{R})$ which satisfies the condition in Definition \ref{defstra}.1, so there exists an object in $\stra{m}{D}$ of the form $\str : T \to \widetilde{R}$. Then we have a map \linebreak $f_1 : D[x_t]^\wedge_{t \in T} \to E_i/J_{E_i}$ determined as  the composition of the map $f_\str : D[x_t]^\wedge_{t \in T} \to \widetilde{R}$ and $\widetilde{R} \to E_i/J_{E_i}$. 
As $D[x_t]^\wedge_{t \in T}$ is the completion of a polynomial ring, one can choose a map $f_2 : D[x_t]^\wedge_{t \in T} \to E_i$ lifting $f_1$. 
As $E_i$ is a $\delta$-$D$-algebra, $f_2$ extends uniquely to a $\delta$-$D$-algebra map $f_3 : D\{x_t\}^\wedge_{t \in T} \to E_i$. Since the image of $y_w (w \in W)$ under $f_3$ belongs to $J_{E_i}$, the image of $\phi^m(y_w)$ belongs to $I_{E_i}$. 
As $f_3$ carries $K := (I,(\phi^m(y_w))_{w \in W})$ into $I_{E_i}$ and $S_\str$ is the $q$-PD envelope of $(D\{x_t\}^\wedge_{t \in T},K)$, $f_3$ extends uniquely to a $\delta$-$D$-algebra map $g : S_\str \to E_i$ in $(X/D)_{m\text{-}q\text{-}\mathrm{crys}}$. We define
\[
\mathscr{F}(E_i) := M_\str \widehat{\otimes}_{S_\str} E_i \in \modbb{E_i}.
\]
We must show that this is well-defined. If we choose another affine open Spf$(\widetilde{R}')$, $\str' : T' \to \widetilde{R}'$ which is an object in $\stra{m}{D}$ and a morphism $S_{\str'} \to E_i$, then we have a diagram
\[
\begin{tikzcd}
S_\str \arrow[dr,bend left=10]\arrow[d] &\\
S_{\str \sqcup \str'} \arrow[r] & E_i. \\
S_{\str'} \arrow[ur,bend right=10]\arrow[u] &
\end{tikzcd}
\]
So we have the isomorphism
\begin{align*}
M_\str \widehat{\otimes}_{S_\str} E_i &= (M_\str \widehat{\otimes}_{S_\str} S_{\str \sqcup \str'}) \widehat{\otimes}_{S_{\str \sqcup \str'}} E_i \\
&\xrightarrow[\simeq]{\varphi_{\str,\str \sqcup \str'}}  M_{\str \sqcup \str'} \widehat{\otimes}_{S_{\str \sqcup \str'}} E_i \\
&\xleftarrow[\simeq]{\varphi_{\str',\str \sqcup \str'}}  (M_{\str'} \widehat{\otimes}_{S_{\str'}} S_{\str \sqcup \str'}) \widehat{\otimes}_{S_{\str \sqcup \str'}} E_i \\
&= M_{\str'} \widehat{\otimes}_{S_{\str'}} E_i.
\end{align*}
Hence, $\mathscr{F}(E_i)$ is independent of the choice. Moreover, for a morphism \[(E_i, I_{E_i}) \to (E',I_{E'})\] in $(X/D)_{m\text{-}q\text{-}\mathrm{crys}}$, $\mathscr{F}(E')$ is defined as in the case of $\mathscr{F}(E_i)$, and there exists a natural isomorphism
\[
\mathscr{F}(E_i) \widehat{\otimes}_{E_i} E' \xrightarrow{\simeq} \mathscr{F}(E')
\]
by definition.

Next, we define $\mathscr{F}(E)$ for general $(E,I_E) \in (X/D)_{m\text{-}q\text{-}\mathrm{crys}}$. Take $(E_i)_i$ for $E$ as in the first paragraph. Then there exists a cover $(E,I_E) \to \prod_i(E_i,\ol{I_EE_i})$ in $(X/D)_{m\text{-}q\text{-}\mathrm{crys}}$. Then we define $\mathscr{F}(E)$ as the kernel of $\underset{i}{\prod} \mathscr{F}(E_i) \rightrightarrows \underset{i,j}{\prod} \mathscr{F}(E_i \widehat{\otimes}_{E} E_j)$: As
\[
\mathscr{F}(E_i) \widehat{\otimes}_{E_i} (E_i \widehat{\otimes}_{E} E_j) \cong \mathscr{F}(E_i \widehat{\otimes}_{E} E_j) \cong \mathscr{F}(E_j) \widehat{\otimes}_{E_j} (E_i \widehat{\otimes}_{E} E_j)
\]
and each $\mathscr{F}(E_i)$ belongs to $\modbb{E_i}$, we see that $\mathscr{F}(E) \in \modbb{E}$ and that \[\mathscr{F}(E) \widehat{\otimes}_{E} E_i \cong \mathscr{F}(E_i).\]

For a morphism $(E,I_E) \to (E', I_{E'})$ in $(X/D)_{m\text{-}q\text{-}\mathrm{crys}}$, we have the isomorphisms
\begin{align*}
\mathscr{F}(E_i) \widehat{\otimes}_{E_i} (E_i \widehat{\otimes}_{E} E') &\xrightarrow{\simeq} \mathscr{F}(E_i \widehat{\otimes}_{E} E'), \\
\mathscr{F}(E_i \widehat{\otimes}_{E} E_j) \widehat{\otimes}_{E_i \widehat{\otimes}_{E} E_j} (E_i \widehat{\otimes}_{E} E_j \widehat{\otimes}_{E} E') &\xrightarrow{\simeq} \mathscr{F}(E_i \widehat{\otimes}_{E} E_j \widehat{\otimes}_{E} E')
\end{align*}
and these induce the isomorphism
\[
\mathscr{F}(E) \widehat{\otimes}_{E} E' \xrightarrow{\simeq} \mathscr{F}(E').
\]
 Thus, $\mathscr{F} \in \mathscr{C}_{\prism}((X/D)_{m\text{-}q\text{-}\mathrm{crys}})$. So the functor $\text{Str}_\prism(\stra{m}{D}) \to \mathscr{C}_{\prism}((X/D)_{m\text{-}q\text{-}\mathrm{crys}})$ can be defined by 
\[
((M_\str)_{\str \in \stra{m}{D}}, (\varphi_{\str \str'})_{\str \to \str'}) \mapsto \mathscr{F}.
\] The two functors we constructed are quasi-inverse to each other.
\qed
\end{proof}

Assume that $(D, I)$ is a $q$-PD pair with $q = 1$ in $D$. Then we can prove the result for $\mathscr{C}((X/D)_{m\text{-}q\text{-}\mathrm{crys}})$ in the same way as Proposition \ref{pr:qstra}.

\begin{cor}
Assume that $(D, I)$ is a $q$-PD pair with $q = 1$ in $D$. Then there is an equivalence of categories
\[
\mathscr{C}((X/D)_{m\text{-}q\text{-}\mathrm{crys}}) \xrightarrow{\simeq} \mathrm{Str}(\stra{m}{D}).
\]
\end{cor}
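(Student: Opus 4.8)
The plan is to argue that this is a verbatim transcription of the proof of Proposition \ref{pr:qstra}, obtained by replacing the module category $\modbb{E} = \modb{E}{I_E}$ throughout by $\modd{E}$ and the target category $\mathrm{Str}_\prism(\stra{m}{D})$ by $\mathrm{Str}(\stra{m}{D})$; I would only flag the points where the hypothesis $q=1$ is used and identify the module-theoretic inputs. First I would record that when $q = 1$ in $D$ one has $[p]_q = p$, so by condition 2.\ in Definition \ref{def:qpd} the underlying ring $E$ of every $q$-PD pair occurring in $(X/D)_{m\text{-}q\text{-}\mathrm{crys}}$ is a $p$-torsion free $p$-complete $\delta$-ring. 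Consequently $\modd{E}$ is defined for each such $E$, the rings $S_\str$ of Definition \ref{defstra}.2 are again $p$-torsion free $p$-complete $\delta$-rings so that $\modd{S_\str}$ makes sense, and every $(p,[p]_q)$-completion and $(p,[p]_q)$-complete flatness condition appearing in the cited lemmas (Lemma \ref{lm:pushout2}, Lemma \ref{qpdlift}) specializes to an ordinary $p$-completion and $p$-complete faithful flatness — precisely the notions built into the definition of $\modd{E}$.

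With this in hand I would define the forward functor $\mathscr{C}((X/D)_{m\text{-}q\text{-}\mathrm{crys}}) \to \mathrm{Str}(\stra{m}{D})$ by the same formula as before, namely
\[
\mathscr{F} \mapsto \bigl((\mathscr{F}(S_\str))_{\str \in \stra{m}{D}},\ (\mathscr{F}(S_\str) \widehat{\otimes}_{S_\str} S_{\str'} \xrightarrow{\simeq} \mathscr{F}(S_{\str'}))_{\str \to \str'}\bigr),
\]
noting that now $\mathscr{F}(S_\str) \in \modd{S_\str}$ by the definition of $\mathscr{C}((X/D)_{m\text{-}q\text{-}\mathrm{crys}})$. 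The reverse functor would be constructed exactly as in the proof of Proposition \ref{pr:qstra}: for $(E,I_E)$ one chooses a standard-smooth affine open cover, lifts it by Lemma \ref{qpdlift} to pairs $(E_i, \widehat{I_E E_i})$, sets $\mathscr{F}(E_i) := M_\str \widehat{\otimes}_{S_\str} E_i$ after choosing an object $\str \in \stra{m}{D}$ and a map $S_\str \to E_i$, verifies independence of this choice through the sum $\str \sqcup \str'$ using the stratification isomorphisms $\varphi_{\str, \str \sqcup \str'}$ and $\varphi_{\str', \str \sqcup \str'}$, and finally defines $\mathscr{F}(E)$ as the kernel of $\prod_i \mathscr{F}(E_i) \rightrightarrows \prod_{i,j} \mathscr{F}(E_i \widehat{\otimes}_E E_j)$.

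The only properties of the module category invoked in this argument are the stability of $\modd{E}$ under completed base change and the descent condition packaged into its definition; these guarantee respectively that $\mathscr{F}(E_i) \in \modd{E_i}$ and that the kernel $\mathscr{F}(E)$ lies in $\modd{E}$ with $\mathscr{F}(E) \widehat{\otimes}_E E_i \cong \mathscr{F}(E_i)$. The first is exactly the statement of Proposition \ref{pr:tensor}.1 for $\modd{E}$, and the second is immediate from the defining descent property of $\modd{E}$. Since these are the very facts that drove the proof of Proposition \ref{pr:qstra}, the same verification shows the two functors are mutually quasi-inverse. I do not expect a serious obstacle here: the substantive work was already done in Proposition \ref{pr:qstra}, and the only thing requiring care is the routine bookkeeping that under $q = 1$ each completion and flatness hypothesis matches the $p$-adic framework in which $\modd{E}$ is defined, which follows at once from $[p]_q = p$.
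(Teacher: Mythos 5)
Your proposal is correct and coincides with the paper's own treatment: the paper dispatches this corollary with the single remark that it is proved "in the same way as Proposition \ref{pr:qstra}," which is exactly the substitution $\modbb{-} \rightsquigarrow \modd{-}$, $\mathrm{Str}_\prism \rightsquigarrow \mathrm{Str}$ that you carry out, together with the observation that $q=1$ forces $[p]_q = p$ so all completions and flatness conditions become $p$-adic ones. Your identification of the two module-theoretic inputs (base-change stability from Proposition \ref{pr:tensor}.1 and the descent property built into the definition of $\modd{E}$) is precisely what makes the transcription go through.
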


\section{The \textit{m}-prismatic site and the (\textit{m}$-$1)-\textit{q}-crystalline site}

In \cite{BS19}, Bhatt and Scholze showed that $q$-crystalline cohomology of a smooth formal scheme $X$ is isomorphic to the prismatic cohomology of the pullback of $X$ by the Frobenius lift $\phi$ on the base ring.
 As the latter is isomorphic to the level 1-prismatic cohomology of $X$ by our result, it is then natural to wonder if the $m$-prismatic site can be compared with the $(m-1)$-$q$-crystalline site. 
In this section, we prove that there exists an equivalence between the category of crystals on the $m$-prismatic site and that on the $(m-1)$-$q$-crystalline site.

\begin{con}
Let $(D,I)$ be a $q$-PD pair, let $J_q = (\phi^{m-1})^{-1}(I)$ and let \linebreak $J_\prism = (\phi^{m})^{-1}([p]_qD)$. 
Note that  $I \subseteq \phi^{-1}([p]_qD)$ by Corollary 16.8 of \cite{BS19}. In particular, we have $J_q \subseteq J_\prism$. 
Let $X$ be a $p$-adic formal scheme smooth and separated over $D/J_q$, and let $\widetilde{X}$ be  $X \underset{\mathrm{Spf}(D/J_q)}{\widehat{\times}}$ Spf$(D/J_\prism)$, where the completion is the classical $p$-completion. Then we have a  diagram:
\[
\begin{tikzcd}
\mathrm{Spf}(D)\arrow[r,hookleftarrow] & \mathrm{Spf}(D/J_q)\arrow[dr,phantom, "\Box"] & X\arrow[l]\\
\mathrm{Spf}(D)\arrow[r,hookleftarrow]\arrow[u,"\simeq"] & \mathrm{Spf}(D/J_\prism)\arrow[u,hookrightarrow] & \widetilde{X}.\arrow[l]\arrow[u,hookrightarrow]
\end{tikzcd} 
\]
We can define a functor $\alpha$ from the $(m-1)$-$q$-crystalline site of $X$ over $(D,I)$ to the $m$-prismatic site of $\widetilde{X}$ over $(D,[p]_qD)$ in the following way: 
For an object \[(\text{Spf}(E) \gets \text{Spf}(E/J_{E,q}) \to X)\] of $(X/(D,I))_{(m-1)\text{-}q\text{-crys}}$, we define the object $\alpha(\text{Spf}(E) \gets \text{Spf}(E/J_{E,q}) \to X)$ of $(\widetilde{X}/(D,[p]_qD))_{m-\prism}$ by \[(\text{Spf}(E) \gets \text{Spf}(E/J_{E,\,\prism}) \xrightarrow{f} \widetilde{X}),\] where the right map $f$ is defined as follows:
\[
\text{Spf}(E/J_{E,\,\prism}) \xrightarrow{g} \text{Spf}(E/J_{E,q}) \underset{\mathrm{Spf}(D/J_q)}{\widehat{\times}}\text{Spf}(D/J_\prism) \to X \underset{\mathrm{Spf}(D/J_q)}{\widehat{\times}}\text{Spf}(D/J_\prism) = \widetilde{X}.
\]
Here the first map is induced by the natural surjection of the corresponding rings. This defines the functor \[\alpha: (X/(D,I))_{(m-1)\text{-}q\text{-crys}} \to (\widetilde{X}/(D,[p]_qD))_{m-\prism}.\] One can check that $\alpha$ is continuous. So $\alpha$ gives a morphism of topoi
\[
\widehat{\alpha}: (\widetilde{X}/(D,[p]_qD))^{\sim}_{m-\prism} \to (X/(D,I))^{\sim}_{(m-1)\text{-}q\text{-crys}}.
\]
\end{con}

\begin{thm}
\label{th:mm1}
$\widehat{\alpha}$ induces an equivalence of categories of crystals
\[
\widehat{\alpha}_* : \mathscr{C}_{\prism}((\widetilde{X}/(D,[p]_qD))_{m-\prism}) \xrightarrow{\simeq} \mathscr{C}_{\prism}((X/(D,I))_{(m-1)\text{-}q\text{-}\mathrm{crys}}).
\]
The same holds true for $\mathscr{C}^{\mathrm{fp}}$ and $\mathscr{C}^{\mathrm{tors}}$.
\end{thm}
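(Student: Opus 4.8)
The plan is to realise both sides as categories of stratifications and to match these. I would first verify, by unwinding the definition of the functor $\alpha$, that $\widehat{\alpha}_*$ preserves crystals and is computed on objects by
\[
\widehat{\alpha}_*(\mathscr{G})(E,I_E) = \mathscr{G}(\alpha(E,I_E)) = \mathscr{G}(E,[p]_qE),
\]
where $(E,[p]_qE)$ is the bounded prism underlying the $q$-PD pair $(E,I_E)$, equipped with the induced map $\mathrm{Spf}(E/J_{E,\,\prism}) \to \widetilde{X}$. A morphism of $q$-PD pairs induces a morphism of the associated prisms, the $(p,[p]_q)$-completions used on the two sites agree, and $\modbb{E}$ on the $q$-crystalline side is taken with respect to the prism ideal $[p]_qE$; hence the $m$-prismatic crystal condition for $\mathscr{G}$ transports directly to the $(m-1)$-$q$-crystalline crystal condition for $\widehat{\alpha}_*\mathscr{G}$, and likewise for the finite projective and torsion variants.

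To prove that $\widehat{\alpha}_*$ is an equivalence, I would compare both categories with one and the same category of stratifications. By Proposition \ref{pr:qstra} applied with $m$ replaced by $m-1$, the target $\mathscr{C}_{\prism}((X/(D,I))_{(m-1)\text{-}q\text{-crys}})$ is equivalent to $\mathrm{Str}_{\prism}(\stra{m-1}{D})$. Running the construction of Definition \ref{defstra} and the proof of Proposition \ref{pr:qstra} for the $m$-prismatic site of $\widetilde{X}$ over $(D,[p]_qD)$ — with Lemma \ref{qpdlift} replaced by Lemma \ref{prislift} and the $q$-PD envelope replaced by the prismatic envelope built in the proof of Proposition \ref{pr:4} — yields an analogous equivalence of $\mathscr{C}_{\prism}((\widetilde{X}/(D,[p]_qD))_{m-\prism})$ with a prismatic category of stratifications indexed by the framed affine opens of $\widetilde{X}$.

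The crux is to identify these two stratification categories. Since $(D,I)$ is a $\delta$-pair over $(A,(q-1))$ we have $q-1 \in I$, hence $[p]_q \in (p,I)$; chasing Frobenius modulo $p$ then gives $J_\prism \subseteq \sqrt{(p,J_q)}$, so that the closed immersion $\mathrm{Spf}(D/J_\prism) \hookrightarrow \mathrm{Spf}(D/J_q)$ is a homeomorphism on underlying spaces and base change $R \mapsto \widetilde{R} := R \,\widehat{\otimes}_{D/J_q} D/J_\prism$ sets up a bijection between the framed affine opens of $X$ and those of $\widetilde{X}$. Under it the kernel of $D[x_t]^\wedge \to \widetilde{R}$ is $(J_\prism,y_w)$, so the same framing data $(T,y_w)$ may be reused; that the standard smoothness and the regularity of $\overline{y_w}$ pass from the $D/(p,J_q)$- to the $D/(p,J_\prism)$-setting follows from a Jacobian-criterion argument identical to the one in Corollary \ref{cor:reg seq}. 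Crucially, the two envelopes then coincide: the $q$-PD envelope attached to $\str \in \stra{m-1}{D}$ adjoins $\phi^{m}(y_w)/[p]_q$ to $D\{x_t\}^\wedge$, while the prismatic envelope attached to the corresponding framed open of $\widetilde{X}$ adjoins the very same element $\phi^{m}(y_w)/[p]_q$; both equal $(D\{x_t\}^\wedge\{\phi^{m}(y_w)/[p]_q\})^\wedge$ and carry the same prism ideal $[p]_q(-)$, so the module categories $\modbb{S_\str}$ agree and the two stratification categories are identical. Tracing a crystal through these identifications shows that the resulting equivalence is exactly $\widehat{\alpha}_*$, and the identical argument for the finite projective and torsion variants disposes of the $\mathscr{C}^{\mathrm{fp}}$ and $\mathscr{C}^{\mathrm{tors}}$ cases.

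The main obstacle is this envelope identification. The level shift from $m$ to $m-1$ is precisely absorbed by the extra Frobenius in the $q$-PD envelope, which adjoins $\phi(f)/[p]_q$ rather than $f/[p]_q$, so the decisive point is to check, using the explicit description of the $q$-PD envelope in Lemma 16.10 of \cite{BS19}, that it agrees as a $\delta$-ring-with-ideal with the prismatic envelope of Proposition \ref{pr:4}, and that the transfer of the regularity and standard-smoothness hypotheses across $J_q \subseteq J_\prism$ is valid. Once these are in place, the remaining verifications are routine transcriptions of the constructions of Section 2 to the prismatic side.
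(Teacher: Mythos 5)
Your proposal is correct and follows essentially the same route as the paper: both sides are reduced to the single stratification category $\mathrm{Str}_{\prism}(\stra{m-1}{D})$ via Proposition \ref{pr:qstra}, with the decisive point being that the level-$(m-1)$ $q$-PD envelope $D\{x_t,\phi^{m}(y_w)/[p]_q\}^\wedge$ coincides, as a $\delta$-ring with prism ideal $[p]_q(-)$, with the level-$m$ prismatic envelope of $([p]_q,(\phi^m(y_w))_w)$ — exactly the identification the paper makes inside its proof. Your extra intermediate step (a stratification category indexed by framed opens of $\widetilde{X}$, then matched with that of $X$ via the nil closed immersion $\mathrm{Spf}(D/J_\prism)\hookrightarrow\mathrm{Spf}(D/J_q)$) is only a cosmetic reorganization of the same argument.
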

\begin{proof}
By Proposition \ref{pr:qstra}, we  see  that
\[
\mathscr{C}_{\prism}((X/(D,I))_{(m-1)\text{-}q\text{-}\mathrm{crys}}) \xrightarrow{\simeq} \mathrm{Str}_\prism(\stra{m-1}{D,X}),
\]
where $\stra{m-1}{D,X}$ is the category $\stra{m-1}{D}$ for $X$ constructed in Definition \ref{defstra}. It remains to show that there is a natural equivalence of categories
\[
\mathscr{C}_{\prism}((\widetilde{X}/(D,[p]_qD))_{m-\prism}) \xrightarrow{\simeq}
\mathrm{Str}_\prism(\stra{m-1}{D,X}).
\] 

First, we define the functor $\mathscr{C}_{\prism}((\widetilde{X}/(D,[p]_qD))_{m-\prism}) \to \text{Str}_\prism(\stra{m-1}{D,X})$. 
For any object $(\str : T \to R) \in \stra{m-1}{D,X}$, we define $\widetilde{R}' = R \widehat{\otimes}_{D/J_q} D/J_{\prism}$. 
If we describe the kernel of the surjection $f_\str : D[x_t]^{\wedge}_{t \in T} \to R$ as the ideal $(J_q,(y_w)_{w \in W})$ as in Definition \ref{defstra}, then the kernel of the surjection $D[x_t]^{\wedge}_{t \in T} \to R \to \widetilde{R}'$ is the ideal $(J_{\prism},(y_w)_{w \in W})$.  We can construct the $q$-PD envelope $S_\str$ as in Definition \ref{defstra}. Then $S_\str$ can be regarded as an object of $(X/(D,I))_{(m-1)\text{-}q\text{-}\mathrm{crys}}$ as in the proof of  Proposition \ref{pr:4q}. 
As $\phi^{m-1}(y_w)$ belongs to $I_{S_\str,q}$, we see that $y_w \in J_{S_\str,q}$. By the containment $J_{S_\str,q} \subseteq J_{S_\str,\prism}$, it follows that $y_w \in J_{S_\str,\prism}$. This gives a map $\widetilde{R}' \to S_{\str}/J_{S_\str,\prism}$. So we may also regard $S_\str$  as an object of $(\widetilde{X}/(D,[p]_qD))_{m-\prism}$. Thus, we can define the functor 
\[
\mathscr{C}_{\prism}((\widetilde{X}/(D,[p]_qD))_{m-\prism}) \to \text{Str}_\prism(\stra{m-1}{D,X})
\]
by
\[
\mathscr{F} \mapsto ((\mathscr{F}(S_\str))_{\str \in \stra{m-1}{D,X}}, (\mathscr{F}(S_\str) \widehat{\otimes}_{S_\str} S_{\str'} \xrightarrow{\simeq} \mathscr{F}(S_{\str'}))_{\str \to \str'}).
\]

Next we define the functor 
\[
\text{Str}_\prism(\stra{m-1}{D,X}) \to \mathscr{C}_{\prism}((\widetilde{X}/(D,[p]_qD))_{m-\prism})
\]
 as follows. Given $((M_\str)_{\str \in \stra{m-1}{D,X}}, (\varphi_{\str \str'})_{\str \to \str'}) \in \text{Str}_\prism(\stra{m-1}{D,X})$ and \[(\text{Spf}(E) \gets \text{Spf}(E/J_E) \to \widetilde{X}) \in (\widetilde{X}/(D,[p]_qD))_{m-\prism},\] we choose an affine open formal subscheme Spf$(\widetilde{R}) \subseteq \widetilde{X}$ such that Spf$(E/J_E) \to \widetilde{X}$ factors through Spf$(\widetilde{R})$. 
Since the map $\widetilde{X} \to X$ is a closed immersion defined by the ideal $J_{\prism}$ which is a nil ideal in $D/J_q$, there is an open formal subscheme $U \subseteq X$ which lifts $\text{Spf}(\widetilde{R})$. 
Then the map $U \to \text{Spf}(D/J_q)$ is a  smooth morphism which lifts the morphism $\text{Spf}(\widetilde{R}) \to \text{Spf}(D/J_{\prism})$. By Lemma \ref{lemlift}, we see that $U$ is affine. We denote the formal scheme $U$ by $ \text{Spf}(R)$. Then we have a diagram:
\[
\begin{tikzcd}
& \text{Spf}(R) \arrow[r,hookrightarrow] &X \arrow[r]& \text{Spf}(D/J_q)\\
\text{Spf}(E/J_E) \arrow[r]& \text{Spf}(\widetilde{R})\arrow[r,hookrightarrow]\arrow[u,hookrightarrow] & \widetilde{X} \arrow[r]\arrow[u]& \text{Spf}(D/J_{\prism}).\arrow[u,hookrightarrow]
\end{tikzcd}
\]
There exists an open cover $\text{Spf}(R) = \bigcup_j \text{Spf}(\widehat{R_{g_i}})$ such that each $\widehat{R_{g_i}}/p$ is standard smooth over $D/(p,J_q)$ as in the proof of Proposition \ref{pr:qstra}.  Then the fiber product $\text{Spf}(\widehat{R_{g_i}} \widehat{\otimes}_{R} \widetilde{R})$ is an affine open of $\text{Spf}(\widetilde{R})$. We denote the formal scheme $\text{Spf}(\widehat{R_{g_i}} \widehat{\otimes}_{R} \widetilde{R})$ by $\text{Spf}(\widetilde{R}_i)$. We also have the fiber product $\text{Spf}(\widetilde{R}_i \widehat{\otimes}_{\widetilde{R}} E/J_E)$ which is an affine open of $\text{Spf}(E/J_E)$. We denote the formal scheme $\text{Spf}(\widetilde{R}_i \widehat{\otimes}_{\widetilde{R}} E/J_E)$ by $\text{Spf}(\overline{E_i})$. By Lemma \ref{prislift}, there is a unique  bounded prism $(E_i, [p]_qE_i) \in (\widetilde{X}/(D,[p]_qD))_{m-\prism}$ for which the corresponding formal scheme $\text{Spf}(E_i)$ is an affine open formal subscheme of $\text{Spf}(E)$, and lifts $\text{Spf}(\overline{E_i})$.

First, we define $\mathscr{F}(E_i)$ for such $E_i$. By construction, there exists an object in $\stra{m-1}{D,X}$ of the form $\str : T \to \widehat{R_{g_i}}$. 
Let $S_\str$ be the $q$-PD envelope constructed from $\str$ in Definition \ref{defstra}.2. Explicitly, the ring $S_\str$ is given by $D\{x_t, \frac{\phi^{m}(y_w)}{[p]_q}\}^\wedge_{t \in T, w \in W}$, where $(y_w)_{w \in W}$ is a sequence of elements in $D[x_t]^\wedge_{t \in T}$ in Definition \ref{defstra}.1. Then, noting \linebreak that $(\phi^m(y_w))_{w \in W}$ is $(p,[p]_q)$-completely regular relative to $D$ by an argument \linebreak similar to that in Corollary \ref{cor:reg seq}, we see that $S_\str$ is the prismatic envelope of \[(D\{x_t\}^\wedge_{t \in T}, ([p]_q, (\phi^m(y_w))_{w \in W})).\] In particular, we can regard $S_\str$ as an object of $(\widetilde{X}/(D,[p]_qD))_{m-\prism}$, and then we can construct a morphism $S_\str \to E_i$ in $(\widetilde{X}/(D,[p]_qD))_{m-\prism}$ as in the proof of Proposition \ref{pr:4}. We define
\[
\mathscr{F}(E_i) := M_\str \widehat{\otimes}_{S_\str} E_i \in \modbb{E_i}.
\]
We can also define $\mathscr{F}(E)$ for general $(E,[p]_qE) \in (\widetilde{X}/(D,[p]_qD))_{m-\prism}$  as in the proof of Proposition \ref{pr:qstra}.

We can prove that the presheaf $\mathscr{F}$ is well-defined and that it defines an object of $\mathscr{C}_{\prism}((\widetilde{X}/(D,[p]_qD))_{m-\prism})$ as in the proof of Proposition \ref{pr:qstra}. So the functor \[\text{Str}_\prism(\stra{m-1}{D,X}) \to \mathscr{C}_{\prism}((\widetilde{X}/(D,[p]_qD))_{m-\prism})\] is defined by
\[
((M_\str)_{\str \in \stra{m-1}{D,X}}, (\varphi_{\str \str'})_{\str \to \str'}) \mapsto \mathscr{F}.
\]
The two functors we constructed are quasi-inverse to each other. Hence, the category $\mathscr{C}_{\prism}((\widetilde{X}/(D,[p]_qD))_{m-\prism})$ is equivalent to the category $\text{Str}_\prism(\stra{m-1}{D,X})$, as desired.
\qed
\end{proof}

\section{Relation to the Frobenius descent}

The equivalences of categories of crystals proved in the previous sections are modeled on the Frobenius descent, which is due to Berthelot: there exists an equivalence between the category of crystals on the $m$-crystalline site and that on the usual crystalline site. 
However, the Frobenius descent was proved by identifying the  crystals with the stratifications. It did not follow from a certain equivalence of topoi because the $m$-crystalline site was not suitable enough to apply the site-theoretic argument. 

In this section, we first give an alternative, site-theoretic proof of the Frobenius descent in a certain setting. Our strategy is to suitably  modify the definition of the $m$-crystalline site  without changing the category of crystals. Then we  apply the site-theoretic argument in the previous sections to this modified site. 
Next, assuming that $(D,I)$ is a $q$-PD pair with $q=1$ in $D$ and $p \in I$, we use the modified version of the $m$-crystalline site to  prove that, the equivalence between the category of crystals on the $m$-$q$-crystalline site and that on the usual $q$-crystalline site in Section 2 is compatible with the Frobenius descent.

First, we recall the definition of the $m$-PD ring.

\begin{defi}
\label{mpd}
For a $\mathbf{Z}_{(p)}$-ring $D$ and an ideal $J$ of $D$, an $m$-PD structure on $J$ is a PD ideal $(I,\gamma)$ of $D$  satisfying the following conditions:
\begin{enumerate}
\item $J^{(p^m)} + pJ \subseteq I \subseteq J$, where $J^{(p^m)}$ is the ideal generated by $x^{p^m}$ for all elements $x$ of $J$.
\item The PD structure $\gamma$ is compatible with the unique one on $p\mathbf{Z}_{(p)}$.
\end{enumerate}
We call the triple $(J,I,\gamma)$ an $m$-PD ideal of $D$ and the quadruple $(D,J,I,\gamma)$ an $m$-PD ring.
\end{defi}

\begin{rem}
\begin{enumerate}
\item We warn the reader that the use of the letters $I,J$ in Definition \ref{mpd} is reversed, compared to the one in \cite{Ber90}. We prefer to use the notation in Definition \ref{mpd} because it is compatible with the one in Section 2.
\item When $m = 0$, the notion of a $m$-PD structure on an ideal $J$ is nothing but a PD structure on $J$ compatible with the unique one on $p\mathbf{Z}_{(p)}$.
\end{enumerate}
\end{rem}

Next, we recall the definition of the $m$-crystalline site. Note that in this paper, we consider the site which is `affine', `big', with respect to the `flat topology', and possibly over the `$p$-adic base'. So it is not exactly the same as the original definition.

\begin{defi}
\label{mcryssite}
Let $(D,J,I,\gamma)$ be a $p$-torsion free $p$-complete $m$-PD ring or an $m$-PD ring in which $p$ is nilpotent, and
 suppose that the ideal $I$ contains $p$. Let $X$ be a scheme smooth and separated over $D/J$. 
We define the {\it $m$-crystalline site} $(X/D)_{m\text{-crys}}$ of $X$ over $D$ as follows. Objects are maps $(D,J,I,\gamma) \to (E,J_E,I_E,\gamma_E)$ of $m$-PD rings together with a map Spec$(E/J_E) \to X$ over $D/J$ satisfying the following conditions:
\begin{enumerate}
\item There exists some $n \ge 0$ such that $p^nE=0$.
\item Spec$(E/J_E) \to X$ factors through some affine open Spec$(R) \subseteq X$.
\end{enumerate}
We shall often denote such an object by
\[
(\text{Spec}(E) \gets \text{Spec}(E/J_E) \to X) \in (X/D)_{m\text{-crys}}
\]
or $(E,J_E,I_E,\gamma_E)$ if no confusion arises. 
A morphism 
\[
(\text{Spec}(E') \gets \text{Spec}(E'/J_{E'}) \to X) \to (\text{Spec}(E) \gets \text{Spec}(E/J_E) \to X)
\]
 is a map of $m$-PD rings $(E,J_E,I_E,\gamma_E) \to (E',J_{E'},I_{E'},\gamma_{E'})$ over $(D,J,I,\gamma)$ such that the induced morphism
$\text{Spec}(E'/J_{E'}) \to \text{Spec}(E/J_E)$
 is compatible with the maps $\text{Spec}(E'/J_{E'}) \to \nolinebreak X$, $\text{Spec}(E/J_E) \to X$. When we denote such an object by $(E,J_E,I_E,\gamma_E)$, we shall write \[(E,J_E,I_E,\gamma_E) \to (E',J_{E'},I_{E'},\gamma_{E'})\] (not $(E',J_{E'},I_{E'},\gamma_{E'}) \to (E,J_E,I_E,\gamma_E)$)
 for a morphism from $(E',J_{E'},I_{E'},\gamma_{E'})$ to $(E,J_E,I_E,\gamma_E)$. 

A map $(E,J_E,I_E,\gamma_E) \to (E',J_{E'},I_{E'},\gamma_{E'})$  in  $(X/D)_{m\text{-crys}}$ is a cover if it is a  faithfully flat map and satisfies $J_{E'} = J_EE', I_{E'} = I_EE'$.
\end{defi}

We define the categories of crystals on $(X/D)_{m\text{-crys}}$ as follows.

\begin{defi}
\begin{enumerate}
\item Let $\mathscr{C}^{\text{qcoh}}((X/D)_{m\text{-crys}})$  be the category of abelian sheaves $\mathscr{F}$ on the $m$-crystalline site $(X/D)_{m\text{-crys}}$ such that, for any object $(E,J_E,I_E,\gamma_E)$  in $(X/D)_{m\text{-crys}}$,  $\mathscr{F}(E,J_E,I_E,\gamma_E)$ is an $E$-module, and for any  morphism \[(E,J_E,I_E,\gamma_E) \to (E_1,J_{E_1},I_{E_1},\gamma_{E_1})\] in $(X/D)_{m\text{-crys}}$, $\mathscr{F}(E,J_E,I_E,\gamma_E)  \to \mathscr{F}(E_1,J_{E_1},I_{E_1},\gamma_{E_1})$ is compat\nolinebreak ible with the module structures in the usual sense and 
the canonical map \[\mathscr{F}(E,J_E,I_E,\gamma_E) \otimes_E E_1 \to \mathscr{F}(E_1,J_{E_1},I_{E_1},\gamma_{E_1})\] is an isomorphism of $E_1$-modules. 
\item Let $\mathscr{C}^{\text{tors}}((X/D)_{m\text{-crys}})$ be the category of abelian sheaves defined by
\[
\mathscr{C}^{\text{tors}}((X/D)_{m\text{-crys}}) = \displaystyle\bigcup_{n}\mathscr{C}^{p^n\text{-tors}}((X/D)_{m\text{-crys}}),
\]
where $\mathscr{C}^{p^n\text{-tors}}((X/D)_{m\text{-crys}})$ is the full subcategory of $\mathscr{C}^{\text{qcoh}}((X/D)_{m\text{-crys}})$ \linebreak consisting of $p^n$-torsion objects. We see that $\mathscr{C}^{\text{tors}}((X/D)_{m\text{-crys}})$ is also a full subcategory of $\mathscr{C}^{\text{qcoh}}((X/D)_{m\text{-crys}})$.
\item Let $\mathscr{C}^{\text{fp}}((X/D)_{m\text{-crys}})$ be the full subcategory of $\mathscr{C}^{\text{qcoh}}((X/D)_{m\text{-crys}})$ consisting  of objects $\mathscr{F}$ such that $\mathscr{F}(E)$ is a finite projective $E$-module for any $(E,J_E,I_E,\gamma_E)$ in $(X/D)_{m\text{-crys}}$. 
\end{enumerate}
\end{defi}

\begin{rem}
\label{crysstab}
Our definition of $m$-crystalline site differs from the original one in the following sense:
\begin{enumerate}
\item The objects in our definition is `affine' in the sense that the left two schemes appearing in the diagram
\[
(\text{Spec}(E) \gets \text{Spec}(E/J_E) \to X)
\]
are always affine, while in the usual definition, one considers the objects of the form
\[
(T\gets U \to X)
\]
(with the conditions in our definition) where the schemes $T,U$ are not necessarily affine. We also assume that the morphism $\text{Spec}(E/J_E) \to X$ in the diagram above  factors through some affine open $\text{Spec}(R) \subseteq X$, but this condition is not imposed in the usual definition.
\item Our site is `big' in the sense that, in the definition of an object
\[
(\text{Spec}(E) \gets \text{Spec}(E/J_E) \xrightarrow{f} X),
\]
the map $f$ may be any map of schemes, while in the usual definition, the morphism $U \to X$ in the diagram in 1. is assumed to be an open immersion.
\item The topology in our definition is defined by flat covers, while the topology in the usual definition is defined by the Zariski covers.
\item The base ring $D$ can be a $p$-torsion free $p$-complete $m$-PD ring in our definition, while the base ring $D$ is only allowed to be an $m$-PD ring in which $p$ is nilpotent in the usual definition.
\end{enumerate}
By a standard argument, we see that the difference in 1. does not change the associated topos and the category of crystals $\mathscr{C}^{\text{qcoh}}((X/D)_{m\text{-crys}})$, and that the differences in 2. and 3. do not change the category of crystals $\mathscr{C}^{\text{qcoh}}((X/D)_{m\text{-crys}})$ 
(hence the categories $\mathscr{C}^{p^n\text{-tors}}((X/D)_{m\text{-crys}})$, $\mathscr{C}^{\text{tors}}((X/D)_{m\text{-crys}})$ and $\mathscr{C}^{\text{fp}}((X/D)_{m\text{-crys}})$ are also unchanged). For the difference in 4., we note that if $D$ is a $p$-torsion free $p$-complete $m$-PD ring, then there are equivalences of categories
\begin{align*}
&\mathscr{C}^{\text{qcoh}}((X/(D/p^nD))_{m\text{-crys}}) \simeq \mathscr{C}^{p^n\text{-tors}}((X/D)_{m\text{-crys}}),  \\
&\mathscr{C}^{\text{qcoh}}((X/D)_{m\text{-crys}}) \simeq \varprojlim_n \mathscr{C}^{p^n\text{-tors}}((X/D)_{m\text{-crys}}) \\
&\phantom{\mathscr{C}^{\text{qcoh}}((X/D)_{m\text{-crys}})} \simeq \varprojlim_n \mathscr{C}^{\text{qcoh}}((X/(D/p^nD))_{m\text{-crys}}).
\end{align*}
So one can recover the categories of crystals on $(X/(D/p^nD))_{m\text{-crys}}$'s from that on $(X/D)_{m\text{-crys}}$.
\end{rem}

Next, we review the Frobenius descent functor from the $m$-crystalline site to the usual crystalline site. 

\begin{con}
Let $(D,I,\gamma)$ be a $p$-torsion free $p$-complete PD ring or a PD ring in which $p$  is nilpotent. Suppose that the ideal $I$ contains $p$.  Assume that there exists a Cartesian diagram
\[
\begin{tikzcd}
\widetilde{X}\arrow[r,"f"]\arrow[dr,phantom, "\Box"] & \text{Spec}(D/I) \\
X'\arrow[r]\arrow[u] & \text{Spec}(D/I),\arrow[u,"(\phi ^m)^*"]
\end{tikzcd} 
\]
where $f$ is a smooth and separated map, and $\phi$ is the Frobenius on $D/I$. Note that in this situation, $(D,I,I,\gamma)$ is an $m$-PD ring. So we can define the $m$-crystalline site $(\widetilde{X}/(D,I,I,\gamma))_{m\text{-crys}}$.

We define a functor $\widetilde{\sigma}$ from $(\widetilde{X}/(D,I,I,\gamma))_{m\text{-crys}}$ to $(X'/(D,I,\gamma))_{\text{crys}}$ in the \linebreak  following  way: 
For an object $(\text{Spec}(E) \gets \text{Spec}(E/J_E) \to \widetilde{X})$ of $(\widetilde{X}/(D,I,I,\gamma))_{m\text{-crys}}$, we define the  object $\widetilde{\sigma}(\text{Spec}(E) \gets \text{Spec}(E/J_E) \to \widetilde{X})$ of $(X'/(D,I,\gamma))_{\text{crys}}$ by
\[(\text{Spec}(E) \gets \text{Spec}(E/I_E) \xrightarrow{f} X'),\] where the right map $f$ is defined as follows:
\[ \mathrm{Spec}(E/I_E) \xrightarrow{g} \mathrm{Spec}(E/J_E) \underset{\mathrm{Spec}(D/I),(\phi ^m)^*}{\times} \mathrm{Spec}(D/I) \to \widetilde{X} \underset{\mathrm{Spec}(D/I),(\phi ^m)^*}{\times} \mathrm{Spec}(D/I). \]
Here, the first map $g$ is induced by the map of rings
\[
E/J_E \underset{D/I,\phi^m}{\otimes} D/I \to E/I_E ; \ \ \ \ \  e \otimes d \mapsto \phi^m(e)d.
\]
This defines the functor $\widetilde{\sigma} : (\widetilde{X}/(D,I,I,\gamma))_{m\text{-crys}} \to (X'/(D,I,\gamma))_{\text{crys}}$.
\end{con}

\begin{prop}
\label{sigmacoco}
The functor $\widetilde{\sigma}$ is cocontinuous.
\end{prop}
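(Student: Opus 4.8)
The plan is to prove cocontinuity by the very argument of Proposition \ref{pr:3}, transported from the $m$-prismatic setting to the $m$-crystalline one. The role played there by the Frobenius lift $\phi^m$ is played here by the honest Frobenius power map $e \mapsto e^{p^m}$, which descends to a well-defined map $E/J_E \to E/I_E$ precisely because $J_E^{(p^m)} \subseteq I_E$ in an $m$-PD ring (Definition \ref{mpd}). Unwinding cocontinuity in the pretopology of Definition \ref{mcryssite}, it suffices to show: for every object $c = (\text{Spec}(E) \gets \text{Spec}(E/J_E) \to \widetilde{X})$ of $(\widetilde{X}/(D,I,I,\gamma))_{m\text{-crys}}$ and every cover $\alpha' : (\text{Spec}(E') \gets \text{Spec}(E'/I_{E'}) \to X') \to \widetilde{\sigma}(c)$ in $(X'/(D,I,\gamma))_{\text{crys}}$, there is a cover $\alpha : a \to c$ in the source with $\widetilde{\sigma}(\alpha) = \alpha'$ (so the image cover trivially refines $\alpha'$). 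Recall that such a cover $\alpha'$ is exactly a faithfully flat map of PD rings $\alpha' : E \to E'$ with $I_{E'} = I_E E'$, together with a compatible map $\text{Spec}(E'/I_{E'}) \to X'$.

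First I would upgrade $E'$ to an object of the source. I set $J_{E'} := J_E E'$ and keep the PD ideal $(I_{E'}, \gamma_{E'})$ supplied by $\alpha'$; the only point to verify is that $(E', J_{E'}, I_{E'}, \gamma_{E'})$ is an $m$-PD ring over $(E,J_E,I_E,\gamma_E)$, i.e.\ that $J_{E'}^{(p^m)} + p J_{E'} \subseteq I_{E'}$ (the inclusion $I_{E'} \subseteq J_{E'}$ and the PD-compatibility being inherited from $\alpha'$). The part $p J_{E'} = p J_E E' \subseteq I_E E' = I_{E'}$ is immediate since $p J_E \subseteq I_E$. For an element $x = \sum_i a_i e'_i$ with $a_i \in J_E$ I would expand $x^{p^m}$ multinomially: the terms in which one exponent equals $p^m$ lie in $J_E^{(p^m)} E' \subseteq I_{E'}$, while for all other multi-indices $\underline{n}$ with $\sum n_i = p^m$ and every $n_i < p^m$ there is a carry in base $p$, so $p \mid \binom{p^m}{\underline{n}}$ and the corresponding term lies in $p J_E E' \subseteq I_{E'}$. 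Hence $J_{E'}^{(p^m)} \subseteq I_{E'}$. This $m$-PD base-change computation is the one genuinely new ingredient compared with Proposition \ref{pr:3}.

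Next I would produce the structure map to $\widetilde{X}$ exactly as in Proposition \ref{pr:3}. Condition (*) for $c$ gives an affine open $\text{Spec}(\widetilde{R}) \subseteq \widetilde{X}$ through which $\text{Spec}(E/J_E) \to \widetilde{X}$ factors. Since $\alpha'(J_E) \subseteq J_E E' = J_{E'}$, the map $\alpha'$ reduces to $\overline{\alpha}' : E/J_E \to E'/J_{E'}$, and the composite $\widetilde{R} \to E/J_E \xrightarrow{\overline{\alpha}'} E'/J_{E'}$ defines $\text{Spec}(E'/J_{E'}) \to \text{Spec}(\widetilde{R}) \subseteq \widetilde{X}$. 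This yields an object $a = (\text{Spec}(E') \gets \text{Spec}(E'/J_{E'}) \to \widetilde{X})$ of the source and a morphism $\alpha : a \to c$ whose underlying ring map is $\alpha'$. The morphism $\alpha$ is a cover: $\alpha'$ is faithfully flat, $J_{E'} = J_E E'$ by construction, and $I_{E'} = I_E E'$ by hypothesis.

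Finally I would check $\widetilde{\sigma}(\alpha) = \alpha'$. Both morphisms have underlying ring map $\alpha'$, so it remains to compare the structure maps to $X'$. The structure map of $\widetilde{\sigma}(a)$ is the Frobenius twist of that of $a$, hence is determined by the composite $\widetilde{R} \to E'/J_{E'} \xrightarrow{e \mapsto e^{p^m}} E'/I_{E'}$; the commutative diagram used in the proof of Proposition \ref{pr:3}, with the Frobenius lift replaced by $e \mapsto e^{p^m}$, identifies this with the structure map of $\alpha'$. The relevant commutativity is simply that $\overline{\alpha}'$ intertwines the two power maps $E/J_E \to E/I_E$ and $E'/J_{E'} \to E'/I_{E'}$, which holds because the ring homomorphism $\alpha'$ commutes with $p^m$-th powers and carries $J_E, I_E$ into $J_{E'}, I_{E'}$. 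I emphasize that, unlike the fullness argument of Proposition \ref{pr:1}, this step uses only the commutativity and not the injectivity of $e \mapsto e^{p^m}$ (which fails in general). I therefore expect the sole real obstacle to be the $m$-PD structure extension of the second paragraph; the remainder is formally parallel to Proposition \ref{pr:3}, and cocontinuity follows.
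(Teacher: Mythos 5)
Your argument is correct and is essentially the paper's: the proof given for this proposition is literally ``we can prove the result in the same way as Proposition~\ref{pr:3},'' and your write-up is exactly that transport, including the correct reduction to lifting a single cover $\alpha'$ to a cover $\alpha$ with $\widetilde{\sigma}(\alpha)=\alpha'$. The one genuinely new verification you single out --- that $(E',J_EE',I_{E'},\gamma_{E'})$ is again an $m$-PD ring, via the multinomial/carry computation showing $(J_EE')^{(p^m)}+pJ_EE'\subseteq I_EE'$ --- is precisely the detail the paper leaves implicit, and your check of it (and your observation that only commutativity, not injectivity, of $e\mapsto e^{p^m}$ is needed here) is sound.
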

\begin{proof}
We can prove the result in the same way as Proposition \ref{pr:3}.
\qed
\end{proof}

By Proposition \ref{sigmacoco}, we obtain a morphism of topoi \[(\widetilde{X}/(D,I,I,\gamma))^{\sim}_{m\text{-crys}} \to (X'/(D,I,\gamma))^{\sim}_{\text{crys}}.\] By abuse of notation, we will denote it by $\widetilde{\sigma}$. The morphism of topoi $\widetilde{\sigma}$ induces a pullback functor of categories of crystals
\[
\widetilde{\sigma}^* : \mathscr{C}^{\mathrm{qcoh}}((X'/(D,I,\gamma))_{\text{crys}}) \to \mathscr{C}^{\mathrm{qcoh}}((\widetilde{X}/(D,I,I,\gamma))_{m\text{-crys}}),
\]
which we call the Frobenius descent functor. 
The Frobenius descent of Berthelot implies that $\widetilde{\sigma}^*$ is an equivalence: in fact, in the case where $p$ is nilpotent in $D$, it follows by gluing its local version in Corollaire 2.3.7 of \cite{Ber00}. In the case where $D$ is a $p$-torsion free $p$-complete ring, it follows from the last equivalence of Remark \ref{crysstab} (see also Th\'{e}or\`{e}me 4.1.3 of \cite{Ber00}).

In this section, we first give an alternative, site-theoretic proof of the equivalence $\widetilde{\sigma}^*$ in a certain setting, which uses techniques similar to those in the previous sections. 
We will be particularly interested in the case where $D$ is a $p$-torsion free $p$-complete ring.

We need to suitably modify  the $m$-crystalline site for our purposes. First, we recall the infinitesimal invariance of the category of crystals on the $m$-crystalline site. 

\begin{con}
Let $(D,J,I,\gamma)$ be a $p$-torsion free $p$-complete $m$-PD ring or an $m$-PD ring in which $p$ is nilpotent. Suppose that the ideal $I$ contains $p$. Let $(J_1,I_1,\gamma_1)$ be an $m$-PD subideal of $(J,I,\gamma)$. Assume that  there exists a Cartesian diagram
\[
\begin{tikzcd}
X_1\arrow[r,"f"]\arrow[dr,phantom, "\Box"] & \text{Spec}(D/J_1) \\
X\arrow[r]\arrow[u,hookrightarrow] & \text{Spec}(D/J),\arrow[u,hookrightarrow]
\end{tikzcd}
\]
where $f$ is a smooth and separated map. Then we have a cocontinuous functor of sites $\pi:(X_1/(D,J_1,I_1,\gamma_1))_{m\text{-crys}} \to (X/(D,J,I,\gamma))_{m\text{-crys}}$ defined by
\[
(\text{Spec}(E) \gets \text{Spec}(E/J_E) \to X_1) \mapsto (\text{Spec}(E) \gets X \times_{X_1} \text{Spec}(E/J_E) \to X).
\]
This is well-defined by the proof of Proposition 2.11 in \cite{Miy15}. So we obtain a \linebreak morphism of topoi $ (X_1/(D,J_1,I_1,\gamma_1))_{m\text{-crys}}^{\sim} \to (X/(D,J,I,\gamma))_{m\text{-crys}}^{\sim}$. By abuse of notation, we will denote it by $\pi$.
 Note that the inverse image functor $\pi^*$ is equal to the functor $i^{(m)}_{\text{cris} \, *}$ in Proposition 2.11 of \cite{Miy15}, except that we work with the big site with respect to the flat topology while Miyatani worked with the small site with respect to the Zariski topology.
\end{con}

Then the following holds true, which is sometimes called the infinitesimal invariance of the category of crystals on the $m$-crystalline site:

\begin{prop}
$\pi^*$ induces an equivalence of categories of crystals
\[
\mathscr{C}^{\mathrm{qcoh}}((X/(D,J,I,\gamma))_{m\text{-}\mathrm{crys}}) \xrightarrow{\simeq} \mathscr{C}^{\mathrm{qcoh}}((X_1/(D,J_1,I_1,\gamma_1))_{m\text{-}\mathrm{crys}}).
\]
\end{prop}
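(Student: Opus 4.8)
The plan is to construct an explicit quasi-inverse $\Psi$ to $\pi^{*}$, exploiting the fact that $\pi$ leaves the underlying $m$-PD ring $E$ untouched and only enlarges its $J$-ideal: concretely $\pi(E,J_E,I_E,\gamma_E)=(E,J_E+JE,I_E,\gamma_E)$. I would first record that on crystals $\pi^{*}$ is evaluation, $(\pi^{*}\mathscr{F})(E,J_E,I_E,\gamma_E)=\mathscr{F}(\pi(E,J_E,I_E,\gamma_E))$, an equality of $E$-modules with the transition maps transported verbatim. From this it is immediate that $\pi^{*}$ carries $\mathscr{C}^{\mathrm{qcoh}}$ into $\mathscr{C}^{\mathrm{qcoh}}$, since neither the modules nor their comparison isomorphisms are altered (and likewise for $\mathscr{C}^{\mathrm{tors}}$ and $\mathscr{C}^{\mathrm{fp}}$). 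So the whole content is the construction of $\Psi$ and the verification that the two functors are mutually quasi-inverse.

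For $\Psi$, fix $\mathscr{G}\in\mathscr{C}^{\mathrm{qcoh}}((X_{1}/(D,J_{1},I_{1},\gamma_{1}))_{m\text{-}\mathrm{crys}})$ and an object $(\mathrm{Spec}(E),J_E,I_E,\gamma_E)$ of the $X$-site whose structure map factors through an affine open $\mathrm{Spec}(R)\subseteq X$. Since $X\hookrightarrow X_{1}$ is a homeomorphism (it is the closed immersion cut out by the nil ideal $J/J_{1}$), the Cartesian square lifts $\mathrm{Spec}(R)$ uniquely to an affine open $\mathrm{Spec}(R_{1})\subseteq X_{1}$ smooth over $D/J_{1}$ with $R=R_{1}/JR_{1}$; here I would invoke Lemma \ref{lemlift} for the affineness and existence of $R_{1}$. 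After reducing to the case $p^{n}E=0$ via the limit description of Remark \ref{crysstab}, the surjection $E/J_{1}E\twoheadrightarrow E/J_E$ has nilpotent kernel (its generators lie in $JE\subseteq I_E$, and $I_E$ is nilpotent since $p\in I_E$ and $J^{(p^{m})}\subseteq I$), so formal smoothness of $R_{1}$ over $D/J_{1}$ lifts $R_{1}\to E/J_E$ to a map $R_{1}\to E/K_E$ with $J_{1}E\subseteq K_E\subseteq J_E$ and $K_E+JE=J_E$. This exhibits $(E,K_E,I_E,\gamma_E)$ as an object of the $X_{1}$-site with $\pi(E,K_E,I_E,\gamma_E)=(E,J_E,I_E,\gamma_E)$, and I would set $\Psi(\mathscr{G})(E,J_E,I_E,\gamma_E):=\mathscr{G}(E,K_E,I_E,\gamma_E)$, an $E$-module.

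The key point is independence of the chosen lift $K_E$. Two lifts $K_E,K_E'$ yield two $X_{1}$-objects with the same ring $E$ and the same $\pi$-image; embedding both into the product of the two lifts in the $X_{1}$-site, i.e. into the $m$-PD envelope of the diagonal of $R_{1}\times_{D/J_{1}}R_{1}$, and using that $\mathscr{G}$ is a crystal, one obtains a canonical isomorphism $\mathscr{G}(E,K_E,I_E,\gamma_E)\cong\mathscr{G}(E,K_E',I_E,\gamma_E)$ satisfying the cocycle condition. This is the standard ``value of a crystal is independent of the embedding'' mechanism. Gluing these identifications over the affine opens of $X$ and over covers shows that $\Psi(\mathscr{G})$ is a well-defined crystal, with functoriality in the object handled the same way. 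Then $\pi^{*}\Psi\cong\mathrm{id}$ and $\Psi\pi^{*}\cong\mathrm{id}$ are formal, the latter because for an $X_{1}$-object one may take the lift to be that object itself.

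The main obstacle is precisely this independence-of-lift step. It rests on two ingredients that require care: first, the infinitesimal lifting of the section, whose hypotheses force the thickening $\mathrm{Spec}(E/J_E)\hookrightarrow\mathrm{Spec}(E/K_E)$ to have nilpotent ideal — this is where the inclusion $J^{(p^{m})}+pJ\subseteq I$ from Definition \ref{mcryssite} and the reduction to $p$-nilpotent objects are essential — and second, the verification that the canonical comparison isomorphisms coming from the crystal structure are genuinely compatible, i.e. satisfy the cocycle identity on the triple product. This is the usual but delicate linearization argument identifying crystals with stratifications, and it is where the bulk of the work lies; the remaining checks ($\pi^{*}$ preserving crystals, the two composites being the identity) are routine given the evaluation description of $\pi^{*}$.
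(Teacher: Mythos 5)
Your route is genuinely different from the paper's: the paper disposes of this proposition in two lines, quoting Corollary 2.14 of \cite{Miy15} for the case where $p$ is nilpotent in $D$ (after noting via Remark \ref{crysstab} that the differences between the sites do not change the category of crystals), and then handling the $p$-torsion free $p$-complete case by the limit description in the same remark. You instead reprove the infinitesimal invariance from scratch by building an explicit quasi-inverse $\Psi$. Your reading of $\pi$ and of $\pi^*$ as evaluation is correct, and the first paragraph (that $\pi^*$ preserves crystals) is fine. But the construction of $\Psi$ has two weak points. First, the equality $K_E+JE=J_E$ is asserted, not arranged: with the natural minimal choice $K_E=J_1E+I_E$ one only gets $K_E+JE=I_E+JE$, which need not exhaust $J_E$, so $\pi$ of your lift is not literally the object you started from and you must in any case invoke the crystal condition for the morphism $(E,J_E,I_E,\gamma_E)\to(E,K_E+JE,I_E,\gamma_E)$ to close that gap. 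Second, and more seriously, the independence-of-lift step that you yourself identify as carrying all the weight is only gestured at; your two lifts live in different quotients $E/K_E$ and $E/K_E'$, so "embedding both into the $m$-PD envelope of the diagonal" requires first lifting both sections to a common quotient and then running the full crystal--stratification dictionary with its cocycle check. As written this is a genuine gap, and filling it amounts to reproving the result the paper cites.

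Both difficulties evaporate if you notice that in this big, affine formulation of the site no lifting of the thickening is needed at all. For an object $(E,J_E,I_E,\gamma_E)$ of the $X$-site, the same quadruple together with the composite $\mathrm{Spec}(E/J_E)\to X\hookrightarrow X_1$ is already an object of the $X_1$-site: one checks $J_1E\subseteq JE\subseteq J_E$, $I_1E\subseteq I_E$, compatibility of $\gamma$, and condition (2) of Definition \ref{mcryssite} using that affine opens of $X$ lift uniquely to affine opens of $X_1$ (Lemma \ref{lemlift}). This defines a functor $\iota$ with $\pi\circ\iota=\mathrm{id}$, and for every object $A=(E,J_E,I_E,\gamma_E,g)$ of the $X_1$-site the Cartesian square defining $X$ yields a morphism $\iota\pi(A)\to A$ in the $X_1$-site whose underlying ring map is the identity of $E$ (it replaces $J_E$ by $J_E+JE$). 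The crystal condition applied to this single morphism gives a natural isomorphism $\mathscr{G}(A)\xrightarrow{\ \sim\ }\mathscr{G}(\iota\pi(A))$, so $\mathscr{G}\mapsto\mathscr{G}\circ\iota$ and $\pi^*$ are quasi-inverse on $\mathscr{C}^{\mathrm{qcoh}}$ with no independence-of-lift or cocycle verification at all. Moreover, any lift $(E,K_E)$ of the kind you construct receives a morphism from $(E,J_E,\iota(g))$ in the $X_1$-site, which is also the cleanest way to see that your $\Psi$ is well defined and agrees with $\mathscr{G}\mapsto\mathscr{G}\circ\iota$. I would either adopt this shortcut or fall back on the paper's citation; as it stands, the central step of your argument is not complete.
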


\begin{proof}
In the case where $p$ is nilpotent in $D$, the proposition follows from Corollary 2.14 of \cite{Miy15} (The difference of our site with Miyatani's one does not cause any problem because Remark \ref{crysstab} ensures that the categories of crystals are the same). 
In the case where $D$ is a $p$-torsion free $p$-complete ring, the proposition follows from the previous case and the last equivalence of Remark \ref{crysstab}.
\qed
\end{proof}

Next, we relate the functor $\widetilde{\sigma}$ to a functor which is more similar to the functors studied in the previous sections, by using the infinitesimal invariance of the category of crystals on the $m$-crystalline site.

\begin{con}
Let $(D,I,\gamma)$ be a $p$-torsion free $p$-complete PD ring with $p \in I$. Set
\[
J = \text{Ker}(D \twoheadrightarrow D/I \xrightarrow{\phi^m} D/I).
\]
Then $(D,J,I,\gamma)$ is an $m$-PD ring. Assume that there exists a diagram with Cartesian squares
\[
\begin{tikzcd}
\widetilde{X}\arrow[r,"f"]\arrow[dr,phantom, "\Box"] & \text{Spec}(D/I) \\
X\arrow[r]\arrow[u,hookrightarrow]\arrow[dr,phantom, "\Box"] & \text{Spec}(D/J)\arrow[u,hookrightarrow] \\
X'\arrow[r]\arrow[u] & \text{Spec}(D/I), \arrow[u,"(\phi^m)^*"]
\end{tikzcd} 
\]
where $f$ is a smooth and separated map. Then we have functors of sites as above:
\begin{align*}
\pi : (\widetilde{X}/(D,I,I,\gamma))_{m\text{-crys}} &\to (X/(D,J,I,\gamma))_{m\text{-crys}}, \\
\widetilde{\sigma} : (\widetilde{X}/(D,I,I,\gamma))_{m\text{-crys}} &\to (X'/(D,I,\gamma))_{\text{crys}}.
\end{align*}
 We can also define the functor $\sigma$ from $(X/(D,J,I,\gamma))_{m\text{-crys}}$ to $(X'/(D,I,\gamma))_{\text{crys}}$ in the following way:
For an object $(\text{Spec}(E) \gets \text{Spec}(E/J_E) \to X)$ of $(X/D)_{m\text{-crys}}$, we define the object $\sigma(\text{Spec}(E) \gets \text{Spec}(E/J_E) \to X)$ of $(X'/D)_{\text{crys}}$ by \[(\text{Spec}(E) \gets \text{Spec}(E/I_E) \xrightarrow{f} X'),\]  where the right map $f$ is defined as follows:
\[ \mathrm{Spec}(E/I_E) \xrightarrow{g} \mathrm{Spec}(E/J_E) \underset{\mathrm{Spec}(D/J),(\phi ^m)^*}{\times} \mathrm{Spec}(D/I) \to X \underset{\mathrm{Spec}(D/J),(\phi ^m)^*}{\times} \mathrm{Spec}(D/I). \]
Here, the first map $g$ is induced by the map of rings
\[
E/J_E \underset{D/J,\phi^m}{\otimes} D/I \to E/I_E ; \ \ \ \ \  e \otimes d \mapsto \phi^m(e)d.
\]
This defines the functor $\sigma : (X/(D,J,I,\gamma))_{m\text{-crys}} \to (X'/(D,I,\gamma))_{\text{crys}}.$
We can check that $\sigma \circ \pi = \widetilde{\sigma}$. Hence we have a commutative diagram:
\[
\begin{tikzcd}
\mathscr{C}^{\text{qcoh}}((X'/(D,I,\gamma))_{\text{crys}})\arrow[r,"\sigma^*"]\arrow[rd,bend right=10,"\widetilde{\sigma}^*"'] & \mathscr{C}^{\text{qcoh}}((X/(D,J,I,\gamma))_{m\text{-crys}}) \arrow[d,"\simeq"',"\pi^*"] \\
& \mathscr{C}^{\text{qcoh}}((\widetilde{X}/(D,I,I,\gamma))_{m\text{-crys}}).
\end{tikzcd} 
\]
So it is enough to consider the functor $\sigma$ to study the Frobenius descent.
The functor $\sigma$ is much closer to the functors between the sites of level $m$ and level $0$ in the previous sections.
\end{con}

However, the $m$-crystalline site is not suitable enough to apply the site-theoretic argument in the previous sections; for any object $(E,J_E,I_E,\gamma_E)$ of $(X/D)_{m\text{-crys}}$, $p$ is nilpotent in $E$, and the data $(J_E,\gamma_E)$ is not uniquely determined by the pair $(E,I_E)$. But we can overcome this difficulty by defining a variant of the $m$-crystalline site.  Fix a non-negative integer $m$.

\begin{defi}
Let $(D,I,\gamma)$ be a PD ring with $p \in I$. We define \[J_{\text{max}} := \text{Ker}(D \twoheadrightarrow D/I \xrightarrow{\phi^m} D/I).\] Then 
$(J_{\text{max}},I,\gamma)$ is an $m$-PD ideal and for any $m$-PD ideal of the form $(J,I,\gamma)$, we have $J \subseteq J_{\text{max}}$. A \textit{maximal m-PD ring} is an $m$-PD ring $(D,J,I,\gamma)$ satisfying $J = J_{\text{max}}$.
\end{defi}

Note that, for a $p$-torsion free $p$-complete maximal $m$-PD ring $(D,J,I,\gamma)$, the data $(J,\gamma)$ is uniquely determined by the pair $(D,I)$: indeed, the ideal $J$ is uniquely determined by $I$ and the PD structure is uniquely determined by $\gamma_n(x) = \frac{x^n}{n!}$.

Next, we define a variant of the $m$-crystalline site.

\begin{defi}
\label{defmcrysnew}
Let $(D,J,I,\gamma)$ be a $p$-torsion free $p$-complete maximal $m$-PD ring with $p \in I$. Let $X$ be a scheme smooth and separated over $D/J$. We define the {\it new $m$-crystalline site} $(X/D)_{m\text{-crys,new}}$ of $X$ over $D$ as follows. 
Objects are maps $(D,J,I,\gamma) \to (E,J_E,I_E,\gamma_E)$ of $p$-torsion free $p$-complete maximal $m$-PD rings together with a map Spec$(E/J_E) \to X$ over $D/J$ satisfying the following condition:
\begin{enumerate}
\item[(*)] Spec$(E/J_E) \to X$ factors through some affine open Spec$(R) \subseteq X$.
\end{enumerate}
We shall often denote such an object by
\[
(\text{Spf}(E) \gets \text{Spec}(E/J_E) \to X) \in (X/D)_{m\text{-crys,new}}
\]
or $(E,J_E,I_E,\gamma_E)$ if no confusion arises. A morphism 
\[
(\text{Spf}(E') \gets \text{Spec}(E'/J_{E'}) \to X) \to (\text{Spf}(E) \gets \text{Spec}(E/J_E) \to X)
\]
 is a map of maximal $m$-PD rings 
\[
(E,J_E,I_E,\gamma_E) \to (E',J_{E'},I_{E'},\gamma_{E'})
\]
 over $(D,J,I,\gamma)$ such that the induced morphism $\text{Spec}(E'/J_{E'}) \to \text{Spec}(E/J_E)$
 is \linebreak compatible with the maps $\text{Spec}(E'/J_{E'}) \to X$, $\text{Spec}(E/J_E) \to X$. When we denote such an object by $(E,J_E,I_E,\gamma_E)$, we shall write $(E,J_E,I_E,\gamma_E) \to (E',J_{E'},I_{E'},\gamma_{E'})$ (not $(E',J_{E'},I_{E'},\gamma_{E'}) \to (E,J_E,I_E,\gamma_E)$) for a morphism from $(E',J_{E'},I_{E'},\gamma_{E'})$ to $(E,J_E,I_E,\gamma_E)$. 
A map $(E,J_E,I_E,\gamma_E) \to (E',J_{E'},I_{E'},\gamma_{E'})$
 in  $(X/D)_{m\text{-crys,new}}$ is a cover if it is a $p$-completely faithfully flat map and satisfies $I_{E'} = I_EE'$ (but we do not require that $J_{E'} =  J_EE'$). (Note that $I_EE'$ is always closed in $p$-adic topology because $p \in I_EE'$.)

When $m=0$, we denote the site $(X/D)_{m\text{-crys,new}}$ simply by $(X/D)_{\text{crys,new}}$ and call it the {\it new crystalline site}.
\end{defi}

The new $m$-crystalline site is much closer to our notion of sites of higher level in the previous sections. We will prove the equivalence between the category of crystals on the new $m$-crystalline site and that on the new crystalline site by the site-theoretic argument as in the previous sections. We first construct a functor between these sites.

\begin{con}
Let $X'$ be $X \underset{\mathrm{Spec}(D/J),(\phi ^m)^*}{\times} \mathrm{Spec}(D/I)$. Then we have the functor of sites \[
\sigma_{\text{new}} : (X/D)_{m\text{-crys,new}} \to (X'/D)_{\text{crys,new}}
\]
that sends an object $(\text{Spf}(E) \gets \text{Spec}(E/J_E) \to X)$ of $(X/D)_{m\text{-crys,new}}$ to an object $(\text{Spf}(E) \gets \text{Spec}(E/I_E) \xrightarrow{f} X')$ of $(X'/D)_{\text{crys,new}}$, where the right map $f$ is defined as follows:
\[ \mathrm{Spec}(E/I_E) \xrightarrow{g} \mathrm{Spec}(E/J_E) \underset{\mathrm{Spec}(D/J),(\phi ^m)^*}{\times} \mathrm{Spec}(D/I) \to X \underset{\mathrm{Spec}(D/J),(\phi ^m)^*}{\times} \mathrm{Spec}(D/I). \]
Here, the first map $g$ is induced by the map of rings
\[
E/J_E \underset{D/J,\phi^m}{\otimes} D/I \to E/I_E ; \ \ \ \ \  e \otimes d \mapsto \phi^m(e)d.
\]
\end{con}

\begin{prop}
\label{sigmanewcoco}
The functor $\sigma_{\mathrm{new}}$ is cocontinuous.
\end{prop}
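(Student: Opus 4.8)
The plan is to establish cocontinuity exactly as in Proposition \ref{pr:3} (and Proposition \ref{sigmacoco}): given an object $(E,J_E,I_E,\gamma_E)$ of $(X/D)_{m\text{-crys,new}}$ and a cover of $\sigma_{\mathrm{new}}(E)$ in $(X'/D)_{\text{crys,new}}$, I would lift it to a cover of $(E,J_E,I_E,\gamma_E)$ whose image under $\sigma_{\mathrm{new}}$ is the given cover. So I start from a cover
\[
\alpha' : (\mathrm{Spf}(E') \gets \mathrm{Spec}(E'/I_{E'}) \to X') \to \sigma_{\mathrm{new}}(\mathrm{Spf}(E) \gets \mathrm{Spec}(E/J_E) \to X)
\]
in $(X'/D)_{\text{crys,new}}$, whose underlying datum is a $p$-completely faithfully flat ring map $E \to E'$ with $I_{E'} = I_E E'$ together with a compatible map $\mathrm{Spec}(E'/I_{E'}) \to X'$. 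The one feature that makes the crystalline case easier than the prismatic one is that the lift needs no analog of Lemma \ref{prislift}: since $E'$ is already $p$-torsion free and $p$-complete, the maximal $m$-PD structure on $(E',I_{E'})$ is intrinsic, namely
\[
J_{E'} := \mathrm{Ker}\bigl(E' \twoheadrightarrow E'/I_{E'} \xrightarrow{\phi^m} E'/I_{E'}\bigr) = \{x \in E' : x^{p^m} \in I_{E'}\},
\]
with the PD structure $\gamma_{E'}$ already carried by $I_{E'}$.

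Next I would check that $(E',J_{E'},I_{E'},\gamma_{E'})$ is an object of $(X/D)_{m\text{-crys,new}}$. The inclusions $I_{E'} \subseteq J_{E'}$ and $J_{E'}^{(p^m)} + pJ_{E'} \subseteq I_{E'}$ are immediate from the displayed description of $J_{E'}$ and from $p \in I_{E'}$, so $(J_{E'},I_{E'},\gamma_{E'})$ is a maximal $m$-PD ideal, while $p$-torsion freeness and $p$-completeness are inherited from $E'$. To equip it with a structure map to $X$, I would invoke condition (*) for $(E)$ to obtain an affine open $\mathrm{Spec}(R) \subseteq X$ through which $\mathrm{Spec}(E/J_E) \to X$ factors, giving $g : R \to E/J_E$. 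Since $x \in J_E$ forces $\alpha'(x)^{p^m} = \alpha'(x^{p^m}) \in I_E E' = I_{E'}$, the map $E \to E'$ sends $J_E$ into $J_{E'}$ and hence induces $\overline{\alpha}' : E/J_E \to E'/J_{E'}$; the composite $R \xrightarrow{g} E/J_E \xrightarrow{\overline{\alpha}'} E'/J_{E'}$ then provides the desired map $\mathrm{Spec}(E'/J_{E'}) \to \mathrm{Spec}(R) \subseteq X$, so condition (*) holds for the new object.

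The ring map $\alpha := \alpha' : E \to E'$ is then a morphism of maximal $m$-PD rings over $(D,J,I,\gamma)$ compatible with the maps to $X$, and it is a cover in $(X/D)_{m\text{-crys,new}}$ precisely because it is $p$-completely faithfully flat and satisfies $I_{E'} = I_E E'$ (the definition of covers in Definition \ref{defmcrysnew} does \emph{not} demand $J_{E'} = J_E E'$, which is exactly the flexibility that allows the maximal $J_{E'}$ above). The remaining, and main, point is to verify $\sigma_{\mathrm{new}}(\alpha) = \alpha'$: the underlying rings and ideals agree by construction, so one must check that the map $\mathrm{Spec}(E'/I_{E'}) \to X'$ recovered by applying $\sigma_{\mathrm{new}}$ to the structure map $\mathrm{Spec}(E'/J_{E'}) \to X$ just built coincides with the map supplied by $\alpha'$. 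This is the same diagram-chase as in Proposition \ref{pr:3}, tracing generators through $E/J_E \otimes_{D/J,\phi^m} D/I$ and the formula $e \otimes d \mapsto \phi^m(e)d$, and it goes through verbatim. Producing a cover $\alpha$ with $\sigma_{\mathrm{new}}(\alpha) = \alpha'$ establishes cocontinuity.
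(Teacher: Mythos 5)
Your proof is correct and is exactly the argument the paper intends: the paper's own proof simply says the result is proved "in the same way as Proposition \ref{pr:3}", and your write-up is a careful adaptation of that lifting argument (lift a cover of $\sigma_{\mathrm{new}}(E)$ by equipping $(E',I_{E'})$ with its intrinsic maximal $m$-PD ideal $J_{E'}$, transport the structure map through the affine open $\mathrm{Spec}(R)$ supplied by condition (*), and verify $\sigma_{\mathrm{new}}(\alpha)=\alpha'$ by the same diagram chase). No gaps.
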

\begin{proof}
We can prove the result in the same way as Proposition \ref{pr:3}.
\qed
\end{proof}

By Proposition \ref{sigmanewcoco}, we obtain a morphism of the associated topoi. By abuse of notation, we will denote it by $\sigma_{\mathrm{new}}$. We have the following result.

\begin{prop}
The morphism $\sigma_{\mathrm{new}} : \widetilde{(X/D)}_{m\text{-}\mathrm{crys,new}} \to \widetilde{(X'/D)}_{\mathrm{crys,new}}$ is an equivalence of topoi.
\end{prop}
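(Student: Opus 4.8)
The plan is to run exactly the same argument that proved Theorem \ref{thpristop}, i.e. to invoke the site-theoretic criterion of Proposition 4.2.1 of \cite{Oya17} (see also Proposition 9.10 of \cite{Xu19}): a functor of sites which is fully faithful, continuous, cocontinuous, and for which every object of the target admits a cover by an object lying in the image induces an equivalence of the associated topoi. Cocontinuity of $\sigma_{\mathrm{new}}$ is already in hand by Proposition \ref{sigmanewcoco}, so it suffices to verify the remaining three conditions, each being the new-$m$-crystalline counterpart of one of Propositions \ref{pr:1}, \ref{pr:2}, \ref{pr:4}.

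First I would establish full faithfulness, following Proposition \ref{pr:1}. Faithfulness is immediate, and for fullness one forms the same diagram out of a morphism $\alpha : \sigma_{\mathrm{new}}(E',\dots) \to \sigma_{\mathrm{new}}(E,\dots)$, with the two trapezoids coming from the maps $(\phi^m)^*$, and reduces to the claim that two maps $\text{Spec}(E'/J_{E'}) \to X$ which agree after precomposition with $(\phi^m)^*$ must coincide. The decisive input here is that for a maximal $m$-PD ring the map $\phi^m : E'/J_{E'} \to E'/I_{E'}$ is injective, precisely because $J_{E'} = \text{Ker}(E' \twoheadrightarrow E'/I_{E'} \xrightarrow{\phi^m} E'/I_{E'})$; combined with the fact that $(\phi^m)^*$ is a homeomorphism of underlying spaces (so that factorization through the affine intersection $\text{Spec}(R_1) \cap \text{Spec}(R_2)$ is preserved, this intersection being affine by separatedness of $X$), the argument of Proposition \ref{pr:1} carries over verbatim. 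Continuity, the analog of Proposition \ref{pr:2}, then follows from the observation that a map is a cover in $(X/D)_{m\text{-crys,new}}$ if and only if its image under $\sigma_{\mathrm{new}}$ is a cover in $(X'/D)_{\text{crys,new}}$ (both amounting to $p$-completely faithfully flat maps with $I_{E'} = I_E E'$, the underlying ring being unchanged by $\sigma_{\mathrm{new}}$), together with a pushout lemma in the spirit of Lemma \ref{lm:pushout} ensuring that $\sigma_{\mathrm{new}}$ sends the coproduct of a cover and an arbitrary map to the corresponding coproduct.

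The heart of the proof, and the main obstacle, is the covering property, i.e. the exact analog of Proposition \ref{pr:4}: given $(E',\dots)$ in $(X'/D)_{\text{crys,new}}$ one must produce an object $(E,\dots)$ of $(X/D)_{m\text{-crys,new}}$ and a cover $(E',\dots) \to \sigma_{\mathrm{new}}(E,\dots)$. I would imitate the local construction of Proposition \ref{pr:4}, reducing to the case $X = \text{Spec}(R)$ with $R/p$ standard smooth over $D/(p,J)$, presenting $R$ as $D[x_1,\dots,x_n]^\wedge/(J,y_1,\dots,y_r)$ with the $\overline{y_i}$ a regular sequence, and replacing the prismatic envelope by the ($m$-)PD envelope of the relevant ideal. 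The genuinely delicate point is that $D$ carries no lift of Frobenius, so one cannot simply apply a $\delta$-structure Frobenius to lifted coordinates as in Section 1; instead the role of $\phi^m(y_i)$ is taken over by the $p^m$-th power (up to divided-power correction), since $\overline{\phi^m(y_i)} = \overline{y_i}^{\,p^m}$ in characteristic $p$. One must then prove the $p$-complete faithful flatness of the resulting twisted map $(\phi^m)' : S' \to S$, $x_i \mapsto x_i^{p^m}$, between divided-power polynomial envelopes — the crystalline analog of Proposition \ref{prop:mprismff}. This is where the divided-power structure must do the work that Lemma 2.11 of \cite{BS19} did for free $\delta$-rings in the prismatic case, and it is the step I expect to require the most care.

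Once these three facts are verified, one lifts the coordinates to $E'$, extends to a map out of $S'$, and takes the base change $E' \to E' \widehat{\otimes}_{S'} S$ along $(\phi^m)'$, which by faithful flatness is a cover of the desired shape; compatibility of the defining data is checked by tracing the $x_i$ through the commuting diagram exactly as at the end of Proposition \ref{pr:4}. Feeding full faithfulness, continuity, the already-established cocontinuity, and the covering property into Proposition 4.2.1 of \cite{Oya17} then yields that $\sigma_{\mathrm{new}}$ is an equivalence of topoi.
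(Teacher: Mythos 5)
Your proposal follows exactly the route of the paper's proof: cocontinuity is Proposition \ref{sigmanewcoco}, full faithfulness and continuity are carried over verbatim from Propositions \ref{pr:1} and \ref{pr:2}, the covering property is the paper's Proposition \ref{prmcrys4} (with the PD envelope of $(I, y_1^{p^m},\dots,y_r^{p^m})$ replacing the prismatic envelope and faithful flatness of $x_i \mapsto x_i^{p^m}$ checked modulo $p$ using $p$-torsion freeness of $S$ and $S'$), and the conclusion is drawn from Proposition 4.2.1 of \cite{Oya17}. The step you flag as delicate is precisely the one the paper isolates as Proposition \ref{prmcrys4}, so your plan is correct and essentially identical to the paper's argument.
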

\begin{proof}
We can prove that the functor of sites $\sigma_{\text{new}} : (X/D)_{m\text{-crys,new}} \to (X'/D)_{\text{crys,new}}$ is fully faithful, continuous and cocontinuous in the same way as the propositions in Section 1, so the result follows from  Proposition \ref{prmcrys4} below and Proposition 4.2.1 of \cite{Oya17} (see also Proposition 9.10 of \cite{Xu19}).
\qed
\end{proof}

We will need the following analogs of Lemma \ref{prislift}.

\begin{lem}
\label{mcryslift}
Let $(E,J_E,I_E,\gamma_E)$ be an $m$-PD ring which is an object of $(X/D)_{m\text{-}\mathrm{crys}}$. Let $\overline{f} : \mathrm{Spec}(\overline{E_i}) \to \mathrm{Spec}(E/J_E)$ be an open immersion. Then there is a  unique open immersion  $f : \mathrm{Spec}(E_i) \to \mathrm{Spec}(E)$ which lifts $\overline{f}$. Moreover, there exists a PD structure $\gamma_{E_i}$ on $I_EE_i$ such that the corresponding map of rings induces a map $(E,J_E,I_E,\gamma_E) \to (E_i,J_EE_i,I_EE_i,\gamma_{E_i})$ in $(X/D)_{m\text{-}\mathrm{crys}}$.
\end{lem}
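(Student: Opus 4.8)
The plan is to follow the proof of Lemma \ref{prislift} verbatim, replacing the $\delta$-structure and the prism conditions by the PD structure and the $m$-PD conditions of Definition \ref{mpd}. The one genuinely new feature is that every object $E$ of $(X/D)_{m\text{-crys}}$ has $p$ nilpotent, so I would first reduce the lifting statement to a homeomorphism of underlying topological spaces rather than to a formal-scheme argument. Concretely, I would show that $J_E$ is a nil ideal. Since $(I_E,\gamma_E)$ is a PD ideal compatible with the divided powers on $p\mathbf{Z}_{(p)}$ and $p$ is nilpotent in $E$, a standard computation shows $I_E$ is nil: for $x\in I_E$ one has $x^p = p!\,\gamma_p(x)\in pI_E$, and iterating gives $x^{p^k}\in p^{c_k}I_E$ with $c_k\to\infty$, which vanishes once $c_k$ exceeds the nilpotence index of $p$. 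Then, for $x\in J_E$, Definition \ref{mpd} gives $x^{p^m}\in J_E^{(p^m)}\subseteq I_E$, so $x^{p^m}$ is nilpotent and hence so is $x$; thus $J_E$ is nil. Consequently $\mathrm{Spec}(E/J_E)\to\mathrm{Spec}(E)$ is a homeomorphism, and the existence, uniqueness and affinity of the lift $f:\mathrm{Spec}(E_i)\to\mathrm{Spec}(E)$ follow from Lemma \ref{lemlift} applied with $A=E$, the finitely generated ideal $(p)$ (for which $E$ is trivially complete since $p$ is nilpotent) and the closed ideal $J_E$, whose image in $E/pE$ is nil. By construction $E_i/J_EE_i=\overline{E_i}$, and $E\to E_i$ is an open immersion of affine schemes, hence flat.

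Next I would produce the $m$-PD structure. Since $E\to E_i$ is flat, the PD structure $\gamma_E$ on $I_E$ extends uniquely to a PD structure $\gamma_{E_i}$ on $I_EE_i$ compatible with $\gamma_E$ (the standard base change of divided powers along a flat map; locally $E_i$ is a localization of $E$, where the extension is explicit). It then remains to check that $(E_i,J_EE_i,I_EE_i,\gamma_{E_i})$ satisfies the inclusions of Definition \ref{mpd}. The inclusions $p(J_EE_i)\subseteq I_EE_i$ and $I_EE_i\subseteq J_EE_i$ are immediate from $p\in I_E$ and $I_E\subseteq J_E$. For $(J_EE_i)^{(p^m)}\subseteq I_EE_i$, the key point, parallel to the Frobenius computation in Corollary \ref{cor:reg seq}, is that $p\in I_E$, so $E_i/I_EE_i$ is a ring in which $p=0$ and the $p^m$-power map is additive; writing a general element of $J_EE_i$ as $\sum_j a_jx_j$ with $a_j\in E_i$ and the $x_j$ in the image of $J_E$, one computes $(\sum_j a_jx_j)^{p^m}\equiv\sum_j a_j^{p^m}x_j^{p^m}\pmod{I_EE_i}$, and each $x_j^{p^m}\in J_E^{(p^m)}\subseteq I_E$, so the whole sum lies in $I_EE_i$. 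Finally $p$ is nilpotent in $E_i$, and the composite $\mathrm{Spec}(E_i/J_EE_i)=\mathrm{Spec}(\overline{E_i})\to\mathrm{Spec}(E/J_E)\to X$ factors through the same affine open as the structure map of $E$; hence $\gamma_{E_i}$ makes the map of rings a morphism $(E,J_E,I_E,\gamma_E)\to(E_i,J_EE_i,I_EE_i,\gamma_{E_i})$ in $(X/D)_{m\text{-crys}}$.

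The main obstacle is the verification of the $m$-PD condition $(J_EE_i)^{(p^m)}\subseteq I_EE_i$; everything else is either formal or handled exactly as in Lemma \ref{prislift}. I expect this step to reduce cleanly to the additivity of the $p^m$-power map once one observes that $p$ vanishes modulo $I_E$, so that no estimate beyond the nilpotence argument used to lift the open immersion is required.
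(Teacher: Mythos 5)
Your proof is correct and follows the same route as the paper's: show $J_E$ is nil (via $p$ being nilpotent and the $m$-PD inclusions), apply Lemma \ref{lemlift} for the unique affine lift, and extend the PD structure along the flat map $E\to E_i$ (the paper cites Corollary 3.22 of \cite{BO78} for this step). The only difference is that you spell out the verification of $(J_EE_i)^{(p^m)}\subseteq I_EE_i$ via Frobenius additivity modulo $p\in I_E$, which the paper leaves implicit as part of the standard flat base change of $m$-PD structures; your computation there is correct.
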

\begin{proof}
As  $J_E$  is a nil ideal, the existence, the uniqueness and the affinity of the lifting follow from Lemma \ref{lemlift}. We denote the lifting by \[f : \mathrm{Spec}(E_i) \to \mathrm{Spec}(E).\]
As $E_i$ is flat over $E$, by Corollary 3.22 of \cite{BO78}, there exists (uniquely) a PD structure \linebreak $\gamma_{E_i}$ on $I_EE_i$ such that the corresponding map of rings induces a map \[(E,I_E,\gamma_E) \to (E_i,I_EE_i,\gamma_{E_i})\] of PD rings. This induces a map $(E,J_E,I_E,\gamma_E) \to (E_i,J_EE_i,I_EE_i,\gamma_{E_i})$, as required.
\qed
\end{proof}

\begin{lem}
\label{newlift}
Let $(E,J_E,I_E,\gamma_E)$ be a $p$-torsion free $p$-complete maximal $m$-PD ring which is an object of $(X/D)_{m\text{-}\mathrm{crys,new}}$. Let $\overline{f}$ be an open immersion \[\overline{f} : \mathrm{Spec}(\overline{E_i}) \to \mathrm{Spec}(E/I_E)\] or an open immersion \[\overline{f} : \mathrm{Spec}(\overline{E_i}) \to \mathrm{Spec}(E/J_E).\] Then there is a unique open immersion  $f : \mathrm{Spf}(E_i) \to \mathrm{Spf}(E)$ which lifts $\overline{f}$. Moreover, there exists a PD structure $\gamma_{E_i}$ on $I_EE_i$ such that the corresponding map of rings induces a map $(E,J_E,I_E,\gamma_E) \to (E_i,J_{E_i},I_EE_i,\gamma_{E_i})$ in $(X/D)_{m\text{-}\mathrm{crys,new}}$.
\end{lem}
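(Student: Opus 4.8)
The plan is to imitate the proofs of Lemmas \ref{prislift} and \ref{mcryslift}: perform the lifting $p$-adically via Lemma \ref{lemlift}, and then transport the divided powers along the (flat) lift. The first task is to check that in either variant the relevant closed ideal is nil modulo $p$, which is exactly what is needed to invoke Lemma \ref{lemlift} with $A = E$ and completion ideal $(p)$. For $I_E$ this is the point where the PD hypothesis enters: if $x \in I_E$ then $x^{p} = p!\,\gamma_{E,p}(x) \in pE$ since $p \mid p!$ (here $\gamma_{E,p}$ is the $p$-th divided power of $\gamma_E$), so $\overline{x}^{\,p} = 0$ in $E/pE$ and $I_E/pE$ is nil. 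For $J_E$, maximality gives $J_E^{(p^m)} \subseteq I_E$, so $x \in J_E$ forces $x^{p^m} \in I_E$ and hence $x^{p^{m+1}} \in pE$; thus $J_E/pE$ is nil as well. Since $E$ is $p$-torsion free and $p$-complete, Lemma \ref{lemlift} then yields a unique open immersion $f : \mathrm{Spf}(E_i) \to \mathrm{Spf}(E)$ lifting $\overline{f}$, with $\mathrm{Spf}(E_i)$ affine.

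Next I would show that $E \to E_i$ is $p$-completely flat (indeed $p$-completely \'etale), so that $E_i$ is again $p$-torsion free and $p$-complete. This is done exactly as in the proof of Lemma \ref{prislift}, with the $(p,I_E)$-completion there replaced by the $p$-completion: cover $\mathrm{Spf}(E_i)$ by standard opens $\mathrm{Spf}(\widehat{E}_{g_j})$, identify each $\widehat{E}_{g_j}$ with the derived $p$-completion of the \'etale localization $E_{g_j}$, and use the sheaf property together with the vanishing of the higher cohomology of the structure sheaf to see that the \v{C}ech complex computes $E_i$, forcing $E_i \otimes^L_E N$ to sit in degree $0$ for every $p$-power-torsion $E$-module $N$; reducing modulo $p$ shows $E_i/p$ is \'etale over $E/p$. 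Note that no $\delta$-structure is involved here: the Frobenius used to define the maximal ideal is simply the absolute Frobenius of the $\mathbf{F}_p$-algebra $E_i/I_EE_i$.

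It remains to equip $E_i$ with its maximal $m$-PD structure. Since $p \in I_E$ and $\gamma_E$ is compatible with the canonical divided powers on $p\mathbf{Z}_{(p)}$, each $p^nE$ is a sub-PD-ideal, so $\gamma_E$ descends to a PD structure on $I_E/p^nE \subseteq E/p^nE$; as $E/p^n \to E_i/p^n$ is flat, Corollary 3.22 of \cite{BO78} gives a unique PD structure on $I_EE_i/p^nE_i$ extending it. Passing to the limit over $n$ — using that $pE_i \subseteq I_EE_i$, so that $I_EE_i$ is $p$-adically closed, hence complete, in the $p$-torsion free $p$-complete ring $E_i$ — produces a unique PD structure $\gamma_{E_i}$ on $I_EE_i$ compatible with $\gamma_E$. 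Setting $J_{E_i} := \mathrm{Ker}(E_i \to E_i/I_EE_i \xrightarrow{\phi^m} E_i/I_EE_i)$, the inclusions $J_{E_i}^{(p^m)} \subseteq I_EE_i$ and $pJ_{E_i} \subseteq I_EE_i \subseteq J_{E_i}$ are immediate from the definition of $J_{E_i}$ and from $p \in I_E$, so $(E_i,J_{E_i},I_EE_i,\gamma_{E_i})$ is a maximal $m$-PD ring. Finally, the lift $f$ and the extension of $\gamma$ are over $(D,J,I,\gamma)$, and the induced $\mathrm{Spec}(E_i/J_{E_i}) \to X$ factors through the same affine open as $\mathrm{Spec}(E/J_E) \to X$ (condition (*)); hence the map of rings defines a morphism in $(X/D)_{m\text{-crys,new}}$, and uniqueness of $f$, of $\gamma_{E_i}$, and of $J_{E_i}$ gives uniqueness of the whole datum.

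The step I expect to be the main obstacle is the divided-power transport in the $p$-complete rather than $p$-nilpotent setting: unlike in Lemma \ref{mcryslift}, Corollary 3.22 of \cite{BO78} cannot be applied directly to $E \to E_i$, so I must descend to the truncations $E/p^n \to E_i/p^n$ and then check that the resulting PD structures assemble into a genuine PD structure on the closed ideal $I_EE_i$ of the complete ring $E_i$. The nilpotence observation $x^{p} \in pE$ for $x \in I_E$ is the one genuinely new input relative to Lemma \ref{prislift}, as it is what makes the $p$-adic lifting legitimate in the first place.
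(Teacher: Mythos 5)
Your proposal is correct and follows essentially the same route as the paper: verify that $I_E$ and $J_E$ are nil modulo $p$ so that Lemma \ref{lemlift} applies, then transport the divided powers by applying Corollary 3.22 of \cite{BO78} to the flat truncations $E/p^nE \to E_i/p^nE_i$ and passing to the inverse limit. The only cosmetic difference is that you re-run the \v{C}ech/\'etale-localization argument of Lemma \ref{prislift} to get $p$-complete flatness of $E \to E_i$, whereas the paper simply uses that each $\mathrm{Spec}(E_i/p^nE_i) \to \mathrm{Spec}(E/p^nE)$ is an open immersion of affine schemes and hence flat.
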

\begin{proof}
As the images of $J_E$ and $I_E$ in $E/pE$ are nil ideals, the existence, the uniqueness and the affinity of the lifting follow from Lemma \ref{lemlift}. We denote the lifting by \[f : \mathrm{Spf}(E_i) \to \mathrm{Spf}(E).\]
Note that $I_E/p^nE$ has naturally the PD structure induced by $\gamma_E$. Then, as \[E/p^nE \to E_i/p^nE_i\] is flat, by Corollary 3.22 of \cite{BO78}, there exists (uniquely) a PD structure on $I_EE_i/p^nE_i$ such that the map $(E/p^nE,I_E/p^nE) \to (E_i/p^nE_i,I_EE_i/p^nE_i)$ is the map of PD rings. Taking inverse limit with respect to $n$, we see that there exists a PD structure \linebreak $\gamma_{E_i}$ on $I_EE_i$ such that the map  \[(E,I_E,\gamma_E) \to (E_i,I_EE_i,\gamma_{E_i})\] is a map of PD rings. Also, the flatness of $E/p^nE \to E_i/p^nE_i$ for all $n$ and the $p$-torsion freeness of $E$ imply the $p$-torsion freeness of $E_i$. Hence the map \[(E,J_E,I_E,\gamma_E) \to (E_i,J_{E_i},I_EE_i,\gamma_{E_i})\] (where $J_{E_i} := \text{Ker}(E_i \twoheadrightarrow E_i/I_EE_i \xrightarrow{\phi^m} E_i/I_EE_i$)) is a map in $(X/D)_{m\text{-}\mathrm{crys,new}}$.
\qed
\end{proof}

\begin{prop}
\label{prmcrys4}
Let $(E',J_{E'},I_{E'},\gamma_{E'})$ be an object in $(X'/D)_{\mathrm{crys,new}}$. Then there exists an object $(E,J_E,I_{E},\gamma_E)$ in $(X/D)_{m\text{-}\mathrm{crys,new}}$ and a cover of the form \[(E',J_{E'},I_{E'},\gamma_{E'}) \to \sigma_{\mathrm{new}}(E,J_E,I_{E},\gamma_E).\]
\end{prop}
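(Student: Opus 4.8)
The plan is to follow the proof of Proposition \ref{pr:4} step by step, replacing the prismatic envelope by the $p$-completed divided power envelope, the lifting Lemma \ref{prislift} by Lemma \ref{newlift}, and the appeal to the faithful flatness of the relative Frobenius on free $\delta$-rings (Lemma 2.11 of \cite{BS19}) by an analogous statement for PD envelopes. First I would reduce to the affine case: writing $X = \bigcup_i \mathrm{Spec}(R_i)$ and using that $\mathrm{Spec}(E'/I_{E'}) \to X'$ factors through an affine open, Lemma \ref{newlift} produces, exactly as in Proposition \ref{pr:4}, a cover $(E',J_{E'},I_{E'},\gamma_{E'}) \to \prod_i (E'_i, J_{E'_i}, I_{E'_i}, \gamma_{E'_i})$ by objects whose structure map factors through an affine open of the form $\mathrm{Spec}(R')$ with $R' = R \otimes_{D/J,\phi^m} D/I$ and $R/p$ standard smooth over $D/(p,J)$ (Tag 00TA of \cite{Sta}). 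Hence I may assume $\mathrm{Spec}(E'/I_{E'}) \to X'$ already factors through such an $\mathrm{Spec}(R')$.

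Next, present $R$ as in Proposition \ref{pr:4}: choose a surjection $D[x_1,\dots,x_n]^\wedge \twoheadrightarrow R$ with kernel $(J, y_1, \dots, y_r)$ where $\overline{y_1}, \dots, \overline{y_r} \in D/(p,J)[x_1,\dots,x_n]$ is a regular sequence, and write $y_i = \sum_{\underline j} \alpha_{\underline j} x^{\underline j}$. Let $(\phi^m)'' : D[x_1,\dots,x_n]^\wedge \to D[x_1,\dots,x_n]^\wedge$ be the $D$-algebra map $x_i \mapsto x_i^{p^m}$; it is finite free, hence $p$-completely faithfully flat. Set $y_i' := \sum_{\underline j} \alpha_{\underline j}^{(m)} x^{\underline j}$, where $\alpha_{\underline j}^{(m)} \in D$ is any lift of $\phi^m(\overline{\alpha_{\underline j}}) \in D/I$, and $\phi^m(y_i) := (\phi^m)''(y_i')$. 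By the argument of Corollary \ref{cor:reg seq}, the reductions $\overline{y_i'}$ and $\overline{\phi^m(y_i)} = \overline{y_i}^{\,p^m}$ form regular sequences in $D/I[x_1,\dots,x_n]$ with flat quotients over $D/I$. I then let $S'$ (resp. $S$) be the $p$-completed divided power envelope of $(D[x_1,\dots,x_n]^\wedge, K')$ (resp. $(D[x_1,\dots,x_n]^\wedge, K)$) compatible with $(I,\gamma)$, where $K' = (I, y_1', \dots, y_r')$ and $K = (I, \phi^m(y_1), \dots, \phi^m(y_r))$; the regularity ensures $S', S$ are $p$-torsion free and flat over $D$ by the flatness of PD envelopes along regular immersions \cite{BO78}, so they define objects of $(X'/D)_{\mathrm{crys,new}}$ and $(X/D)_{m\text{-crys,new}}$ respectively (with their maximal $m$-PD structures), and the inclusion $\phi^m(y_i) \in I_S$ gives the required map $R \to S/J_S$.

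The key point — the crystalline replacement for Lemma 2.11 of \cite{BS19} — is the faithful flatness of the induced map $(\phi^m)' : S' \to S$, $x_i \mapsto x_i^{p^m}$. Here I would observe that, by the very definition of $\phi^m(y_i)$, the ideal $K$ is exactly the one generated by $(\phi^m)''(K')$; since divided power envelopes commute with flat base change \cite{BO78}, this identifies $S$ (after $p$-completion) with $S' \widehat{\otimes}_{D[x_1,\dots,x_n]^\wedge, (\phi^m)''} D[x_1,\dots,x_n]^\wedge$. As $(\phi^m)''$ is $p$-completely faithfully flat, so is its base change $(\phi^m)'$. This is the step I expect to be the main obstacle: one must check carefully that the PD envelope together with its compatibility with $(I,\gamma)$ is preserved under $(\phi^m)''$ and that $K$ and $K'$ correspond under it — precisely where the explicit choice $x_i \mapsto x_i^{p^m}$, and the absence of a genuine Frobenius lift on $D$, must be handled with care.

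Finally, I would conclude exactly as in Proposition \ref{pr:4}. The map $D[x_1,\dots,x_n]^\wedge \to R' \to E'/I_{E'}$ lifts to $f_2 : D[x_1,\dots,x_n]^\wedge \to E'$; since $f_2$ carries $K'$ into $I_{E'}$ and $(E', I_{E'}, \gamma_{E'})$ is a PD ring, the universal property of the PD envelope extends $f_2$ to a PD map $g : S' \to E'$. Setting $E := E' \widehat{\otimes}_{S'} S$ (the $p$-completed base change of $(\phi^m)'$ along $g$), the $p$-complete faithful flatness of $(\phi^m)'$ shows that $h : E' \to E$ is $p$-completely faithfully flat with $I_E = I_{E'}E$, so that $E$, endowed with the maximal $m$-PD structure determined by its $p$-torsion free $p$-complete PD structure, is an object of $(X/D)_{m\text{-crys,new}}$ and $h$ realizes a cover $(E',J_{E'},I_{E'},\gamma_{E'}) \to \sigma_{\mathrm{new}}(E,J_E,I_E,\gamma_E)$. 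That $h$ respects the structure maps to $X'$ amounts to the commutativity of the bottom right square of the corresponding diagram, which I would verify by tracing the elements $x_i$ exactly as in the proof of Proposition \ref{pr:4}.
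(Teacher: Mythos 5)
Your proposal is correct and follows essentially the same route as the paper: reduce to the case where the structure map factors through $\mathrm{Spec}(R')$ with $R/p$ standard smooth, present $R$ via a regular sequence $y_1,\dots,y_r$, form the $p$-completed PD envelopes $S'$ and $S$ of the ideals $(I,y_1',\dots,y_r')$ and $(I,\phi^m(y_1),\dots,\phi^m(y_r))$ (your $K$ coincides with the paper's $(I,y_1^{p^m},\dots,y_r^{p^m})$ since the generators differ by elements of $I$), and base change the faithfully flat map $(\phi^m)'\colon S'\to S$ along $g\colon S'\to E'$. The only cosmetic difference is that you establish the faithful flatness of $(\phi^m)'$ integrally, via flat base change of PD envelopes along the finite free map $x_i\mapsto x_i^{p^m}$, whereas the paper checks the same base-change identification modulo $p$ and then invokes $p$-torsion-freeness of $S$ and $S'$; both arguments are valid and rest on the same observation.
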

\begin{proof}
By Lemma \ref{newlift} and the argument as in the proof of Proposition \ref{pr:4}, we may assume that the structure morphism $\text{Spec}(E'/I_{E'}) \to X'$ factors through an affine open $\text{Spec}(R') \subseteq X'$ such that $R'$ is of the form $R \underset{D/J,\phi^m}{\otimes} D/I$ for some affine open $\text{Spec}(R) \subseteq \nolinebreak X$, and that the corresponding map of rings $D/J \to R$ is standard smooth in the sense of Tag 00T6 of \cite{Sta}.

By Tag 00T7 of \cite{Sta}, there exists a surjection $D[x_1, \dots, x_n]^\wedge \to R$ whose kernel is the ideal $(J, y_1, \dots, y_r)$ such that $\overline{y_1}, \dots, \overline{y_r} \in D/J[x_1, \dots, x_n]$ form a regular sequence. 
Since the image of $J$ in $D/pD$ is a nil ideal, Jacobian criterion for smoothness implies that $\overline{y_1}, \dots, \overline{y_r} \in D/pD[x_1, \dots, x_n]$ form a regular sequence and that the quotient $\widetilde{R} := D[x_1, \dots, x_n]/(p,y_1,\dots,y_r)$ is smooth over $D/pD$. 
By a similar argument to that in the proof of Corollary \ref{cor:reg seq}, we see that the sequence $y^{p^m}_1, \dots, y^{p^m}_r \in D[x_1, \dots, x_n]^\wedge$ is $p$-completely regular relative to $D$. Then we can construct a map $D[x_1, \dots, x_n]^\wedge \to S$ to the $p$-completed PD envelope  with respect to the ideal $(I, y^{p^m}_1, \dots, y^{p^m}_r)$. 
We denote the PD ideal of $S$ by $I_S$ and let $J_S = \text{Ker}(S \twoheadrightarrow S/I_{S} \xrightarrow{\phi^m} S/I_{S})$.  Then we have a map $R \to S/J_S$. Thus, $(S,J_S,I_S,\gamma_S)$ can be regarded as an object of $(X/D)_{m\text{-}\mathrm{crys,new}}$. By definition, $S = D[x_1, \dots, x_n]^\wedge[\frac{y^{kp^m}_1}{k!}, \dots, \frac{y^{kp^m}_r}{k!}]^\wedge_{k \in \mathbb{N}}$ and $S$ is $p$-torsion free.

Let $\widetilde{R}'$ be $\widetilde{R} \underset{D/pD,\phi^m}{\otimes} D/pD$. Let \[D/pD[x_1, \dots, x_n] \to \widetilde{R}'\text{, }\widetilde{R}' \to R'\] be the base  changes of the surjections $D/pD[x_1, \dots, x_n] \to \widetilde{R}$, $\widetilde{R} \to R$ along the map $D/pD \xrightarrow{\phi^m} D/pD$ respectively. Then we have a surjection $D[x_1, \dots, x_n]^\wedge \to R'$ \linebreak determined as the composition of the map $D[x_1, \dots, x_n]^\wedge \to D/pD[x_1, \dots, x_n]$ and the surjection $D/pD[x_1, \dots, x_n] \to \widetilde{R}' \to R'$ constructed above. On the other hand, the kernel of the map $D[x_1, \dots, x_n]^\wedge \to \widetilde{R}'$ is $(p, y_1', \dots, y_r')$, where the $\overline{y_i'}$'s are the \linebreak images of the $\overline{y_i}$'s under the map $D/pD[x_1, \dots, x_n] \to D/pD[x_1, \dots, x_n]$ sending $\sum_{\underline{j}} \alpha_{\underline{j}} x^{\underline{j}}$ to $\sum_{\underline{j}} \alpha^{p^m}_{\underline{j}} x^{\underline{j}}$ (here we write $\underline{j} := (j_1, \dots, j_n)$ for the multi-index).
So the \linebreak kernel of the map $D[x_1, \dots, x_n]^\wedge \to R'$ is $(I, y_1', \dots, y_r')$. As \[\overline{y_1}, \dots, \overline{y_r} \in D/pD[x_1, \dots, x_n]\] form a regular sequence, the sequence $\overline{y_1'}, \dots, \overline{y_r'} \in D/pD[x_1, \dots, x_n]$ is also a regu\nolinebreak lar sequence. 
By a similar argument to that in the proof of Corollary \ref{cor:reg seq}, the sequence \linebreak $y_1', \dots, y_r' \in D[x_1, \dots, x_n]^\wedge$ is $p$-completely regular relative to $D$. Then we can \linebreak construct a map $D[x_1, \dots, x_n]^\wedge \to S'$ to the $p$-completed PD envelope with respect to the ideal $(I, y_1', \dots, y_r')$. 
By definition, $S' = D[x_1, \dots, x_n]^\wedge[\frac{{y'_1}^{k}}{k!}, \dots, \frac{{y'_r}^{k}}{k!}]^\wedge_{k \in \mathbb{N}}$  and $S'$ is $p$-torsion free.

By the relation between $y_i$ and $y_i'$, we have a map of $D$-PD rings $(\phi^m)' : S' \to S$ sending $x_i$ to $x^{p^m}_i$. 
The map $(\phi^m)'$ is $p$-completely faithfully flat: indeed, since $S, S'$ are $p$-torsion free, it suffices to check that $S'/pS' \to S/pS$ is faithfully flat. But this map is the base change of the map $D/pD[x_1, \dots, x_n] \to D/pD[x_1, \dots, x_n]$ sending $x_i$ to $x^{p^m}_i$, which is clearly faithfully flat.

Let $(E',J_{E'},I_{E'},\gamma_{E'}) \in (X'/D)_{\mathrm{crys,new}}$ be as above. We have a map \[f_1 : D[x_1, \dots, x_n]^\wedge \to R' \to E'/I_{E'}.\] As $D[x_1, \dots, x_n]^\wedge$ is the completion of a polynomial ring, one can choose a map \linebreak $f_2 : D[x_1, \dots, x_n]^\wedge \to E'$ lifting $f_1$. 
By the universal property of PD envelope, $f_2$ extends uniquely to a $D$-PD ring map $g : S' \to E'$. 
If we set $h : E' \to E' \widehat{\otimes}_{S'} S$ to be the base change of $(\phi^m)' : S' \to S$ along $g$, then by $p$-complete faithful flatness of $(\phi^m)'$, we see that the same holds true for $h$.

Then we can prove that the map $h$ defines a morphism \[(E',J_{E'},I_{E'},\gamma_{E'}) \to \sigma_{\mathrm{new}}(E' \widehat{\otimes}_{S'} S,J_{E' \widehat{\otimes}_{S'} S},I_{E' \widehat{\otimes}_{S'} S},\gamma_{E' \widehat{\otimes}_{S'} S})\] in $(X'/D)_{\mathrm{crys,new}}$ as in the proof of Proposition \ref{pr:4}, so the proposition follows.
\qed
\end{proof}

$\sigma_{\text{new}}$ also induces equivalences between the categories of crystals:

\begin{defi}
Let 
\begin{align*}
&\mathscr{C}((X/D)_{m\text{-crys,new}})\text{ (resp. }\mathscr{C}^{\mathrm{tors}}((X/D)_{m\text{-crys,new}}), \\
&\mathscr{C}^{p^n\text{-}\mathrm{tors}}((X/D)_{m\text{-crys,new}}),\mathscr{C}^{\mathrm{fp}}((X/D)_{m\text{-crys,new}}))
\end{align*}
 be the category of abelian sheaves $\mathscr{F}$ on $(X/D)_{m\text{-crys,new}}$ such that, for any object $(E,J_E,I_E,\gamma_E)$  in $(X/D)_{m\text{-crys,new}}$,  
\[
\mathscr{F}(E) \in \modd{E}\text{ (resp. }\modtors{E},\modpntors{E},\modfp{E}),
\]
 and for any  morphism \[(E,J_E,I_E,\gamma_E) \to (E_1,J_{E_1},I_{E_1},\gamma_{E_1})\] in $(X/D)_{m\text{-crys,new}}$, $\mathscr{F}(E,J_E,I_E,\gamma_E)  \to \mathscr{F}(E_1,J_{E_1},I_{E_1},\gamma_{E_1})$ is compat\nolinebreak ible with the module structures in the usual sense and 
the canonical map \[\mathscr{F}(E,J_E,I_E,\gamma_E) \widehat{\otimes}_E E_1 \to \mathscr{F}(E_1,J_{E_1},I_{E_1},\gamma_{E_1})\] is an isomorphism of $E_1$-modules. 
\end{defi}

\begin{thm}
\label{thm:sigmanew}
$\sigma_{\mathrm{new}}^*$ induces an equivalence of categories of crystals
\[
\mathscr{C}((X'/D)_{\mathrm{crys,new}}) \to \mathscr{C}((X/D)_{m\text{-}\mathrm{crys,new}}).
\]
The same holds true for $\mathscr{C}^{\mathrm{tors}}, \mathscr{C}^{p^n\text{-}\mathrm{tors}}$ and $\mathscr{C}^{\mathrm{fp}}$.
\end{thm}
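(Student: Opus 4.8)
The plan is to follow the proof of Theorem \ref{th:mcrys} almost verbatim, replacing the prismatic site by the new ($m$-)crystalline site, bounded prisms by $p$-torsion free $p$-complete maximal $m$-PD rings, the category $\mathcal{M}_\prism$ by $\mathcal{M}$, and the $(p,I_E)$-completions by $p$-completions throughout. Since $\sigma_{\mathrm{new}}$ is fully faithful, continuous and cocontinuous and induces the equivalence of topoi established above, the inverse image $\sigma_{\mathrm{new}}^*$ and its quasi-inverse $\sigma_{\mathrm{new},*}$ are already quasi-inverse equivalences of topoi. It therefore remains only to check that both functors carry crystals to crystals, and the statement for $\mathscr{C}$, $\mathscr{C}^{\mathrm{tors}}$, $\mathscr{C}^{p^n\text{-}\mathrm{tors}}$ and $\mathscr{C}^{\mathrm{fp}}$ will then follow at once.

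For $\sigma_{\mathrm{new}}^*$ this is immediate, exactly as in the treatment of $\mathrm{C}^*$ in Theorem \ref{th:mcrys}: since $\sigma_{\mathrm{new}}^*$ is computed by precomposition with $\sigma_{\mathrm{new}}$, one has $\sigma_{\mathrm{new}}^*(\mathscr{F})(E,J_E,I_E,\gamma_E) = \mathscr{F}(\sigma_{\mathrm{new}}(E,J_E,I_E,\gamma_E))$, and $\sigma_{\mathrm{new}}$ leaves the underlying ring $E$ unchanged. Hence the module lies in $\modd{E}$ (respectively $\modtors{E}$, $\modpntors{E}$, $\modfp{E}$) and the base-change isomorphisms defining a crystal are inherited from those for $\mathscr{F}$. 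So $\sigma_{\mathrm{new}}^*$ sends $\mathscr{C}((X'/D)_{\mathrm{crys,new}})$ into $\mathscr{C}((X/D)_{m\text{-}\mathrm{crys,new}})$, and similarly for the three variants.

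The substantive point is that $\sigma_{\mathrm{new},*}$ preserves crystals, and here I would reproduce the mechanism of the proof of Theorem \ref{th:mcrys}. First I would record the analogs of Propositions \ref{pr:5}, \ref{pr:6} and \ref{pr:7}: these are formal consequences of the fullness of $\sigma_{\mathrm{new}}$, of the lifting result Proposition \ref{prmcrys4}, and of the pushout lemma for the new crystalline site (the analog of Lemma \ref{lm:pushout}, asserting that the $p$-completed tensor product $E_1\widehat{\otimes}_E E_2$, equipped with the induced maximal $m$-PD structure, represents the coproduct and that the coprojection to it is again a cover). Granting these, fix a crystal $\mathscr{F}$ and a morphism $g:(E',J_{E'},I_{E'},\gamma_{E'}) \to (E'_1,J_{E'_1},I_{E'_1},\gamma_{E'_1})$ in $(X'/D)_{\mathrm{crys,new}}$; by the analog of Proposition \ref{pr:6} one reduces, exactly as in Theorem \ref{th:mcrys}, to showing that for every cover $f:(E',\ldots) \to \sigma_{\mathrm{new}}(E,\ldots)$ one has $\sigma_{\mathrm{new},*}(\mathscr{F})(E') \in \modd{E'}$ together with the base-change isomorphism. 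Using $\sigma_{\mathrm{new}}^*\sigma_{\mathrm{new},*} \simeq \mathrm{id}$ and the analog of Proposition \ref{pr:7}, the Čech diagram $\mathscr{F}(E) \rightrightarrows \mathscr{F}(E_2) \to \mathscr{F}(E_3)$, with $E_2 = E\widehat{\otimes}_{E'}E$ and $E_3 = E\widehat{\otimes}_{E'}E\widehat{\otimes}_{E'}E$, is a descent datum on $\mathscr{F}(E)$ along the $p$-completely faithfully flat cover $(E',\ldots)\to(E,\ldots)$; by the defining descent property of $\mathcal{M}$ it descends uniquely to some $M \in \modd{E'}$, and comparing the resulting exact sequence with the exactness of the sheaf $\sigma_{\mathrm{new},*}(\mathscr{F})$ identifies $\sigma_{\mathrm{new},*}(\mathscr{F})(E') \cong M \in \modd{E'}$.

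I expect the main obstacle to be the pushout lemma and, more precisely, the bookkeeping of the PD structures within it. Unlike the prismatic case, covers in the new crystalline site are required only to satisfy $I_{E'} = I_E E'$ and need not satisfy $J_{E'} = J_E E'$; one must check that the $p$-completed tensor product indeed carries a maximal $m$-PD structure representing the coproduct and that the induced maps are covers. This relies on the flat base-change stability of PD structures (Corollary 3.22 of \cite{BO78}) together with the $p$-torsion freeness preserved under the relevant completions, in the same spirit as in Lemmas \ref{mcryslift} and \ref{newlift}. Once this lemma is in place the descent step is governed entirely by the definition of $\mathcal{M}$, and the $\modtors{}$, $\modpntors{}$ and $\modfp{}$ versions follow by the same argument, since these categories also satisfy the relevant faithfully flat descent.
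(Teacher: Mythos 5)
Your proposal is correct and follows essentially the same route as the paper, which proves this theorem simply by invoking the argument of Theorem \ref{th:mcrys} mutatis mutandis (maximal $m$-PD rings for bounded prisms, $\mathcal{M}$ for $\mathcal{M}_{\prism}$, $p$-completion for $(p,I_E)$-completion). Your identification of the pushout lemma and the PD-structure bookkeeping via flat base change as the only point requiring genuine care is exactly the right observation.
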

\begin{proof}
We can prove the result in the same way as Theorem \ref{th:mcrys}.
\qed
\end{proof}

The following corollary recovers the Frobenius descent, namely, we obtain a site-theoretic proof of the Frobenius descent in our setting.

\begin{cor}
\label{cor:new}
$\sigma^*$ induces an equivalence of categories of crystals
\[
\mathscr{C}^{\mathrm{qcoh}}((X'/D)_{\mathrm{crys}}) \to \mathscr{C}^{\mathrm{qcoh}}((X/D)_{m\text{-}\mathrm{crys}}).
\]
\end{cor}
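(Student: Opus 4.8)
The plan is to deduce the corollary from Theorem \ref{thm:sigmanew} by comparing the category of crystals on the new $m$-crystalline site with the quasi-coherent crystals on the original $m$-crystalline site. Concretely, I would first establish natural equivalences
\[
\mathscr{C}((X/D)_{m\text{-crys,new}}) \simeq \mathscr{C}^{\mathrm{qcoh}}((X/D)_{m\text{-crys}}), \qquad
\mathscr{C}((X'/D)_{\text{crys,new}}) \simeq \mathscr{C}^{\mathrm{qcoh}}((X'/D)_{\text{crys}}),
\]
the second being the case $m=0$ of the first. I would then verify that these equivalences intertwine $\sigma_{\text{new}}^*$ with $\sigma^*$, so that the equivalence of $\sigma_{\text{new}}^*$ supplied by Theorem \ref{thm:sigmanew} transports to the desired equivalence of $\sigma^*$. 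Since the surrounding discussion records $\sigma \circ \pi = \widetilde{\sigma}$ with $\pi^*$ an equivalence, this simultaneously yields the site-theoretic proof of the Frobenius descent $\widetilde{\sigma}^*$.

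To construct the comparison equivalence I would argue $p^n$-torsion level by level. Using the last equivalence of Remark \ref{crysstab}, $\mathscr{C}^{\mathrm{qcoh}}((X/D)_{m\text{-crys}}) \simeq \varprojlim_n \mathscr{C}^{p^n\text{-tors}}((X/D)_{m\text{-crys}})$, and one checks the analogous description $\mathscr{C}((X/D)_{m\text{-crys,new}}) \simeq \varprojlim_n \mathscr{C}^{p^n\text{-tors}}((X/D)_{m\text{-crys,new}})$, using that an object of $\modd{E}$ over a $p$-complete ring $E$ is recovered from its reductions modulo $p^n$. It therefore suffices to produce, for each fixed $n$, an equivalence $\mathscr{C}^{p^n\text{-tors}}((X/D)_{m\text{-crys,new}}) \simeq \mathscr{C}^{p^n\text{-tors}}((X/D)_{m\text{-crys}})$. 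The bridge is the reduction functor $(E, J_E, I_E, \gamma_E) \mapsto (E/p^nE, \ldots)$ from the new site to the $m$-crystalline site over $D/p^nD$: since a $p^n$-torsion crystal on the new site takes values in $\modpntors{E}$, which factor through $E/p^nE$, its data descend to the reduced site; and conversely the $p$-torsion free liftings produced by the $p$-completed PD envelopes (exactly as in Lemma \ref{newlift} and the proof of Proposition \ref{prmcrys4}) show that every relevant reduced object arises, up to covers, from an object of the new site. Matching the module conditions on the two sides is then routine.

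For the compatibility with the functors I would observe that $\sigma$ and $\sigma_{\text{new}}$ are given by literally the same recipe on objects, sending $(E, J_E, I_E, \gamma_E)$ to the datum $(\mathrm{Spec}(E) \gets \mathrm{Spec}(E/I_E) \to X')$ with the first structure map induced by $e \otimes d \mapsto \phi^m(e)d$. Hence both functors commute with reduction modulo $p^n$, and the square relating $\sigma^*$, $\sigma_{\text{new}}^*$ and the two vertical comparison equivalences commutes up to canonical natural isomorphism. Combining this with Theorem \ref{thm:sigmanew}, which asserts that $\sigma_{\text{new}}^*$ is an equivalence, yields the corollary.

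The main obstacle I expect is the fixed-$n$ comparison $\mathscr{C}^{p^n\text{-tors}}((X/D)_{m\text{-crys,new}}) \simeq \mathscr{C}^{p^n\text{-tors}}((X/D)_{m\text{-crys}})$: one must check carefully that passing between the $p$-torsion free $p$-complete maximal $m$-PD rings of the new site and the $p$-nilpotent $m$-PD rings of the original site neither loses nor adds data at the level of $p^n$-torsion crystals, and that the descent-theoretic conditions defining $\modd{E}$ correspond, under reduction, to the module conditions underlying $\mathscr{C}^{p^n\text{-tors}}$. This requires that the reduction functor induces an equivalence of the associated topoi, for which the cofinality afforded by the PD-envelope liftings and the matching of covers ($p$-completely faithfully flat maps reducing to faithfully flat maps modulo $p^n$) are the essential inputs. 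Once this bookkeeping is in place, the rest of the argument is formal.
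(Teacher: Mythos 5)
Your overall architecture coincides with the paper's: reduce to $p^n$-torsion crystals via the last equivalence of Remark \ref{crysstab}, compare the new and original ($m$-)crystalline sites at each fixed torsion level, observe that $\sigma$ and $\sigma_{\mathrm{new}}$ are intertwined by the reduction functors (the paper records $\sigma\circ\nu_n=\nu_n\circ\sigma_{\mathrm{new}}$ for $\nu_n:(E,J_E,I_E,\gamma_E)\mapsto(E/p^nE,\dots)$), invoke the $p^n$-torsion case of Theorem \ref{thm:sigmanew}, and pass to the inverse limit. The one place where you diverge is the fixed-$n$ comparison, which you correctly identify as the crux: the paper isolates it as Lemma \ref{lem:crysnew} and proves it not by showing that $\nu_n$ induces an equivalence of topoi, but by exhibiting both $\mathscr{C}^{p^n\text{-}\mathrm{tors}}((X/D)_{m\text{-}\mathrm{crys,new}})$ and $\mathscr{C}^{p^n\text{-}\mathrm{tors}}((X/D)_{m\text{-}\mathrm{crys}})$ as equivalent to the same category of stratifications $\mathrm{Str}^{p^n\text{-}\mathrm{tors}}(\strat{m}{D})$, built from the $p$-completed PD envelopes $S_\str$ and their reductions $S_\str/p^nS_\str$. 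This is worth noting because an honest equivalence of topoi between the two sites is doubtful (the original site contains objects killed by arbitrary powers of $p$, not just $p^n$, and its objects carry no canonical lift), whereas the stratification route needs only that the $S_\str$ are cofinal enough on each side --- which is exactly the input you point to with the PD-envelope liftings of Lemma \ref{newlift} and Proposition \ref{prmcrys4}. So your plan is sound, but to complete it you should replace the claimed topos equivalence by the weaker and sufficient statement that both crystal categories are computed by the same stratification data.
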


\begin{proof}
For $n \in \mathbb{N}$, let $\nu_n : (X/D)_{m\text{-crys,new}} \to (X/D)_{m\text{-crys}}$ be the functor of sites that sends an object $(\text{Spf}(E) \gets \text{Spec}(E/J_E) \to X)$ to 
\[
(\text{Spec}(E/p^nE) \gets \text{Spec}((E/p^nE)/(J_E/p^nE)) \to X).
\]
The functor $(X'/D)_{\text{crys,new}} \to (X'/D)_{\text{crys}}$ can be defined in the same way. By abuse of notation, we will denote it also by $\nu_n$. Then one can check that they  are continuous. So we obtain morphisms of topoi
\begin{align*}
&\widetilde{(X/D)}_{m\text{-crys}} \to \widetilde{(X/D)}_{m\text{-crys,new}}, \\
&\widetilde{(X'/D)}_{\text{crys}} \to \widetilde{(X'/D)}_{\text{crys,new}},
\end{align*}
which we denote  by  $\widehat{\nu}_{n}$. Then the $\widehat{\nu}_{n,*}$'s induce functors of categories of crystals
\begin{align*}
&\mathscr{C}^{p^n\text{-tors}}((X/D)_{m\text{-crys}}) \to \mathscr{C}^{p^n\text{-tors}}((X/D)_{m\text{-crys,new}}), \\
&\mathscr{C}^{p^n\text{-tors}}((X'/D)_{\text{crys}}) \to \mathscr{C}^{p^n\text{-tors}}((X'/D)_{\text{crys,new}}).
\end{align*}
Also, we have an equality $\sigma \circ \nu_n = \nu_n \circ \sigma_{\text{new}} $ as morphisms of sites. So we have a commutative diagram
\[
\begin{tikzcd}
\mathscr{C}^{p^n\text{-tors}}((X'/D)_{\text{crys}})\arrow[r,"\sigma^*"]\arrow[d,"\widehat{\nu}_{n,*}"] & \mathscr{C}^{p^n\text{-tors}}((X/D)_{m\text{-crys}})\arrow[d,"\widehat{\nu}_{n,*}"] \\
\mathscr{C}^{p^n\text{-tors}}((X'/D)_{\text{crys,new}})\arrow[r,"\sigma_{\text{new}}^*"] & \mathscr{C}^{p^n\text{-tors}}((X/D)_{m\text{-crys,new}}),
\end{tikzcd} 
\]
where the functors $\sigma^*, \sigma^*_{\text{new}}$ are the pullback functors induced by the morphism of topoi $\sigma, \sigma_{\text{new}}$, which are defined by the cocontinuity of the corresponding functors of sites. As $\sigma^*_{\text{new}}$ is an equivalence by Theorem \ref{thm:sigmanew} and $\widehat{\nu}_{n,*}$'s are equivalences by Lemma \ref{lem:crysnew} below, we conclude that the functor
\[
\sigma^* : \mathscr{C}^{p^n\text{-tors}}((X'/D)_{\text{crys}}) \to \mathscr{C}^{p^n\text{-tors}}((X/D)_{m\text{-crys}})
\]
is an equivalence. By taking the projective limit over $n$ and using the last equivalence in Remark \ref{crysstab}, we obtain the required equivalence
\[
\sigma^* : \mathscr{C}^{\text{qcoh}}((X'/D)_{\text{crys}}) \to \mathscr{C}^{\text{qcoh}}((X/D)_{m\text{-crys}}).
\]
\qed
\end{proof}

\begin{lem}
\label{lem:crysnew}
The functors
\begin{align*}
&\widehat{\nu}_{n,*} : \mathscr{C}^{p^n\text{-}\mathrm{tors}}((X/D)_{m\text{-}\mathrm{crys}}) \to \mathscr{C}^{p^n\text{-}\mathrm{tors}}((X/D)_{m\text{-}\mathrm{crys,new}}) \ (n \in \mathbf{N}), \\
&\widehat{\nu}_{n,*} : \mathscr{C}^{p^n\text{-}\mathrm{tors}}((X'/D)_{\mathrm{crys}}) \to \mathscr{C}^{p^n\text{-}\mathrm{tors}}((X'/D)_{\mathrm{crys,new}}) \ (n \in \mathbf{N})
\end{align*}
in the proof of Corollary \ref{cor:new} are equivalences which are compatible with the natural inclusion functors
\begin{align*}
&\iota_n : \mathscr{C}^{p^n\text{-}\mathrm{tors}}((X/D)_{m\text{-}\mathrm{crys}}) \to \mathscr{C}^{p^{n+1}\text{-}\mathrm{tors}}((X/D)_{m\text{-}\mathrm{crys}}), \\
&\iota_n : \mathscr{C}^{p^n\text{-}\mathrm{tors}}((X/D)_{m\text{-}\mathrm{crys,new}}) \to \mathscr{C}^{p^{n+1}\text{-}\mathrm{tors}}((X/D)_{m\text{-}\mathrm{crys,new}}).
\end{align*}
\end{lem}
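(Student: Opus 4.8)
The plan is to begin by making the functor $\widehat{\nu}_{n,*}$ completely explicit. Since $\nu_n$ is a \emph{continuous} functor of sites, the pushforward along the associated morphism of topoi is just precomposition (followed by sheafification, which is harmless here as crystals are already sheaves), so for a $p^n$-torsion crystal $\mathscr{F}$ on $(X/D)_{m\text{-crys}}$ and an object $(E,J_E,I_E,\gamma_E)$ of $(X/D)_{m\text{-crys,new}}$ one has $\widehat{\nu}_{n,*}\mathscr{F}(E,J_E,I_E,\gamma_E) = \mathscr{F}(E/p^nE, J_E/p^nE, I_E/p^nE, \gamma_E)$. First I would check that this lands in $\mathscr{C}^{p^n\text{-tors}}((X/D)_{m\text{-crys,new}})$: the value $\mathscr{F}(E/p^nE)$ is a $p^n$-torsion $E$-module, hence lies in $\modpntors{E}$, and for a morphism $(E,\dots)\to(E_1,\dots)$ in the new site the crystal base-change isomorphism of $\mathscr{F}$ along the induced map $E/p^nE\to E_1/p^nE_1$ gives the required isomorphism $\widehat{\nu}_{n,*}\mathscr{F}(E)\,\widehat{\otimes}_E E_1 \xrightarrow{\simeq} \widehat{\nu}_{n,*}\mathscr{F}(E_1)$, because completed tensor against a $p^n$-torsion module is ordinary tensor modulo $p^n$.

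Next I would construct a quasi-inverse $\Phi$, following the pattern of the stratification argument in Proposition \ref{pr:qstra} and of the equivalence in Theorem \ref{thm:sigmanew}. The underlying algebraic input is that for a $p$-torsion free $p$-complete ring $S$ the category $\modpntors{S}\subseteq\modd{S}$ coincides with the category of $p^n$-torsion $(S/p^nS)$-modules; thus a $p^n$-torsion crystal on the new site, evaluated on the $p$-completed PD-envelope objects $S$ produced in the proof of Proposition \ref{prmcrys4}, carries exactly the same module-and-descent data as a $p^n$-torsion crystal on the $m$-crystalline site evaluated on the reductions $\nu_n(S)=S/p^nS$. Concretely, given $\mathscr{G}$ on $(X/D)_{m\text{-crys,new}}$ and an object $F$ of $(X/D)_{m\text{-crys}}$, I would set $\Phi(\mathscr{G})(F):=\mathscr{G}(E)\otimes_E F$, where $E$ is a local lift of $F$ to the new site supplied by Lemma \ref{newlift} and the envelope construction of Proposition \ref{prmcrys4}; independence of the choice of $E$, and the resulting crystal base-change condition, follow from the crystal structure of $\mathscr{G}$ by the same cocycle/descent argument as in Proposition \ref{pr:qstra}. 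One then checks $\widehat{\nu}_{n,*}\circ\Phi\simeq\mathrm{id}$ and $\Phi\circ\widehat{\nu}_{n,*}\simeq\mathrm{id}$.

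The main obstacle I expect is the reduction to liftable objects, namely showing that the family of reductions $\{\nu_n(E)\}_{E}$ is large enough to compute every $p^n$-torsion crystal on $(X/D)_{m\text{-crys}}$, and that faithfully flat covers in the new site reduce to covers in the $m$-crystalline site. Here the decisive simplification is the reduction trick: for any object $F$ of $(X/D)_{m\text{-crys}}$ the surjection $F\twoheadrightarrow F/p^nF$ is a morphism of $m$-PD rings, and since $\mathscr{F}(F)$ is $p^n$-torsion the crystal base-change isomorphism forces $\mathscr{F}(F)\cong\mathscr{F}(F/p^nF)$; this lets me replace every object by one killed by $p^n$, which is then handled Zariski-locally and after a flat cover by a reduction $S/p^nS$ of a $p$-torsion free $p$-complete envelope $S$. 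The $p$-torsion freeness and $p$-completeness of $S$ are exactly what make reduction modulo $p^n$ exact and identify $(p)$-completely faithfully flat maps with faithful flatness modulo $p^n$, so the covers and cocycle conditions transport correctly.

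Finally I would verify compatibility with the inclusions $\iota_n$. For a $p^n$-torsion crystal $\mathscr{F}$ on $(X/D)_{m\text{-crys}}$, applying the crystal base-change isomorphism along $E/p^{n+1}E\twoheadrightarrow E/p^nE$ and using that $\mathscr{F}(E/p^{n+1}E)$ is already $p^n$-torsion yields $\mathscr{F}(E/p^{n+1}E)\cong\mathscr{F}(E/p^nE)$; this gives a canonical identification $\widehat{\nu}_{n+1,*}\circ\iota_n \simeq \iota_n\circ\widehat{\nu}_{n,*}$, and the same computation applies verbatim on the new site and for $X'$ in place of $X$. Together with the equivalences established above (and, in the $p$-torsion free case, the last equivalence of Remark \ref{crysstab}), this completes the proof.
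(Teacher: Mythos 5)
Your proposal is in substance the same argument as the paper's: the paper packages the data $\{\mathscr{G}(S_\str)\}_\str$ into the intermediary category of stratifications $\mathrm{Str}^{p^n\text{-}\mathrm{tors}}(\strat{m}{D})$ and shows that both categories of crystals are equivalent to it, whereas you build the quasi-inverse directly, but the underlying mechanism --- evaluate on the $p$-completed PD envelopes $S_\str$, which are objects of the new site and whose reductions mod $p^n$ are objects of the old one, then propagate to all objects by the crystal base-change property and a cocycle argument --- is identical. One phrase in your construction needs correcting: you define $\Phi(\mathscr{G})(F):=\mathscr{G}(E)\otimes_E F$ with ``$E$ a local lift of $F$ to the new site,'' but an arbitrary $p^n$-torsion $m$-PD thickening $F$ need not lift to a $p$-torsion free $p$-complete maximal $m$-PD ring, and neither Lemma \ref{newlift} nor Proposition \ref{prmcrys4} produces such a lift; what actually exists (and what the paper uses, via Lemma \ref{mcryslift} to pass to affine opens $E_i$ of $E$) is, Zariski-locally, a morphism $S_\str\to E_i$ in the old site out of the reduction of the envelope --- a map that is in general neither a lift nor a flat cover, merely a morphism in the site, which suffices because crystals satisfy base change along arbitrary morphisms. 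With ``lift of $F$'' replaced by ``map from the envelope $S_\str$ to $F$, locally on $\mathrm{Spec}(E/J_E)$,'' your independence-of-choices and descent steps go through exactly as in Proposition \ref{pr:qstra}, and your verification of compatibility with the inclusions $\iota_n$ is correct and is more explicit than what the paper records.
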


For the proof of Lemma \ref{lem:crysnew}, we will need  the notion of stratifications for the new $m$-crystalline site $(X/D)_{m\text{-}\mathrm{crys,new}}$, as in Section 2.

\begin{defi}
\label{defnewstr}
Let $(D,I),J,X$ as above.
\begin{enumerate}
\item We define the category $\strat{m}{(D,I)}$ in the same way as in Definition \ref{defstra}. 
Namely, objects of $\strat{m}{(D,I)}$ are maps $\str : T \to R$ where $T$ is a finite set and Spec$(R) \subseteq X$ is an affine open subscheme such that the induced map $D/J \to R$ is standard smooth in the sense of Tag 00T6 of \cite{Sta}; moreover, $\str$ is a map satisfying the following conditions:
\begin{enumerate}
\item $f_\str : D_\str := D[x_t]^{\wedge}_{t \in T} \to R$ \ \ \ is surjective. \\
$\ \ \ \ \ \ \ \ \ \ \ \ \ \ \ \ \ \ \   x_t \ \ \ \mapsto \str(t)$
\item There exists a sequence $y_1, \dots , y_r \in D[x_t]^{\wedge}_{t \in T}$ such that the kernel of $f_\str$ can be described as the ideal $(J,(y_w)_{w \in W})$ (where $W = \{ 1, \dots , r \}$), and that $\overline{y_1}, \dots , \overline{y_r} \in (D/J)[x_t]_{t \in T}$ form a regular sequence. 
\end{enumerate}
The morphism and the sum $\str \sqcup \str'$ for $\str, \str' \in \strat{m}{(D,I)}$ are defined in the same way as in Definition \ref{defstra}. We shall simply write $\strat{m}{D}$ instead of $\strat{m}{(D,I)}$ if no confusion arises.
\item For $\str : T \to R \in \strat{m}{D}$, we define $S_\str$ in the same way as in the proof of Proposition \ref{prmcrys4} : namely, we define $S_\str = D[x_t]^\wedge[\frac{y^{kp^m}_w}{k!}]^\wedge_{t \in T, w \in W, k \in \mathbb{N}}$ to be the $p$-completed PD envelope of $D[x_t]^\wedge_{t \in T}$ with respect to the ideal $(I,(y^{p^m}_w)_{w \in W})$.
Note that it is independent of the choice of the elements $y_w$ $(w \in W)$. Indeed, the ideal $(I,(y^{p^m}_w)_{w \in W}) = (I,(J, (y_w)_{w \in W})^{(p^m)})$ is independent of the choice. In particular, we see that the construction of $S_\str$ is functorial in $\str \in \strat{m}{D}$.
\item A \textit{stratification} with respect to $\strat{m}{D}$ and  $\mathcal{M}$ (resp. $\mathcal{M}^{p^n\text{-tors}}$) is a pair \[((M_\str)_{\str \in \strat{m}{D}}, (\varphi_{\str \str'})_{\str \to \str'}),\] where $M_\str \in \modd{S_\str}$ (resp. $\mathcal{M}^{p^n\text{-tors}}(S_\str)$) and $\varphi_{\str \str'} : M_\str \widehat{\otimes}_{S_\str} S_{\str'} \xrightarrow{\simeq} M_{\str'}$ is an isomorphism of $S_{\str'}$-modules satisfying the cocycle condition. 
We denote the category of stratifications with respect to $\strat{m}{D}$ and  $\mathcal{M}$ (resp. $\mathcal{M}^{p^n\text{-tors}}$) by $\text{Str}(\strat{m}{D})$ (resp. $\text{Str}^{p^n\text{-tors}}(\strat{m}{D})$).
\end{enumerate}
\end{defi}

We can prove the following proposition in the same way as Proposition \ref{pr:qstra}.

\begin{prop}
There exist equivalences of categories
\begin{align*}
\mathscr{C}((X/D)_{m\text{-}\mathrm{crys,new}}) &\xrightarrow{\simeq} \mathrm{Str}(\strat{m}{D}), \\
\mathscr{C}^{p^n\text{-}\mathrm{tors}}((X/D)_{m\text{-}\mathrm{crys,new}}) &\xrightarrow{\simeq} \mathrm{Str}^{p^n\text{-}\mathrm{tors}}(\strat{m}{D}).
\end{align*}
\end{prop}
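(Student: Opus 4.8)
The plan is to imitate the proof of Proposition~\ref{pr:qstra} essentially verbatim, transcribing each ingredient of the $q$-crystalline argument into the PD setting of the new $m$-crystalline site according to the following dictionary: the $q$-PD envelope is replaced by the $p$-completed PD envelope $S_\str$ of Definition~\ref{defnewstr}.2, the lifting Lemma~\ref{qpdlift} is replaced by its PD analogue Lemma~\ref{newlift}, every $(p,[p]_q)$-completion becomes a $p$-completion, and the module categories $\modbb{-}$ are replaced by $\modd{-}$. I will treat the first equivalence in detail; the $p^n$-torsion statement is obtained by the same construction with $\modd{-}$ replaced by $\modpntors{-}$ throughout, using the inclusion $\modtors{E}\subseteq\modd{E}$ so that the descent steps carry over unchanged.

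First I would define the functor $\mathscr{C}((X/D)_{m\text{-crys,new}}) \to \mathrm{Str}(\strat{m}{D})$ by
\[
\mathscr{F} \mapsto \bigl((\mathscr{F}(S_\str))_{\str \in \strat{m}{D}},\ (\mathscr{F}(S_\str) \widehat{\otimes}_{S_\str} S_{\str'} \xrightarrow{\simeq} \mathscr{F}(S_{\str'}))_{\str \to \str'}\bigr).
\]
This makes sense because, as checked in the proof of Proposition~\ref{prmcrys4}, each $S_\str$ is an object of $(X/D)_{m\text{-crys,new}}$, and the transition isomorphisms come from the functoriality of $\str \mapsto S_\str$ recorded in Definition~\ref{defnewstr}.2. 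The crystal condition on $\mathscr{F}$ guarantees the cocycle condition.

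Next I would construct the quasi-inverse $\mathrm{Str}(\strat{m}{D}) \to \mathscr{C}((X/D)_{m\text{-crys,new}})$. Given a stratification $((M_\str),(\varphi_{\str\str'}))$ and an object $(E,J_E,I_E,\gamma_E)$, I choose an affine open $\mathrm{Spec}(R)\subseteq X$ through which $\mathrm{Spec}(E/J_E)\to X$ factors, cover $\mathrm{Spec}(R)$ by standard opens $\mathrm{Spec}(\widehat{R_{g_i}})$ as in the proof of Proposition~\ref{prmcrys4}, and apply Lemma~\ref{newlift} to lift the affine opens $\mathrm{Spec}(\widehat{R_{g_i}} \widehat{\otimes}_R E/J_E)$ of $\mathrm{Spec}(E/J_E)$ to objects $(E_i, J_{E_i}, I_EE_i, \gamma_{E_i})$ of the new $m$-crystalline site. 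For such an $E_i$ I pick an object $\str : T \to \widetilde{R}$ of $\strat{m}{D}$ adapted to the relevant chart and produce a PD map $g : S_\str \to E_i$ exactly as in Proposition~\ref{prmcrys4} (lift $D[x_t]^\wedge_{t\in T}\to E_i/J_{E_i}$ to $D[x_t]^\wedge_{t\in T}\to E_i$ and extend by the universal property of the PD envelope), then set $\mathscr{F}(E_i) := M_\str \widehat{\otimes}_{S_\str} E_i \in \modd{E_i}$, where membership in $\modd{E_i}$ is guaranteed by Proposition~\ref{pr:tensor}(1).

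The heart of the argument — and the step that deserves the most care — is proving well-definedness and gluing. Independence of $\mathscr{F}(E_i)$ from the auxiliary choice of $\str$ is checked as in Proposition~\ref{pr:qstra}, by comparing $\str$ with a second choice $\str'$ through their sum $\str\sqcup\str'$ in $\strat{m}{D}$ and transporting via the isomorphisms $\varphi_{\str,\str\sqcup\str'}$ and $\varphi_{\str',\str\sqcup\str'}$. For a general object $E$ I would then define $\mathscr{F}(E)$ as the kernel of $\prod_i \mathscr{F}(E_i) \rightrightarrows \prod_{i,j} \mathscr{F}(E_i \widehat{\otimes}_E E_j)$; here the decisive input is the faithfully-flat $p$-complete descent property built into the definition of $\modd{-}$ (replacing the role of $\modbb{-}$ in the $q$-crystalline proof), which yields both $\mathscr{F}(E)\in\modd{E}$ and the crystal isomorphism $\mathscr{F}(E)\widehat{\otimes}_E E_i \cong \mathscr{F}(E_i)$, hence the compatibility $\mathscr{F}(E)\widehat{\otimes}_E E' \cong \mathscr{F}(E')$ for every morphism $E\to E'$. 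Once this is in place, verifying that the two functors are mutually quasi-inverse is formal. Thus the main obstacle is precisely this gluing/well-definedness step: one must confirm that the local modules $M_\str \widehat{\otimes}_{S_\str} E_i$ patch into an object of $\modd{E}$, which is exactly the point at which the $p$-complete descent encoded in $\modd{-}$ is invoked; everything else is a direct transcription of the $q$-crystalline argument, with the $p$-completed PD envelope and Lemma~\ref{newlift} supplying the analogues of the $q$-PD envelope and Lemma~\ref{qpdlift}.
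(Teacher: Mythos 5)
Your proposal is correct and follows exactly the route the paper intends: the paper's proof of this proposition consists of the single remark that it is proved ``in the same way as Proposition \ref{pr:qstra}'', and your write-up is a faithful transcription of that argument with the correct dictionary (Lemma \ref{newlift} for Lemma \ref{qpdlift}, the $p$-completed PD envelope of Definition \ref{defnewstr}.2 for the $q$-PD envelope, $\modd{-}$ for $\modbb{-}$, and $\modpntors{-}$ with $\modtors{E}\subseteq\modd{E}$ for the torsion variant). You also correctly identify the gluing step via flat descent in $\modd{-}$ as the only point requiring care, which matches where the weight of the argument lies in Proposition \ref{pr:qstra}.
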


Using this, we prove Lemma \ref{lem:crysnew}. \bigskip \\*
\textit{Proof of Lemma \ref{lem:crysnew}}.
 Let $\strat{m}{D}$, $\mathrm{Str}^{p^n\text{-}\mathrm{tors}}(\strat{m}{D})$ be as above. Then the cate\nolinebreak gory $\mathscr{C}^{p^n\text{-}\mathrm{tors}}((X/D)_{m\text{-}\mathrm{crys,new}})$ is equivalent to the category $\mathrm{Str}^{p^n\text{-}\mathrm{tors}}(\strat{m}{D})$. 
It remains to show that there  is a natural  equivalence of categories \[\mathscr{C}^{p^n\text{-}\mathrm{tors}}((X/D)_{m\text{-}\mathrm{crys}}) \xrightarrow{\simeq} \mathrm{Str}^{p^n\text{-}\mathrm{tors}}(\strat{m}{D}).\]

We first define a functor $\mathscr{C}^{p^n\text{-}\mathrm{tors}}((X/D)_{m\text{-}\mathrm{crys}}) \to \mathrm{Str}^{p^n\text{-}\mathrm{tors}}(\strat{m}{D})$. 
For an object $(\str : T \to R) \in \strat{m}{D}$, we can construct the $p$-completed PD envelope $S_\str$ as in Definition \ref{defnewstr}. Then $S_\str/p^nS_\str$ can be regarded as an object of $(X/D)_{m\text{-}\mathrm{crys}}$. Thus, we can define the functor by
\[
\mathscr{F} \mapsto ((\mathscr{F}(S_\str/p^nS_\str))_{\str \in \strat{m}{D}}, (\mathscr{F}(S_\str/p^nS_\str) \otimes_{S_\str} S_{\str'} \xrightarrow{\simeq} \mathscr{F}(S_{\str'}/p^nS_{\str'}))_{\str \to \str'}).
\]

Also, we can define the functor $\mathrm{Str}^{p^n\text{-}\mathrm{tors}}(\strat{m}{D}) \to \mathscr{C}^{p^n\text{-}\mathrm{tors}}((X/D)_{m\text{-}\mathrm{crys}})$  as \linebreak follows. 
Given \[((M_\str)_{\str \in \strat{m}{D}}, (\varphi_{\str \str'})_{\str \to \str'}) \in \mathrm{Str}^{p^n\text{-}\mathrm{tors}}(\strat{m}{D})\] and $(E,J_E,I_E,\gamma_E) \in (X/D)_{m\text{-}\mathrm{crys}}$, we choose an affine open Spec$(R) \subseteq X$ such that Spec$(E/J_E) \to X$ factors through Spec$(R)$. 
As in the proof of Proposition \ref{pr:qstra}, there exists an open cover of $\text{Spec}(R)$ by standard opens $\text{Spec}(R_{g_i})$ such that each $R_{g_i}$ is standard smooth over $D/J$. 
Then the fiber product $\text{Spec}(R_{g_i} \otimes_{R} E/J_E)$ is an affine open of $\text{Spec}(E/J_E)$. We denote the scheme $\text{Spec}(R_{g_i} \otimes_{R} E/J_E)$ by $\text{Spec}(\overline{E_i})$. By Lemma \ref{mcryslift} there is a unique $m$-PD ring $(E_i,J_EE_i, I_EE_i,\gamma_{E_i}) \in (X/D)_{m\text{-}\mathrm{crys}}$ for which the corresponding scheme $\text{Spec}(E_i)$ is an affine open subscheme of $\text{Spec}(E)$ and lifts $\text{Spec}(\overline{E_i})$, namely,  $E_i/J_EE_i = \overline{E_i}$.

First, we define $\mathscr{F}(E_i)$ for such $E_i$. By construction, there exists an object in $\strat{m}{D}$ of the form $\str : T \to R_{g_i}$. Then we can construct a morphism $S_\str \to E_i$ as in the proof of Proposition \ref{pr:qstra}. We define
\[
\mathscr{F}(E_i) := M_\str \otimes_{S_\str} E_i.
\]
We can also define $\mathscr{F}(E)$ for general $(E,J_E,I_E,\gamma_E) \in (X/D)_{m\text{-}\mathrm{crys}}$ as in the proof of Proposition \ref{pr:qstra}.

We can prove that the presheaf $\mathscr{F}$ is well-defined and that it defines an object of $\mathscr{C}^{p^n\text{-}\mathrm{tors}}((X/D)_{m\text{-}\mathrm{crys}})$ as in the proof of Proposition \ref{pr:qstra}. So the functor \[\mathrm{Str}^{p^n\text{-}\mathrm{tors}}(\strat{m}{D}) \to \mathscr{C}^{p^n\text{-}\mathrm{tors}}((X/D)_{m\text{-}\mathrm{crys}})\] can be defined by
\[
((M_\str)_{\str \in \strat{m}{D}}, (\varphi_{\str \str'})_{\str \to \str'}) \mapsto \mathscr{F}.
\]
The two functors we constructed are quasi-inverse to each other. Hence, the category $\mathscr{C}^{p^n\text{-}\mathrm{tors}}((X/D)_{m\text{-}\mathrm{crys}})$ is equivalent to the category $\mathrm{Str}^{p^n\text{-}\mathrm{tors}}(\strat{m}{D})$, as desired.
\qed \bigskip

Finally, we compare the $m$-$q$-crystalline site with our variant of the $m$-crystalline site.  Assume that $(D,I)$ is a $q$-PD pair with $q=1$ in $D$, i.e., a derived $p$-complete $\delta$-pair over $\mathbf{Z}_p$ satisfying the following conditions:

\begin{enumerate}
\item For any $f \in I$, $f^p \in pI$.
\item The pair $(D,(p))$ is a bounded prism, i.e., $D$ is $p$-torsion free. In particular, $D$ is classically $p$-complete.
\item $D/I$ is classically $p$-complete.
\end{enumerate}
By Remark 16.3 of \cite{BS19}, there exists a canonical PD structure $\gamma$ on $I$. Now suppose further that $p \in I$. Let $J = (\phi^m)^{-1}(I)$. Then we have the following result.

\begin{lem}
With the notation above,
\[
J^{(p^m)}+pJ \subseteq I \subseteq J.
\]
Here, $J^{(p^m)}$ denotes the ideal of $D$ generated by $x^{p^m}$ for all elements $x$ of $J$. In particular, $(J,I,\gamma)$ is an $m$-PD ideal.
\end{lem}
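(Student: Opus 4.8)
The plan is to prove the two halves $I \subseteq J$ and $J^{(p^m)}+pJ \subseteq I$ separately. The key inputs I would use are: the hypothesis $p \in I$, which gives $pD \subseteq I$ and will absorb all the ``error terms''; the restated condition that $f^p \in pI$ for every $f \in I$; and the congruence $\phi^\ell(x) \equiv x^{p^\ell} \pmod{pD}$, valid for all $x \in D$ and all $\ell \ge 0$. I would justify this congruence by noting that $\phi$ is a ring endomorphism of $D$ reducing to the Frobenius on $D/pD$, so $\phi^\ell$ reduces to the $\ell$-fold Frobenius $\bar{x} \mapsto \bar{x}^{p^\ell}$ on $D/pD$; equivalently one iterates $\phi(x) = x^p + p\delta(x) \equiv x^p \pmod{pD}$.

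First I would establish $I \subseteq J$, that is, $\phi^m(I) \subseteq I$. For $f \in I$ the restated condition gives $f^p \in pI \subseteq I$, and since $\phi(f) = f^p + p\delta(f)$ with $p\delta(f) \in pD \subseteq I$, we obtain $\phi(f) \in I$. Thus $\phi(I) \subseteq I$, and by induction $\phi^m(I) \subseteq I$. Hence every $f \in I$ satisfies $\phi^m(f) \in I$, i.e. $f \in (\phi^m)^{-1}(I) = J$, giving $I \subseteq J$.

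Next, the inclusion $pJ \subseteq I$ is immediate, since $pJ \subseteq pD \subseteq I$. For $J^{(p^m)} \subseteq I$ it suffices, because $I$ is an ideal, to check that $x^{p^m} \in I$ for each generator $x \in J$. By the congruence above, $x^{p^m} = \phi^m(x) - pa$ for some $a \in D$; since $x \in J$ means $\phi^m(x) \in I$ and $pa \in pD \subseteq I$, we conclude $x^{p^m} \in I$. Combining the three containments yields $J^{(p^m)}+pJ \subseteq I \subseteq J$.

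Finally, to conclude that $(J,I,\gamma)$ is an $m$-PD ideal in the sense of Definition \ref{mpd}, I note that its condition 1 is precisely the chain of inclusions just proved, while condition 2 --- compatibility of $\gamma$ with the standard divided powers on $p\mathbf{Z}_{(p)}$ --- holds for the canonical PD structure $\gamma$ supplied by Remark 16.3 of \cite{BS19}: since $D$ is $p$-torsion free this $\gamma$ is necessarily given by $\gamma_n(x) = x^n/n!$, and under the inclusion $p\mathbf{Z}_{(p)} \subseteq I$ (using $p \in I$) it restricts to the standard divided powers on $p\mathbf{Z}_{(p)}$. I do not expect a serious obstacle; the only point needing a little care is the verification of the congruence $\phi^m(x) \equiv x^{p^m} \pmod{pD}$ together with the systematic use of $p \in I$ to place the error terms $p\delta(f)$ and $pa$, as well as all of $pJ$, inside $I$.
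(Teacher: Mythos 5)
Your proof is correct and uses essentially the same ingredients as the paper's: the condition $f^p\in pI$ together with $p\in I$ to get $I\subseteq J$, and the congruence $\phi^m(x)\equiv x^{p^m}\pmod{pD}$ to get $J^{(p^m)}\subseteq I$ (the paper packages the latter as an induction through the intermediate ideals $J_i=(\phi^i)^{-1}(I)$, and deduces $I\subseteq J$ by writing $\phi^m(x)=p\phi^{m-1}(\phi(x)/p)$ rather than by showing $\phi(I)\subseteq I$, but these are only cosmetic reorganizations). Your extra remark verifying condition 2 of Definition \ref{mpd} via $\gamma_n(x)=x^n/n!$ on the $p$-torsion free ring $D$ is a harmless bonus that the paper leaves implicit.
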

\begin{proof}
Set $J_i = (\phi^i)^{-1}(I)$. Then  for all $i$, we have $p \in J_i$. For any $x \in J_i$, we see that $x^p = \phi(x)-p\delta(x) \in J_{i-1}$. So for any $x \in J$, one has $x^{p^m} \in I$. This implies that $J^{(p^m)}+pJ \subseteq I$. On the other hand, for any $x \in I$, we have $\phi^m(x) = p\phi^{m-1}(\frac{\phi(x)}{p}) \in I$. The result follows.
\qed
\end{proof}

In this situation, there is a functor of sites from the $m$-$q$-crystalline site to the new $m$-crystalline site by forgetting the $\delta$-structure on the $q$-PD pair.

\begin{con}
Assume that $(D,I)$ is a $q$-PD pair with $q=1$ in $D$. Suppose that the ideal $I$ contains $p$. Let $X$ be a scheme smooth and separated over $D/J$. Then we have the functor of sites \[\tau : (X/D)_{m\text{-}q\text{-crys}} \to (X/D)_{m\text{-crys,new}}\] that sends an object $(E,I_E)$ of the $m$-$q$-crystalline site $(X/D)_{m\text{-}q\text{-crys}}$ to an object $(E,J_E,I_E,\gamma_E)$ of the new $m$-crystalline site $(X/D)_{m\text{-crys,new}}$, where $J_E = (\phi^m)^{-1}(I_E)$, and $\gamma_E$ is the canonical PD structure on $I_E$ defined by Remark 16.3 of \cite{BS19}.
\end{con}

\begin{prop}
\label{pr:taucon}
Assume that $(D,I)$ is a $q$-PD pair with $q=1$ in $D$. Suppose that the ideal $I$ contains $p$.  Then the functor $\tau$ is continuous.
\end{prop}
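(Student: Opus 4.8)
The plan is to follow the proof of Proposition \ref{pr:2} essentially verbatim. Since both sites have the fibre products needed to test continuity — computed by completed tensor products of rings — it suffices to check that $\tau$ carries covers to covers and that it commutes with the pushout construction along a cover. The entire argument rests on two elementary observations coming from the standing hypotheses $q=1$ and $p\in I$. First, since $q=1$ in $D$ we have $[p]_q=p$, so the $(p,[p]_q)$-complete faithful flatness in the definition of covers of $(X/D)_{m\text{-}q\text{-crys}}$ (Definition \ref{def:mqcrys}) is exactly the $p$-complete faithful flatness in Definition \ref{defmcrysnew}. Second, because $p\in I_E$, the ideal $I_EE'$ already contains $p$ and is therefore $p$-adically closed, so its $(p,[p]_q)$-complete saturation is itself: $\ol{I_EE'}=I_EE'$.

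First I would record that $\tau$ is well defined on objects. Given $(E,I_E)\in(X/D)_{m\text{-}q\text{-crys}}$, the ring $E$ is $p$-torsion free and classically $p$-complete because $(E,I_E)$ is a $q$-PD pair with $q=1$ (Definition \ref{def:qpd}.2, with $[p]_q=p$); the triple $(J_E,I_E,\gamma_E)$ with $J_E=(\phi^m)^{-1}(I_E)$ is an $m$-PD ideal by the preceding Lemma; and $J_E=\mathrm{Ker}(E\twoheadrightarrow E/I_E\xrightarrow{\phi^m}E/I_E)$ is exactly the maximal choice $J_{\mathrm{max}}$. Hence $(E,J_E,I_E,\gamma_E)$ is a $p$-torsion free $p$-complete maximal $m$-PD ring, a legitimate object of $(X/D)_{m\text{-crys,new}}$, and the structure map $\mathrm{Spf}(E/J_E)\to X$ is unchanged, so condition (*) is preserved. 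Next I would verify that $\tau$ preserves covers: if $(E,I_E)\to(E',I_{E'})$ is a cover in $(X/D)_{m\text{-}q\text{-crys}}$, it is $(p,[p]_q)=p$-completely faithfully flat and satisfies $I_{E'}=\ol{I_EE'}$, which by the second observation equals $I_EE'$; together these are precisely the conditions for $\tau(E,I_E)\to\tau(E',I_{E'})$ to be a cover in $(X/D)_{m\text{-crys,new}}$. (Neither notion of cover constrains the ideals $J$, so there is nothing to check there.)

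For the pushout condition, let $(E_1,I_{E_1})\xleftarrow{f}(E,I_E)\xrightarrow{g}(E_2,I_{E_2})$ with $f$ a cover. By Lemma \ref{lm:pushout2} the coproduct in $(X/D)_{m\text{-}q\text{-crys}}$ is $(E_3,\ol{I_{E_2}E_3})$ with $E_3=E_1\widehat{\otimes}_EE_2$ the classical $p$-completed tensor product. I would argue that $\tau(E_3,\ol{I_{E_2}E_3})=(E_3,J_{E_3},I_{E_2}E_3,\gamma_{E_3})$ represents the coproduct of $\tau(E_1,I_{E_1})$ and $\tau(E_2,I_{E_2})$ over $\tau(E,I_E)$ in $(X/D)_{m\text{-crys,new}}$: the underlying ring $E_3$ is the same $p$-completed tensor product in both settings; the ideal satisfies $\ol{I_{E_2}E_3}=I_{E_2}E_3$ by the second observation; and the maximal $m$-PD datum $(J_{E_3},\gamma_{E_3})$ is uniquely determined by the pair $(E_3,I_{E_2}E_3)$ since $E_3$ is $p$-torsion free $p$-complete. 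Consequently the canonical comparison map
\[
\tau\bigl((E_1,I_{E_1})\sqcup_{(E,I_E)}(E_2,I_{E_2})\bigr)\longrightarrow \tau(E_1,I_{E_1})\sqcup_{\tau(E,I_E)}\tau(E_2,I_{E_2})
\]
is an isomorphism, and together with the previous paragraph this yields continuity.

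The main obstacle will be this last step: one must check that $E_3$, equipped with its unique maximal $m$-PD structure, genuinely satisfies the universal property of the coproduct in $(X/D)_{m\text{-crys,new}}$, even though morphisms there are maps of $m$-PD rings that forget the $\delta$-structure present in the $q$-crystalline setting. The point to make carefully is that any pair of $m$-PD maps out of $\tau(E_1,I_{E_1})$ and $\tau(E_2,I_{E_2})$ agreeing on $\tau(E,I_E)$ factors uniquely through $E_3$ as a ring map, by the universal property of the $p$-completed tensor product, and that this ring map is automatically an $m$-PD map once it respects the ideals $I$: indeed, on the $p$-torsion free targets the divided powers are forced to be $\gamma_n(x)=x^n/n!$, so they are preserved whenever they exist. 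Thus the richer morphism structure of the new site introduces no new obstruction, and the comparison map above is an isomorphism.
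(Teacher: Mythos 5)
Your proposal is correct and follows essentially the same route as the paper: the paper's proof consists of the single observation that for $q=1$ the $(p,[p]_q)$-completion coincides with the $p$-completion, after which it invokes the argument of Proposition \ref{pr:2} (covers go to covers, and pushouts along covers are preserved). Your write-up simply fills in the details the paper leaves implicit, including the closedness of $I_EE'$ (which the paper itself notes in Definition \ref{defmcrysnew}) and the rigidity of divided powers on $p$-torsion free rings (which the paper records just after defining maximal $m$-PD rings).
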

\begin{proof}
Note that if $q=1$ in $D$, then the $(p,[p]_q)$-completion is the same as the $p$-completion. So we can prove the result in the same way as Proposition \ref{pr:2}.
\qed
\end{proof}

So we obtain a morphism of topoi
\[
\widetilde{(X/D)}_{m\text{-crys,new}} \to \widetilde{(X/D)}_{m\text{-}q\text{-crys}}.
\]
We denote it by $\widehat{\tau}$. Then the following holds true.

\begin{prop}
\label{prtaucrys}
Assume that $(D,I)$ is a $q$-PD pair with $q=1$ in $D$. Suppose that the ideal $I$ contains $p$. Then $\widehat{\tau}_*$ induces an equivalence of categories of crystals
\[
\mathscr{C}((X/D)_{m\text{-}\mathrm{crys,new}}) \xrightarrow{\simeq} \mathscr{C}((X/D)_{m\text{-}q\text{-}\mathrm{crys}}).
\]
\end{prop}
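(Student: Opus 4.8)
The plan is to prove the equivalence by passing through the categories of stratifications introduced in Sections 2 and 4, rather than working with the topoi directly. By the corollary following Proposition \ref{pr:qstra} we have, for $q=1$ in $D$, an equivalence $\mathscr{C}((X/D)_{m\text{-}q\text{-crys}}) \xrightarrow{\simeq} \mathrm{Str}(\stra{m}{D})$, and by the proposition preceding Lemma \ref{lem:crysnew} we have an equivalence $\mathscr{C}((X/D)_{m\text{-crys,new}}) \xrightarrow{\simeq} \mathrm{Str}(\strat{m}{D})$. The strategy is to show that these two categories of stratifications are literally identified, and that under this identification the functor $\widehat{\tau}_*$ becomes the identity functor; the assertion then follows formally.

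First I would check that the index categories $\stra{m}{D}$ and $\strat{m}{D}$ coincide. Since we assume $p \in I \subseteq J$, the ring $D/J$ is killed by $p$, so $D/(p,J) = D/J$, there is no difference between a $p$-adic formal scheme and a scheme over $D/J$, and the conditions ``$D/(p,J) \to R/p$ standard smooth'' and ``$D/J \to R$ standard smooth'' agree. Comparing Definition \ref{defstra} with Definition \ref{defnewstr}, the objects (maps $\str : T \to R$ with the surjectivity and regular-sequence conditions) and the morphisms are then defined by identical data, so $\stra{m}{D} = \strat{m}{D}$ as categories.

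The crux of the argument, and the step I expect to be the main obstacle, is to show that the two constructions $\str \mapsto S_\str$ agree: that for $q = 1$ and $p \in I$ the $q$-PD envelope of $(D\{x_t\}^\wedge_{t\in T}, (I,(\phi^m(y_w))_{w}))$ of Definition \ref{defstra}.2 coincides, together with its $m$-PD structure, with the $p$-completed PD envelope of $D[x_t]^\wedge$ with respect to $(I,(y_w^{p^m})_w)$ of Definition \ref{defnewstr}.2. At $q = 1$ one has $[p]_q = p$, and in a $\delta$-ring the identity $\phi(z) = z^p + p\,\delta(z)$ gives
\[
\frac{\phi^{m+1}(y_w)}{[p]_q} = \frac{\phi(\phi^m(y_w))}{p} = \frac{(\phi^m(y_w))^p}{p} + \delta(\phi^m(y_w)),
\]
so that adjoining $\frac{\phi^{m+1}(y_w)}{[p]_q}$ amounts to adjoining the $p$-th divided power of $\phi^m(y_w)$. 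Combining this with $\phi^m(y_w) \equiv y_w^{p^m} \bmod p$ and the fact that in a $p$-torsion free PD ring the divided powers of two elements differing by a multiple of $p$ generate the same $p$-complete subring, I would identify the underlying rings of the two envelopes; the comparison of the $q$-PD and the PD formalisms at $q = 1$ underlying this is precisely Remark 16.3 and the envelope comparison of Section 16 of \cite{BS19}. The delicate points, where most of the work lies, are to verify that this identification is canonical and functorial in $\str$, and that the canonical PD structure $\gamma$ attached to the $q$-PD pair through $\tau$ matches the divided-power structure on the PD envelope, so that $\tau(S_\str^{q\text{-PD}})$ equals $S_\str^{\mathrm{PD}}$ as an object of $(X/D)_{m\text{-crys,new}}$.

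Granting this functorial identification, the two equivalences above land in the same category, since the module categories employed on both sides are $\modd{S_\str}$. For a crystal $\mathscr{F}$ on $(X/D)_{m\text{-crys,new}}$ one has $(\widehat{\tau}_*\mathscr{F})(E,I_E) = \mathscr{F}(\tau(E,I_E))$, so evaluating $\widehat{\tau}_*\mathscr{F}$ on the $q$-PD envelopes returns the same modules $\mathscr{F}(S_\str)$ and the same transition isomorphisms (and cocycle data) as evaluating $\mathscr{F}$ on the PD envelopes. Thus $\widehat{\tau}_*$ is identified with the identity functor on $\mathrm{Str}(\stra{m}{D}) = \mathrm{Str}(\strat{m}{D})$, hence is an equivalence, which is the desired statement.
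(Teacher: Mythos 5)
Your reduction to stratifications is the right framework---the paper's own proof also establishes the proposition by showing that both categories of crystals are equivalent to a single category of stratifications---but the crux step of your argument, the literal identification of the two envelopes $S_\str$, is false, and this is exactly where the real content of the proof lives. The $q$-PD envelope of Definition \ref{defstra}.2 is built over $D\{x_t\}^\wedge_{t\in T}$, the $p$-completed \emph{free $\delta$-ring}, which by Lemma 2.11 of \cite{BS19} is a completed polynomial ring in the infinitely many variables $x_t,\delta(x_t),\delta^2(x_t),\dots$; the envelope of Definition \ref{defnewstr}.2 is built over the ordinary completed polynomial ring $D[x_t]^\wedge_{t\in T}$. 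Your computation $\frac{\phi^{m+1}(y_w)}{p}=\frac{\phi^m(y_w)^p}{p}+\delta(\phi^m(y_w))$ correctly shows that at $q=1$ the $q$-PD envelope is the $p$-completed PD envelope with respect to $(I,(\phi^m(y_w))_w)=(I,(y_w^{p^m})_w)$ --- but of the much larger ring $D\{x_t\}^\wedge$, not of $D[x_t]^\wedge$. Already with a single variable and no $y_w$'s the two sides are $D\{x\}^\wedge$ and $D[x]^\wedge$, which are not isomorphic. So $\mathrm{Str}(\stra{m}{D})$ and $\mathrm{Str}(\strat{m}{D})$ are not literally the same category, and $\widehat{\tau}_*$ cannot be read off as the identity functor.

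What the paper does instead is prove directly that $\mathscr{C}((X/D)_{m\text{-}\mathrm{crys,new}})\simeq\mathrm{Str}(\stra{m}{D})$, i.e. that crystals on the new $m$-crystalline site are \emph{also} classified by stratifications over the $q$-PD envelopes $S_\str$, regarded as objects of $(X/D)_{m\text{-}\mathrm{crys,new}}$ via $\tau$. The observation that $S_\str$ at $q=1$ is a PD envelope of the completed polynomial ring $D\{x_t\}^\wedge$ is used there not to shrink $S_\str$ down to $D[x_t]^\wedge$, but to produce, for an arbitrary object $(E,J_E,I_E,\gamma_E)$ of the new site (which carries no $\delta$-structure, so the universal property of the free $\delta$-ring is unavailable), a map $S_\str\to E_i$ by lifting along the polynomial generators of $D\{x_t\}^\wedge$ and then invoking the universal property of the PD envelope. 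If you wanted to salvage your route, you would have to compare stratifications over the two different families of envelopes by faithfully flat descent along $D[x_t]^\wedge\to D\{x_t\}^\wedge$; that is additional substantive work, not the tautology your proposal asserts.
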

\begin{proof}
Let $\stra{m}{D}, \mathrm{Str}(\stra{m}{D})$ be the categories in Definition \ref{defstra}. Then the category $\mathscr{C}((X/D)_{m\text{-}q\text{-}\mathrm{crys}})$ is equivalent to the category $\mathrm{Str}(\stra{m}{D})$. 
It remains to show that there is a natural equivalence of categories \[\mathscr{C}((X/D)_{m\text{-}\mathrm{crys,new}}) \xrightarrow{\simeq} \mathrm{Str}(\stra{m}{D}).\]

We first define a functor \[\mathscr{C}((X/D)_{m\text{-}\mathrm{crys,new}}) \to \mathrm{Str}(\stra{m}{D}).\] For an object $(\str : T \to R) \in \stra{m}{D}$, we can construct the $q$-PD envelope $S_\str$ as in Definition \ref{defstra}. We may also regard $S_\str$ as an object of $(X/D)_{m\text{-}\mathrm{crys,new}}$. Thus, we can define the  functor by
\[
\mathscr{F} \mapsto ((\mathscr{F}(S_\str))_{\str \in \stra{m}{D}}, (\mathscr{F}(S_\str) \widehat{\otimes}_{S_\str} S_{\str'} \xrightarrow{\simeq} \mathscr{F}(S_{\str'}))_{\str \to \str'}).
\]

Also, we can define the functor $\mathrm{Str}(\stra{m}{D}) \to \mathscr{C}((X/D)_{m\text{-}\mathrm{crys,new}})$ as follows. 
Given $((M_\str)_{\str \in \stra{m}{D}}, (\varphi_{\str \str'})_{\str \to \str'}) \in \mathrm{Str}(\stra{m}{D})$ and $(E,J_E,I_E,\gamma_E) \in (X/D)_{m\text{-}\mathrm{crys,new}}$, we choose an affine open Spec$(R) \subseteq X$ such that Spec$(E/J_E) \to X$ factors through Spec$(R)$. As in the proof of Proposition \ref{pr:qstra}, there exists an open cover of $\text{Spec}(R)$ by standard opens $\text{Spec}(R_{g_i})$ such that each $R_{g_i}$ is standard smooth over $D/J$. 
Then the fiber product $\text{Spec}(R_{g_i} \otimes_{R} E/J_E)$ is an affine open of $\text{Spec}(E/J_E)$. We denote the scheme $\text{Spec}(R_{g_i} \otimes_{R} E/J_E)$ by $\text{Spec}(\overline{E_i})$. By Lemma \ref{newlift} there is a unique maximal $m$-PD ring $(E_i,J_{E_i}, I_EE_i,\gamma_{E_i}) \in (X/D)_{m\text{-}\mathrm{crys,new}}$ for which the corresponding formal scheme $\text{Spf}(E_i)$ is an affine open formal subscheme of $\text{Spf}(E)$ and lifts $\text{Spec}(\overline{E_i})$, namely, $E_i/J_EE_i = \overline{E_i}$.

First we define $\mathscr{F}(E_i)$ for such $E_i$. By construction, there exists an object in $\stra{m}{D}$ of the form $\str : T \to R_{g_i}$ . As $D\{ x_t \}^\wedge_{t \in T}$ is the completion of a polynomial ring by Lemma 2.11 of \cite{BS19}, one can choose a map $f_1 : D\{ x_t \}^\wedge_{t \in T} \to R_{g_i} \to E_i/J_{E_i}$ and a map $f_2 : D\{ x_t \}^\wedge_{t \in T} \to E_i$ lifting $f_1$. 
On the other hand,  we have 
\[S_\str = D\{x_t, \frac{\phi^{m+1}(y_w)}{p}\}^\wedge_{t \in T, w \in W}
\] by Lemma 16.10 of \cite{BS19}. By assumption, this is the $p$-completed PD envelope of $D\{ x_t \}^\wedge_{t \in T}$ with respect to the ideal $(I,(\phi^m(y_w))_{w \in W}) = (I,(y^{p^m}_w)_{w \in W})$. So $f_2$ extends uniquely to a $D$-PD ring map $f_3 : S_\str \to E_i$. We define
\[
\mathscr{F}(E_i) :=  M_\str \widehat{\otimes}_{S_\str} E_i \in \modd{E_i}.
\]
We can also define $\mathscr{F}(E)$ for general $(E,J_E,I_E,\gamma_E) \in (X/D)_{m\text{-}\mathrm{crys,new}}$ as in the proof of Proposition \ref{pr:qstra}.

We can prove that the presheaf $\mathscr{F}$ is well-defined and that it defines an object of $\mathscr{C}((X/D)_{m\text{-}\mathrm{crys,new}})$ as in the proof of Proposition \ref{pr:qstra}. So the  functor
\[
\mathrm{Str}(\stra{m}{D}) \to \mathscr{C}((X/D)_{m\text{-}\mathrm{crys,new}})
\]
can be defined by
\[
((M_\str)_{\str \in \stra{m}{D}}, (\varphi_{\str \str'})_{\str \to \str'}) \mapsto \mathscr{F}.
\]
The two functors we constructed are quasi-inverse to each other. Hence, the category $\mathscr{C}((X/D)_{m\text{-}\mathrm{crys,new}})$ is equivalent to the category $\mathrm{Str}(\stra{m}{D})$, as desired.
\qed
\end{proof}

We have a commutative diagram:
\[
\begin{tikzcd}
(X/D)_{m\text{-}q\text{-crys}} \arrow[d,"\rho"]\arrow[r,"\tau"] & (X/D)_{m\text{-crys,new}}\arrow[d,"\sigma_{\text{new}}"] \\
(X'/D)_{q\text{-crys}}\arrow[r,"\tau"] &  (X'/D)_{\text{crys,new}}.
\end{tikzcd} 
\]
On the other hand, we have an inclusion $\mathscr{C}((X/D)_{m\text{-}q\text{-crys}}) \subseteq \mathscr{C}_\prism((X/D)_{m\text{-}q\text{-crys}})$. So the above square induces the following diagram:
\[
\begin{tikzcd}
\mathscr{C}_\prism((X/D)_{m\text{-}q\text{-crys}}) \arrow[r,hookleftarrow]&\mathscr{C}((X/D)_{m\text{-}q\text{-crys}})  & \mathscr{C}((X/D)_{m\text{-crys,new}}) \arrow[l,"\widehat{\tau}_*","\simeq"']\\
\mathscr{C}_\prism((X'/D)_{q\text{-crys}}) \arrow[r,hookleftarrow]\arrow[u,"\simeq","\text{C}^*"']&\mathscr{C}((X'/D)_{q\text{-crys}}) \arrow[u,"\simeq"] &  \mathscr{C}((X'/D)_{\text{crys,new}}) \arrow[l,"\widehat{\tau}_*","\simeq"']\arrow[u,"\simeq","\sigma_{\text{new}}^*"'].
\end{tikzcd} 
\]
In this way the equivalence between the category of crystals on the $m$-$q$-crystalline site and that on the usual $q$-crystalline site in Section 2 is compatible with the Frobenius descent.

\section{Relation to the results of Xu, Gros-Le Stum-Quir\'{o}s and Morrow-Tsuji}

In this section, we discuss relations between our equivalences in Sections 1, 2 and the results of Xu \cite{Xu19}, Gros-Le Stum-Quir\'{o}s \cite{GSQ20a}\cite{GSQ20b}\cite{GSQ18} and Morrow-Tsuji \cite{MT20}.

First, we establish a relation between our results and the results of Xu. Let $k$ be a perfect field of characteristic $p$, and let $W$ be the Witt ring of $k$. We consider the diagram:
\[
\begin{tikzcd}
\text{Spf}(W) \arrow[r,hookleftarrow] & \text{Spec}(W/pW)=\text{Spec}(k) \arrow[dr,phantom, "\Box"] & X\arrow[l, "f"']\\
\text{Spf}(W) \arrow[u, "\phi^*"]\arrow[r,hookleftarrow] & \text{Spec}(k) \arrow[u, "\phi^*"] & X',\arrow[l]\arrow[u]
\end{tikzcd} 
\]
where the right square is the Cartesian diagram, $\phi^*$ is the morphism induced by the lift $\phi : W \to W$ of Frobenius, and $f$ is a smooth and separated map.

Let us now briefly recall some notations and results in \cite{Xu19}. In \cite{Xu19}, Xu defined the category $\mathscr{E}'$ (resp. $\underline{\mathscr{E}}$) as follows: Objects are diagrams 
\[(\mathcal{T} \gets T \to U) \text{ (resp. }(\mathcal{T} \gets \underline{T} \to U)),\] where $\mathcal{T}$ is a flat $p$-adic formal $W$-scheme, $T \to \mathcal{T}$ (resp. $\underline{T} \to \mathcal{T}$) is the closed immersion defined by the ideal $p\mathcal{O}_{\mathcal{T}}$ (resp. the ideal Ker$(\mathcal{O}_{\mathcal{T}} \to \mathcal{O}_{\mathcal{T}}/p\mathcal{O}_{\mathcal{T}} \xrightarrow{\phi} \mathcal{O}_{\mathcal{T}}/p\mathcal{O}_{\mathcal{T}})$ with $\phi$ the Frobenius) 
and $T \to U$ is an affine morphism over $k$ to an open subscheme $U$ of $X'$ (resp. $X$). The notion of morphism is the obvious one.
We endow $\mathscr{E}', \underline{\mathscr{E}}$ with the topology induced by the fppf covers of $\mathcal{T}$ (see 7.13 of \cite{Xu19}) and denote these sites by $\mathscr{E}'_{\rm fppf}, \underline{\mathscr{E}}_{\rm fppf}$ respectively (Xu also considers  the topology induced by the Zariski topology on $\mathcal{T}$ in 7.9 of \cite{Xu19} that we will not consider here).

We define the categories of crystals by
\[
\mathscr{C}^{\text{tors}}(\mathscr{E}'_{\rm fppf}) = \displaystyle\bigcup_{n}\mathscr{C}^{p^n\text{-tors}}(\mathscr{E}'_{\rm fppf}), \ \ \mathscr{C}^{\text{tors}}( \underline{\mathscr{E}}_{\rm fppf}) = \displaystyle\bigcup_{n}\mathscr{C}^{p^n\text{-tors}}( \underline{\mathscr{E}}_{\rm fppf}),
\]
where $\mathscr{C}^{p^n\text{-tors}}(\mathscr{E}'_{\rm fppf}), \mathscr{C}^{p^n\text{-tors}}( \underline{\mathscr{E}}_{\rm fppf})$ are the categories of $p^n$-torsion quasi-coherent crystals on $\mathscr{E}'_{\rm fppf}, \underline{\mathscr{E}}_{\rm fppf}$ respectively. 
(The categories $\mathscr{C}^{p^n\text{-tors}}(\mathscr{E}'_{\rm fppf}), \mathscr{C}^{p^n\text{-tors}}( \underline{\mathscr{E}}_{\rm fppf})$ are denoted by $\mathscr{C}^{\text{qcoh}}_{\text{fppf}}(\mathcal{O}_{\mathscr{E}' , n}), \mathscr{C}^{\text{qcoh}}_{\text{fppf}}(\mathcal{O}_{\underline{\mathscr{E}} , n})$ respectively in \cite{Xu19}.)

In 9.1 of \cite{Xu19}, Xu defined a functor $\rho : \underline{\mathscr{E}} \to \mathscr{E}'$ that sends $(\mathcal{T} \gets \underline{T} \xrightarrow{f} U)$ to $(\mathcal{T} \gets T \to U')$, where the right map $f: T \to U'$ is defined as follows:
\[
T \xrightarrow{g} \underline{T} \underset{\text{Spec}(k),\phi^*}{\times} \text{Spec}(k) \to U \underset{\text{Spec}(k),\phi^*}{\times} \text{Spec}(k) =: U'.
\]
Here the first map $g$ is induced by the map of sheaves of rings
\[
\mathcal{O}_{\underline{T}} \underset{k,\phi}{\otimes} k \to \mathcal{O}_T, \ \ t \otimes a \mapsto \phi(t)a.
\]
Then, in Theorem 9.2 of \cite{Xu19}, he proved that the functor $\rho$ induces an equivalence of topoi
\[
\rho : \widetilde{\underline{\mathscr{E}}}_{\rm fppf} \xrightarrow{\simeq} \widetilde{\mathscr{E}}'_{\rm fppf},
\]
and in Theorem 9.12 of \cite{Xu19}, he proved that $\rho$ induces an equivalence between the categories of crystals
\[
\rho^* : \mathscr{C}^{p^n\text{-tors}}(\mathscr{E}'_{\rm fppf}) \xrightarrow{\simeq} \mathscr{C}^{p^n\text{-tors}}( \underline{\mathscr{E}}_{\rm fppf}).
\]

We relate our result in Section 1 to his result. For a pair $(E,pE)$ over $(W,pW)$,  we define
\[
\underline{E/pE} = E/\text{Ker}(E \to E/pE \xrightarrow{\phi} E/pE)
\]
(this is the ring-theoretic version of $\underline{T}$ above). Note that if $(E,I_E) = (E,pE)$ is a $\delta$-pair over $(W,pW)$, then 
\[
E/\text{Ker}(E \to E/pE \xrightarrow{\phi} E/pE) = E/\text{Ker}(E \xrightarrow{\phi} E \to E/pE) = E/J_E,
\] where $J_E$ is the ideal considered in the level 1-prismatic site, i.e., $J_E = \phi^{-1}(I_E)$. Then we have a commutative diagram:
\[
\begin{tikzcd}
(X/W)_{1-\prism}  \arrow[r, "\beta_1"]\arrow[d, "\rho'"]& \underline{\mathscr{E}}_{\rm fppf} \arrow[d, "\rho"]\\
(X'/W)_{\prism} \arrow[r, "\beta_0"]& \mathscr{E}'_{\rm fppf},
\end{tikzcd}
\]
where $\rho$ is the functor defined by Xu, $\rho'$ is the functor $\rho$ defined in Section 1 and $\beta_0,\beta_1$ are the functors defined as:
\begin{align*}
&\beta_0 : (\text{Spf}(E) \hookleftarrow \text{Spec}(E/pE) \to X' ) \mapsto (\text{Spf}(E) \hookleftarrow \text{Spec}(E/pE) \to X' ), \\
&\beta_1 : (\text{Spf}(E) \hookleftarrow \text{Spec}(E/J_E) \to X ) \mapsto (\text{Spf}(E) \hookleftarrow \text{Spec}(E/J_E = \underline{E/pE}) \to X ).
\end{align*}
It is easy to see that the functors $\beta_0, \beta_1$ are continuous. So the functors above induce a commutative diagram:
\[
\begin{tikzcd}
\mathscr{C}^{p^n\text{-tors}}((X/W)_{1-\prism}) & \mathscr{C}^{p^n\text{-tors}}( \underline{\mathscr{E}}_{\rm fppf})\arrow[l,"\widehat{\beta}_{1,*}"] \\
\mathscr{C}^{p^n\text{-tors}}((X'/W)_{\prism})\arrow[u,"\simeq","\rho'^*"'] & \mathscr{C}^{p^n\text{-tors}}(\mathscr{E}'_{\rm fppf}). \arrow[l,"\widehat{\beta}_{0,*}"]\arrow[u,"\simeq","\rho^*"'] 
\end{tikzcd} 
\]
So our equivalence $\rho'^*$ is compatible with Xu's equivalence $\rho^*$.

If $X/k$ lifts to $\widetilde{X}/W$ and the relative Frobenius of $X$ lifts to $\widetilde{X}$, then we have the following commutative diagram: 
{\fontsize{9pt}{10pt}\selectfont
\[
\begin{tikzcd}
\text{MIC}(\widetilde{X}/W)^{\text{qn}} & \mathscr{C}^{\text{tors}}((X/W)_{\text{crys}}) \arrow[l,"\simeq","\psi"'] & \mathscr{C}^{\text{tors}}((X/W)_{1-\prism}) \arrow[l,"\simeq","\lambda"'] & \mathscr{C}^{\text{tors}}( \underline{\mathscr{E}}_{\rm fppf})\arrow[l,"\widehat{\beta}_{1,*}"] \\
p\text{-MIC}(\widetilde{X}'/W)^{\text{qn}} \arrow[u,"\simeq","\varphi"'] && \mathscr{C}^{\text{tors}}((X'/W)_{\prism})\arrow[ll,dashed,"\simeq","\varpi"']\arrow[u,"\simeq","\rho'^*"'] & \mathscr{C}^{\text{tors}}(\mathscr{E}'_{\rm fppf}). \arrow[l,"\widehat{\beta}_{0,*}"]\arrow[u,"\simeq","\rho^*"'] 
\end{tikzcd} 
\]
}

\noindent
Here, $\text{MIC}(\widetilde{X}/W)$ (resp. $p\text{-MIC}(\widetilde{X}'/W)$) denote the category of $p$-power torsion quasi-coherent $\mathscr{O}_X$-modules with quasi-nilpotent  integrable connection (resp. $p$-connection) relative to $W$. The functor $\lambda$ is the composition:
{\fontsize{9pt}{10pt}\selectfont
\[
\mathscr{C}^{\mathrm{tors}}((X/W)_{1-\prism}) \to \mathscr{C}^{\mathrm{tors}}((X/W)_{q\text{-}\mathrm{crys}})  \to \mathscr{C}^{\mathrm{tors}}((X/W)_{\mathrm{crys,new}}) \to  \mathscr{C}^{\mathrm{tors}}((X/W)_{\text{crys}}),
\]
}
where the first functor is the equivalence
\[
\alpha^* : \mathscr{C}^{\mathrm{tors}}((X/W)_{1-\prism}) \xrightarrow{\simeq} \mathscr{C}^{\mathrm{tors}}((X/W)_{q\text{-}\mathrm{crys}})
\]
in Theorem \ref{th:mm1}, the second functor is the inverse of the functor (note that $(W, pW)$ is a $q$-PD pair with $q=1$ in $W$)
\[
\widehat{\tau}_* : \mathscr{C}^{\mathrm{tors}}((X/W)_{\mathrm{crys,new}}) \xrightarrow{\simeq} \mathscr{C}^{\mathrm{tors}}((X/W)_{q\text{-}\mathrm{crys}})
\]
in Proposition \ref{prtaucrys} and the third functor is the inverse of the functor 
\[
\widehat{\nu}_* : \mathscr{C}^{\mathrm{tors}}((X/W)_{\text{crys}}) \xrightarrow{\simeq} \mathscr{C}^{\mathrm{tors}}((X/W)_{\text{crys,new}})
\]
 in Lemma \ref{lem:crysnew}. The functor $\varphi$ is the equivalence constructed in Proposition 2.5 of \cite{Shi12}. The functor $\psi$ is a natural equivalence in the theory of crystalline site. By composing the equivalences in the above commutative diagram, we obtain an \linebreak equivalence $\varpi$ from the category of crystals on the prismatic site to the category of modules with integrable $p$-connection. We note that the equivalence $\varpi$ has been established in more general situation by Ogus.

\bigskip

Next, we establish a relation between the functors we constructed and the twisted Simpson correspondence by Gros-Le Stum-Quir\'{o}s. Let $(D,I)$ be a $q$-PD pair with $I = \phi^{-1}([p]_qD)$. Assume that there exists a pushout diagram
\[
\begin{tikzcd}
D \arrow[r]\arrow[d,"\phi"] & D[x]^\wedge \arrow[r,"f"] & A\arrow[d] \\
D \arrow[rr] && A'\arrow[ul, phantom, "\lrcorner", very near start],
\end{tikzcd}
\]
where the map $f$ is $(p,I)$-completely \'{e}tale. We give a $\delta$-structure on $D[x]^\wedge$ by $\delta(x) = 0$.  By Lemma 2.18 of \cite{BS19}, The ring $A$ admits a unique $\delta$-structure compatible with the one on $D[x]^\wedge$. Set 
\begin{align*}
\overline{A} &= A \underset{D}{\otimes} D/I, \\
\overline{A}' &= A' \underset{D}{\otimes} D/[p]_qD.
\end{align*}
Then by Theorem 4.8 and Proposition 6.9 of \cite{GSQ20b}, we have the following \linebreak commutative diagram:
\[
\begin{tikzcd}
\mathscr{C}_{\prism}((\overline{A}/D)_{1-\prism}) \arrow[r,"\simeq","\widehat{\alpha}_*"'] & \mathscr{C}_{\prism}((\overline{A}/D)_{q\text{-crys}}) \arrow[r,"\text{G}"'] & \widehat{\text{Strat}}^{(0)}_q(A/D) \\
\mathscr{C}_{\prism}((\overline{A}'/D)_{\prism}) \arrow[u,"\simeq","\rho^*"']\arrow[rr,"\text{H}"'] && \widehat{\text{Strat}}^{(-1)}_q(A'/D), \arrow[u,"\simeq","\text{F}^*"']
\end{tikzcd}
\]
where $\widehat{\text{Strat}}^{(0)}_q(A/D)$ (resp. $\widehat{\text{Strat}}^{(-1)}_q(A'/D)$) is the category of twisted hyper-stratified $A$-modules of level 0 (resp. $A'$-modules of level $(-1)$) defined in Definition 3.9 of \cite{GSQ20b}, $\widehat{\alpha}_*$ and $\rho^*$ are the functors we constructed,  G and H  are the functors that appeared in Proposition 6.9 of \cite{GSQ20b}, and $\text{F}^*$ is an equivalence in Theorem 4.8 of \cite{GSQ20b}. 
(More precisely, it is not clear to us which category of modules they used in their definition of $\widehat{\text{Strat}}^{(0)}_q(A/D)$ and $\widehat{\text{Strat}}^{(-1)}_q(A'/D)$. We guess that it would be reasonable to use the category $\mathcal{M}_\prism(A)$ and $\mathcal{M}_\prism(A')$ respectively. So the categories of twisted hyper-stratified modules would be related to our equivalences of categories of crystals with notation $\mathscr{C}_\prism$.) 

Thus, we see that our equivalences of categories fit into the diagram in  Proposition 6.9 of \cite{GSQ20b}. In particular, our argument gives a direct proof of the equivalence $\widehat{\alpha}_* \circ \rho^*$, which they plan to prove indirectly in forthcoming work by showing that G and H are equivalences (See Remark 2 after Proposition 6.9 of \cite{GSQ20b}).
 Moreover, our proof of the equivalence $\widehat{\alpha}_* \circ \rho^*$, which is in the style of \cite{Oya17}, \cite{Xu19}, answers `the hope' in Remark 3 after Proposition 6.9 of \cite{GSQ20b} to some extent.

\bigskip

Finally, we establish a relation between our results and the results of Morrow-Tsuji in \cite{MT20}. Let us briefly recall some  notations and results in \cite{MT20}. 
Let $\mathcal{O}$ be a ring of integers of a characteristic 0 perfectoid field containing all $p$-power roots of unity, and let $A_{\text{inf}} := W(\mathcal{O}^\flat), \epsilon := (1,\zeta_p,\zeta_{p^2},\dots) \in \mathcal{O}^\flat,\mu := [\epsilon] -1, \xi := \frac{\mu}{\phi^{-1}(\mu)}, \tilde{\xi} := \phi(\xi),$ where $\phi$ is the Frobenius on $A_{\text{inf}}$ ($\phi$ is denoted by $\varphi$ in \cite{MT20}).  Then the pair ($A_{\text{inf}},(\tilde{\xi})$) is a bounded prism.

Let $R$ be a smooth $\mathcal{O}$-algebra with a formally \'{e}tale map $\mathcal{O} [ T^{\pm1}_1,\dots,T^{\pm1}_d ]^\wedge \to R$ called a framing. 
We can use the framing to define the ring $R_{\infty}$ with the action \[ \text{Gal}(R_{\infty}/R) =: \Gamma \cong \mathbb{Z}^d_p,\] and the rings $A^\Box := A^\Box_{\rm inf}(R)$, $A^\Box_{\infty} := A_{\rm inf}(R_{\infty})$. For an $A_{\rm inf}$-algebra $B$, we set \[B^{(1)} := A_{\rm inf} \otimes_{\phi,A_{\rm inf}} B.\] We denote the relative Frobenius $B^{(1)} \to B$ by $F$. 
We use the same notation for the base change of an $\mathcal{O}$-algebra along the Frobenius because we can regard any $\mathcal{O}$-algebra  as an $A_{\rm inf}$-algebra.

Let $\text{Rep}^\mu_\Gamma(A^\Box)$ (resp. $\text{Rep}^\mu_\Gamma(A^{\Box(1)})$) be the category of generalized representations \linebreak of $\Gamma$ over $A^\Box$ (resp. $A^{\Box(1)}$) which is trivial modulo $\mu$, and let $\text{qMIC}(A^{\Box})$ (resp. $\text{qHIG}(A^{\Box(1)})$) be the category of finite projective $A^\Box$-modules (resp. $A^{\Box(1)}$-modules) with flat $q$-connection (resp. flat $q$-Higgs field). 
We shall write $\text{qMIC}(A^{\Box})^{\rm qnilp}$  (resp. \linebreak $\text{qHIG}(A^{\Box(1)})^{\rm qnilp}$) for the full subcategory of $\text{qMIC}(A^{\Box})$ \hspace{-3pt} (resp. $\text{qHIG}(A^{\Box(1)})$)  consisting \linebreak of $(p,\mu)$-adically quasi-nilpotent objects (this notation is not used in \cite{MT20}).

Then, in Section 3 of \cite{MT20}, they considered the following commutative diagram
\[
\begin{tikzcd}
\mathscr{C}^{\text{fp}}((\text{Spf}(R^{(1)})/A_{\text{inf}})_{\prism}) \arrow[r,"\text{ev}_{A^{\Box(1)}}"]\arrow[rd,"\text{ev}^\phi_{A^{\Box(1)}}"'] & \text{Rep}^\mu_\Gamma(A^{\Box(1)}) \arrow[r,"\simeq"]\arrow[d,"- \otimes_{A^{\Box(1)},F} A^\Box"] & \text{qHIG}(A^{\Box(1)}) \arrow[d] \\
&\text{Rep}^\mu_\Gamma(A^\Box) \arrow[r,"\simeq"] & \text{qMIC}(A^{\Box}),
\end{tikzcd}
\]
where $\text{ev}_{A^{\Box(1)}}$ is the functor defined by evaluation on \[(\text{Spf}(A^{\Box(1)}) \gets \text{Spf}(R^{(1)}) \xrightarrow{=} \text{Spf}(R^{(1)})),\] and $\text{ev}^\phi_{A^{\Box(1)}}$ is the composition of the functor $\text{ev}_{A^{\Box(1)}}$ with $- \otimes_{A^{\Box(1)},F} A^\Box$. Moreover, the arrows with $\simeq$ are equivalences.

By our result in Section 1, we see that the above diagram extends to the  following diagram:
\[
\begin{tikzcd}
\mathscr{C}^{\text{fp}}((\text{Spf}(R^{(1)})/A_{\text{inf}})_{\prism}) \arrow[r,"\text{ev}_{A^{\Box(1)}}"]\arrow[d,"\simeq"] & \text{Rep}^\mu_\Gamma(A^{\Box(1)}) \arrow[r,"\simeq"]\arrow[d,"- \otimes_{A^{\Box(1)},F} A^\Box"] & \text{qHIG}(A^{\Box(1)}) \arrow[d] \\
\mathscr{C}^{\text{fp}}((\text{Spf}(R)/A_{\text{inf}})_{1-\prism}) \arrow[r,"\text{ev}_{A^{\Box}}"] &\text{Rep}^\mu_\Gamma(A^\Box) \arrow[r,"\simeq"] & \text{qMIC}(A^{\Box}),
\end{tikzcd}
\]
where $\text{ev}_{A^{\Box}}$ is the functor defined by evaluation on $(\text{Spf}(A^{\Box}) \gets \text{Spf}(R) \xrightarrow{=} \text{Spf}(R))$.

In Theorem 3.2 of \cite{MT20}, they proved that the functors $\text{ev}_{A^{\Box(1)}},\text{ev}^\phi_{A^{\Box(1)}}$ are fully faithful and  induce equivalences
\begin{align*}
&\mathscr{C}^{\text{fp}}((\text{Spf}(R^{(1)})/A_{\text{inf}})_{\prism}) \xrightarrow{\simeq} \text{qHIG}(A^{\Box(1)})^{\rm qnilp}, \\
&\mathscr{C}^{\text{fp}}((\text{Spf}(R^{(1)})/A_{\text{inf}})_{\prism})  \xrightarrow{\simeq} \text{qMIC}(A^{\Box})^{\rm qnilp}.
\end{align*}
Thus, we see that the functor $\text{ev}_{A^{\Box}}$ is also fully faithful and  induces an equivalence
\[
\mathscr{C}^{\text{fp}}((\text{Spf}(R)/A_{\text{inf}})_{1-\prism}) \xrightarrow{\simeq}  \text{qMIC}(A^{\Box})^{\rm qnilp}.
\]

\phantomsection
\bibliographystyle{alpha}
\bibliography{myrefs}
\addcontentsline{toc}{section}{References}
\nocite{*}

\end{document}